\newlength{\tilesize}
\newlength{\smalltilesize}
\definecolor{wwwwww}{rgb}{0.4,0.4,0.4}
\definecolor{cccccc}{rgb}{0.8,0.8,0.8}
\newtheorem{theorem}{Theorem}[section]
\newtheorem{lemma}[theorem]{Lemma}
\newtheorem{definition}[theorem]{Definition}
\newtheorem{remark}[theorem]{Remark}
\newtheorem{claim}[theorem]{Claim}
\numberwithin{equation}{section}
\newcommand{\qed}{\hspace*{\fill} $\blacksquare$\medskip}
\newenvironment{proof}
{\noindent {\em Proof}.\,\,}
{\qed}
\def \Z {\mathbb Z}
\def \N {\mathbb N}
\def \cX {\mathcal X}
\def \cY {\mathcal Y}
\def \cT {\mathcal T}
\def \cC {\mathcal C}
\def \cV {\mathcal V}
\def \cI {\mathcal I}
\def \cB {\mathcal B}
\def \cA {\mathcal A}
\def \cS {\mathcal S}
\def \cW {\mathcal W}
\def \cG {\mathcal G}
\def \cP {\mathcal P}
\def \cO {\mathcal O}
\def \cD {\mathcal D}
\def \cU {\mathcal U}
\def \epsi {\varepsilon}
\def \prm 	 {^{\prime}}
\def \dprm {^{\prime\prime}}
\def \starred {^{\star}}
\def \boundary {\partial}
\def \CAPA {{\rm CAP}}
\def \CS {{\rm CS}}
\def \quasiqnumber {\ell\starred(\ell\starred - 1)}
\def \protonumber {\quasiqnumber + 1}
\def \critinumber {\quasiqnumber + 2}
\def \boub {^{\mathrm{bd2}}}
\def \supp	 {\text{supp}}
\def \subconf {\prec}
\def \Lambdaminus {\Lambda^{-}}
\def \metaset	{\cX_\mathrm{meta}}
\def \groundset 	{\cX_\mathrm{stab}}
\newcommand 	{\stablev}[1] {V_{#1}}
\def \comlev 	{\varPhi}
\newcommand 	{\lowset}[1] {\cI_{#1}}
\def \molset	 {\cV_{\star,\nb}^{4\nb}}
\def \nbset		{\cV_{\star,\nb}}
\newcommand {\nbis}[1]	 {\cV_{\star, #1}}
\newcommand {\standard}[1]	{\ensuremath{\hat{\mathcal{S}}_{#1} }}
\def \gate {\cC^{\star}}
\def \proto {\cP}
\def \entgate {\gate_\mathrm{bd}}
\def \gateatt {\gate_\mathrm{att}}
\def \g {g(\Box, \minnbis{\protonumber})}
\def \gbar {\bar{g}(\Box, \minnbis{\protonumber})}
\newcommand \minnbis[1] {\bar{\cV}_{\star, #1}}
\newcommand \entrance[1] {{\Rsh}#1}
\newcommand \optentrance[1] {{\bar\Rsh}#1}
\newcommand {\broken}[1] {B^{-}(#1)}
\newcommand {\extrapart}[1] {n_{1}^{+}(#1)}
\def \optpaths	{\ensuremath{(\Box\to\boxplus)_{\mathrm{opt}}} }
\def \setb {\varpi_{2}} 
\def \ta	{1}
\def \tb	{2}
\def \na	{n_{\ta}}
\def \nb	{n_{\tb}}
\def \abbar	{\ta\tb\text{--bar}}
\def \abbars	{\ta\tb\text{--bars}}
\def \btile		{\tb\text{--tile}}
\def \btiles		{\tb\text{--tiles}}
\def \btiled		{\tb\text{--tiled}}
\def \D	 {\Delta}
\def \Da	 {\Delta_{1}}
\def \Db	 {\Delta_{2}}
\newcommand {\tsupp}[1] {[#1]}
\def \tile {\text{t}}
\def \RT {{\bf RT }}
\def \RA {{\bf RA }}
\def \RB {{\bf RB }}
\def \RC {{\bf RC }}
\def \RF {{\bf RF }}
\def \CA {\ensuremath{\cD_{A}} }
\def \CB {\ensuremath{\cD_{B}} }
\def \CC {\ensuremath{\cD_{C}} }
\def \CF {\ensuremath{\cD_{F}} }
\newcommand{\checkref} {} 
\newcommand{\step}[1] {\medskip\noindent \underline{Step #1:} }
\newcounter{mylist1counter}
\newenvironment{mytextlikelist}
{	\begin{list}{ (\roman{mylist1counter})~}
	{\usecounter{mylist1counter} \itemsep=0em \topsep=0em \parsep=0em }
}
{	
	\end{list}
}
\newcounter{alongedgescounter}
\def \notesmessage{}
\begin{document}

\author{
\renewcommand{\thefootnote}{\arabic{footnote}}
F.\ den Hollander \footnotemark[1] \, \footnotemark[2]
\\
\renewcommand{\thefootnote}{\arabic{footnote}}
F.R.\ Nardi \footnotemark[3] \, \footnotemark[2]
\\
\renewcommand{\thefootnote}{\arabic{footnote}}
A.\ Troiani \footnotemark[1]
}

\title{Kawasaki dynamics with two types of particles:\\
critical droplets}

\footnotetext[1]{
Mathematical Institute, Leiden University, P.O.\ Box 9512,
2300 RA Leiden, The Netherlands
}
\footnotetext[2]{
EURANDOM, P.O.\ Box 513, 5600 MB Eindhoven, The Netherlands
}
\footnotetext[3]{
Technische Universiteit Eindhoven, P.O.\ Box 513, 5600 MB Eindhoven, 
The Netherlands
}

\maketitle

\marginpar{\footnotesize{\notesmessage}}


\begin{abstract}
This is the third in a series of three papers in which we study a 
two-dimensional lattice gas consisting of two types of particles
subject to Kawasaki dynamics at low temperature in a large finite 
box with an open boundary. Each pair of particles occupying neighboring 
sites has a negative binding energy provided their types are different, 
while each particle has a positive activation energy that depends on 
its type. There is no binding energy between particles of the same 
type. At the boundary of the box particles are created and annihilated 
in a way that represents the presence of an infinite gas reservoir. 
We start the dynamics from the empty box and are interested in the 
transition time to the full box. This transition is triggered by 
a critical droplet appearing somewhere in the box.

In the first paper we identified the parameter range for which the
system is metastable, showed that the first entrance distribution on 
the set of critical droplets is uniform, computed the expected transition 
time up to and including a multiplicative factor of order one, and proved 
that the nucleation time divided by its expectation is exponentially 
distributed, all in the limit of low temperature. These results were proved 
under \emph{three hypotheses}, and involved \emph{three model-dependent quantities}: 
the energy, the shape and the number of critical droplets. In the second paper 
we proved the first and the second hypothesis and identified the energy of 
critical droplets. In the third paper we prove the third hypothesis and identify 
the shape and the number of critical droplets, thereby completing our analysis.

Both the second and the third paper deal with understanding the \emph{geometric 
properties} of subcritical, critical and supercritical droplets, which are 
crucial in determining the metastable behavior of the system, as explained in 
the first paper. The geometry turns out to be considerably more complex than for 
Kawasaki dynamics with one type of particle, for which an extensive literature 
exists. The main motivation behind our work is to understand metastability of 
multi-type particle systems. 

\vskip 0.5truecm
\noindent
{\it MSC2010.} 
60K35, 82C20, 82C22, 82C26, 05B50.\\
{\it Key words and phrases.} 
Multi-type lattice gas, Kawasaki dynamics, metastability, critical droplets,
polyominoes, discrete isoperimetric inequalities.
\end{abstract}


\pagebreak


\section{Introduction}
\label{sec introduction}

\paragraph{Motivation.}
The main motivation behind the present work is to understand metastability of \emph{multi-type
particle systems} subject to \emph{conservative stochastic dynamics}. In the past ten
years, a good understanding has been achieved of the metastable behavior of the lattice 
gas subject to Kawasaki dynamics, i.e., a conservative dynamics characterized by random 
hopping of particles of a single type with hardcore repulsion and nearest-neighbor 
attraction. The analysis was based on a combination of techniques from large deviation 
theory, potential theory, geometry and combinatorics. In particular, a precise description 
has been obtained of the time to nucleation (from the ``gas phase'' to the ``liquid 
phase''), the shape of the critical droplet triggering the nucleation, and the typical 
nucleation path, i.e., the typical growing and shrinking of droplets. For an overview 
we refer the reader to two recent papers presented at the 12th Brazilian School of 
Probability: Gaudilli\`ere and Scoppola~\cite{GSnotes} and Gaudilli\`ere~\cite{Gnotes}. 
For an overview on metastability and droplet growth in a broader context, we refer the 
reader to the monograph by Olivieri and Vares~\cite{OV04}, and the review papers by 
Bovier~\cite{B09}, \cite{B11}, den Hollander~\cite{dH09}, Olivieri and Scoppola~\cite{OS10}. 
 
The model we study constitutes a first attempt to generalize the results in Bovier, den 
Hollander and Nardi~\cite{BdHN06} for two-dimensional Kawasaki dynamics with one type of 
particle to \emph{multi-type particle systems}. We take a large finite box $\Lambda 
\subset \Z^{2}$. Particles come in two types: type $1$ and type $2$. Particles hop around 
subject to hard-core repulsion, and are conserved inside $\Lambda$. At the boundary of 
$\Lambda$ particles are created and annihilated as in a gas reservoir, where the two 
types of particles have different densities $e^{-\beta\Da}$ and $e^{-\beta\Db}$. We 
assume a binding energy $U$ between particles of different type, and no binding energy 
between particles of the same type. Because of the ``antiferromagnetic'' nature of the 
interaction, configurations with minimal energy have a ``checkerboard'' structure. The 
phase diagram of this simple model is already very rich. The model can be seen as a 
conservative analogue of the Blume-Capel model investigated by Cirillo and 
Olivieri~\cite{CO96}. 

Our model describes the condensation of a low-temperature and low-density supersaturated 
lattice gas. We are interested in studying the nucleation towards the liquid phase 
represented by the checkerboard configuration $\boxplus$, starting from the gas phase 
represented by the empty configuration $\Box$. It turns out that the \emph{geometry} of 
the energy landscape is much more complex than for the model of Kawasaki dynamics with 
one type of particle. Consequently, it is a somewhat delicate matter to capture the 
proper mechanisms behind the growing and shrinking of droplets. Our proofs use potential 
theory and rely on ideas developed in Bovier, den Hollander and Nardi~\cite{BdHN06} for 
Kawasaki dynamics with one type of particle.

\paragraph{Two previous papers.}
In $\cite{dHNT12}$ we identified the values of the parameters for which the model properly 
describes the condensation of a supersaturated gas and exhibits a metastable behavior. 
Under \emph{three hypotheses}, we determined the distribution and the expectation of the 
nucleation time, and identified the so-called critical configurations that satisfy a 
certain ``gate property''. The first hypothesis assumes that configuration $\boxplus$, 
corresponding to the liquid phase, is a minimizer of the Hamiltonian. The second hypothesis 
requires that the valleys of the energy landscape are not too deep. The third hypothesis 
requires that the critical configurations have an appropriate geometry. Subject to the 
three hypotheses, several theorems were derived, for which \emph{three model-dependent
quantities} needed to be identified as well: (1) the energy barrier $\Gamma\starred$ 
separating $\Box$ from $\boxplus$; (2) the set $\gate$ of critical configurations; 
(3) the cardinality $N\starred$ of the set of protocritical configurations, which can 
be thought of as the ``entrance'' set of $\gate$. Quantity (1) was identified in 
\cite{dHNT11}. In the present paper we identify quantities (2) and (3).

In \cite{dHNT11} the first two hypotheses were verified and the energy value $\Gamma\starred$ 
of the energy barrier separating $\Box$ and $\boxplus$ is identified. These results were 
sufficient to establish the exponential probability distribution of the nucleation time 
divided by its mean, and to determine the mean nucleation time up to a multiplicative 
factor $K$ of order $1+o(1)$ as the inverse temperature $\beta \to \infty$. 

\paragraph{Present paper.}
In the present paper we show that the model satisfies the third hypothesis, and we identify
the set of critical configurations. We give a geometric characterization of the configurations 
in $\gate$ and compute the value of $N\starred$. A prototype of the critical configuration 
was already identified in \cite{dHNT11}, and consists of a configuration of minimal energy 
with $\protonumber$ particles of type $\tb$ arranged in a cluster of minimal energy plus a 
particle of type $\tb$. The difficult task is to characterize the \emph{full} set of critical
configurations. This part of the analysis uses the specific dynamical features of the model, 
which are investigated in detail in a neighborhood of the saddle configurations. This task 
is carried over by observing that, in the regime $0 < \Da < U < \Db$ and in configurations of 
minimal energy, each particle of type $\tb$ is surrounded by particles of type $\ta$. 
This allows us to look at configurations of minimal energy not as clusters of single particles, 
but as clusters of ``tiles'': particles of type $\tb$ surrounded by particles of type $\ta$
The tiles making up the cluster can travel around the cluster faster than particles of type 
$\tb$ can appear at the boundary of $\Lambda$. This motion of tiles along the border gives 
the dynamics the opportunity to extend the set of critical configurations. Different mechanisms 
are identified that allow tiles to travel around a cluster. The energy barrier that must be 
overcome in order to activate these mechanisms is determined, and is compared with the energy 
barrier the dynamics has to overcome in order to let a particle enter $\Lambda$. How rich the 
set of critical configurations is depends on the relative magnitude of these barriers. 
Consequently, the geometry of the critical configurations is highly sensitive to the choice 
of parameters.

The problem of computing the value $N\starred$, i.e., the cardinality of the set of 
protocritical configurations, is reduced to counting the number of polyominoes of minimal 
perimeter belonging to certain classes of configurations that depend on the values of 
the parameters $\Da$ and $\Db$. This is a non-trivial problem that is interesting in 
its own right. With these results we are able to derive the sharp asymptotics for the 
nucleation time and to find the entrance distribution of the set of critical 
configurations.

Results in this paper are derived by using a foliation of the state space according to
the number of particles of type $\ta$ in $\Lambda$, plus the fact that configurations 
in $\gate$ must satisfy a ``gate property'', i.e., they must be visited by all optimal 
paths. These results allow us to compute sharp asymptotic values for the expected 
nucleation time.

\paragraph{Literature.}
Similar analyses have been carried out both for conservative and non-conservative 
dynamics. For Ising spins subject to \emph{Glauber dynamics} in finite volume, 
a rough asymptotics for the nucleation time was derived by Neves and Schonmann~\cite{NS91} 
(on $\Z^{2}$) and by Ben Arous and Cerf~\cite{BAC96} (on $\Z^{3}$). Their results 
were improved by Bovier and Manzo~\cite{BM02}, where the potential-theoretic approach 
to metastability developed by Bovier, Eckhoff, Gayrard and Klein~\cite{BEGK02} was used 
to compute a sharp asymptotics for the nucleation time.

For the model with three-state spins (Blume--Capel model), the transition time and 
the typical trajectories were characterized by Cirillo and Olivieri~\cite{CO96}. 
For conservative Kawasaki dynamics, metastable behavior was studied in den Hollander, 
Olivieri and Scoppola~\cite{dHOS00} (on $\Z^{2}$) and in den Hollander, Nardi, 
Olivieri and Scoppola~\cite{dHNOS03} (on $\Z^{3}$). The sharp asymptotics of the 
nucleation time was derived by Bovier, den Hollander and Nardi~\cite{BdHN06}. Models 
with an anisotropic interaction were considered in Kotecky and Olivieri~\cite{KO93} 
for Glauber dynamics and in Nardi, Olivieri and Scoppola~\cite{NOS05} for Kawasaki
dynamics. 

The model studied in the present paper falls in the class of variations on Ising 
spins subject to Glauber dynamics and lattice gas particles subject to Kawasaki 
dynamics. These variations include staggered magnetic field, next-nearest-neighbor 
interactions, and probabilistic cellular automata. In all these models the geometry 
of the energy landscape is highly complex, and needs to be controlled in order to 
arrive at a complete description of metastability. For an overview, see the monograph 
by Olivieri and Vares~\cite{OV04}, Chapter 7.

\paragraph{Outline.}
Section~\ref{sec model and dynamics description} defines the model,
Section~\ref{sec basic not} introduces basic notation and key definitions, 
Section~\ref{sec key theorems} states the main theorems, while
Section~\ref{sec discussion} discusses these theorems.


\subsection{Lattice gas subject to Kawasaki dynamics}
\label{sec model and dynamics description}

Let $\Lambda \subset \Z^2$ be a large box centered at the origin (later it 
will be convenient to choose $\Lambda$ rhombus-shaped). Let $|\cdot|$ denote 
the Euclidean norm, let
\begin{equation}
\label{intextLam}
\begin{aligned}
\partial^-\Lambda &= \{x\in\Lambda\colon\,\exists\,y\notin\Lambda\colon\,|y-x|=1\},\\
\partial^+\Lambda &= \{x\notin\Lambda\colon\,\exists\,y\in\Lambda\colon\,|y-x|=1\},\\ 
\end{aligned}
\end{equation}
be the internal, respectively, external boundary of $\Lambda$, and put $\Lambda^-
= \Lambda\backslash\partial^-\Lambda$ and $\Lambda^+=\Lambda\cup\partial^+\Lambda$. 
With each site $x\in\Lambda$ we associate a variable $\eta(x) \in \{0,1,2\}$ indicating 
the absence of a particle or the presence of a particle of type $\ta$ or type $\tb$,
respectively. A configuration $\eta=\{\eta(x)\colon\,x\in\Lambda\}$ is an element of 
$\cX=\{0,1,2\}^\Lambda$. To each configuration $\eta$ we associate an energy 
given by the Hamiltonian
\begin{equation}
\label{Ham1}
H = -U \sum_{(x,y)\in\Lambda^{*,-}} 
1_{\{\eta(x)\eta(y) = 2\}}\\
+ \Da \sum_{x\in\Lambda} 1_{\{\eta(x)=1\}} 
+ \Db \sum_{x\in\Lambda} 1_{\{\eta(x)=2\}},
\end{equation}
where $\Lambda^{*,-}=\{(x,y)\colon\,x,y\in\Lambda^-,\,|x-y|=1;\,|x-z|>2,\,|y-z|>2\,\, 
\forall\,z \in \partial^{-}\Lambda\}$ is the set of non-oriented bonds in $\Lambda$ 
at distance at least 3 from $\partial^-\Lambda$, $-U<0$ is the \emph{binding energy} 
between neighboring particles of \emph{different} types in $\Lambda^-$, and $\Da>0$ 
and $\Db>0$ are the \emph{activation energies} of particles of type $\ta$, respectively, 
$\tb$ in $\Lambda$. 
The width is taken to be $3$ for technical convenience only. This
change does not effect the theorems in \cite{dHNT12} and \cite{dHNT11}, for which
the boundary plays no role. See also Appendix \ref{appb3}.
Without loss of generality we will assume that 
\begin{equation}
\Da \leq \Db.
\end{equation} 
The Gibbs measure associated with $H$ is 
\begin{equation}
\label{Gibbsmeasure}
\mu_\beta(\eta) = \frac{1}{Z_\beta}\,e^{-\beta H(\eta)}, \qquad \eta\in\cX,
\end{equation}
where $\beta\in (0,\infty)$ is the inverse temperature and $Z_\beta$ is the normalizing 
partition sum.

Kawasaki dynamics is the continuous-time Markov process $(\eta_t)_{t \geq 0}$ with 
state space $\cX$ whose transition rates are
\begin{equation}
\label{rate}
c_\beta(\eta,\eta\prm) = \left\{\begin{array}{ll}
e^{-\beta [H(\eta\prm)-H(\eta)]_+},
&\eta,\eta\prm\in\cX,\,\eta\neq\eta\prm,\,\eta\leftrightarrow\eta\prm,\\
0,
&\mbox{otherwise},
\end{array}
\right. 
\end{equation}
where $\eta\leftrightarrow\eta\prm$ means that $\eta\prm$ can be obtained from $\eta$ 
by one of the following moves:
\begin{itemize}
\item[$\bullet$]
interchanging $0$ and $1$ or $0$ and $2$ between two neighboring sites in $\Lambda$\\
(``hopping of particles in $\Lambda$''),
\item[$\bullet$]
changing $0$ to $1$ or $0$ to $2$ in $\partial^-\Lambda$\\
(``creation of particles in $\partial^-\Lambda$''),
\item[$\bullet$]
changing $1$ to $0$ or $2$ to $0$ in $\partial^-\Lambda$\\
(``annihilation of particles in $\partial^-\Lambda$'').
\end{itemize}
Note that this dynamics preserves particles in $\Lambda^-$, but allows particles to be 
created and annihilated in $\partial^-\Lambda$. Think of the latter as describing particles
entering and exiting $\Lambda$ along non-oriented bonds between $\partial^+\Lambda$ and 
$\partial^-\Lambda$ (the rates of these moves are associated with the bonds rather than 
with the sites). The pairs $(\eta,\eta\prm)$ with $\eta\leftrightarrow\eta\prm$ are called 
\emph{communicating configurations}, the transitions between them are called \emph{allowed 
moves}. Note that particles in $\partial^-\Lambda$ do not interact: the interaction only 
works well inside $\Lambda^-$ (see \eqref{Ham1}). Also note that the Gibbs measure is the
reversible equilibrium of the Kawasaki dynamics:
\begin{equation}
\label{reveq}
\mu_\beta(\eta)c_\beta(\eta,\eta') = \mu_\beta(\eta')c_\beta(\eta',\eta) \qquad 
\forall\,\eta,\eta'\in\cX.
\end{equation} 

The dynamics defined by (\ref{Ham1}) and (\ref{rate}) models the behavior in $\Lambda$ of 
a lattice gas in $\Z^2$, consisting of two types of particles subject to random hopping, 
hard-core repulsion, and nearest-neigbor attraction between different types. We may think 
of $\Z^2\backslash\Lambda$ as an \emph{infinite reservoir} that keeps the particle densities 
fixed at $\rho_\ta=e^{-\beta\Da}$, respectively, $\rho_\tb=e^{-\beta\Db}$. In the above 
model this reservoir is replaced by an \emph{open boundary} $\partial^-\Lambda$, where 
particles are created and annihilated at a rate that matches these densities. Thus, the 
dynamics is a \emph{finite-state} Markov process, ergodic and reversible with respect to 
the Gibbs measure $\mu_\beta$ in \eqref{Gibbsmeasure}.

Note that there is \emph{no} binding energy between neighboring particles of the 
\emph{same} type (including such an interaction would make the model much more complicated). 
Consequently, our dynamics has an ``anti-ferromagnetic flavor'', and does \emph{not} 
reduce to Kawasaki dynamics with one type of particle when $\Da=\Db$. Also note that 
our dynamics does not allow swaps between particles, i.e., interchanging $1$ and $1$, 
or $2$ and $2$, or $1$ and $2$, between two neighboring sites in $\Lambda$. (The first 
two swaps would not effect the dynamics, but the third would; for Kawasaki dynamics with 
one type of particle swaps have no effect.)


\subsection{Basic notation and key definitions}
\label{sec basic not}

To state our main theorems in Section~\ref{sec key theorems}, we need some notation.

\begin{definition}
\label{def1}
{\rm (a)} $\Box$ is the configuration where $\Lambda$ is empty.\\
{\rm (b)} $\boxplus$ is the set consisting of the two configurations where $\Lambda$ 
is filled with the largest possible checkerboard droplet such that all particles
of type $\tb$ are surrounded by particles of type $\ta$
(see Section~\ref{sec coordinates}, item 3 and Section~\ref{sec def}, items 1--3).\\
{\rm (c)} $\omega \colon\,\eta\to\eta\prm$ is any (self-avoiding) path of allowed 
moves from $\eta \in \cX$ to $\eta\prm \in \cX$.\\
{\rm (d)} $\comlev(\eta,\eta\prm)$ is the communication height between $\eta,\eta\prm
\in\cX$ defined by
\begin{equation}
\comlev(\eta,\eta\prm) = \min_{\omega \colon\,\eta\to\eta\prm}
\max_{\xi\in\omega} H(\xi),
\end{equation}
and $\comlev(A,B)$ is its extension to non-empty sets $A,B\subset\cX$ defined by
\begin{equation}
\comlev(A,B) = \min_{\eta\in A,\eta\prm\in B} \comlev(\eta,\eta\prm).
\end{equation}
{\rm (e)} $\cS(\eta,\eta\prm)$ is the communication level set between $\eta$ and $\eta\prm$
defined by
\begin{equation}
\cS(\eta,\eta\prm) = \left\{\zeta\in\cX\colon\,\exists\,\omega\colon\,\eta\to\eta\prm,\,
\omega\ni\zeta\colon\,\max_{\xi\in\omega} H(\xi) = H(\zeta) = \Phi(\eta,\eta\prm)\right\}.
\end{equation}
A configuration $\zeta\in\cS(\eta,\eta\prm)$ is called a saddle for $(\eta,\eta\prm)$.\\
{\rm (f)} $\stablev{\eta}$ is the stability level of $\eta\in\cX$ defined by
\begin{equation}
\stablev{\eta} = \comlev({\eta},\lowset{\eta}) - H(\eta),
\end{equation}
where $\lowset{\eta}=\{\xi\in\cX\colon\,H(\xi)<H(\eta)\}$ is the set of
configurations with energy lower than $\eta$.\\
{\rm (g)} $\groundset = \{\eta\in\cX\colon\,H(\eta)=\min_{\xi\in\cX} H(\xi)\}$ is the 
set of stable configurations, i.e., the set of configurations with mininal energy.\\
{\rm (h)} $\metaset = \{\eta\in\cX\colon\,\stablev{\eta}=\max_{\xi\in\cX\backslash\groundset} 
\stablev{\xi}\}$ is the set of metastable configurations, i.e., the set of non-stable 
configurations with maximal stability level.\\
{\rm (i)} $\Gamma=\stablev{\eta}$ for $\eta\in\metaset$ (note that $\eta\mapsto\stablev{\eta}$ 
is constant on $\metaset$), $\Gamma\starred=\comlev(\Box,\boxplus) - H(\Box)$
(note that $H(\Box) = 0$). 
\end{definition}

\begin{definition}
\label{def3}
{\rm (a)} $(\eta\to\eta\prm)_\mathrm{opt}$ is the set of paths realizing the minimax in
$\Phi(\eta,\eta\prm)$.\\
{\rm (b)} A set $\cW\subset\cX$ is called a gate for $\eta\to\eta\prm$ if $\cW\subset
\cS(\eta,\eta\prm)$ and $\omega\cap\cW\neq\emptyset$ for all $\omega\in
(\eta\to\eta\prm)_\mathrm{opt}$.\\
{\rm (c)} A set $\cW\subset\cX$ is called a minimal gate for $\eta\to\eta\prm$ if
it is a gate for $\eta\to\eta\prm$ and for any $\cW\prm\subsetneq\cW$ there 
exists an $\omega\prm \in (\eta\to\eta\prm)_\mathrm{opt}$ such that $\omega\prm
\cap\cW\prm=\emptyset$.\\
{\rm (d)} A priori there may be several (not necessarily disjoint) minimal gates.
Their union is denoted by $\cG(\eta,\eta\prm)$ and is called the essential gate
for $(\eta\to\eta\prm)_\mathrm{opt}$. The configurations in $\cS(\eta,\eta\prm)
\backslash\cG(\eta,\eta\prm)$ are called dead-ends.\\
{\rm (e)} Let $S(\omega)=\{\arg\max_{\xi\in\omega}H(\xi)\}$. A saddle $\zeta\in
\cS(\eta,\eta\prm)$ is called unessential if, for all $\omega \in (\eta\to\eta
\prm)_{\mathrm{opt}}$ such that $\omega\ni\zeta$ the following holds: $S(\omega)
\backslash\{\zeta\}\neq\emptyset$ and there exists an $\omega\prm \in (\eta\to
\eta\prm)_{\mathrm{opt}}$ such that $S(\omega\prm)\subseteq S(\omega)\backslash
\{\zeta\}$.\\
{\rm (f)} A saddle $\zeta \in \cS(\eta,\eta\prm)$ is called essential if it is not 
unessential, i.e., if either of the following occurs:
\begin{mytextlikelist}
\item[{\rm (f1)}]
There exists an $\omega \in (\eta\to\eta\prm)_{\mathrm{opt}}$ such that $S(\omega)
= \{\zeta\}$.		
\item[{\rm (f2)}] 
There exists an $\omega \in (\eta\to\eta\prm)_{\mathrm{opt}}$ such that $S(\omega)
\supseteq \{\zeta\}$ and $S(\omega\prm) \nsubseteq S(\omega) \backslash \{\zeta\}$ 
for all $\omega\prm \in (\eta\to\eta\prm)_{\mathrm{opt}}$.
\end{mytextlikelist}
\end{definition}

\begin{lemma}
\label{lemma-mnos-essential-saddles}
{\rm [Manzo, Nardi, Olivieri and Scoppola~\cite{MNOS04}, Theorem~5.1]}\\
A saddle $\zeta \in \cS(\eta,\eta\prm)$ is essential if and only if $\zeta \in 
\cG(\eta,\eta\prm)$.	
\end{lemma}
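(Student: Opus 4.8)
The plan is to reduce the statement to a combinatorial fact about transversals (hitting sets) of an antichain of finite sets, which isolates the one genuinely nontrivial point. I begin with the elementary observations that every configuration of $\cS(\eta,\eta\prm)$ lies at the maximal energy $\Phi(\eta,\eta\prm)$, that $\cS(\eta,\eta\prm)=\bigcup_{\omega\in(\eta\to\eta\prm)_{\mathrm{opt}}}S(\omega)$, and hence that for any $\cW\subseteq\cS(\eta,\eta\prm)$ and any optimal $\omega$ one has $\omega\cap\cW=S(\omega)\cap\cW$. Therefore $\cW\subseteq\cS(\eta,\eta\prm)$ is a gate for $\eta\to\eta\prm$ if and only if $\cW\cap S(\omega)\neq\emptyset$ for every optimal $\omega$. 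Let $\mathcal{M}=\{M_1,\dots,M_k\}$ be the inclusion-minimal elements of the finite poset $\bigl(\{S(\omega):\omega\in(\eta\to\eta\prm)_{\mathrm{opt}}\},\subseteq\bigr)$; these form an antichain, each $M_i$ equals $S(\omega_i)$ for some optimal $\omega_i$, and every $S(\omega)$ contains some $M_i$. It follows that $\cW\subseteq\cS(\eta,\eta\prm)$ is a gate if and only if $\cW\cap M_i\neq\emptyset$ for all $i$, i.e.\ if and only if $\cW$ is a transversal of $\mathcal{M}$; so the minimal gates are exactly the inclusion-minimal transversals of $\mathcal{M}$, and $\cG(\eta,\eta\prm)$ is the union of all of them.

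Next I would show that the set of essential saddles equals $\bigcup_{i=1}^{k}M_i$. Unwinding Definition~\ref{def3}, a saddle $\zeta\in\cS(\eta,\eta\prm)$ is essential precisely when there is an optimal $\omega\ni\zeta$ admitting no optimal $\omega\prm$ with $S(\omega\prm)\subseteq S(\omega)\setminus\{\zeta\}$ (using that $S(\omega)=\{\zeta\}$ is a special case of this); call such an $\omega$ a witness for $\zeta$. If $\zeta\in M_i$, take $\omega$ with $S(\omega)=M_i$: an optimal $\omega\prm$ with $S(\omega\prm)\subseteq M_i\setminus\{\zeta\}\subsetneq M_i$ would contradict minimality of $M_i$, so $\omega$ is a witness and $\zeta$ is essential. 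Conversely, given a witness $\omega$ for an essential $\zeta$, repeatedly replace $\omega$ by an optimal path with strictly smaller saddle-set as long as this is possible; at each step the new path still contains $\zeta$ (otherwise its saddle-set lies in $S(\omega)\setminus\{\zeta\}$, contradicting the witness property) and remains a witness, so by finiteness the process stops at an optimal $\omega$ whose saddle-set is a minimal saddle-set $M_i$ still containing $\zeta$.

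The combinatorial core is then: for an antichain $\mathcal{M}=\{M_1,\dots,M_k\}$ of finite sets, the union of all inclusion-minimal transversals equals $\bigcup_i M_i$. One inclusion is immediate. For the other, fix $i$ and $x\in M_i$; for each $M_l$ with $x\notin M_l$ the antichain property gives $M_l\not\subseteq M_i$, so we may choose $z_l\in M_l\setminus M_i$. Then $T:=\{x\}\cup\{z_l: x\notin M_l\}$ meets every $M_l$, hence is a transversal, and $T\cap M_i=\{x\}$. Passing to an inclusion-minimal transversal $T\prm\subseteq T$, the set $T\prm$ still meets $M_i$ while $T\prm\cap M_i\subseteq T\cap M_i=\{x\}$, so $x\in T\prm$. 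Combining the three steps, $\cG(\eta,\eta\prm)$ is the union of the minimal transversals of $\mathcal{M}$, which equals $\bigcup_i M_i$, which equals the set of essential saddles; this is the claim.

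The step I expect to be the crux, and the one that makes everything run, is the reduction of the first paragraph together with the antichain observation: because $\cS(\eta,\eta\prm)$ is a single energy level, whether an optimal path meets a candidate gate depends only on its saddle-set $S(\omega)$, which collapses the path-level gate condition to a finite hitting-set condition, and the minimal saddle-sets automatically form an antichain. That antichain property is exactly what powers the nontrivial implication that an essential saddle lies in $\cG(\eta,\eta\prm)$; without it, an element of $\bigcup_i M_i$ could lie in no minimal transversal and the lemma would fail in that direction.
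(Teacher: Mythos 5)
Your proof is correct, but note that the paper itself offers no proof of this lemma: it is imported verbatim from Manzo, Nardi, Olivieri and Scoppola \cite{MNOS04}, Theorem~5.1, so the only comparison available is with that original argument. Your reduction is sound at every step I checked: since all of $\cS(\eta,\eta\prm)$ sits at energy $\Phi(\eta,\eta\prm)$, a candidate gate meets an optimal path iff it meets its saddle-set $S(\omega)$, so gates are exactly the transversals (within $\cS(\eta,\eta\prm)$) of the family of saddle-sets, and hence of its inclusion-minimal members $M_1,\dots,M_k$, which form an antichain; your unification of (f1) and (f2) into the single ``witness'' condition is faithful to Definition~\ref{def3}(e)--(f); the descending-replacement argument correctly shows every essential saddle lies in some $M_i$ (the key point being that a strictly smaller saddle-set omitting $\zeta$ would violate the witness property); and the transversal $T=\{x\}\cup\{z_l\colon x\notin M_l\}$ with $T\cap M_i=\{x\}$, pruned to a minimal transversal, correctly places each $x\in M_i$ in a minimal gate. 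The original proof in \cite{MNOS04} works directly with paths and gates — showing that an unessential saddle can be deleted from any gate without destroying the gate property, and exhibiting, for an essential saddle, a gate that meets a suitable family of optimal paths only at that saddle — so the mathematical content is essentially the same, but your formulation isolates the combinatorial core (union of minimal transversals of an antichain equals the union of the antichain) from the dynamical input (the collapse of the gate condition onto saddle-sets), which makes the two-way implication more transparent and reusable. One cosmetic caveat: you should state explicitly that the saddle-sets $S(\omega)$ are nonempty and that at least one optimal path exists, and that a minimal transversal is automatically contained in $\bigcup_i M_i\subseteq\cS(\eta,\eta\prm)$, so that ``minimal gate'' and ``inclusion-minimal transversal'' really coincide despite the requirement $\cW\subseteq\cS(\eta,\eta\prm)$ in Definition~\ref{def3}(b); these are one-line observations and you use them implicitly.
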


In \cite{dHNT12} we are interested in the transition of the Kawasaki dynamics from 
$\Box$ to $\boxplus$ in the limit as $\beta\to\infty$. This transition, which is
viewed as a crossover from a ``gas phase'' to a ``liquid phase'', is triggered 
by the appearance of a \emph{critical droplet} somewhere in $\Lambda$. The critical 
droplets form a subset $\gate$ of the essential gate $\cG(\Box,\boxplus)$, and all 
have energy $\Gamma\starred$ (because $H(\Box)=0$). 

In \cite{dHNT12} we showed that the first entrance distribution on the set of critical 
droplets is uniform, computed the expected transition time up to and including a 
multiplicative factor of order one, and proved that the nucleation time divided by 
its expectation is exponentially distributed, all in the limit as $\beta\to\infty$. 
These results, which are typical for metastable behavior, were proved under 
\emph{three hypotheses}: 
\begin{itemize}
\item[(H1)] 
$\groundset=\boxplus$.
\item[(H2)] 
There exists a $V\starred<\Gamma\starred$ such that $V_\eta\leq V\starred$ for 
all $\eta\in\cX\backslash\{\Box,\boxplus\}$.
\item[(H3)]
See (H3-a,b,c) and Fig.~\ref{fig-H3illus} below.
\end{itemize}
The third hypothesis consists of three parts characterizing the entrance set of 
$\cG(\Box,\boxplus)$ and the exit set of $\cG(\Box,\boxplus)$. To formulate these 
parts some further definitions are needed.

\begin{definition}
\label{defdroplets-a}
{\rm (a)} $\entgate$ is the minimal set of configurations in $\cG(\Box,\boxplus)$ such that all 
paths in $(\Box\to\boxplus)_\mathrm{opt}$ enter $\cG(\Box,\boxplus)$ through $\entgate$.\\
{\rm (b)} $\proto$ is the set of configurations visited by these paths just prior to their first
entrance of $\cG(\Box,\boxplus)$.
\end{definition}

\begin{itemize}
\item[(H3-a)]
Every $\hat{\eta}\in\proto$ consists of a \emph{single droplet} somewhere in $\Lambda^-$. 
This single droplet fits inside an $L\starred \times L\starred$ square somewhere in 
$\Lambda^-$ for some $L\starred \in \N$ large enough that is independent of $\hat{\eta}$ 
and $\Lambda$. Every $\eta\in\entgate$ consists of a single droplet $\hat{\eta}\in\proto$ 
and one \emph{additional free particle} of type $\tb$ somewhere in $\partial^-\Lambda$.
\end{itemize}

\begin{definition}
\label{defdroplets-b}
{\rm (a)} $\gateatt$ is the set of configurations obtained from $\entgate$ by moving the 
free particle of type $\tb$ along a path of empty sites in $\Lambda$ and attaching it to
the single droplet (i.e., creating at least one additional active bond). This set 
decomposes as $\gateatt = \cup_{\hat{\eta}\in\proto} \gateatt(\hat{\eta})$.\\
{\rm (b)} $\gate$ is the set of configurations obtained from $\entgate$ by moving the
free particle of type $\tb$ along a path of empty sites in $\Lambda$ without ever
attaching it to the droplet. This set 
decomposes as $\gate=\cup_{\hat{\eta}\in\proto}\gate(\hat{\eta})$. 
\end{definition}

\noindent
Note that $\Gamma\starred=H(\gate)=H(\proto)+\Db$, and that $\gate$ consists of precisely 
those configurations ``interpolating'' between $\proto$ and $\gateatt$: a free 
particle of type $\tb$ enters $\partial^-\Lambda$ and moves to the single droplet where 
it attaches itself via an active bond, i.e., a bond between particles of type $\ta$ and 
$\tb$. Think of $\proto$ as the set of configurations where the dynamics is ``almost over 
the hill'', of $\gate$ as the set of configurations where the dynamics is ``on top of the 
hill'', and of the free particle as ``achieving the crossover'' when it attaches itself 
``properly'' to the single droplet (the meaning of the word ``properly'' will become clear 
in Section~\ref{Proofs}; see also \checkref{}\cite{dHNT12}, Section~2.4). The sets $\proto$ 
and $\gate$ are referred to as the \emph{protocritical droplets}, respectively, the 
\emph{critical droplets}.

\begin{itemize}
\item[(H3-b)] 
All transitions from $\gate$ that either add a particle in $\Lambda$ or increase the 
number of droplets (by breaking an active bond) lead to energy $>\Gamma\starred$.
\item[(H3-c)] 
All $\omega \in (\entgate \to \boxplus)_{\mathrm{opt}}$ pass through $\gate_{\mathrm{att}}$.
For every $\hat\eta \in \proto$ there exists a $\zeta \in \gate_{\mathrm{att}}(\hat\eta)$ 
such that $\Phi(\zeta,\boxplus)<\Gamma\starred$.
\end{itemize} 

\begin{figure}[htbp]
\begin{centering}
{\includegraphics[width=0.4\textwidth]{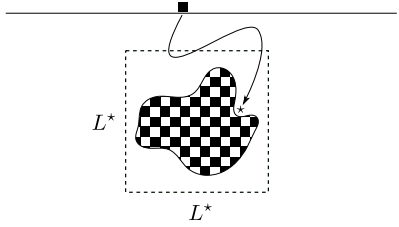}}
\par\end{centering}
\caption{A qualitative representation of a configuration in $\entgate$. If the free particle 
of type $\tb$ reaches the site marked as $\star$, then the dynamics has entered the ``basin 
of attraction'' of $\boxplus$.	}
\label{fig-H3illus}
\end{figure}

\medskip\noindent
{\bf Remark:}
Hypothesis (H3-a) and Definition \ref{defdroplets-b} are slightly different from how they
appear in \cite{dHNT12} and \cite{dHNT11}. This is done to make their analogues in 
\cite{dHNT12} and \cite{dHNT11} more precise, and to allow for a more precise proof 
of Lemma~1.18 and Lemma~2.2 in \cite{dHNT12}, which we repeat in Appendix~\ref{appB}.

\medskip
As shown in \cite{dHNT12}, (H1--H3) are needed to derive the metastability theorems in 
\cite{dHNT12} with the help of the \emph{potential-theoretic approach} to metastability 
outlined in Bovier~\cite{B09}. In \cite{dHNT11} we proved (H1--H2) and identified the 
energy $\Gamma\starred$ of critical droplets. In the present paper we prove (H3), identify 
the set $\gate$ of critical droplets, and compute the cardinality $N\starred$ of the set 
$\proto$ of protocritical droplets modulo shifts, thereby completing our analysis.

Hypotheses (H1--H2) imply that $(\metaset,\groundset)=(\Box,\boxplus)$, and that 
the highest energy barrier between a configuration and the set of configurations 
with lower energy is the one separating $\Box$ and $\boxplus$, i.e., $(\Box,\boxplus)$ 
is the unique \emph{metastable pair}. Hypothesis (H3) is needed to find the asymptotics 
of the prefactor of the expected transition time in the limit as $\Lambda\to\Z^2$ and 
will be proved in Theorem~\ref{th-H3} below. The main theorems in \cite{dHNT12} involve 
\emph{three model-dependent quantities}: the energy, the shape and the number of 
critical droplets. The first ($\Gamma\starred$) was identified in \cite{dHNT11}, 
the second ($\gate$) and the third ($N\starred$) will be identified in 
Theorems~\ref{th-RA}--\ref{th-RC} below.


\subsection{Main theorems}
\label{sec key theorems}

In \cite{dHNT12} it was shown that $0<\Da + \Db < 4U$ is the \emph{metastable region}, 
i.e., the region of parameters for which $\Box$ is a local minimum but not a global 
minimum of $H$. Moreover, it was argued that within this region the subregion where 
$\Da,\Db<U$ is of little interest because the critical droplet consists of two free 
particles, one of type $\ta$ and one of type $\tb$. Therefore the \emph{proper 
metastable region} is 
\begin{equation}
\label{propmetreg}
0<\Da \leq \Db, \quad \Da + \Db < 4U, \quad \Db \geq U,
\end{equation}
as indicated in Fig.~\ref{fig-propmetreg}.

\begin{figure}[htbp]
\begin{centering}
{\includegraphics[width=0.28\textwidth]{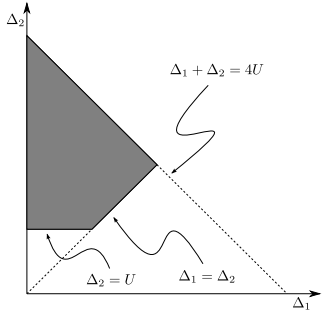}}
\par\end{centering}
\caption{Proper metastable region.}
\label{fig-propmetreg}
\end{figure}

In this present paper, as in \cite{dHNT11}, the analysis will be carried out for the 
subregion of the proper metastable region defined by
\begin{equation}
\label{subpropmetreg}
\Da < U, \quad  \Db - \Da > 2U, \quad \Da + \Db < 4U,
\end{equation}
as indicated in Fig.~\ref{fig-subpropmetreg}. (\emph{Note}: The second and third 
restriction imply the first restriction. Nevertheless, we write all three because 
each plays an important role in the sequel.)

\begin{figure}[htbp]
\begin{centering}
{\includegraphics[width=0.28\textwidth]{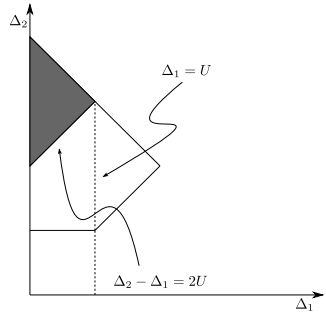}}
\par\end{centering}
\caption{Subregion of the proper metastable region given by (\ref{subpropmetreg}).}
\label{fig-subpropmetreg}
\end{figure}

Hypothesis (H3) involves additional characterizations of the sets $\proto$ and $\gate$. 
It turns out that these sets vary over the region defined in (\ref{subpropmetreg}).
The subregion where $\Db \le 4U - 2\Da$ is trivial: the configurations in $\proto$ 
consist of a single droplet, with one particle of type $\tb$ surrounded by four 
particles of type $\ta$, located anywhere in $\Lambda^-$. For this case, $\Gamma\starred
=4\Da+2\Db-4U$ and $N\starred=1$. We will split the subregion where $\Db > 4U - 2\Da$ 
into four further subregions (see Fig.~\ref{fig-subregions}). 

\begin{figure}[htbp]
\begin{centering}
\begin{tikzpicture}[line cap=round,line join=round,>=triangle 45,x=1.0cm,y=1.0cm]
\draw[->,color=black] (0,1.5) -- (0,5);
\draw[->,color=black] (0,1.5) -- (2.2,1.5);
\draw[shift={(0,2)},color=black]   (-2pt,0pt) node[left] {\scriptsize $2U$};
\draw[shift={(0,3)},color=black]   (-2pt,0pt) node[left] {\scriptsize $3U$};
\draw[shift={(0,4)},color=black]   (-2pt,0pt) node[left] {\scriptsize $4U$};
\draw[shift={(0,4.8)},color=black]   (-2pt,0pt) node[left] {\scriptsize $\Delta_{2}$};
\draw[shift={(2,1.5)},color=black]  (0pt,-2pt) node[below] {\scriptsize $\Delta_{1}$};
\draw[shift={(1,1.5)},color=black]  (0pt,-2pt) node[below] {\scriptsize $U$};
\clip(0,1.5) rectangle (1.49,4.5);
\fill[line width=0pt,fill=black,fill opacity=1.0] (0,4) -- (0,2) -- (0.86,2.86) -- cycle;
\fill[line width=0pt,color=cccccc,fill=cccccc,fill opacity=1.0] (0.75,3) -- (0.86,2.86) -- (1,3) -- cycle;
\fill[line width=0pt,color=wwwwww,fill=wwwwww,fill opacity=1.0] (0.75,3) -- (1,3) -- (0.67,3.33) -- (0.6,3.2) -- cycle;
\fill[line width=0pt,fill=black,pattern=north east lines] (0,4) -- (0.43,3.43) -- (0.5,3.5) -- cycle;
\draw (0,0)-- (2,2);
\draw (0,4)-- (2,2);
\draw (0,3)-- (1,3);
\draw (0,3)-- (0.5,3.5);
\draw (0,2)-- (0.67,3.33);
\draw (0,2)-- (1,3);
\draw (0,4)-- (0.86,2.86);
\draw [dotted] (1,1)-- (1,3);
\end{tikzpicture}
\par
\end{centering}

\caption{Subregions of the parameter space. In the black region: $\ell\starred \le 3$.
The regions $\RA$, $\RB$ and $\RC$ are, respectively, light gray, dark grey and dashed.}
\label{fig-subregions}
\end{figure}
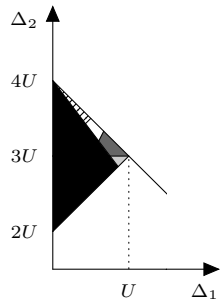

\medskip
For three of these subregions we will indentify $\proto$, $\gate$ and $\entgate$, prove 
(H3), and compute $N\starred$, namely,
\begin{itemize}
\item[$\RA$:] $\Da < 3U$;
\item[$\RB$:] $3U < \Db < 2U + 2\Da$;
\item[$\RC$:] $\Db > 3U + \Da$.
\end{itemize}
The fourth subregion is more subtle and is not analyzed in detail (see 
Section~\ref{sec discussion} for comments). All subregions are open sets. This is done 
to avoid parity problems. We also require that
\begin{equation}
\Da/\epsi \notin \N \quad \mbox{ with } \quad \epsi = 4U-\Da-\Db
\end{equation} 
and put
\begin{equation}
\label{lstardef}
\ell\starred = \left\lceil \frac{\Da}{\epsi}\right\rceil \in \N \setminus \{1\}.
\end{equation}

To state our main theorem we need the following definitions. A $\btile$ is a particle 
of type $\tb$ surrounded by four particles of type $\ta$. Dual coordinates map the 
support of a $\btile$ to a unit square. A monotone polyomino is a polyomino whose 
perimeter has the same length as that of its circumscribing rectangle. Given a set
of configurations $\mathcal{D}$, we write $\mathcal{D}\boub$ to denote the configurations 
obtained from $\mathcal{D}$ by adding a particle of type $\tb$ to a site in $\partial^{-}
\Lambda$. (For precise definitions see Sections~\ref{sec coordinates}--\ref{sec def}.)

\begin{definition}
\label{Dsetdefs}
(a) \CA is the set of $\btiled$ configurations with $\protonumber$ particles of type 
$\tb$ whose dual tile support is a rectangle of side lengths $\ell\starred,\ell\starred
-1$ plus a protuberance on one of the four side of the rectangle (see 
Fig.~{\rm \ref{fig:paradigm_class_A}}).\\
(b) $\CB$ is the set of $\btiled$ configurations with $\protonumber$ particles of type 
$\tb$ whose dual tile support is a monotone polyomino and whose circumscribing rectangle 
has side lengths either $\ell\starred, \ell\starred$ or $\ell\starred + 1, \ell\starred - 1$
(see Fig.~{\rm \ref{fig:paradigm_class_B}}).\\
(c) $\CC$ is the set of $\btiled$ configurations with $\protonumber$ particles of 
type $\tb$ whose dual tile support is a monotone polyomino and whose circumscribing 
rectangle has perimeter $4\ell\starred$ (see Fig.~{\rm \ref{fig:paradigm_class_F}}).\\
Note that $\CA \subseteq \CB \subseteq \CC$.
\end{definition}

\begin{theorem}
\label{th-H3}
Hypothesis {\rm (H3)} is satisfied in each of the subregions $\RA-\RC$.
\end{theorem}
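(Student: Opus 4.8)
The plan is to verify the three parts (H3-a), (H3-b), (H3-c) of Hypothesis~(H3) in turn, taking as given the value $\Gamma\starred$ of $\comlev(\Box,\boxplus)$ and the properties (H1)--(H2) from \cite{dHNT11}, and using throughout the \emph{foliation} of $\cX$ by the number $\na$ of particles of type $\ta$ together with the \emph{tile picture}: in the region~\eqref{subpropmetreg}, any configuration minimising $H$ among those with a prescribed number of particles of type $\tb$ has every particle of type $\tb$ surrounded by four particles of type $\ta$, hence is $\btiled$, and in dual coordinates its tile support is a polyomino whose perimeter is, up to an affine function of the number of tiles, its energy. For (H3-a) I would first show that for every $\omega\in(\Box\to\boxplus)_{\mathrm{opt}}$ the configuration visited just before $\omega$ enters $\cG(\Box,\boxplus)$ is a \emph{single} $\btiled$ droplet lying in $\Lambda^-$ with exactly $\protonumber$ particles of type $\tb$, and that the configuration at which $\omega$ enters $\cG(\Box,\boxplus)$ carries, in addition, one free particle of type $\tb$ in $\partial^-\Lambda$. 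The single-droplet claim is a subadditivity/isoperimetric argument: two or more well-separated clusters, or a cluster meeting $\partial^-\Lambda$ (where the Hamiltonian does not count bonds), would force $\omega$ to pay an extra energy cost which, together with (H2), pushes $\max_\xi H(\xi)$ above $\Gamma\starred$. The tile count $\protonumber$ and the identity $\Gamma\starred=H(\proto)+\Db$ are read off from the computation of $\Gamma\starred$ in \cite{dHNT11}. The bound $L\starred$ is then an isoperimetric statement: a $\btiled$ polyomino with $\protonumber$ tiles sitting in a configuration of energy at most $\Gamma\starred$ has circumscribing rectangle of side $O(\ell\starred)$, a bound depending only on $\Da,\Db,U$ and not on $\Lambda$.

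The heart of the proof, and the step I expect to be the main obstacle, is the explicit \emph{characterisation of the essential gate} $\cG(\Box,\boxplus)$ --- equivalently of $\proto$, $\entgate$ and $\gate$ --- carried out separately in each of $\RA$, $\RB$, $\RC$; this is the content of Theorems~\ref{th-RA}--\ref{th-RC}, on which all of (H3-a), (H3-b), (H3-c) rest. Starting from the prototype protocritical droplet of \cite{dHNT11} (an $\ell\starred\times(\ell\starred-1)$ rectangle of tiles with a one-tile protuberance), I would classify all moves that a path of communication height $\le\Gamma\starred$ may perform: besides letting the free particle of type $\tb$ diffuse through the empty sites of $\Lambda$, such a path may detach a tile from a corner of the droplet and slide it along the droplet's border into a new position. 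Each such mechanism has a well-defined activation energy, which I would compute and compare with the barrier $\Db$ for creating a fresh free particle of type $\tb$; the mechanisms whose barrier does not exceed $\min\{\Gamma\starred-H(\text{droplet}),\Db\}$ are precisely what distinguishes the three subregions --- in $\RA$ ($\Da<3U$) essentially only the protuberance may be relocated, giving the tile supports of $\CA$; in $\RB$ ($3U<\Db<2U+2\Da$) a larger family of slides becomes admissible and one recovers exactly the monotone polyominoes with circumscribing rectangle $\ell\starred\times\ell\starred$ or $(\ell\starred+1)\times(\ell\starred-1)$, i.e.\ $\CB$; in $\RC$ ($\Db>3U+\Da$) every monotone polyomino with circumscribing perimeter $4\ell\starred$ becomes reachable, i.e.\ $\CC$. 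For each subregion this requires a matching pair of bounds: an \emph{upper bound} --- for every configuration in the claimed class, exhibit an $\omega\in(\Box\to\boxplus)_{\mathrm{opt}}$ visiting it and invoke Lemma~\ref{lemma-mnos-essential-saddles} to certify that it is an essential saddle, hence lies in $\cG(\Box,\boxplus)$ --- and a \emph{lower bound} --- any configuration outside the class, if visited by some optimal path, would force that path strictly above $\Gamma\starred$; the delicate part here is ruling out non-monotone tile supports and oversized circumscribing rectangles by means of the isoperimetric inequality for polyominoes. The bookkeeping of the tile-slide barriers and the combinatorial closure of the set of reachable shapes is where the real work lies.

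Given this description of $\gate$, part (H3-b) is immediate: every configuration in $\gate$ has energy $\Gamma\starred$, and in the dynamics a particle can only be created in $\partial^-\Lambda$, where particles do not interact, so such a move costs $\Da$ or $\Db$ and raises the energy to $\Gamma\starred+\Da$ or $\Gamma\starred+\Db$; likewise breaking an active bond costs $U$ and raises the energy to $\Gamma\starred+U$; in all cases one lands strictly above $\Gamma\starred$. For part (H3-c), the first half --- that every $\omega\in(\entgate\to\boxplus)_{\mathrm{opt}}$ passes through $\gateatt$ --- follows because from a configuration in $\entgate$ any allowed move other than moving the free particle of type $\tb$ towards the droplet and attaching it either returns towards $\proto$ or exceeds $\Gamma\starred$ by the same bookkeeping just used for (H3-b); hence the free particle must eventually attach, which lands the path in $\gateatt$. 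For the second half, for each $\hat\eta\in\proto$ I would take $\zeta\in\gateatt(\hat\eta)$ obtained by attaching the incoming tile so as to start the standard filling of the circumscribing rectangle of $\hat\eta$, and then follow the reference growth path of \cite{dHNT11} from $\zeta$ to $\boxplus$, all of whose intermediate configurations have energy strictly below $\Gamma\starred$; thus $\comlev(\zeta,\boxplus)<\Gamma\starred$, as (H3-c) demands. Assembling (H3-a), (H3-b) and (H3-c) yields (H3) in each of $\RA$, $\RB$, $\RC$.
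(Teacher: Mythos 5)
Your proposal follows the paper's route almost exactly: reduce (H3) to the region-by-region identification of $\proto$, $\entgate$ and $\gate$ (Theorems~\ref{th-RA}--\ref{th-RC}), obtained by classifying which tile-slide mechanisms along the droplet boundary remain available below $\Gamma\starred$; once $\gate=\proto\boub$-type characterizations are in hand, (H3-a) is read off from the shape of the classes $\CA,\CB,\CC$, (H3-b) from the fact that any particle creation or droplet-splitting move from energy $\Gamma\starred$ is strictly uphill, and (H3-c) from the existence of a ``good site'' plus the observation that an unattached free particle pins the energy at $\Gamma\starred$ so no further particle can enter. This is precisely the scheme of Sections~\ref{Preplem}--\ref{Proofs}, with your ``tile-slide barriers'' corresponding to the two explicit mechanisms of Section~\ref{sec-moving-dimers} (barriers $3U$ and $U+4\Da$), compared against $\Db$ via Lemmas~\ref{lemma-first-2step-from-corner} and \ref{lemma-proto-below-gamma-RA}--\ref{lemma-proto-below-gamma-RF}, and your ``good site'' step corresponding to Lemmas~\ref{lemma-proto-is-good-RA}, \ref{lemma-proto-is-good-RB}, \ref{lemma-proto-is-good-RF}.
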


\begin{theorem}
\label{th-RA}
In subregion $\RA$, $\proto = \CA$, $\entgate = \CA\boub$, and $N\starred 
= 8\ell\starred-4$.
\end{theorem}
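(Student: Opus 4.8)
The plan is to prove the three statements — $\proto = \CA$, $\entgate = \CA\boub$, and $N\starred = 8\ell\starred - 4$ — essentially in that order, with the last being a purely combinatorial consequence of the first two. First I would reduce the problem to understanding which $\btiled$ configurations of minimal energy can serve as the ``almost-over-the-hill'' droplet. By the energy identity $\Gamma\starred = H(\proto) + \Db$ recalled just after Definition~\ref{defdroplets-b}, and since $\Gamma\starred = 4\ell\starred\Da + (\critinumber)\Db - \text{(binding terms)}$ was identified in \cite{dHNT11}, the configurations in $\proto$ must be energy minimizers among droplets with $\protonumber$ particles of type $\tb$. In the regime \eqref{subpropmetreg}, and in particular in $\RA$ where $\Da < 3U$, every particle of type $\tb$ in a minimal-energy configuration is surrounded by four particles of type $\ta$ (this is where the constraint $\Da < 3U$ enters: a type-$\tb$ particle with only three type-$\ta$ neighbors would cost too much). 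Hence such configurations are exactly $\btiled$ configurations, and passing to dual coordinates, minimizing energy becomes minimizing the perimeter of the dual tile support — a polyomino with $\protonumber = \quasiqnumber + 1$ cells. The classical discrete isoperimetric inequality for polyominoes (which underlies the ``polyominoes, discrete isoperimetric inequalities'' keywords) shows that a polyomino with $\ell\starred(\ell\starred-1) + 1$ cells has minimal perimeter $4\ell\starred$ exactly when it is an $\ell\starred \times (\ell\starred - 1)$ rectangle with a single protuberance — that is, precisely the set \CA\ of Definition~\ref{Dsetdefs}(a). This gives $\proto \subseteq \CA$ at the level of energy minimizers.

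The reverse inclusion and the identification of the entrance set is where the real work lies, and it uses the ``gate property'': configurations in $\gate$ must be visited by \emph{all} optimal paths, and $\entgate$ is the minimal entrance set, so one must show that (i) every configuration in $\CA\boub$ is reached by \emph{some} optimal path from $\Box$ to $\boxplus$ just before entering $\cG(\Box,\boxplus)$, and (ii) no optimal path can enter $\cG(\Box,\boxplus)$ through a configuration outside $\CA\boub$. For (i) I would exhibit explicit optimal paths: starting from $\Box$, grow a $\btiled$ cluster one tile at a time, keeping it a monotone polyomino (in dual coordinates) of minimal perimeter at each stage, until reaching an $\ell\starred \times (\ell\starred-1)$ rectangle, then add a protuberance via a free particle of type $\tb$ entering $\partial^-\Lambda$; since the protuberance can be attached on any of the four sides and at any admissible position along that side, all of \CA\ is reached this way, and the configuration with the extra free particle still in $\partial^-\Lambda$ is the corresponding element of $\CA\boub$. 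The energy bookkeeping along such a path must be checked to never exceed $\Gamma\starred$, which follows from the fact that each intermediate droplet is perimeter-minimal for its tile-count and the sequence of tile-counts is chosen so that the ``incremental'' cost of adding each tile is non-positive once bound energy is accounted for. For (ii), the argument is the delicate one: one uses (H3-a), already available since Theorem~\ref{th-H3} establishes (H3) in $\RA$, to know that every $\hat\eta \in \proto$ is a single droplet fitting in an $L\starred \times L\starred$ box, and then one argues that the droplet must have exactly the energy of \CA\ and the ``reference path'' structure forces it to be a rectangle-plus-protuberance; any other minimal-energy shape would create a configuration from which the free particle, upon entering, would still face an energy barrier strictly above what an optimal path allows, contradicting optimality.

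The hardest part, as usual in this kind of analysis, will be step (ii) — ruling out entrance configurations outside $\CA\boub$ — because it requires a careful local analysis of the energy landscape in a neighborhood of the saddles, showing that the motion of a newly-entered free particle of type $\tb$ toward a droplet of non-\CA\ shape cannot be completed within energy $\Gamma\starred$ unless the droplet was already of the right form. This is exactly the ``specific dynamical features of the model'' alluded to in the Introduction: one must track how tiles can or cannot rearrange along the droplet boundary. For $\RA$ this is the simplest of the three subregions because $\Da < 3U$ makes the tile picture rigid and the protocritical shape is essentially forced; the bulk of the argument is the isoperimetric characterization plus a direct construction of optimal paths. Once $\proto = \CA$ and $\entgate = \CA\boub$ are established, the count $N\starred = 8\ell\starred - 4$ is a finite combinatorial exercise: a protuberance on a side of length $\ell\starred$ can sit in $\ell\starred$ positions and there are two such sides, while a protuberance on a side of length $\ell\starred - 1$ can sit in $\ell\starred - 1$ positions on two such sides, giving $2\ell\starred + 2(\ell\starred - 1) = 4\ell\starred - 2$ configurations; accounting for the two ways to orient or the doubling coming from the two configurations making up $\boxplus$ (or equivalently the reflection symmetry already modded out differently) yields $8\ell\starred - 4$. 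I would double-check this count against the convention ``modulo shifts'' stated in the paper to make sure no over- or under-counting occurs, but no conceptual difficulty remains at that stage.
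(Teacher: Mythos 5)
Your proposal contains a fundamental error in the energetic reduction that undermines the whole structure of the argument, and then compounds it with a circularity.

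The error: you claim that the discrete isoperimetric inequality for polyominoes shows that a polyomino with $\quasiqnumber + 1$ cells has minimal perimeter $4\ell\starred$ \emph{exactly when} it is an $\ell\starred\times(\ell\starred-1)$ rectangle plus a protuberance, i.e.\ that the minimal-perimeter shapes are precisely $\CA$. This is false. The minimal-perimeter polyominoes with $\quasiqnumber+1$ cells are exactly the \emph{monotone} polyominoes whose circumscribing rectangle has perimeter $4\ell\starred$ — this is the set $\CC$ of Definition~\ref{Dsetdefs}(c), and $\CA \subsetneq \CB \subsetneq \CC$ strictly. (For $\ell\starred=4$, for example, a $4\times 4$ square with a monotone staircase of 3 cells removed from one corner has $13 = \protonumber$ cells and perimeter $16 = 4\ell\starred$, but is not in $\CA$.) Energy minimization therefore only gives $\proto \subseteq \CC$. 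The restriction from $\CC$ down to $\CA$ in region $\RA$ is a \emph{dynamical} fact, not an energetic one: it is the content of Lemma~\ref{lemma-proto-below-gamma-RA}, which shows via a case analysis of ``heavy-steps'' that, because $\Db<3U$ in $\RA$, at most two active bonds can be broken, and this forbids all tile motions except the motion of the protuberance. You gesture at ``tracking how tiles can or cannot rearrange along the droplet boundary'' and at a rigidity coming from the region constraint, but you misidentify the constraint (it is $\Db<3U$, not a condition on a type-$\tb$ particle having only three type-$\ta$ neighbors) and, more importantly, you offer no argument at all for why non-$\CA$ shapes in $\CC$ are excluded. That argument is the entire technical heart of the proof for $\RA$, and without it nothing works: the construction of optimal paths reaching $\CA$ in your step (i) is fine, but shows only $\CA\subseteq\bar g(\Box,\minnbis{\protonumber})$, not the reverse.

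The circularity: you propose to use (H3-a) — granted by invoking Theorem~\ref{th-H3} — in order to prove step (ii) of Theorem~\ref{th-RA}. But in the paper (H3) is \emph{deduced from} the identification of $\proto$ and $\entgate$ (see the opening of Section~\ref{Proofs}: ``Once the structure of the configurations in $\proto$ and $\entgate$ are identified, (H3-a) and (H3-b) will follow immediately''). So Theorem~\ref{th-H3} and Theorem~\ref{th-RA} are not independent; you cannot assume one to prove the other. The correct logical route is: first show $g(\{\bar\eta\},\minnbis{\protonumber})=\bar g(\{\bar\eta\},\minnbis{\protonumber})=\CA$ (Lemma~\ref{lemma-proto-below-gamma-RA}), then show existence of a good site (Lemma~\ref{lemma-proto-is-good-RA}), then conclude $\entgate=\CA\boub$ and $\proto=\CA$ via the essential-saddle machinery (Lemmas~\ref{lemma-mnos-essential-saddles}, \ref{lemma-unessential-saddles}, \ref{lemma-identification-of-entgate-RA}), and only then read off (H3-a,b,c). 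Your count $N\starred=8\ell\starred-4$ is arithmetically right ($2\ell\starred+2(\ell\starred-1)=4\ell\starred-2$ protuberance positions per rectangle orientation, two orientations), though the factor of $2$ comes from the two orientations of the non-square rectangle, not from the ``two configurations making up $\boxplus$''.
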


\begin{theorem}
\label{th-RB}
In subregion $\RB$, $\proto = \CB$, $\entgate = \CB\boub$, and $N\starred 
= 8[q_{\ell\starred - 1} + r_{\ell\starred - 1 - 1}]$.
\end{theorem}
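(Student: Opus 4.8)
The plan is to mirror the strategy used for Theorem~\ref{th-RA}, but tailored to the larger family of dual tile supports allowed in subregion $\RB$. By Theorem~\ref{th-H3} we already know that (H3) holds in $\RB$, so the sets $\proto$, $\entgate$ and $\gate$ are well-defined; the task is to pin down $\proto$ exactly as $\CB$ and then count. The first step is the inclusion $\proto \subseteq \CB$. By (H3-a), every $\hat\eta\in\proto$ is a single $\btiled$ droplet in $\Lambda^-$; by the discussion following Definition~\ref{defdroplets-b}, $H(\hat\eta) = \Gamma\starred - \Db$, so $\hat\eta$ minimizes energy among $\btiled$ configurations with $\protonumber = \quasiqnumber+1$ particles of type $\tb$. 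Passing to dual coordinates, where each $\btile$ becomes a unit square, this says that the dual tile support is a polyomino of $\quasiqnumber+1$ cells whose \emph{perimeter} is minimal — the energy of a $\btiled$ cluster is (up to an additive constant depending only on the particle count) an increasing affine function of the dual perimeter, and in $\RB$ the relevant minimal perimeter equals that of the circumscribing rectangle of an $\ell\starred\times\ell\starred$ or $(\ell\starred+1)\times(\ell\starred-1)$ box. Standard discrete isoperimetry for polyominoes (the ``monotone polyomino'' characterization: minimal perimeter forces the polyomino to be monotone with the stated circumscribing rectangle) then gives $\hat\eta\in\CB$. I would lean on the parameter restriction $\Da/\epsi\notin\N$ and the openness of $\RB$ here to rule out parity-exceptional shapes, exactly as is done in \cite{dHNT11} for the energy identification.

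The second step is the reverse inclusion $\CB \subseteq \proto$, together with $\entgate = \CB\boub$. For this I would construct, for each $\hat\eta\in\CB$, an optimal path $\omega\in(\Box\to\boxplus)_{\mathrm{opt}}$ that enters $\cG(\Box,\boxplus)$ precisely at $\hat\eta\boub$. The construction grows the droplet tile-by-tile along a monotone ``staircase'' filling order so that the energy stays $\le\Gamma\starred$ throughout, reaches $\hat\eta$, then admits one free particle of type $\tb$ into $\partial^-\Lambda$ (raising the energy to exactly $\Gamma\starred$), and finally — invoking (H3-c) — continues down to $\boxplus$ through $\gateatt(\hat\eta)$. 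Verifying that the path never exceeds $\Gamma\starred$ is the crux: one must check that every intermediate $\btiled$ cluster along the monotone filling has dual perimeter no larger than the circumscribing rectangle of $\hat\eta$, and that the tile-addition moves (temporarily detaching a type-$\ta$ particle, sliding it, reattaching) cost no more than the barrier already budgeted. This is where the ``motion of tiles along the border'' mechanism described in the introduction enters, and where the subregion hypotheses $3U<\Db<2U+2\Da$ are used to guarantee that the tile-sliding barrier is below the particle-entry barrier, so that the full class $\CB$ — and not just the rectangle-plus-protuberance subclass $\CA$ — is reachable.

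The third step is the count $N\starred = 8[q_{\ell\starred-1} + r_{\ell\starred-1-1}]$, i.e.\ counting the elements of $\CB$ modulo translations. A monotone polyomino whose circumscribing rectangle is $\ell\starred\times\ell\starred$ or $(\ell\starred+1)\times(\ell\starred-1)$ and which has exactly $\quasiqnumber+1 = \ell\starred(\ell\starred-1)+1$ cells is, up to the $4$-fold symmetry group of the rectangle (and a further factor $2$ accounting for the two rectangle shapes / the reflection), determined by a lattice-path-type datum whose enumeration gives the quantities $q_{\ell\starred-1}$ and $r_{\ell\starred-2}$ appearing in the statement (these being, respectively, the counts of such polyominoes in the square-rectangle and the near-square-rectangle case — their precise definitions live in Sections~\ref{sec coordinates}--\ref{sec def}). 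I would set up a bijection between such monotone polyominoes and pairs of partitions into a bounded number of parts fitting in a prescribed staircase region, extract the generating function, and read off $q$ and $r$; then check that no polyomino is double-counted by the symmetry factor $8$ (this requires that none of the relevant shapes is fixed by a nontrivial symmetry, which follows from the cell count $\ell\starred(\ell\starred-1)+1$ being incompatible with the symmetric shapes — again a parity argument).

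The main obstacle I anticipate is the second step: showing that \emph{every} monotone polyomino in $\CB$ is actually realized as a protocritical droplet, rather than just the rectangle-with-protuberance configurations. This amounts to exhibiting, for an arbitrary monotone shape, an optimal path that both reaches it and exits toward $\boxplus$ without ever exceeding $\Gamma\starred$, and to ruling out that some optimal path is forced to ``cut a corner'' in a way that skips certain monotone shapes. Controlling the energy along tile-sliding moves for a general staircase boundary — as opposed to the single-protuberance case — is the delicate part, and is presumably where the detailed dynamical analysis near the saddle configurations (and the full strength of the $\RB$ inequalities) is needed. The counting in step three is combinatorially involved but routine once the geometric characterization is in hand.
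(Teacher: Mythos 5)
Your Step~1 contains a genuine gap that goes to the heart of the theorem. You argue that because every $\hat\eta\in\proto$ satisfies $H(\hat\eta)=\Gamma\starred-\Db$, the dual tile support must be a minimal-perimeter polyomino on $\protonumber$ cells, and that ``in $\RB$ the relevant minimal perimeter equals that of the circumscribing rectangle of an $\ell\starred\times\ell\starred$ or $(\ell\starred+1)\times(\ell\starred-1)$ box.'' This is not what discrete isoperimetry gives. The minimal dual perimeter of a polyomino with $\protonumber=\quasiqnumber+1$ cells is $4\ell\starred$, and this is achieved by \emph{every} monotone polyomino whose circumscribing rectangle has perimeter $4\ell\starred$ — that is, by the whole class $\CC$ of Definition~\ref{Dsetdefs}(c), which includes circumscribing rectangles $(\ell\starred+c)\times(\ell\starred-c)$ for all $c$ with $c^2\le\ell\starred-1$, not just $c=0,1$. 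Energy minimality alone therefore yields $\proto\subseteq\minnbis{\protonumber}=\CC$ and tells you nothing about why configurations with $c\ge2$ are excluded in $\RB$. The restriction to $\CB$ is \emph{not} a static isoperimetric fact that depends on the parameters; it is a \emph{dynamical accessibility} fact. This is exactly the content of Lemma~\ref{lemma-proto-below-gamma-RB}: a delicate analysis of which ``heavy-steps'' (transitions changing the tile support) can be executed starting from a standard configuration without exceeding $\Gamma\starred$, given the budget $\Db<2U+2\Da$ for broken bonds and extra type-$\ta$ particles. In region $\RC$ ($\Db>3U+\Da$) a more expensive bar-relocation mechanism becomes affordable and one does reach all of $\CC$; in $\RB$ that mechanism costs too much, which is precisely why $\proto$ is only $\CB$. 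Your proposal has no mechanism that would ever produce this distinction — on your reasoning, Theorems~\ref{th-RB} and \ref{th-RC} would have to give the same $\proto$.

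A secondary issue: your Step~2 lumps together (i) reaching every $\hat\eta\in\CB$ from a standard configuration below $\Gamma\starred$ and (ii) exiting from $(\hat\eta,x)$ toward $\boxplus$ below $\Gamma\starred$ (the ``good site''). The paper handles these separately (Lemma~\ref{lemma-proto-below-gamma-RB} and Lemma~\ref{lemma-proto-is-good-RB}), and then obtains $\proto=\CB$, $\entgate=\CB\boub$ by the essential-saddle argument of Lemma~\ref{lemma-identification-of-entgate-RA} via Lemma~\ref{lemma-mnos-essential-saddles} — a mechanism your outline does not invoke. On the counting, your plan to re-derive the enumeration from scratch via a partitions bijection is a feasible but unnecessary detour; the paper simply reads $q_{\ell\starred-1}$ and $r_{\ell\starred-2}$ off Kurz's generating functions and applies the symmetry factor $8$. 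The only honest correction your proposal would need there is to replace ``two rectangle shapes'' as the source of the factor $2$ by ``reflections.''
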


\begin{theorem}
\label{th-RC}
In subregion $\RC$, $\proto = \CC$, $\entgate = \CC\boub$, and $N\starred 
= 8[q_{\ell\starred - 1} + \sum_{c=1}^{\left\lfloor\sqrt{\ell\starred - 1}\right\rfloor} 
r_{\ell\starred - c^{2} - 1}]$.
\end{theorem}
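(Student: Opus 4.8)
The plan is to prove the three claims in the order (i) $\proto=\CC$, (ii) $\entgate=\CC\boub$, (iii) the value of $N\starred$, working in dual (tile) coordinates and combining the variational description of minimal-energy droplets with the dynamical analysis behind Theorem~\ref{th-H3}. For the inclusion $\proto\subseteq\CC$ I would first recall from \cite{dHNT11} the energy identity $\Gamma\starred=H(\proto)+\Db$ and the fact that, in the regime $\Da<U<\Db$, every configuration in $\proto$ is a single $\btiled$ droplet carrying $\protonumber$ particles of type $\tb$ of minimal energy: a particle of type $\tb$ not surrounded on all four sides, or a ``loose'' particle of type $\ta$, could be rearranged or removed to strictly lower the energy. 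Passing to dual coordinates turns the support of such a droplet into a polyomino $P$ with $|P|=\protonumber=\ell\starred(\ell\starred-1)+1$, and the Hamiltonian becomes $H=c(|P|)+\tfrac{\Da}{2}\,\mathrm{perim}(P)$ for an explicit constant $c(|P|)$ (the width-$3$ collar is irrelevant because the droplet sits deep inside $\Lambda^-$). Since $\Da>0$, minimizing $H$ at fixed $|P|$ means minimizing $\mathrm{perim}(P)$, and the discrete isoperimetric inequality for polyominoes gives $\mathrm{perim}(P)\ge 4\ell\starred$ with equality exactly when $P$ is a monotone polyomino whose circumscribing rectangle $R$ has perimeter $4\ell\starred$; imposing $|R|\ge\protonumber$ on $R=(\ell\starred+c)\times(\ell\starred-c)$ forces $c^{2}\le\ell\starred-1$. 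This is precisely the description of $\CC$, so $\proto\subseteq\CC$.

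The reverse inclusion $\CC\subseteq\proto$ is the main obstacle. I would show that every $D\in\CC$ is visited by some $\omega\in(\Box\to\boxplus)_{\mathrm{opt}}$ just before the free type-$\tb$ particle appears. The construction grows the droplet along the reference path of \cite{dHNT11} up to a rectangle-plus-protuberance shape in $\CC$ (whose energy is $\Gamma\starred-\Db$), and then transports tiles around the cluster until its shape is $D$. Here the inequality $\Db>3U+\Da$ defining subregion $\RC$ is used decisively: the droplet has energy $\Gamma\starred-\Db$, leaving a budget of $\Db$, and one catalogues the elementary tile moves --- a tile sliding along a flat edge, turning a convex corner, dropping into a concave corner, or being detached at a corner and carried around the cluster --- and checks that the cost of each move, and of any complete transport, stays $\le\Gamma\starred$ \emph{precisely because} $\Db$ exceeds the threshold $3U+\Da$ (the barriers for tile motion being controlled by $U$ and $\Da$). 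One then verifies that the graph on monotone polyominoes with $\protonumber$ cells and circumscribing-rectangle perimeter $4\ell\starred$, whose edges are these moves, is connected --- e.g.\ by reducing every shape to a canonical ``rectangle-plus-staircase'' form --- so that $D$ is reachable, and that the path can be completed from $\entgate$ down to $\boxplus$ below $\Gamma\starred$, which is exactly (H3-c) as guaranteed by Theorem~\ref{th-H3}. The matching statement that nothing outside $\CC$ lies in $\proto$ is already available: an optimal path cannot visit a droplet of larger perimeter (by the isoperimetric step above) and, by (H3-b) from Theorem~\ref{th-H3}, cannot perform a particle addition or bond breaking that raises the energy above $\Gamma\starred$. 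Hence $\proto=\CC$.

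Given $\proto=\CC$, the identity $\entgate=\CC\boub$ is immediate from (H3-a) (valid in $\RC$ via Theorem~\ref{th-H3}): every configuration in $\entgate$ is a droplet in $\proto=\CC$ together with one additional free particle of type $\tb$ in $\partial^-\Lambda$, which is exactly the operation sending $\CC$ to $\CC\boub$. Finally I would compute $N\starred$, the number of configurations in $\CC$ modulo shifts, by stratifying $\CC$ according to the shape of the circumscribing rectangle $R$: for $R$ the square $\ell\starred\times\ell\starred$ one deletes $\ell\starred-1$ cells in a monotone way, and for $R=(\ell\starred+c)\times(\ell\starred-c)$ with $1\le c\le\lfloor\sqrt{\ell\starred-1}\rfloor$ one deletes $\ell\starred-c^{2}-1$ cells. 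Each stratum is then put in $8$-to-$1$ correspondence with the set of monotone polyominoes enumerated (as in Section~\ref{sec def}) by $q_{\ell\starred-1}$, respectively $r_{\ell\starred-c^{2}-1}$, the factor $8$ coming from the symmetry group of $\Lambda$ acting on suitably rooted droplets, with the orbits free and the strata pairwise disjoint modulo shifts. Summing over the strata gives $N\starred=8\bigl[q_{\ell\starred-1}+\sum_{c=1}^{\lfloor\sqrt{\ell\starred-1}\rfloor}r_{\ell\starred-c^{2}-1}\bigr]$.

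The hard part will be step (ii): enumerating the tile-transport mechanisms and pinning down their exact energy cost, showing this cost stays $\le\Gamma\starred$ \emph{uniformly} over $\RC$ (it is here that the case split $\RA$, $\RB$, $\RC$ bites --- a slightly larger family of moves becomes affordable as $\Db$ grows), and proving that the induced move-graph on the relevant monotone polyominoes is connected while ruling out any cheaper escape from $\gate$. This is what simultaneously inflates $\proto$ to all of $\CC$ and, through the reverse inequalities, keeps it from being any larger; the isoperimetric bound of step~(i) and the counting of step~(iii) are comparatively routine by contrast.
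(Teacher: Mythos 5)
Your skeleton — isoperimetric characterization of $\minnbis{\protonumber}$, tile transport within an energy budget set by $\Db>3U+\Da$, and stratification of $\CC$ by circumscribing rectangle to obtain the factor $8[q_{\ell\starred-1}+\sum_c r_{\ell\starred-c^2-1}]$ — is the same architecture the paper uses. But two steps are not sound as written.

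First, the reliance on Theorem~\ref{th-H3} is circular. In the paper, Theorems~\ref{th-H3} and~\ref{th-RA}--\ref{th-RC} are proved together in Section~\ref{Proofs}, in the order ``first identify $\proto,\entgate$, then read off (H3-a), (H3-b); prove (H3-c) by exhibiting a good site.'' You invoke (H3-b) to argue $\proto\subseteq\CC$ (this invocation is actually superfluous: the energy identity $\Gamma\starred=H(\proto)+\Db$ plus Lemma~\ref{lemma-quasi-square-is-optimal} and Lemma~\ref{lemma-entrance-of-set-protonumber} already force $\proto\subseteq\minnbis{\protonumber}=\CC$), and — more seriously — you invoke (H3-a) to conclude $\entgate=\CC\boub$. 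That deduction cannot go in that direction: (H3-a) is a \emph{consequence} of the identification of $\entgate$, not a tool for it. The paper's actual argument is the essential-saddle analysis of Lemma~\ref{lemma-identification-of-entgate-RA} (repeated verbatim for $\RC$): one shows every $\eta\in\CC\boub$ is an essential saddle by exhibiting an optimal path that is forced through $\eta$, and shows all saddles in $\nbis{\le\protonumber}$ are unessential via Lemmas~\ref{lemma-unessential-saddles} and~\ref{lemma-entgate-subset-ent-critinumber}. Something of this form is indispensable and is missing from your proposal.

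Second, your remark that ``the width-$3$ collar is irrelevant because the droplet sits deep inside $\Lambda^-$'' dismisses a real piece of the work. In $\RC$ the claim is that $\g=\gbar$ equals \emph{all} of $\CC$, including configurations whose tile support is close to $\partial^-\Lambda$. The paper's proof of Lemma~\ref{lemma-proto-below-gamma-RF} devotes Steps~4--6 to exactly this: dimers near $\partial^-\Lambda$ cannot always be moved by the generic mechanism of Section~\ref{sec-moving-dimers}, and a modified transport (Fig.~\ref{fig:move-boundary-dimers}) staying within $3U+\Da$ is constructed. Without handling that case you cannot conclude $\CC\subseteq g(\{\Box\},\minnbis{\protonumber})$, which your step (ii) needs. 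The rest — connectivity of the move-graph via reduction to a rectangle-plus-bar shape, and the $8$-to-$1$ counting against Kurz's $q_k,r_k$ — matches the paper's Remark~\ref{rem-visit-all-monotone}, Steps 1--3 of Lemma~\ref{lemma-proto-below-gamma-RF}, and Appendix~\ref{appA}.
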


\noindent
Here, $(r_k)$ and $(q_k)$ are the coefficients of two generating functions defined in 
Appendix~\ref{appA}, which count polyominoes with fixed volume and minimal perimeter.
The claims in Theorems~\ref{th-RA}--\ref{th-RC} are valid for $\ell\starred \geq 4$
only. For $\ell\starred=2,3$, see Section~\ref{sec-small-lc}.


\subsection{Discussion}
\label{sec discussion}

{\bf 1.} 
In \eqref{Ham1} we take an annulus of width 3 without interaction instead of an annulus 
of width 1 as in \cite{dHNT12} and \cite{dHNT11}. This allows us to prove that the model 
satisfies (H3), without having to deal with complications that arise when droplets are
too close to the boundary of $\Lambda$. 
In this case we would have to deal with the problem that particles cannot always travel 
around the clutser without leaving
$\Lambda$.The theorems in \cite{dHNT12} and \cite{dHNT11} 
remain valid.

\medskip\noindent
{\bf 2.} 
Theorems~\ref{th-H3} and \ref{th-RA}--\ref{th-RC}, together with the theorems 
presented in \cite{dHNT12} and \cite{dHNT11}, complete our analysis for part of 
the subregion given by \eqref{subpropmetreg}. Our results do not carry over to 
other values of the parameters, for a variety of reasons explained in \cite{dHNT12}, 
Section~1.5. In particular, for $\Da>U$ the critical droplets are square-shaped 
rather than rhombus-shaped. Moreover, (H2) is expected to be much harder to prove 
for $\Db-\Da<2U$.

\medskip\noindent
{\bf 3.} 
Theorems~\ref{th-RA}--\ref{th-RC} show that, even within the subregion given by 
\eqref{subpropmetreg}, the model-dependent quantities $\gate$ and $N\starred$, 
which play a central role in the metastability theorems in \cite{dHNT12}, are 
highly sensitive to the choice of parameters. This is typical for metastable 
behavior in multi-type particle systems, as explained in \cite{dHNT12}, Section~1.5. 

\medskip\noindent
{\bf 4.} 
The arguments used in the proof of Theorems~\ref{th-H3} and \ref{th-RA}--\ref{th-RC}  
are geometric. Along any optimal path from $\Box$ to $\boxplus$, as the energy gets 
closer to $\Gamma\starred$ the motion of the particles becomes more resticted. By 
analyzing this restriction in detail we are able to identify the shape of the critical 
droplets.   

\medskip\noindent
{\bf 5.}
The fourth subregion is more subtle. The protocritical set $\proto$ is somewhere between 
$\CB$ and $\CC$, and we expect $\proto=\CC$ for small $\Da$ and $\CB\subsetneq\proto
\subsetneq\CC$ for large $\Da$. The proof of Theorems~\ref{th-RA}--\ref{th-RC} in
Section~\ref{Proofs} will make it clear where the difficulties come from. 

\medskip\noindent
{\bf Outline:} In the remainder of this paper we provide further notation and 
definitions (Section~\ref{sec defnot}), state and prove a number of preparatory 
lemmas (Section~\ref{Preplem}), describe the motion of ``tiles'' along the boundary 
of a droplet (Section~\ref{tilemotion}), and give the proof of Theorem~\ref{th-H3} 
and \ref{th-RA}--\ref{th-RC} (Section~\ref{Proofs}). In Appendix~\ref{appA} we 
recall some standard facts about polyominoes with minimal perimeter.


\section{Coordinates and definitions}
\label{sec defnot}

Section~\ref{sec coordinates} introduces two coordinate systems that are used 
to describe the particle configurations: standard and dual. Section~\ref{sec def} 
lists the main geometric definitions that are needed in the rest of the paper.


\subsection{Coordinates}
\label{sec coordinates}

\newcounter{notcounter}
\begin{list}{\textbf{\arabic{notcounter}.~}}
{\usecounter{notcounter}
\labelsep=0em \labelwidth=0em \leftmargin=0em \itemindent=0em}

\item
A site $i\in\Lambda$ is identified by its \emph{standard coordinates} 
$x(i)=(x_{1}(i),x_{2}(i))$, and is called odd when $x_{1}(i)+x_{2}(i)$ is 
odd and even when $x_{1}(i)+x_{2}(i)$ is even. 
Given a configuration $\eta \in \cX$, a site $x \in \Lambda$ such that
$\eta(x)$ is $1$ or $2$ is referred to as a particle $p$ at site $x$.
The standard coordinates 
of a particle $p$ in a configuration $\eta$ 
are denoted by $x(p) = (x_{1}(p),x_{2}(p))$.
The \emph{parity} of a particle $p$ in a configuration $\eta$ is defined as 
$x_{1}(p)+x_{2}(p)+\eta(x(p))$ modulo 2, and $p$ is said to be odd when the 
parity is $1$ and even when the parity is $0$.

\item
A site $i\in\Lambda$ is also identified by its \emph{dual coordinates}
\begin{equation}
u_1(i) = \frac{x_1(i) - x_2(i)}{2}, \qquad u_2(i) = \frac{x_1(i) + x_2(i)}{2}.
\end{equation}
Two sites $i$ and $j$ are said to be \emph{adjacent}, written $i \sim j$, 
when $|x_1(i)-x_1(j)|+ |x_2(i)-x_2(j)|=1$ or, equivalently, $|u_1(i)-u_1(j)| 
= |u_2(i)-u_2(j)| = \tfrac12$ (see Fig.~\ref{fig-coord}).

\item
For convenience, we take $\Lambda$ to be the $(L+\tfrac32) \times (L+\tfrac32)$ 
dual square with bottom-left corner at site with dual coordinates $(-\frac{L+1}{2}, 
-\frac{L+1}{2})$ for some $L\in\N$ with $L>2\ell\starred + 2$ (to allow for $H(\boxplus)
< H(\Box)$). Particles interact only in a 
$(L-\tfrac{3}{2})\times (L-\tfrac{3}{2})$ dual square centered as $\Lambda$. 
This dual square, a \emph{rhombus} in standard 
coordinates, is convenient because the local minima of $H$ are rhombus-shaped as well 
(for more details see \cite{dHNT11}). 

\end{list}

\begin{figure}[htbp]
\centering
\subfigure[]
{\includegraphics[height=0.24\textwidth]{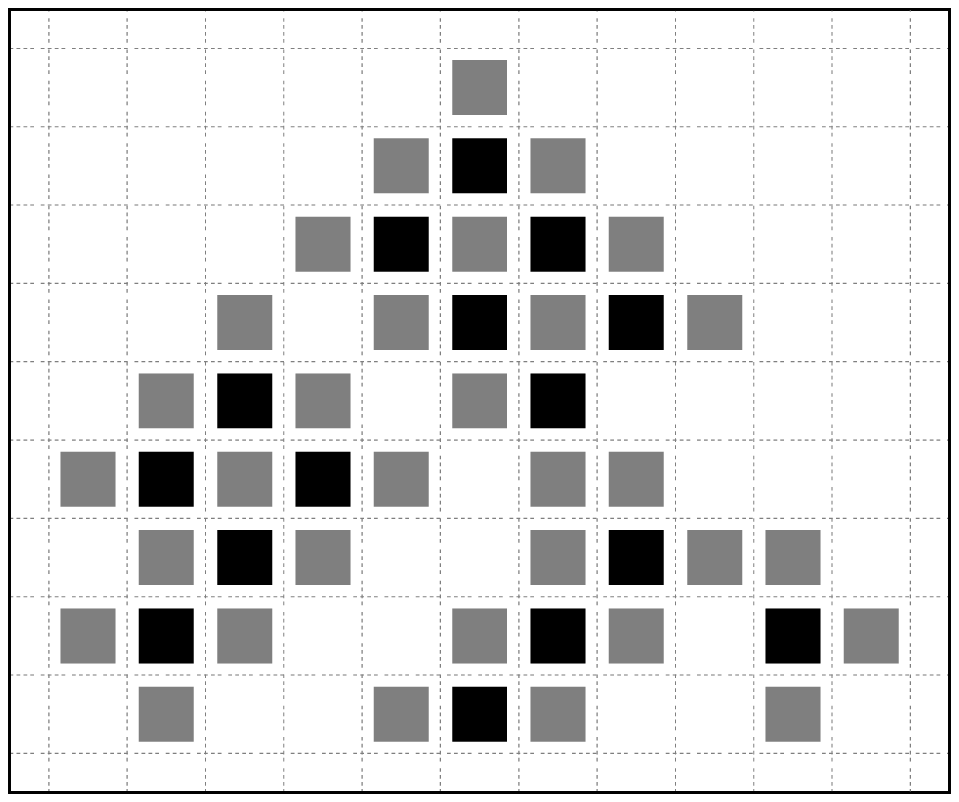}}
\qquad
\subfigure[]
{\includegraphics[height=0.24\textwidth]{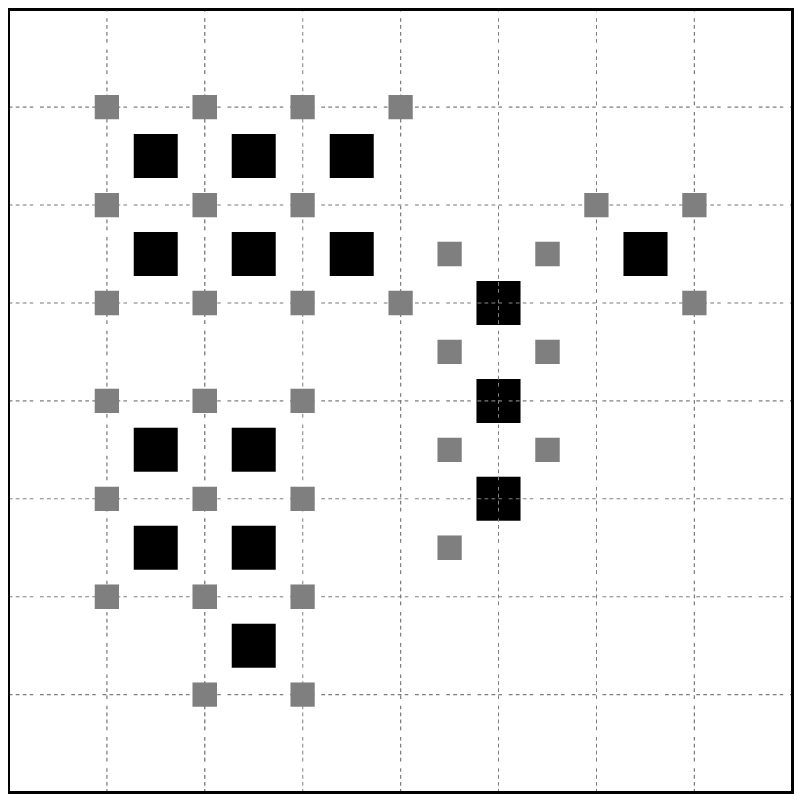}}
\caption{A configuration represented in: (a) standard coordinates; 
(b) dual coordinates. Light-shaded squares are particles of type $\ta$,
dark-shaded squares are particles of type $\tb$. In dual coordinates, 
particles of type $\tb$ are represented by larger squares than particles 
of type $\ta$ to exhibit the ``tiled structure'' of the configuration.}
\label{fig-coord}
\end{figure}


\subsection{Definitions}
\label{sec def}

\newcounter{defcounter}
\begin{list}{\textbf{\arabic{defcounter}.~}}
{\usecounter{defcounter}
\labelsep=0em \labelwidth=0em \leftmargin=0em \itemindent=0em}

\item
A site $i\in\Lambda$ is said to be \emph{lattice-connecting} in the configuration $\eta$ 
if there exists a lattice path $\lambda$ from $i$ to $\partial^-\Lambda$ such that 
$\eta(j) = 0$ for all $j \in \lambda$ with $j \neq i$. We say that a particle $p$ is 
lattice-connecting if $x(p)$ is a lattice-connecting site.
\label{def group lattice}

\item	
Two particles in $\eta$ at sites $i$ and $j$ are called \emph{connected} if $i \sim j$ and 
$\eta(i)\eta(j) = 2$. If two particles $p_{1}$ and $p_{2}$ are connected, then we say that 
there is an \emph{active bond} $b$ between them. The bond $b$ is said to be \emph{incident} 
to $p_{1}$ and $p_{2}$. A particle $p$ is said to be \emph{saturated} if it is connected to 
four other particles, i.e., there are four active bonds incident to $p$. The support of 
the configuration $\eta$, i.e., the union of the unit squares centered at the occupied 
sites of $\eta$, is denoted by $\supp{(\eta)}$. For a configuration $\eta$, $n_\ta(\eta)$ 
and $n_\tb(\eta)$ denote the number of particles of type $\ta$ and $\tb$ in $\eta$, and 
$B(\eta)$ denotes the number of active bonds. The energy of $\eta$ equals $H(\eta)
=\Da n_\ta(\eta)+\Db n_\tb(\eta)-UB(\eta)$.
\label{def group connected particles and bonds}

\item
Let $G(\eta)$ be the \emph{graph} associated with $\eta$, i.e., $G(\eta)=(V(\eta),E(\eta))$, 
where $V(\eta)$ is the set of sites $i\in\Lambda$ such that $\eta(i)\ne 0$, and $E(\eta)$ 
is the set of pairs $\{i,j\}$, $i,j\in V(\eta)$, such that the particles at sites $i$ 
and $j$ are connected. A configuration $\eta'$ is called a \emph{subconfiguration} of 
$\eta$, written $\eta' \subconf \eta$, if $\eta'(i)=\eta(i)$ for all $i\in\Lambda$ such 
that $\eta'(i)>0$. A subconfiguration $c\subconf\eta$ is called a \emph{cluster} if the 
graph $G(c)$ is a maximal connected component of $G(\eta)$. The set of non-saturated 
particles in $c$ is called the \emph{boundary} of $c$, and is denoted by $\boundary c$. 
Clearly, all particles in the same cluster have the same parity. Therefore the concept of 
parity extends from particles to clusters. 
\label{def group clusters and parity}

\item
For a site $i\in\Lambda$, the \emph{tile} centered at $i$, denoted by $\tile(i)$, is the 
set of five sites consisting of $i$ and the four sites adjacent to $i$. If $i$ is an even 
site, then the tile is said to be even, otherwise the tile is said to be odd. The five 
sites of a tile are labeled $a$, $b$, $c$, $d$, $e$ as in Fig.~\ref{fig-2tile}. The sites 
labeled $a$, $b$, $c$, $d$ are called \emph{junction sites}. If a particle $p$ sits at site 
$i$, then $\tile(i)$ is alternatively denoted by $\tile(p)$ and is called the tile associated 
with $p$. In standard coordinates, a tile is a square of size $\sqrt{2}$. In dual coordinates, 
it is a unit square. 
\label{def group tiles}

\item
A tile whose central site is occupied by a particle of type $\tb$ and whose junction 
sites are occupied by particles of type $\ta$ is called a \emph{$\btile$} (see
Fig.~\ref{fig-2tile}). Two $\btiles$ are said to be adjacent if their particles
of type $\tb$ have dual distance 1. A horizontal (vertical) \emph{$\abbar$} is a 
maximal sequence of adjacent $\btiles$ all having the same horizontal (vertical) 
coordinate. If the sequence has length $1$, then the $\abbar$ is called a \emph{$\btiled$ 
protuberance}. A cluster containing at least one particle of type $\tb$ such that 
all particles of type $\tb$ are saturated is said to be $\btiled$. A $\btiled$ 
configuration is a configuration consisting of $\btiled$ clusters only.
\label{def group btiles}
A \emph{hanging protuberance} (or hanging $\btile$) is a $\btile$ where three particles 
of type $\ta$ are adjacent to the particle of type $\tb$ of the $\btile$ only (see 
Fig.~\ref{fig:possible-reattachment}(b)).

\begin{remark}
\label{remark-supercritical-square}
A configuration consisting of a dual $\btiled$ square of side length $\ell\starred$
belongs to $\cX_{\boxplus}$.	
\end{remark}

\begin{figure}[htbp]
\centering
\subfigure[]
{\includegraphics[height=0.12\textwidth]{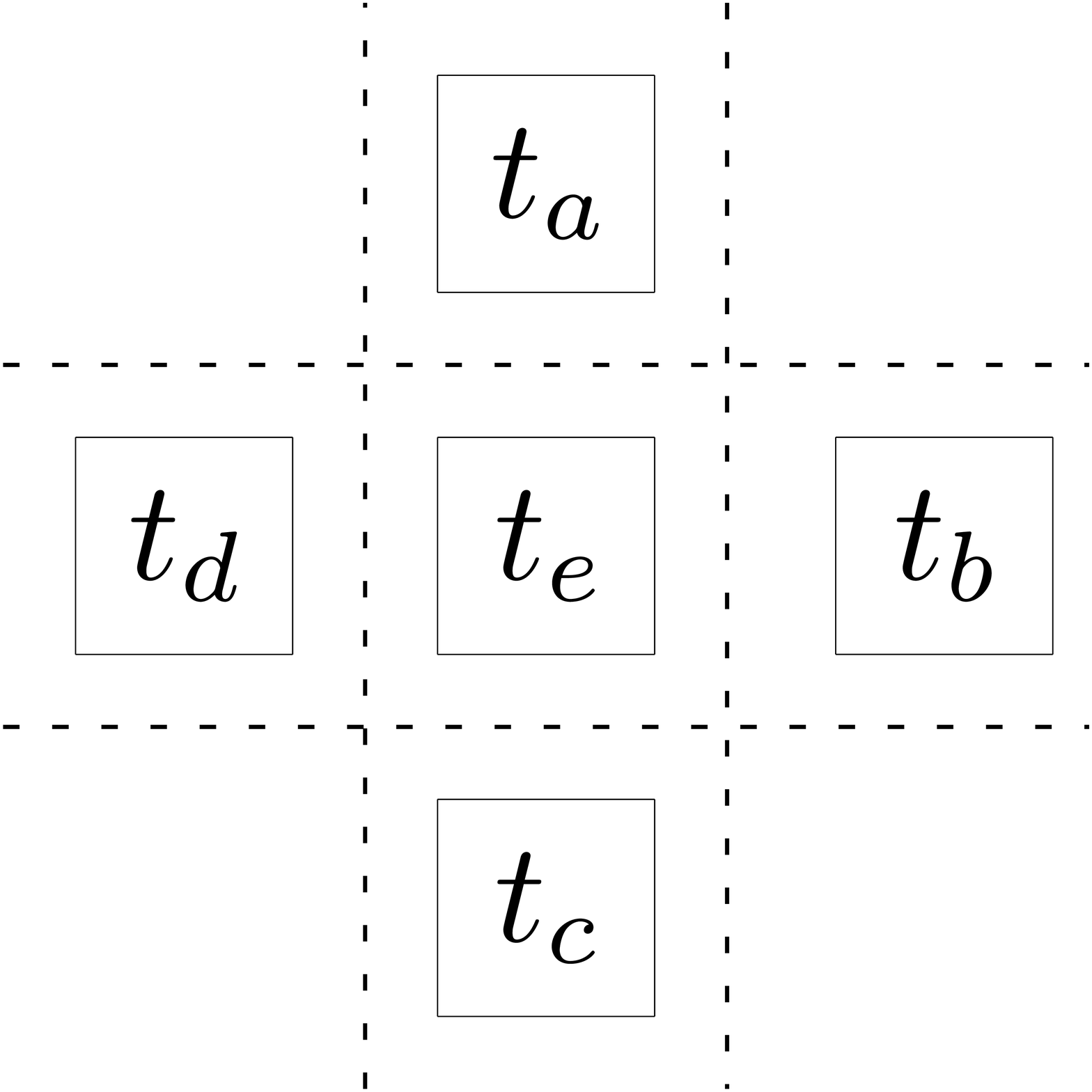}}
\quad
\subfigure[]
{\includegraphics[height=0.12\textwidth]{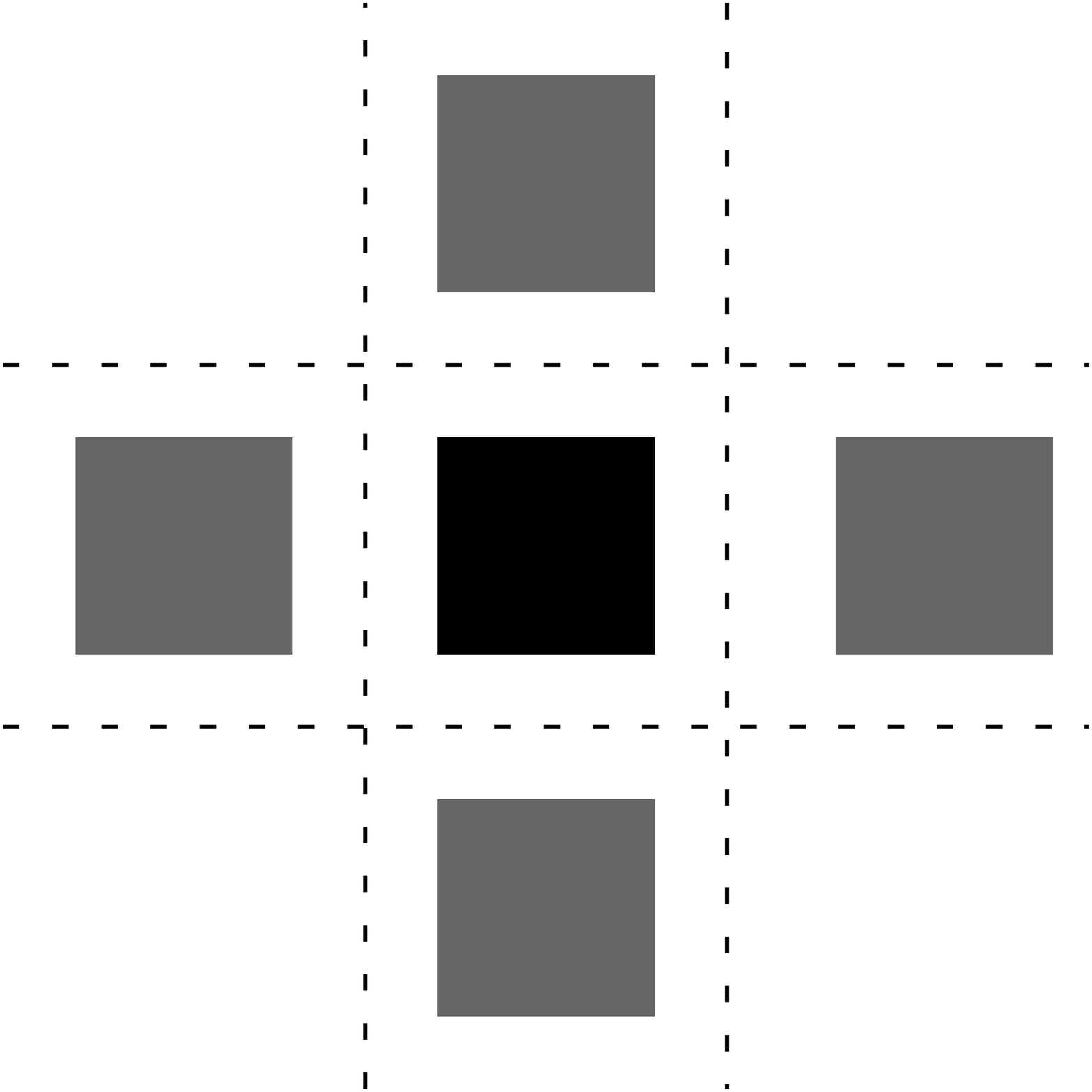}}
\quad
\subfigure[]
{\includegraphics[height=0.12\textwidth]{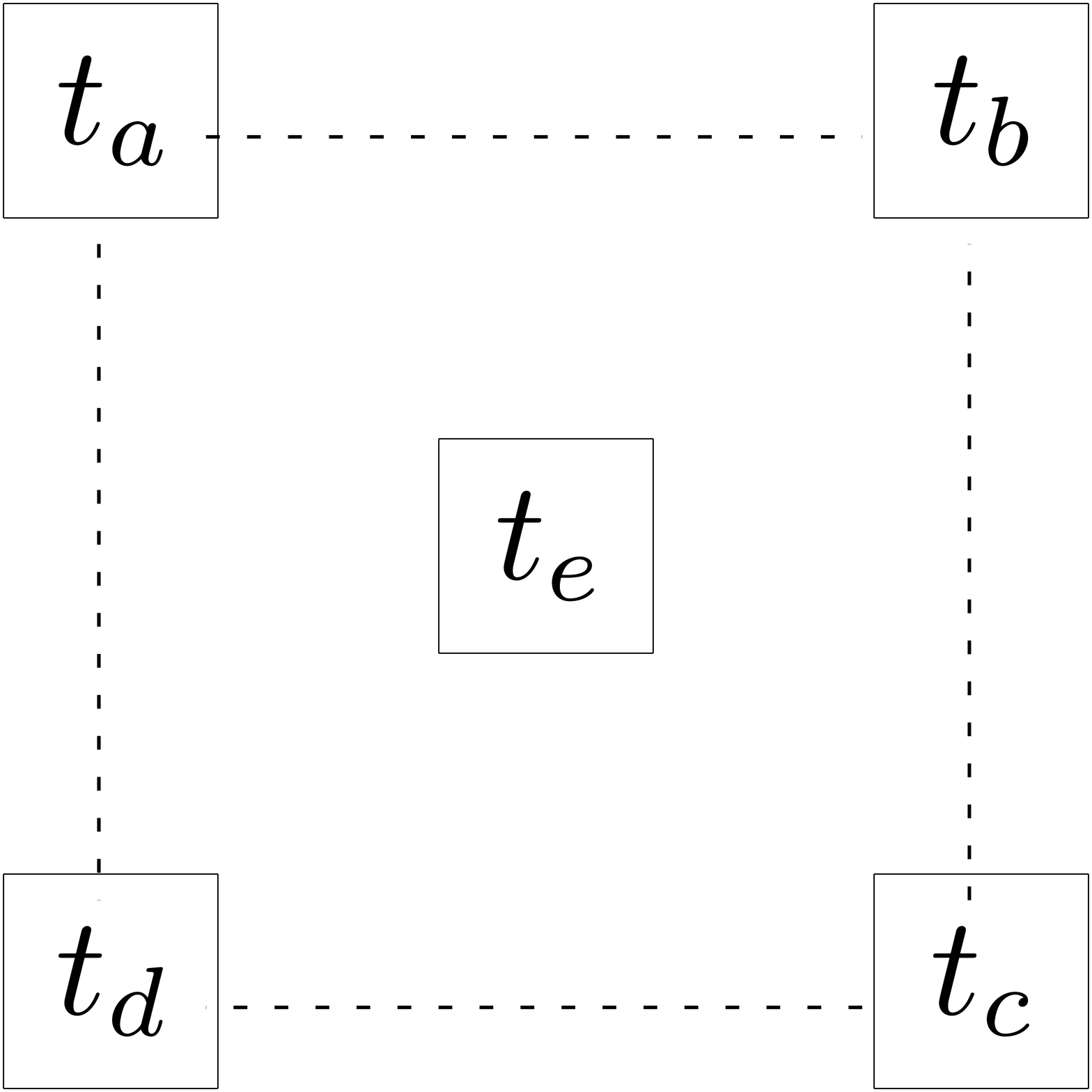}}
\quad
\subfigure[]
{\includegraphics[height=0.12\textwidth]{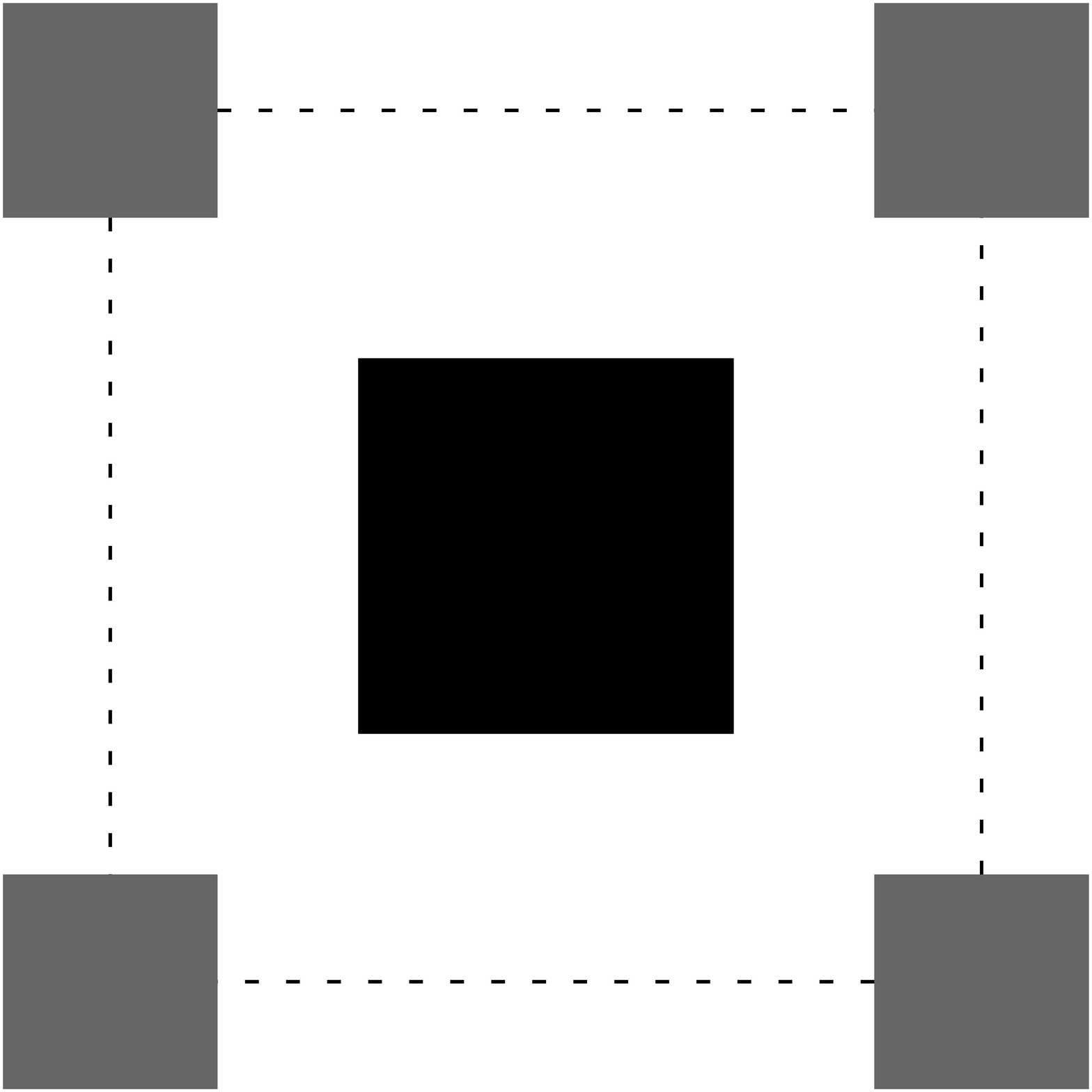}}
\caption{Tiles: (a) standard representation of the labels of a tile; (b) standard 
representation of a $\btile$; (c) dual representation of the labels of a tile; 
(d) dual representation of a $\btile$.}
\label{fig-2tile}
\end{figure}
\label{def group btile}

\item
The \emph{tile support} of a configuration $\eta$ is defined as
\begin{equation}
[\eta] = \bigcup_{p \in \setb(\eta) } \tile(p),
\end{equation}
where $\setb(\eta)$ is the set of particles of type $\tb$ in $\eta$. Obviously,
$[\eta]$ is the union of the tile supports of the clusters making up $\eta$.
\label{def group tile support}
For a standard cluster $c$ the \emph{dual perimeter}, denoted by $P(c)$, is the length of 
the Euclidean boundary of its tile support $[c]$ (which includes an inner boundary
when $c$ contains holes). The dual perimeter $P(\eta)$ of a $\btiled$ configuration 
$\eta$ is the sum of the dual perimeters of the clusters making up $\eta$.
\label{def group dual perimeter}

\item
Denote by $\nbset$ the set of configurations such that in $(\Lambda^{-})^{-}$ the number 
of particles of type $\tb$ is $\nb$. Denote by $\molset$ the subset of $\nbset$ where the 
number of active bonds is $4\nb$ and there are no non-interacting particles of type $\ta$, 
i.e., the set of $\btiled$ configurations with $\nb$ particles of type $\tb$. 
\label{def group set}
A configuration $\eta$ is called \emph{standard} if $\eta \in \molset$ and its tile 
support is a standard polyomino in dual coordinates (see Definition~\ref{def standard
polyominoes} below).
\label{def group standard configuration}
A configuration $\eta$ with $\nb(\eta)$ particles of type $\tb$ is called 
\emph{quasi-standard} if it can be obtained from a standard configuration with 
$\nb(\eta)$ particles of type $\tb$ by removing some (possibly none) of the particles 
of type $\ta$ with only one active bond, i.e., corner particles of type $\ta$.
Denote by $\minnbis{n}$ the set of configurations of \emph{minimal energy} in $\nbis{n}$. 

\item
The state space $\cX$ can be partitioned into manifolds: 
\begin{equation}
\cX = \bigcup_{n_2 = 0}^{|\Lambda|} \nbis{n_2}. 
\end{equation}
Two manifolds $\nbis{n}$ and $\nbis{n'}$ are called adjacent if $|n - n'| = 1$. Note that 
transitions between two manifolds are possible only when they are adjacent and are obtained 
either by adding a particle of type $\tb$ to $\partial^{-}\Lambda$ ($\nbis{n} \to \nbis{n+1}$) 
or removing a particle of type $\tb$ from $\partial^{-}\Lambda$ ($\nbis{n} \to \nbis{n-1}$).
Note further that $\Box \in \nbis{0}$ 
and $\boxplus \in \nbis{(L-2)^{2}}$. 
Therefore, to realize 
the transition $\Box \to \boxplus$, the dynamics must visit 
at least
all manifolds $\nbis{n}$ with 
$n = 1,\ldots,(L-2)^2$. 
Abbreviate $\nbis{\le m} = \bigcup_{n=0}^{m} \nbis{n}$ and 
$\nbis{\ge m}= \bigcup_{n=m}^{|\Lambda|}\nbis{n}$. 

\item
For $\cY \subset \cX$, $\hat{\eta} \in \cY$ and $x \in \Lambda\backslash\supp[\hat{\eta}]$, 
we write $\eta=(\hat{\eta},x)$ to denote the configuration that is obtained from $\hat{\eta}$ 
by adding a particle of type $\tb$ at site $x$. We write $\cY\boub$ to denote the set of configurations obtained from a configuration in $\cY$ by adding a particle of type $\tb$ 
in $\partial^{-}\Lambda$, i.e., $\cY\boub = \bigcup_{\hat{\eta} \in \cY} \bigcup_{x \in 
\partial^{-}\Lambda}(\hat{\eta}, x)$. For $\omega\colon\,\Box\to\boxplus$, let $\sigma_\cY
(\omega)$ be the configuration in $\cY$ that is first visited by $\omega$. Define
\begin{equation}
\begin{aligned}
\entrance{\cY} &= \bigcup_{\omega\colon\,\Box\to\boxplus} \sigma_\cY(\omega),\\
\optentrance{\cY} &= \bigcup_{ {\omega\colon\,\Box\to\boxplus} \atop {\text{optimal}} } 
\sigma_\cY(\omega),
\end{aligned}
\end{equation}
called the \emph{entrance}, respectively, the \emph{optimal extrance} of $\cY$. With this
notation we have $\entgate = \optentrance\cG(\Box,\boxplus)$.

\item
For $\cA, \cB \subset \cX$, define
\begin{equation}
\begin{aligned}
g(\cA,\cB) &= \{\eta\in\cB\colon\,\exists\,\zeta\in\cA \text{ and } 
\omega\colon\,\zeta \to \eta \colon\,
\nb(\eta) \le \nb(\xi) \le \nb(\zeta),\,H(\xi) < \Gamma\starred\,\forall\,\xi\in\omega\},\\
\bar{g}(\cA,\cB) &= \{\eta\in\cB\colon\,\exists\,\zeta\in\cA \text{ and } 
\omega\colon\,\zeta \to \eta \colon\,
\nb(\eta) \le \nb(\xi) \le \nb(\zeta),\,H(\xi) \le \Gamma\starred\,\forall\,\xi\in\omega\}.
\end{aligned}
\end{equation} 

\begin{figure}[htbp]
\centering
\subfigure[]
{\includegraphics[height=0.08\textwidth]{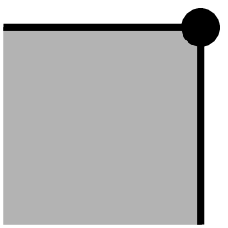}}
\qquad
\subfigure[]
{\includegraphics[height=0.08\textwidth]{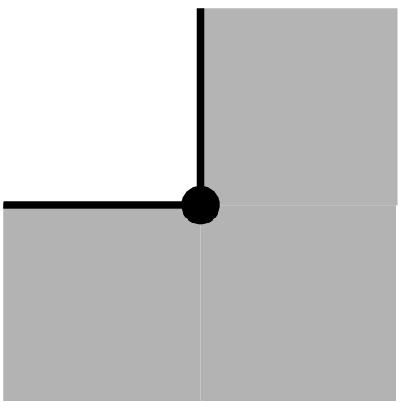}}
\qquad
\subfigure[]
{\includegraphics[height=0.10\textwidth]{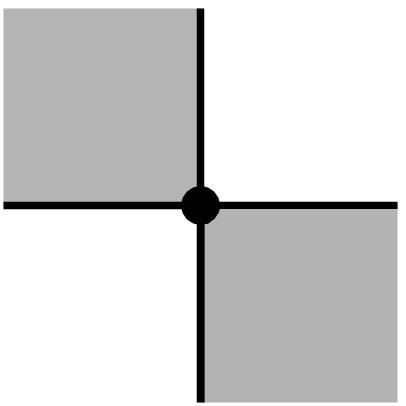}}
\caption{Corners of polyominoes: (a) one convex corner; (b) one concave corner; 
(c) two concave corners. Shaded mean occupied by a unit square.}
\label{fig:convex and concave corners}
\end{figure}

\item
A \emph{unit hole} is an empty site such that all four of its neighbors are 
occupied by particles of the same type (either all of type $\ta$ or all of type $\tb$).
\label{def group unit hoyle}
An empty site with three neighboring sites occupied by a particle of type $\ta$ 
is called a \emph{good dual corner}. In the dual representation a good dual corner 
is a concave corner (see Fig.~\ref{fig:convex and concave corners}).
\label{def group good corner}
The \emph{surface} of $\eta \in \cX$ is defined as
\begin{equation}
F(\eta) = \{x \in \Lambda\colon\,\exists\,y \sim x\colon\, \eta(y) = 1\}.
\end{equation}
For $\eta\in\cX$, let
\begin{equation}
\label{Tdef}
\cT(\eta) = 2P(\eta) + [\psi(\eta) - \phi(\eta)] = 2P(\eta) + 4[C(\eta)-Q(\eta)],
\end{equation}
where $C(\eta)$ is the number of clusters in $\eta$, $P(\eta)$ the total length of the 
perimeter of these clusters, $Q(\eta)$ the number of holes, $\psi(\eta)$ the number of 
convex corners, and $\phi(\eta)$ is the number of concave corners. Note that $\cT(\eta) 
= \sum_{c\in\eta} \cT(c)$, where the sum runs over the clusters in $\eta$.

\end{list}

We also need the following definition:

\begin{definition}
\label{def standard polyominoes}
{\rm [Alonso and Cerf~\cite{AC96}.]}
A polyomino (= a union of unit squares) is called monotone if its perimeter is equal 
to the perimeter of its circumscribing rectangle. A polyomino is called standard if 
its support is a quasi-square (i.e., a rectangle whose side lengths differ by at most 
one), with possibly a bar attached to one of its longest sides.
\end{definition}


\section{Preparatory lemmas}
\label{Preplem}

In this section we collect a number of preparatory lemmas that are valid throughout the 
subregion given by (\ref{subpropmetreg}). These lemmas will be needed in Section~\ref{Proofs} 
to prove Theorems~\ref{th-H3} and \ref{th-RA}--\ref{th-RC}. In Section~\ref{Preplem1} we 
characterize $\optentrance\nbis{\protonumber}$ 
(Lemmas~\ref{lemma-quasi-square-is-optimal}--\ref{lemma-entrance-of-set-protonumber} below), 
in Section~\ref{Preplem2} we characterize $g(\{\Box\}, \minnbis{\protonumber})$ and $\bar{g}
(\{\Box\},\minnbis{\protonumber})$ (Lemma~\ref{lemma-entrance-of-set-protonumber} below), 
and in Section~\ref{Preplem3} we characterize $\cG(\Box,\boxplus)$ 
(Lemmas~\ref{lemma-reachable-from-standard}--\ref{lemma-unessential-saddles} below). 

An elementary observation is the following:

\begin{lemma}
\label{lemma-minimal-energy-coincides-with-minimal-perimeter}
If $\eta \in \minnbis{n_{2}}$ with $\protonumber \le n_{2} \le (\ell\starred)^{2}$, 
then $\eta$ is $\btiled$ and its dual perimeter is equal to $4\ell\starred$.
\end{lemma}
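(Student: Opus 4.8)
The plan is to prove the statement in two halves: first that any $\eta \in \minnbis{n_2}$ with $\protonumber \le n_2 \le (\ell\starred)^2$ must be $\btiled$, and then that, conditional on $\eta$ being $\btiled$, its dual perimeter equals $4\ell\starred$. For the first half I would argue by an energy-counting estimate. Recall $H(\eta) = \Da n_\ta(\eta) + \Db n_\tb(\eta) - U B(\eta)$, and here $n_\tb(\eta) = n_2$ is fixed, so minimizing $H$ over $\nbis{n_2}$ amounts to minimizing $\Da n_\ta(\eta) - U B(\eta)$. Each particle of type $\tb$ can support at most four active bonds, each requiring a distinct adjacent particle of type $\ta$; conversely each particle of type $\ta$ can carry at most four active bonds as well. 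A standard double-counting / LP-relaxation argument (in the spirit of \cite{dHNT11}) shows that to achieve $B(\eta) = 4 n_2$ one needs every particle of type $\tb$ saturated, hence surrounded by four particles of type $\ta$, i.e.\ the configuration is made of $\btiles$; and any particle of type $\ta$ not participating in the tiled structure only increases $\Da n_\ta$ without increasing $B$, so it cannot be present in a minimizer. Comparing with the energy of the explicit $\btiled$ quasi-square with $n_2$ tiles (which is a candidate in $\nbis{n_2}$ for $n_2$ in this range, since $n_2 \le (\ell\starred)^2$ means it fits inside an $\ell\starred \times \ell\starred$ dual square) shows that the minimum is attained only at $\btiled$ configurations with no extra type-$\ta$ particles. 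This is where I expect the bulk of the care: one must rule out configurations where a mild deficit in $B$ is "paid for" by having fewer type-$\ta$ particles, which requires the precise inequalities relating $n_\ta$, $n_\tb$ and $B$ for $\btiled$-like clusters, and here the constraint $\Da < U$ from \eqref{subpropmetreg} is essential.

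For the second half, I would pass to dual coordinates, where a $\btiled$ configuration corresponds to a polyomino (the tile support $[\eta]$) built of $n_2$ unit squares. The energy of a $\btiled$ configuration can be rewritten, using the bond/corner bookkeeping of item~\ref{def group dual perimeter} and \eqref{Tdef}, purely in terms of $n_2$ and the dual perimeter $P(\eta)$: more precisely, minimizing $H$ over $\btiled$ configurations with $n_2$ tiles is equivalent to minimizing $P(\eta)$ over polyominoes of area $n_2$ (a larger dual perimeter forces more non-saturated boundary particles of type $\ta$, hence a larger $n_\ta$ and larger energy, while the type-$\tb$ count is fixed). By the discrete isoperimetric inequality for polyominoes (Alonso--Cerf~\cite{AC96}, recalled in Appendix~\ref{appA}), a polyomino of area $n_2$ with $\protonumber \le n_2 \le (\ell\starred)^2$ has minimal perimeter exactly $4\ell\starred$: indeed $\lceil 2\sqrt{n_2}\,\rceil$ equals $2\ell\starred$ throughout this range, because $(\ell\starred-1)^2 < \protonumber \le n_2 \le (\ell\starred)^2$ forces $\ell\starred - 1 < \sqrt{n_2} \le \ell\starred$, hence $\lceil \sqrt{n_2}\,\rceil = \ell\starred$ and the minimal polyomino perimeter $2\lceil 2\sqrt{n_2}\,\rceil$-type bound collapses to $4\ell\starred$ (the minimal-perimeter polyominoes being the quasi-squares with a possible protuberance). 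The inequality $(\ell\starred-1)^2 < \protonumber = \quasiqnumber + 1 = \ell\starred(\ell\starred-1)+1$ holds since $\ell\starred(\ell\starred-1)+1 - (\ell\starred-1)^2 = \ell\starred - 1 > 0$ for $\ell\starred \ge 2$, which is guaranteed by \eqref{lstardef}.

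The one genuinely delicate point, and the step I would flag as the main obstacle, is the reduction "minimal energy among $\btiled$ configurations $\iff$ minimal dual perimeter": one must handle the possibility of holes (a hole contributes to the dual perimeter via an inner boundary but its combinatorial effect on $n_\ta$ and $B$ must be accounted for correctly through the $Q(\eta)$ term in \eqref{Tdef}), and one must verify that clustering is never advantageous, i.e.\ that a disconnected $\btiled$ configuration has strictly larger energy than a connected one with the same $n_2$ — again a consequence of the polyomino isoperimetric inequality applied componentwise together with superadditivity of the minimal-perimeter function. Once these bookkeeping identities are in place, the conclusion that $P(\eta) = 4\ell\starred$ for every $\eta \in \minnbis{n_2}$ follows immediately by combining the two halves. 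I would present the explicit $\btiled$ quasi-square as the witnessing configuration in $\nbis{n_2}$ throughout, to anchor the comparison.
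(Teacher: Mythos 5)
Your proposal is correct and in substance identical to the paper's argument: the paper disposes of this lemma by citing Lemmas~2.2--2.3 and 4.1 of \cite{dHNT11} (which give exactly your reduction ``minimal energy $\Rightarrow$ $\btiled$, no extra type-$\ta$ particles, energy $\leftrightarrow$ dual perimeter'') together with Corollary~2.5 of \cite{AC96} for the isoperimetric step, so you have simply unpacked the cited results. One small repair in your perimeter computation: knowing $\lceil\sqrt{n_2}\rceil=\ell\starred$ only gives $\lceil 2\sqrt{n_2}\rceil\in\{2\ell\starred-1,2\ell\starred\}$; you should instead use the full strength of the hypothesis $n_2\ge\quasiqnumber+1>(\ell\starred-\tfrac12)^2$, which gives $2\sqrt{n_2}>2\ell\starred-1$ and hence $\lceil 2\sqrt{n_2}\rceil=2\ell\starred$, i.e.\ minimal perimeter $4\ell\starred$ throughout the stated range.
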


\begin{proof}
Immediate from Lemmas~2.2--2.3 and 4.1 in \checkref{}\cite{dHNT11}, and also 
from Corollary 2.5 in \checkref{}\cite{AC96}.
\end{proof}


\subsection{Characterization of $\optentrance\minnbis{\protonumber}$}
\label{Preplem1}

\begin{lemma}
\label{lemma-quasi-square-is-optimal}
Let $\rho$ be a $\btiled$ configuration with $\quasiqnumber$ particles of type $\tb$ and 
with dual tile support equal to a rectangle of side lengths $\ell\starred,(\ell\starred-1)$ 
(i.e., $\rho$ is a standard configuration).\\ 
{\rm (1)} If $\eta \neq \rho$ is a $\btiled$ configuration with $\quasiqnumber$ particles of 
type $\tb$, then $H(\eta) > \Gamma\starred - \Db$.\\
{\rm (2)} If $\eta \neq \rho$ is a configuration with $\quasiqnumber$ particles of type $\tb$ 
such that $\tsupp{\eta} \neq \tsupp{\rho}$, then $H(\eta) > \Gamma\starred - \Db$.\\
{\rm (3)} If $\eta \neq \rho$ is a configuration with $\quasiqnumber$ particles of type $\tb$ 
obtained from $\rho$ by removing at least one of the ``non-corner'' particles of type 
$\ta$ in $\rho$ (note that $\eta$ and $\rho$ have the same dual tile support), then 
$H(\eta) > \Gamma\starred - \Db$.
\end{lemma}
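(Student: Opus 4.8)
The plan is to establish Lemma~\ref{lemma-quasi-square-is-optimal} by reducing all three statements to a single energy--geometry identity for $\btiled$ configurations, together with the fact (Lemma~\ref{lemma-minimal-energy-coincides-with-minimal-perimeter} and the polyomino facts recalled in Appendix~\ref{appA}) that among polyominoes with a fixed area the minimal perimeter is strictly attained only by ``standard'' polyominoes, and that for area $\quasiqnumber = \ell\starred(\ell\starred-1)$ the unique standard shape is the $\ell\starred \times (\ell\starred-1)$ rectangle $\rho$. First I would record, for a $\btiled$ configuration $\eta$ with $n_\tb$ particles of type $\tb$ whose tile support is a (possibly disconnected, possibly holed) polyomino of area $n_\tb$, the exact value of $H(\eta)$ in terms of $n_\tb$, the number of active bonds $B(\eta)$ and $n_\ta(\eta)$, using $H(\eta)=\Da n_\ta(\eta)+\Db n_\tb(\eta)-UB(\eta)$ from item~\ref{def group connected particles and bonds}; since every particle of type $\tb$ is saturated, $B(\eta)$ counts the shared sides in the tiling plus the $4$ bonds per tile, so $n_\ta(\eta)$ and $B(\eta)$ are determined by the combinatorics of the polyomino $[\eta]$ (number of tiles, number of adjacent tile-pairs, number of ``junction'' sites shared between tiles). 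This gives $H(\eta) = \Gamma\starred - \Db + c\,(P([\eta]) - 4\ell\starred) + (\text{nonnegative corrections})$ for an explicit positive constant $c$ (of order $\epsi$), where the corrections account for holes, for disconnected pieces, and for non-corner particles of type $\ta$ that have been deleted.

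Granting that identity, statement~(1) follows: if $\eta\neq\rho$ is $\btiled$ with $\quasiqnumber$ particles of type $\tb$, then $[\eta]$ is a polyomino of area $\quasiqnumber$ that is \emph{not} the $\ell\starred\times(\ell\starred-1)$ rectangle, hence by the Alonso--Cerf isoperimetric result it has dual perimeter $P([\eta]) > 4\ell\starred$ \emph{or} has $P([\eta])=4\ell\starred$ but contains a hole or is disconnected; in the first case the term $c(P([\eta])-4\ell\starred)>0$ does it, and in the second case one of the nonnegative corrections is strictly positive. Either way $H(\eta) > \Gamma\starred - \Db$. For statement~(2), I would argue that if $\tsupp{\eta}\neq\tsupp{\rho}$ then one can first compare $H(\eta)$ with the energy of the $\btiled$ configuration $\eta_0$ having the same tile support but with every tile completed to a full $\btile$: since completing a tile (adding a particle of type $\ta$ at a junction site that borders a type-$\tb$ particle) changes the energy by $\Da - kU$ with $k\ge 1$ the number of new active bonds, and since $\Da < U$ in the subregion \eqref{subpropmetreg}, each completion \emph{lowers} the energy, so $H(\eta)\ge H(\eta_0)$, with strict inequality unless $\eta$ was already $\btiled$; then apply part~(1) (or note $[\eta_0]=[\eta]\neq[\rho]$ and invoke the isoperimetric argument directly). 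Statement~(3) is the special case where $[\eta]=[\rho]$ but some non-corner (i.e.\ two-bond or three-bond) particle of type $\ta$ of $\rho$ has been removed: removing a particle of type $\ta$ with $m\ge 2$ active bonds changes the energy by $-\Da + mU \ge -\Da + 2U > 0$ since $\Da < U$ (indeed $\Da<2U$), and removing a corner (one-bond) particle of type $\ta$ would give $-\Da+U>0$ as well, so any nonempty such removal strictly increases the energy above $H(\rho)=\Gamma\starred-\Db$; here I should be a little careful that after a removal the configuration may no longer be $\btiled$, but that is irrelevant because the energy bookkeeping $H(\eta)=\Da n_\ta+\Db n_\tb-UB$ is used directly, not through any $\btiled$ assumption.

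The main obstacle, and the step I would spend the most care on, is pinning down the exact constant $c$ and verifying that \emph{all} the ``corrections'' are genuinely nonnegative and that at least one is strictly positive in each non-extremal case — in particular handling disconnected tile supports and holed tile supports uniformly, and making sure the perimeter used is the \emph{dual} perimeter (item~\ref{def group dual perimeter}) so that the Alonso--Cerf minimal-perimeter count applies verbatim. Concretely, for a connected hole-free $\btiled$ cluster with $t$ tiles the number of type-$\ta$ particles is $t + (\text{number of distinct junction sites})$ and the number of active bonds is $4t + 2\cdot(\text{number of adjacent tile pairs})$, and one checks that $P([\eta]) = 2t + 2 - 2\cdot(\text{number of adjacent tile pairs})$ is exactly Euler's relation for polyominoes; substituting into $H$ and using $\Gamma\starred = 4\Da + 2\Db - 4U$ for the relevant regime (or the general formula from \cite{dHNT11}) collapses everything to $H(\eta) = H(\rho) + \tfrac{\epsi}{2}\bigl(P([\eta]) - 4\ell\starred\bigr) + (\text{hole/disconnection terms})$. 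Once that display is in hand the three parts are immediate; the only other thing to double-check is that $\quasiqnumber = \ell\starred(\ell\starred-1)$ really does force the unique minimal-perimeter polyomino to be the rectangle, which is where the precise definition of $\ell\starred$ in \eqref{lstardef} and the quasi-square classification in Definition~\ref{def standard polyominoes} enter.
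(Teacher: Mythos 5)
Your overall strategy is the paper's strategy: express the energy of a $\btiled$ configuration through a geometric functional of its tile support, invoke discrete isoperimetric uniqueness of the $\ell\starred\times(\ell\starred-1)$ rectangle, and deduce (2)--(3) by saturating/comparing with $\rho$. The paper packages the geometry into $\cT(\eta)=2P(\eta)+4[C(\eta)-Q(\eta)]$ (Eq.~\eqref{Tdef}) and uses the identity $H(\eta)-H(\rho)=\tfrac{\Da}{4}[\cT(\eta)-\cT(\rho)]$ for $\btiled$ configurations with the same $n_\tb$, which is exactly the identity you are reaching for. However, the concrete numerics you propose do not close the argument and contain several errors that would be fatal if carried through.

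First and most importantly, the coefficient in front of the perimeter/$\cT$ difference is $\Da/2$ (per unit of perimeter), \emph{not} $\epsi/2$. Since both $\eta$ and $\rho$ are $\btiled$ with $\quasiqnumber$ particles of type $\tb$, $B=4n_\tb$ for both, so $H(\eta)-H(\rho)=\Da\,[n_\ta(\eta)-n_\ta(\rho)]$, and $n_\ta$ is what $\cT/4$ tracks. The distinction matters: the threshold you must beat is $\Gamma\starred-\Db-H(\rho)=\Da-\epsi$ (since $H(\rho)=\Gamma\starred-\Db-\Da+\epsi$, not $\Gamma\starred-\Db$ as you write). With the correct identity, the minimal jump $\cT(\eta)-\cT(\rho)\ge 4$ (recall $\cT$ is always a multiple of $4$ on $\btiled$ configurations) gives $H(\eta)-H(\rho)\ge\Da>\Da-\epsi$, using only $\epsi>0$. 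With your proposed $\epsi/2$ coefficient, a minimal perimeter jump of $2$ would give only $H(\eta)-H(\rho)\ge\epsi$, and this does \emph{not} exceed $\Da-\epsi$: by \eqref{lstardef} with $\ell\starred\ge 4$ one has $\Da>3\epsi$, so $\Da-\epsi>2\epsi>\epsi$, and the argument fails.

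Second, your baseline perimeter is off: $P([\rho])=2\ell\starred+2(\ell\starred-1)=4\ell\starred-2$, not $4\ell\starred$. The minimal dual perimeter $4\ell\starred$ in Lemma~\ref{lemma-minimal-energy-coincides-with-minimal-perimeter} is for $n_2\ge\protonumber$; here $\rho$ has $\quasiqnumber$ particles, one fewer. Third, the combinatorial accounting in your last paragraph is incorrect: for a $\btiled$ cluster with $t$ tiles the number of active bonds is exactly $4t$ (each $\tb$ is saturated and every active bond is incident to exactly one $\tb$), not $4t+2e$; and the perimeter is $4t-2e$ where $e$ is the number of adjacent tile pairs, not $2t+2-2e$. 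Finally, in part (3) the strictness comes from $H(\eta)\ge H(\rho)+(2U-\Da)$ together with $H(\rho)=\Gamma\starred-\Db-\Da+\epsi$ and $\Da<U$; your claim that $H(\rho)=\Gamma\starred-\Db$ makes the conclusion look trivial but for the wrong reason. If you replace $\epsi/2$ by $\Da/2$, correct $H(\rho)$, and fix the bookkeeping, your plan does coincide with the paper's proof (the paper's only organizational difference is that in part (2) it treats connected and disconnected $\eta$ in separate sub-cases rather than applying part (1) to the saturated configuration directly, but your one-step reduction via saturation is also valid).
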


\begin{proof}
Note that the standard
configurations with $\quasiqnumber$ particles of type
$\tb$ are unique modulo translations and rotations.

(1) Since $\eta$ is a $\btiled$ configuration, it follows from Lemma~2.3 in \cite{dHNT11} 
that $H(\eta)-H(\rho) = \tfrac14 [\cT(\eta)-\cT(\rho)] \Da$, because the energy difference 
between the two configurations only depends on the difference in the number of particles 
of type $\ta$. From Lemma~2.2 in \cite{dHNT11} it follows that $\cT(\eta) > \cT(\rho)$.
From the definition of $\cT$ in \eqref{Tdef} and Eq.\ (2.4) in \cite{dHNT11} we have that, 
for any $\btiled$ $\eta$, $\cT(\eta) = 4k$ for some $k\in\N$. Hence $\cT(\eta) - \cT(\rho) 
\ge 4$, and so $H(\eta) - H(\rho) \ge \Da$. The claim now follows by observing that
$H(\rho) = \Gamma\starred + \epsi - \Da - \Db$ and $\epsi>0$.

\medskip\noindent
(2) First consider the case where $\eta$ consists of a single cluster. Then there exists a 
configuration $\eta\prm$, obtained from $\eta$ by saturating all particles of type $\tb$, 
such that $H(\eta\prm) \le H(\eta)$ with equality if and only if $\eta\prm = \eta$. Clearly, 
$\tsupp{\eta} = \tsupp{\eta\prm}$. By part (1), 
we have
$H(\eta) \ge H(\eta\prm) > \Gamma\starred - \Db$. If $\eta$ consists of clusters $c_{1},
\ldots,c_{m}$ with $m \in \N\backslash\{1\}$, then observe that $H(\eta) = \sum_{i=1}^{m} 
c_{i}$. Let $\eta^{\nb(c_{i})}$ denote any standard configuration with $\nb(c_{i})$ particles 
of type $\tb$. By Lemmas~3.1 and 4.2 in \cite{dHNT11}, we have $H(\eta) = \sum_{i=1}^{k} 
H(c_{i}) \geq \sum_{i = 1}^{k} H(\eta^{\nb(c_{i})})$. Since $\rho$ is a standard configuration, 
it follows from Lemma~2.2 in \cite{dHNT11} that $\sum_{i=1}^{k} \cT(\eta^{\nb(c_{i})})>
\cT(\rho)$, and so, as in the proof of part (1), $\sum_{i=1}^{k} \cT(\eta^{\nb(c_{i})})
-\cT(\rho)>4$. Using (3.6) in \cite{dHNT11} for the energy of a standard configuration, we 
obtain that $\sum_{i = 1}^{k} H(\eta^{\nb(c_{i})}) - H(\rho) = \tfrac14[\sum_{i=1}^{k}
\cT(\eta^{\nb(c_{i})}) - \cT(\rho)] \Da$, from which we get the claim.

\medskip\noindent
(3) Let $m \in \N$ denote the number of non-corner particles of type $\ta$ removed from $\rho$ 
to obtain $\eta$. Then $H(\eta) \ge H(\rho) +m(2U - \Da) \ge H(\rho) + 2U - \Da$ (because
each of the non-corner particles of type $\ta$ in $\rho$ has at least 2 active bonds).
Substituting the value of $H(\rho)$ into the latter expression, we obtain $H(\eta) \ge 
\Gamma\starred - \Da - \Db + \epsi + 2U - \Da$. The claim follows by observing that 
$\Da < U$.
\end{proof}

\begin{lemma}
\label{lemma-entrance-of-set-protonumber}
{\rm (1)} All paths in $\optpaths$ enter the set $\nbis{\protonumber}$ via a configuration 
$(\hat{\eta},x)$ with $\hat{\eta} \in \nbis{\quasiqnumber}$ a quasi-standard 
configuration and $x \in \partial^{-}\Lambda$.\\
{\rm (2)} All paths in $\optpaths$ enter the set $\nbis{\critinumber}$ via a configuration 
$(\hat{\eta}, x)$ with $\hat{\eta} \in \minnbis{\protonumber}$ such that $\comlev
(\Box,\hat{\eta}) \le \Gamma\starred$, i.e., $\hat{\eta} \in \bar{g}(\{\Box\}, 
\minnbis{\protonumber})$, and $x \in \partial^{-}\Lambda$. Consequently, $\bar{g}(\{\Box\},
\minnbis{\protonumber})\boub$ is a gate for the transition $\Box \to \boxplus$.
\end{lemma}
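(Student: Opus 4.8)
The plan is to read both statements off the foliation of $\cX$ by the manifolds $\nbis{n}$. Since the value of $n$ changes only between adjacent manifolds and only by creating or annihilating a type-$\tb$ particle in $\partial^-\Lambda$ (Section~\ref{sec def}), and since $\Box\in\nbis{0}$, any path $\Box\to\boxplus$ enters $\nbis{n}$ for the first time from $\nbis{n-1}$ via a creation move in $\partial^-\Lambda$. Applying this with $n=\protonumber$ and with $n=\critinumber$, the first entrance of an optimal path into $\nbis{\protonumber}$, resp.\ $\nbis{\critinumber}$, is a configuration $(\hat\eta,x)$ with $x\in\partial^-\Lambda$ and $\hat\eta\in\nbis{\quasiqnumber}$, resp.\ $\hat\eta\in\nbis{\protonumber}$. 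As the path is optimal, $H((\hat\eta,x))\le\comlev(\Box,\boxplus)=\Gamma\starred$; since $x\in\partial^-\Lambda$ the added particle carries no active bond, so $H((\hat\eta,x))=H(\hat\eta)+\Db$, and hence $H(\hat\eta)\le\Gamma\starred-\Db$. Everything reduces to identifying the configurations of energy $\le\Gamma\starred-\Db$ in the two manifolds.

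For (1), let $\rho$ be the standard configuration with $\quasiqnumber$ particles of type $\tb$. If $\hat\eta\ne\rho$, Lemma~\ref{lemma-quasi-square-is-optimal}(2) forces $[\hat\eta]=[\rho]$, so the type-$\tb$ particles of $\hat\eta$ occupy precisely the cells of $[\rho]$. Delete from $\hat\eta$ the $k\ge0$ type-$\ta$ particles carrying no active bond (equivalently, not adjacent to any type-$\tb$ particle), obtaining $\hat\eta'$ with $H(\hat\eta')=H(\hat\eta)-k\Da$; every surviving type-$\ta$ particle lies on a junction site of a tile of $[\rho]$, hence on a type-$\ta$ site of $\rho$, so $\hat\eta'$ is obtained from $\rho$ by removing some type-$\ta$ particles. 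If any removed particle were non-corner, Lemma~\ref{lemma-quasi-square-is-optimal}(3) would give $H(\hat\eta')>\Gamma\starred-\Db\ge H(\hat\eta)\ge H(\hat\eta')$, a contradiction; so only corner particles are removed, $\hat\eta'$ is quasi-standard, and $H(\hat\eta')\ge H(\rho)=\Gamma\starred+\epsi-\Da-\Db$ (each corner removal raises the energy by $U-\Da>0$). Hence $H(\hat\eta)=H(\hat\eta')+k\Da\ge\Gamma\starred-\Db+\epsi+(k-1)\Da$, which exceeds $\Gamma\starred-\Db$ as soon as $k\ge1$; therefore $k=0$, $\hat\eta=\hat\eta'$ is quasi-standard, and (1) follows.

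For (2), recall that the prototype critical configuration of \cite{dHNT11} has energy $\Gamma\starred$ and consists of a minimal-energy configuration of $\nbis{\protonumber}$ together with one free type-$\tb$ particle, so $\Gamma\starred=\Db+\min_{\xi\in\nbis{\protonumber}}H(\xi)$. Thus $H(\hat\eta)\le\Gamma\starred-\Db=\min_{\xi\in\nbis{\protonumber}}H(\xi)$ together with $\hat\eta\in\nbis{\protonumber}$ gives $H(\hat\eta)=\min_{\xi\in\nbis{\protonumber}}H(\xi)$, i.e.\ $\hat\eta\in\minnbis{\protonumber}$. Moreover the initial segment of the optimal $\Box\to\boxplus$ path up to $\hat\eta$ has energies $\le\Gamma\starred$ and stays in $\nbis{\le\protonumber}$, so $\comlev(\Box,\hat\eta)\le\Gamma\starred$ and the particle-number constraint in the definition of $\bar g$ is met by this segment; hence $\hat\eta\in\bar g(\{\Box\},\minnbis{\protonumber})$. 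This is the first assertion of (2).

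Finally, $\optentrance{\nbis{\critinumber}}\subseteq\bar g(\{\Box\},\minnbis{\protonumber})\boub$ by the above, so every optimal path meets $\bar g(\{\Box\},\minnbis{\protonumber})\boub$. For any $(\hat\eta,x)$ in this set ($\hat\eta\in\minnbis{\protonumber}$, $\comlev(\Box,\hat\eta)\le\Gamma\starred$, $x\in\partial^-\Lambda$) one has $H((\hat\eta,x))=H(\hat\eta)+\Db=\Gamma\starred=\comlev(\Box,\boxplus)$, and $(\hat\eta,x)$ is reachable from $\Box$ below $\Gamma\starred$ (append the creation move at $x$ to a witnessing path for $\hat\eta\in\bar g(\{\Box\},\minnbis{\protonumber})$), so to conclude $(\hat\eta,x)\in\cS(\Box,\boxplus)$ it remains to continue to $\boxplus$ without exceeding $\Gamma\starred$: bring the free particle to the $\btiled$ droplet $\hat\eta$ and attach it (the energy drops to $\le\Gamma\starred-U$), then grow the droplet to the $\ell\starred\times\ell\starred$ dual square of Remark~\ref{remark-supercritical-square}, which lies in the basin of $\boxplus$, staying within the dual-perimeter budget $4\ell\starred$ granted by Lemma~\ref{lemma-minimal-energy-coincides-with-minimal-perimeter}. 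Then $\bar g(\{\Box\},\minnbis{\protonumber})\boub\subseteq\cS(\Box,\boxplus)$, and the set is a gate. I expect this last descent — controlling the energy along the growth of the supercritical droplet, and with it the precise determination of which $\hat\eta\in\minnbis{\protonumber}$ actually arise — to be the main obstacle; the remaining steps are bookkeeping resting on Lemma~\ref{lemma-quasi-square-is-optimal}, while the fine control is provided by the tile-motion analysis of the later sections.
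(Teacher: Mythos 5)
Your argument follows the paper's route precisely: use the foliation by $\nbis{n}$ to see that every entrance into $\nbis{\protonumber}$ (resp.\ $\nbis{\critinumber}$) is by a creation move in $\partial^-\Lambda$, deduce $H(\hat\eta)\le\Gamma\starred-\Db$ from optimality, and then identify the low-energy configurations in the relevant manifold. The paper's proof of (1) is literally the one-liner ``immediate from Lemma~\ref{lemma-quasi-square-is-optimal},'' and the paper's proof of (2) is a short contradiction using $H(\eta)=\Gamma\starred-\Db$ on $\minnbis{\protonumber}$; your unpacking of the contrapositives of Lemma~\ref{lemma-quasi-square-is-optimal}(2)--(3), together with the observation that the witnessing initial segment of the optimal path stays in $\nbis{\le\protonumber}$ and so certifies $\hat\eta\in\bar g(\{\Box\},\minnbis{\protonumber})$, is a correct and somewhat more explicit rendering of the same reasoning. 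The one place where you go beyond the paper's proof is the final paragraph, where you try to verify directly that $\bar g(\{\Box\},\minnbis{\protonumber})\boub\subseteq\cS(\Box,\boxplus)$ by descending to $\boxplus$. That descent is genuinely non-trivial: the free particle must be attached at a ``good'' site, not an arbitrary one, and growing the droplet to the supercritical $\ell\starred\times\ell\starred$ dual square below $\Gamma\starred$ is exactly the content of the existence-of-good-site lemmas and the tile-motion analysis of Sections~\ref{tilemotion}--\ref{Proofs}; the phrase ``staying within the dual-perimeter budget'' does not substitute for that control. You flag this yourself, which is fair, and the paper's own proof of this lemma likewise does not establish that inclusion here — the downstream use (Lemma~\ref{lemma-entgate-subset-ent-critinumber}) effectively needs only that $\bar g\boub$ meets every optimal path, with the $\subseteq\cS$ half supplied later by the good-site lemmas. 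So: same approach, correct on the two main assertions, and the same deferral on the gate's membership in $\cS$.
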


\begin{proof}
(1) This is immediate from Lemma~\ref{lemma-quasi-square-is-optimal}.

\medskip\noindent
(2) By Theorem~1.5 in \cite{dHNT11} (which identifies $\Gamma\starred$) and Lemmas~3.1--3.2 
in \cite{dHNT11} (which determine the energy of configurations in $\minnbis{n}$ for all 
$n$), if $\eta \in \minnbis{\protonumber}$, then $H(\eta) = \Gamma\starred - \Db$. We argue
by contradiction. Suppose that $\omega \in \optpaths$ enters $\nbis{\critinumber}$ via a 
configuration $\zeta = (\hat\zeta, x)$ with $\zeta \in \nbis{\protonumber} \backslash 
\minnbis{\protonumber}$ and $x \in \partial^{-}\Lambda$. Then $H(\zeta) = H(\hat{\zeta}) 
+ \Db > \Gamma\starred$, because $H(\zeta) > \Gamma\starred - \Db$. Hence $\omega$ is not
optimal.
\end{proof}


\subsection{Characterization of $g(\{\Box\}, \minnbis{\protonumber})$ and 
$\bar{g}(\{\Box\}, \minnbis{\protonumber})$}
\label{Preplem2}

\begin{definition}
\label{def-set-standard-configuration}
{\rm (a)} For $n\in\N$, let $\standard{n}$ be the set of standard configurations with $n$ 
particles of type $\tb$.\\
{\rm (b)} Let $\omega\colon\Box\to\boxplus = (\Box, \ldots, \xi, \eta, \zeta, \ldots, 
\boxplus$). Write $P_{\omega}(\eta)$ to denote the part of $\omega$ from $\Box$ to 
$\xi$ and $S_{\omega}(\eta)$ to denote the part of $\omega$ from $\zeta$ to $\boxplus$. 
Any configuration in $P_{\omega}(\eta)$ is called a predecessor of $\eta$ in $\omega$, 
while any configuration in $S_{\omega}(\eta)$ is called a successor of $\eta$ in $\omega$. 
The configurations $\xi$ and $\zeta$ are called the immediate predecessor, respectively, 
the immediate successor of $\eta$ in $\omega$.
\end{definition}

\begin{lemma}
\label{lemma-reachable-from-standard}
{\rm (1)} For every $\zeta \in \bar{g}(\{\Box\},\minnbis{\protonumber})$ there is a 
standard configuration $\bar{\eta} \in \standard{\protonumber}$ such that $\zeta \in
\bar{g}(\{\bar{\eta}\},\minnbis{\protonumber})$.\\
{\rm (2)} For every $\zeta \in g(\{\Box\}, \minnbis{\protonumber})$ there is a 
standard configuration $\bar{\eta} \in \standard{\protonumber}$ such that $\zeta \in 
g(\{\bar{\eta}\},\minnbis{\protonumber})$. Consequently,
\begin{equation}
g(\{\Box\}, \minnbis{\protonumber})
= \bigcup_{\bar{\eta} \in \standard{\protonumber}} 
g(\{\bar{\eta}\},\minnbis{\protonumber}).
\end{equation}
\end{lemma}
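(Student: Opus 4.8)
The plan is to exploit the structure already established in Lemma~\ref{lemma-entrance-of-set-protonumber}: any optimal path, and more generally any ``gradient-like'' path witnessing membership in $g$ or $\bar g$, must pass through the manifold $\nbis{\protonumber}$ via a configuration of the form $(\hat\eta, x)$ where $\hat\eta \in \nbis{\quasiqnumber}$ is quasi-standard and $x \in \partial^-\Lambda$, and that such paths first hit $\nbis{\protonumber}$ precisely at such configurations; before that they spend all their time in $\nbis{\le\quasiqnumber}$. First I would unwind the definitions of $g(\{\Box\},\minnbis{\protonumber})$ and $\bar g(\{\Box\},\minnbis{\protonumber})$: a configuration $\zeta \in \minnbis{\protonumber}$ lies in $\bar g(\{\Box\},\cdot)$ iff there is a path $\omega\colon \Box\to\zeta$ with $\nb$ non-increasing (from $\protonumber$ down — but here starting at $\Box$ we first must climb to $\protonumber$ and then the constraint $\nb(\zeta)\le \nb(\xi)\le \nb(\Box)$ is vacuous on the $\Box$-side while on the $\zeta$-side it forces the tail of $\omega$ to stay at exactly $\nb=\protonumber$ once it first reaches that level) and with $H(\xi)\le\Gamma\starred$ (resp.\ $<\Gamma\starred$) along $\omega$. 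The key point is that the last time $\omega$ is in $\nbis{\quasiqnumber}$ before reaching $\nbis{\protonumber}$, it sits at a configuration $\bar\eta_0$ with $\protonumber-1$ particles of type $\tb$, and the move into $\nbis{\protonumber}$ adds a particle of type $\tb$ in $\partial^-\Lambda$; since $H$ at the entering configuration is $\le\Gamma\starred$ (resp.\ $<\Gamma\starred$) and adding a type-$\tb$ particle in $\partial^-\Lambda$ costs exactly $\Db$, we get $H(\bar\eta_0)\le\Gamma\starred-\Db$ (resp.\ $<$), which by Lemma~\ref{lemma-quasi-square-is-optimal} forces $\bar\eta_0$ to be quasi-standard, and in the strict case even exactly standard.

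Next I would produce the required $\bar\eta\in\standard{\protonumber}$. Starting from the quasi-standard $\bar\eta_0\in\nbis{\quasiqnumber}$ identified above, restore the missing corner particles of type $\ta$ (each such restoration is a creation/hopping of a type-$\ta$ particle into a site with at least two active bonds available, hence energy-decreasing or neutral, and in any case keeps $H\le\Gamma\starred-\Db$), obtaining the standard configuration $\rho$ of Lemma~\ref{lemma-quasi-square-is-optimal} — call it $\bar\eta'$. Then add a type-$\tb$ particle at a good dual corner of $\bar\eta'$ to reach a standard configuration $\bar\eta\in\standard{\protonumber}$ of minimal energy $H(\bar\eta)=\Gamma\starred$; this insertion goes over a communication height of exactly $\Gamma\starred$ (the saddle being ``type-$\tb$ particle in $\partial^-\Lambda$ plus $\bar\eta'$'', energy $\Gamma\starred-\Db+\Db$), which is $\le\Gamma\starred$ but unfortunately not $<\Gamma\starred$ — this is exactly why part~(2) needs separate care. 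I would then splice: the $\Box\to\bar\eta$ path just constructed, followed by the tail of the original $\omega$ from its first entry into $\nbis{\protonumber}$ down to $\zeta$ (which stays at $\nb=\protonumber$ and has $H\le\Gamma\starred$, resp.\ $<\Gamma\starred$), witnesses $\zeta\in\bar g(\{\bar\eta\},\minnbis{\protonumber})$, resp.\ $\zeta\in g(\{\bar\eta\},\minnbis{\protonumber})$. The final displayed equality in part~(2) is then immediate: the inclusion $\supseteq$ is trivial since any path from $\bar\eta\in\standard{\protonumber}$ to $\minnbis{\protonumber}$ extends a path from $\Box$ (recall $H(\bar\eta)=\Gamma\starred$ is reached from $\Box$ below $\Gamma\starred$, as $\bar\eta'$ has energy $\Gamma\starred-\Db<\Gamma\starred$ and the last step's saddle is strictly below $\Gamma\starred$ when routed through $\partial^-\Lambda$... ), and $\subseteq$ is the content of part~(2).

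The main obstacle I anticipate is in part~(2) (the strict-inequality case): the natural path that builds up $\bar\eta$ by inserting the last type-$\tb$ particle has communication height exactly $\Gamma\starred$, not strictly below it, so one cannot naively claim $g(\{\Box\},\minnbis{\protonumber})\subseteq g(\{\bar\eta\},\minnbis{\protonumber})$ by prepending a sub-$\Gamma\starred$ path. The resolution must be more delicate: rather than inserting a fresh particle, one should observe that the original path $\omega$ realizing $\zeta\in g(\{\Box\},\minnbis{\protonumber})$ \emph{already} reaches some configuration $\eta_1\in\nbis{\protonumber}$ with $H(\eta_1)<\Gamma\starred$ after its first entry; by Lemma~\ref{lemma-quasi-square-is-optimal}(2)--(3) applied at level $\quasiqnumber$ together with a comparison of tile supports, $\eta_1$ must have the tile support of a standard configuration, and then — since its energy is strictly below $\Gamma\starred=H(\minnbis{\protonumber})+$ (no, $=H(\minnbis{\protonumber})$) — $\eta_1$ itself is already a standard (hence minimal-energy) configuration in $\nbis{\protonumber}$, i.e.\ $\eta_1=\bar\eta\in\standard{\protonumber}$ up to restoring/having corner particles, and the portion of $\omega$ from $\bar\eta$ onward witnesses $\zeta\in g(\{\bar\eta\},\minnbis{\protonumber})$ directly, with no prepended path needed. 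So the real work is a careful bookkeeping of where along $\omega$ the energy first dips to or below the relevant threshold and an application of the trichotomy in Lemma~\ref{lemma-quasi-square-is-optimal} to pin down the configuration there; everything else is routine path-surgery.
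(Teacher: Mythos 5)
Your proposal correctly identifies the relevant apparatus (Lemma~\ref{lemma-quasi-square-is-optimal}, splitting the path at the entrance to $\nbis{\protonumber}$), but the key construction is missing, and your attempted resolution of the part~(2) difficulty does not work.

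You correctly sense that building $\bar\eta\in\standard{\protonumber}$ by \emph{creating} a fresh type-$\tb$ particle and attaching it forces the path over a saddle at energy exactly $\Gamma\starred$, which is fatal for part~(2). Your proposed fix, however, is based on the claim that the entry configuration $\eta_1$ into $\nbis{\protonumber}$ ``is already a standard configuration.'' It is not: by Lemma~\ref{lemma-quasi-square-is-optimal} the path enters $\nbis{\protonumber}$ at a configuration $(\hat\eta,x)$ with $\hat\eta$ quasi-standard in $\nbis{\quasiqnumber}$ and $x\in\partial^-\Lambda$, i.e.\ with a disconnected free particle of type $\tb$ having no active bonds. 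Its tile support contains a detached unit square, so it is not in $\minnbis{\protonumber}$ and in particular not in $\standard{\protonumber}$; the later-in-$\omega$ steps certainly cannot be read off as a path from a standard configuration. So the gap is still open.

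The idea the paper uses, which you do not have, is to work with the floating particle that is \emph{already present} rather than inserting a new one: take $\eta=(\hat\eta,x)$ to be the configuration visited at the \emph{last} entry of $\omega$ into $\nbis{\protonumber}$ before $\zeta$ (not the first; this is what guarantees the remaining tail $\omega_2$ stays at $\nb\ge\protonumber$), and build a path $\omega_3\colon\eta\to\bar\eta$ that stays entirely within $\nbis{\protonumber}$ by first letting the floating particle travel along a lattice path of empty sites to a site $y\starred$ adjacent to the rectangular cluster (using that the sides of $[\hat\eta]$ are bounded by $\ell\starred$, so such a free lattice path exists), and then saturating all particles of type $\tb$ of $(\hat\eta,y\starred)$ with particles of type $\ta$. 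Along $\omega_3$ the energy never exceeds $H(\eta)$ (attaching the free particle gains at least $U$, saturation only lowers the energy). Hence $\max_\xi H(\xi)$ along the reversed path $\hat\omega_3$ is $\le H(\eta)\le\Gamma\starred$ in case~(1) and $<\Gamma\starred$ in case~(2), and splicing $\hat\omega_3$ with $\omega_2$ gives the required path from $\bar\eta$ to $\zeta$ with $\nb$ identically $\protonumber$. This single construction handles (1) and (2) simultaneously; no separate argument for the strict case is needed, and no step has communication height exactly $\Gamma\starred$ because no particle is ever created.

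One further remark: your restoration step (``restore the missing corner particles of type $\ta$'') goes in the wrong direction at this stage. The quasi-standard $\hat\eta$ is fine as it is; what has to be repaired is the saturation of $y\starred$, done \emph{after} the floating particle has been moved there. Restoring corners at the level of $\nbis{\quasiqnumber}$ and only then trying to insert a type-$\tb$ particle is exactly what leads to the spurious $\Gamma\starred$-saddle.
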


\begin{proof}
(1) Pick $\zeta \in \bar{g}(\{\Box\},\minnbis{\protonumber})$. Let $\omega\colon\,\Box
\to\zeta$ be such that $\max_{\xi \in \omega} H(\xi) \le \Gamma\starred$. Let 
$\eta$ be the configuration visited by $\omega$ when it enters the set $\nbis{\protonumber}$ 
for the last time before visiting $\zeta$. Write $\omega$ as $\omega_{1}+\omega_{2}$, 
where $\omega_{1}$ is the part of $\omega$ from $\Box$ to $\eta$ and $\omega_{2}$ is 
the part of $\omega$ from $\eta$ to $\zeta$. By Lemma~\ref{lemma-quasi-square-is-optimal}, 
we have $\eta = (\hat{\eta}, x)$, where $\hat{\eta}$ is a quasi-standard configuration in 
$\nbis{\quasiqnumber}$ and $x \in \partial^{-}\Lambda$, otherwise $H(\eta) > \Gamma\starred$.
We will show that there is a standard configuration $\bar{\eta} \in \minnbis{\protonumber}$ 
and a path $\omega_{3}\colon\,\eta \to \bar{\eta}$ such that $H(\xi) \le H(\eta)$ and 
$\nb(\xi) = \protonumber$ for all $\xi \in \omega_{3}$. 
	
Let $\tilde{\eta}$ be the standard configuration in $\nbis{\quasiqnumber}$ with the same 
tile support as $\hat{\eta}$. This configuration exists because every quasi-standard 
configuration whose support lies in $\Lambdaminus$ has no particle of type $\tb$ in
$\partial^{-}\Lambdaminus$. (The latter is due to the fact that, in a quasi-standard 
configuration with $\quasiqnumber$ particles of type $\tb$, each site that is occupied 
by a particle of type $\tb$ has at least three neighboring sites occupied by a particle 
of type $\ta$, and all sites in $\partial^{-}\Lambdaminus$ have at most two adjacent sites 
in $\Lambdaminus$.) Let $\bar{\eta}$ the standard configuration in $\nbis{\protonumber}$ 
obtained from $\tilde{\eta}$ by adding a protuberance, with the particle of type $\tb$ in 
this protuberance located at a site $y\starred$ on one of the longest sides of the rectangular 
cluster of $\tilde{\eta}$. This is always possible because at least one of the longest 
sides of $[\tilde{\eta}]$ is far away from $\partial^-\Lambda$.   
	
Consider the path $S_{\omega_{2}}(\eta)$. Since $\eta$ is the configuration visited by 
$\omega$ when the set $\nbis{\protonumber}$ is entered for the last time before visiting 
$\zeta$, all configurations in $S_{\omega_{2}}(\eta)$ have at least $\protonumber$ particles 
of type $\tb$. In particular, the particle of type $\tb$ in $x$ cannot leave $\Lambda$. We 
refer to this particle as the ``floating particle''. Observe that $H(\eta) \ge \Gamma\starred 
+ \epsi - \Da$, with equality if and only if $\hat{\eta}$ is standard. This implies that 
only moves of the floating particle are allowed until it enters $\Lambdaminus$ (particles 
in $\partial^{-}\Lambda$ cannot have active bonds). Furthermore, since $L>2\ell
\starred$ (and hence the sides of $\hat{\eta}$ are smaller than the sides of $\Lambda$), it 
follows that all sites $y \in \Lambdaminus$ such that $y \notin \supp(\hat{\eta})$ are 
lattice-connecting. In particular, there exists a lattice path $\lambda = x_{0}, x_{1},
\ldots,x_{m}$ in $\Lambda$ for some $m \in \N$ with $x_{0}=x$ and $x_{m} = y\starred$. 
	
Let $\omega_{3}$ be the path from $\eta$ to $\bar{\eta}$ obtained by first letting the 
floating particle move along the lattice path $\lambda$ until it reaches site $y\starred$ 
and then saturating all the particles of type $\tb$ in $(\hat{\eta}, y\starred)$. Note 
that $H(\hat{\eta}, y\starred) \le H(\eta) - U$ and that all configurations in $\gamma_{3}$ 
have at least as many active bonds as $\eta$. Therefore $H(\xi) \le H(\eta)$ for all 
$\xi \in \omega_{3}$.	 
	
Let $\hat\omega_{3}$ denote the path from $\bar{\eta}$ to $\eta$ obtained by inverting 
$\omega_{3}$. Then, by construction, $\tilde\omega = \hat\omega_{3} + \omega_{2}$ is a 
path from $\bar{\eta}$ to $\zeta$ such that $H(\xi) \le \Gamma\starred$ and $\nb(\xi)
\ge \protonumber$ for all $\xi \in \tilde\omega$.

\medskip\noindent
(2) Same as part (1).
\end{proof}


\subsection{Characterization of $\cG(\Box,\boxplus)$}
\label{Preplem3}

In this section we want to characterize the essential gate $\cG(\Box,\boxplus)$ for the transition
$\Box \to \boxplus$.
By Lemma~\ref{lemma-mnos-essential-saddles} the set $\cG(\Box,\boxplus)$ coincides with
the set of essential saddles.
Remind here that a saddle is characterized only by 
its energy and not by its number of particles (see Def.~\ref{def3}(e)-(f)).
In the following lemma we show that all saddles with strictly less than $\critinumber$
particles of type $\tb$ can not be essential.

\begin{lemma}
\label{lemma-unessential-saddles}
{\rm (1)} All saddles in $\nbis{\le\quasiqnumber}$ are unessential.\\
{\rm (2)} Let $\zeta\in \nbis{\le \protonumber}$ be such that $H(\zeta) = \Gamma\starred$. 
Let $\cO(\zeta)= \{\omega \in \optpaths\colon\,\omega \ni \zeta\}$ be the set of optimal 
paths visiting $\zeta$. If all paths in $\cO(\zeta)$ visit $g(\{\Box\},\minnbis{\protonumber})$
after visiting $\zeta$, then $\zeta$ is unessential.
\end{lemma}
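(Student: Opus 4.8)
\textbf{Proof plan for Lemma~\ref{lemma-unessential-saddles}.}

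The plan is to prove both parts by exhibiting, for each saddle $\zeta$ under consideration and each optimal path $\omega$ through $\zeta$, an optimal path $\omega\prm$ with $S(\omega\prm)\subseteq S(\omega)\setminus\{\zeta\}$, which is exactly the definition of $\zeta$ being unessential (Def.~\ref{def3}(e)). For part (1), let $\zeta$ be a saddle with $\nb(\zeta)\le\quasiqnumber$. Since $\zeta$ lies on an optimal path, $H(\zeta)=\Gamma\starred$. I would first observe that any optimal path $\omega\colon\Box\to\boxplus$ must eventually reach the manifold $\nbis{\critinumber}$, and by Lemma~\ref{lemma-entrance-of-set-protonumber}(1) it enters $\nbis{\protonumber}$ through a configuration $(\hat\eta,x)$ with $\hat\eta$ quasi-standard in $\nbis{\quasiqnumber}$, which has energy $H(\hat\eta)\ge\Gamma\starred+\epsi-\Da$ when it is standard and strictly larger otherwise; in any case $H((\hat\eta,x))=H(\hat\eta)+\Db$. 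The key point is that before $\omega$ first reaches the manifold $\nbis{\quasiqnumber}$ in a way that can be ``locked in'' — more precisely, before the last visit to $\nbis{\le\quasiqnumber}$ — the configuration $\zeta$ with $\nb(\zeta)\le\quasiqnumber$ has energy $\Gamma\starred$, while by the energy estimates of Lemma~\ref{lemma-quasi-square-is-optimal} and the results of \cite{dHNT11} there are configurations of strictly lower energy in $\nbis{\le\quasiqnumber}$. I would then argue that one can reroute $\omega$ locally near $\zeta$ so as to avoid the specific configuration $\zeta$ while staying at energy $\le\Gamma\starred$ and keeping the same (or a smaller) set of maximal-energy configurations: the idea is that in the sub-$\critinumber$ regime the dynamics still has ``room'' (many lattice-connecting empty sites, freedom to reorganize particles of type $\ta$, freedom in where the floating particle of type $\tb$ sits), so no single configuration at this particle count is forced.

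For part (2), the hypothesis is precisely what is needed: if every $\omega\in\cO(\zeta)$ visits $g(\{\Box\},\minnbis{\protonumber})$ strictly after $\zeta$, then I would use Lemma~\ref{lemma-reachable-from-standard}(2) to replace the initial segment of $\omega$ (from $\Box$ up to its first visit to $g(\{\Box\},\minnbis{\protonumber})$, which occurs after $\zeta$) by a path that first goes from $\Box$ to an appropriate standard configuration $\bar\eta\in\standard{\protonumber}$ and then into $g(\{\bar\eta\},\minnbis{\protonumber})$, and then concatenate with the tail of $\omega$. The point is that the construction in Lemma~\ref{lemma-reachable-from-standard} produces a path whose energy never exceeds $\Gamma\starred$ and which does not revisit $\zeta$ (since $\zeta$ occurs before the target set $g(\{\Box\},\minnbis{\protonumber})$ on $\omega$, and the replacement segment consists of standard/quasi-standard configurations with controlled energy, one checks $\zeta$ cannot appear there). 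This yields an optimal $\omega\prm$ with $\zeta\notin\omega\prm$, hence certainly $S(\omega\prm)\subseteq S(\omega)\setminus\{\zeta\}$, so $\zeta$ is unessential. I would also need to handle the degenerate case where $\cO(\zeta)=\emptyset$, in which $\zeta$ is vacuously unessential, or $\zeta$ is not a saddle at all.

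The main obstacle I expect is in part (1): carefully justifying the local rerouting argument around an \emph{arbitrary} saddle $\zeta$ with few particles of type $\tb$, i.e. showing that the ``reservoir of slack'' below the energy barrier at particle counts $\le\quasiqnumber$ genuinely lets one avoid any prescribed configuration without creating a new maximal-energy configuration. Concretely, one must combine: (i) the fact that an optimal path must pass through $\nbis{\protonumber}$ and hence through a quasi-standard configuration of energy $\ge\Gamma\starred+\epsi-\Da$ (Lemmas~\ref{lemma-quasi-square-is-optimal}--\ref{lemma-entrance-of-set-protonumber}); (ii) the structural facts from \cite{dHNT11} about $\minnbis{n}$ and the energy landscape below $\nbis{\protonumber}$; and (iii) the geometric freedom (lattice-connecting sites, far-from-boundary condition $L>2\ell\starred+2$) to move the extra type-$\tb$ particle and reorganize type-$\ta$ particles. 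I would most likely phrase the cleanest version as: reduce to part (2) by first showing that any optimal path through a saddle $\zeta\in\nbis{\le\quasiqnumber}$ either can be modified to avoid $\zeta$ outright, or necessarily visits $g(\{\Box\},\minnbis{\protonumber})$ after $\zeta$ (so that part~(2) applies), the dichotomy being driven by whether $\omega$'s behavior before its last visit to $\nbis{\le\quasiqnumber}$ is ``wasteful'' (energy drops strictly below $\Gamma\starred$, giving slack to reroute) or already ``committed'' to the standard skeleton.
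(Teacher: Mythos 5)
Your treatment of part (2) is essentially the paper's argument: once you know $\omega$ visits some $\eta\in g(\{\Box\},\minnbis{\protonumber})$ after $\zeta$, the definition of $g$ directly supplies a path $\omega_3\colon\Box\to\eta$ staying strictly below $\Gamma\starred$, and splicing $\omega_3$ onto the tail $\omega_2\colon\eta\to\boxplus$ gives an optimal path whose saddle set is contained in $S(\omega)\setminus\{\zeta\}$. (Going through Lemma~\ref{lemma-reachable-from-standard}(2) is a slight detour; the definition of $g$ already gives you the sub-$\Gamma\starred$ path from $\Box$ you need.)

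Part (1) is where there is a genuine gap, and you acknowledge it yourself. The ``local rerouting'' idea is not what the paper does, and it is unlikely to be justifiable as stated: a saddle $\zeta$ at energy exactly $\Gamma\starred$ need not have a nearby detour of equal or lower energy, and the ``slack'' you invoke is a heuristic, not an argument. Your fallback --- reducing to part (2) --- also does not go through. Part (2) carries the nontrivial hypothesis that \emph{every} path in $\cO(\zeta)$ visits $g(\{\Box\},\minnbis{\protonumber})$ (the \emph{strict} set) after $\zeta$, whereas Lemma~\ref{lemma-entrance-of-set-protonumber}(2) only guarantees passage through $\bar g(\{\Box\},\minnbis{\protonumber})$; the identity $\bar g=g$ is exactly what is established later, region by region, and cannot be assumed at this stage. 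Part (1), however, must hold in the full generality of the preparatory-lemmas section, so the reduction cannot be used as a black box.

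What the paper actually does in part (1) is a \emph{global replacement of the initial segment}, not a local reroute: given $\omega\in\cO(\zeta)$, Lemma~\ref{lemma-entrance-of-set-protonumber}(1) identifies the configuration $\rho_i$ (a quasi-standard configuration in $\nbis{\quasiqnumber}$) visited just before the last entrance to $\nbis{\protonumber}$; since $\nb(\zeta)\le\quasiqnumber$ and $H(\rho_i)\le\Gamma\starred-\Db<H(\zeta)$, the saddle $\zeta$ lies strictly before $\rho_i$ on $\omega$. One then replaces the whole stretch $\Box\to\rho_i$ by an explicitly constructed path: the reference path of \cite{dHNT11} from $\Box$ to the standard configuration $\rho_0$ (which stays strictly below $\Gamma\starred$), followed by iteratively detaching and removing the $i$ corner particles of type $\ta$ to reach $\rho_i$ (again strictly below $\Gamma\starred$). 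Concatenating with the tail $\omega_2\colon\rho_i\to\boxplus$ gives an optimal $\hat\omega$ with $S(\hat\omega)\subset S(\omega)\setminus\{\zeta\}$. The missing idea in your proposal is precisely this use of the concrete reference-path construction from \cite{dHNT11} to bridge from $\Box$ all the way to $\rho_i$ below $\Gamma\starred$; without it, there is no argument.
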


\begin{proof}
(1) Let $\zeta \in \nbis{\le\quasiqnumber}$ be a configuration such that $H(\zeta) = 
\Gamma\starred$. By Lemma~\ref{lemma-entrance-of-set-protonumber}(2), we have (recall 
Definition~\ref{def3}(e)) $S(\omega)\backslash \{\zeta\} \neq \emptyset$ for all 
$\omega \in \cO(\zeta)$, i.e., all paths in $\cO(\zeta)$ visit at least one other saddle 
configuration. Pick $\omega \in \optpaths$. By Lemma~\ref{lemma-entrance-of-set-protonumber}(1), 
$\omega$ (last) enters the set $\nbis{\protonumber}$ via a configuration $(\rho_{i},x)$ with 
$\rho_{i}$ a quasi-standard configuration obtained from the standard configuration $\rho_{0}
\in \nbis{\quasiqnumber}$ by removing $i$ corner particles of type $\ta$, and $x\in
\partial^-\Lambda$. It is clear that the configuration visited by $\omega$ just before 
$(\rho_{i}, x)$ is $\rho_{i}$. Write $\omega = \omega_{1} + \omega_{2}$, where $\omega_{1}$ 
is a path from $\Box$ to $\rho_{i}$ and $\omega_{2}$ is a path from $\rho_{i}$ to $\boxplus$. Obviously, $\zeta \in \omega_{1}$. Moreover, $\omega \in \optpaths$ implies that $H(\rho_{i},x) 
\le \Gamma\starred$ and, consequently, $H(\rho_{i}) \le \Gamma\starred - \Db$. Furthermore, 
$H(\rho_{j})< H(\rho_{i})$ for $j < i$. Let $\omega_{3}\colon\,\Box \to \rho_{0}$ be a path 
from $\Box$ to $\rho_{0}$ such that $H(\xi) < \Gamma\starred$ for all $\xi \in
\omega_{3}$ (e.g.\ follow the construction of the ``reference path'' in \cite{dHNT11}), 
and let $\omega_{4}\colon\,\rho_{0}\to\rho_{i}$ be the path obtained by, iteratively, 
detaching and moving out of $\Lambda$ one corner particle of type $\ta$ until configuration 
$\rho_{i}$ is reached. It is easy to see that $\max_{\xi \in \omega_{4}} H(\xi)<
\Gamma\starred$. Consider the path $\hat\omega = \omega_{3} + \omega_{4} + \omega_{2}$. By
construction, $\omega \in \optpaths$ and $S(\hat\omega) \subset S(\omega)\backslash\{\zeta\}$. 
Finally, observe that the same argument holds for any $\omega \in \cO(\zeta)$.

\medskip\noindent
(2) Since $g(\{\bar{\eta}\}, \minnbis{\protonumber}) \subset \nbis{\protonumber}$, all optimal 
paths from $g(\{\bar{\eta}\}, \minnbis{\protonumber})$ to $\boxplus$ must visit the set
$\optentrance{\nbis{\critinumber}}$. Therefore $S(\omega)\backslash \{\zeta\} \neq \emptyset$ 
for all $\omega \in \cO(\zeta)$, since all configurations in $\optentrance{\nbis{\critinumber}}$ 
have energy $\Gamma\starred$. Pick $\omega \in \cO(\zeta)$, and let $\eta$ be the first 
configuration in $g(\{\bar{\eta}\},\minnbis{\protonumber})$ visited by $\omega$ after 
visiting $\zeta$. Let $\omega_{1}$ be the part of $\omega$ from $\Box$ to $\eta$ and 
$\omega_{2}$ the part of $\omega$ from $\eta$ to $\boxplus$. Since $\eta \in g(\{\bar{\eta}\},
\minnbis{\protonumber})$, there is a path $\omega_{3}$ from $\Box$ to $\eta$ such that 
$H(\sigma) < \Gamma\starred$ for all $\sigma\in \omega_{3}$. Let $\hat\omega = \omega_{3} 
+ \omega_{2}$. By construction, $\hat\omega \in \optpaths$ and $S(\hat\omega) \subseteq 
S(\omega) \backslash \{\zeta\}$. Finally, observe that the same argument holds for all 
$\omega \in \cO(\zeta)$.
\end{proof}

\begin{lemma}
\label{lemma-entgate-subset-ent-critinumber}
If $\gbar = \g$, then $\entgate \subset \g\boub$ and $\proto \subset \g$.	
\end{lemma}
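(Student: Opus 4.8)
The plan is to exploit the gate property of $\bar g(\{\Box\},\minnbis{\protonumber})\boub$ established in Lemma~\ref{lemma-entrance-of-set-protonumber}(2), together with the hypothesis $\gbar = \g$, to control where optimal paths first reach $\cG(\Box,\boxplus)$. First I would recall that, by definition, $\entgate = \optentrance\cG(\Box,\boxplus)$, so it suffices to take an arbitrary optimal path $\omega\colon\Box\to\boxplus$ and show that the configuration through which it first enters $\cG(\Box,\boxplus)$ lies in $\g\boub$, and that the configuration visited just before is in $\g$. By Lemma~\ref{lemma-entrance-of-set-protonumber}(2), $\omega$ enters $\nbis{\critinumber}$ via a configuration $(\hat\eta,x)$ with $\hat\eta\in\bar g(\{\Box\},\minnbis{\protonumber})$ and $x\in\partial^-\Lambda$; under the hypothesis $\gbar = \g$ this means $\hat\eta\in\g$, hence $(\hat\eta,x)\in\g\boub$. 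The key point is then to argue that this first entrance into $\nbis{\critinumber}$ is in fact the first entrance of $\omega$ into the essential gate $\cG(\Box,\boxplus)$, i.e. that $\omega$ has not touched $\cG(\Box,\boxplus)$ while still inside $\nbis{\le\protonumber}$.

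The main obstacle is precisely this last claim: a priori an optimal path could pass through an essential saddle with $\le\quasiqnumber$ or $=\protonumber$ particles of type $\tb$ before reaching $\nbis{\critinumber}$. This is where Lemma~\ref{lemma-unessential-saddles} does the work. Part (1) says every saddle in $\nbis{\le\quasiqnumber}$ is unessential, hence (by Lemma~\ref{lemma-mnos-essential-saddles}) not in $\cG(\Box,\boxplus)$. For saddles $\zeta\in\nbis{\protonumber}$ with $H(\zeta)=\Gamma\starred$, I would invoke Lemma~\ref{lemma-unessential-saddles}(2): I need to check that along an \emph{optimal} path, once we are in $\nbis{\protonumber}$ at energy $\Gamma\starred$ we must subsequently visit $g(\{\Box\},\minnbis{\protonumber}) = \g$. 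Indeed, by Lemma~\ref{lemma-entrance-of-set-protonumber}(2) an optimal path enters $\nbis{\critinumber}$ only through $\bar g(\{\Box\},\minnbis{\protonumber})\boub = \g\boub$ (using $\gbar=\g$), and the configuration in $\nbis{\protonumber}$ visited immediately before that entrance is the corresponding element of $\g$; so from any saddle $\zeta$ of the above type the path does reach $\g$ afterwards, making $\zeta$ unessential. Therefore no configuration of $\nbis{\le\protonumber}$ at energy $\Gamma\starred$ lies in $\cG(\Box,\boxplus)$, and the first visit of $\omega$ to $\cG(\Box,\boxplus)$ occurs in $\nbis{\ge\critinumber}$.

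Combining these observations: the first entrance of $\omega$ into $\cG(\Box,\boxplus)$ coincides with (or occurs after) its first entrance into $\nbis{\critinumber}$, which by the above is a configuration $(\hat\eta,x)\in\g\boub$; and since every configuration in $\g\boub$ has energy $\Gamma\starred = H(\cG(\Box,\boxplus))$ and lies in $\cS(\Box,\boxplus)$ (being on an optimal path at the maximal energy), and the very next move after $(\hat\eta,x)$ that is forced is the floating particle of type $\tb$ moving into $\Lambda^-$ toward the droplet, one checks that $(\hat\eta,x)$ is itself the entrance point into $\cG(\Box,\boxplus)$. This gives $\entgate\subset\g\boub$. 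Finally, $\proto$ is by Definition~\ref{defdroplets-a}(b) the set of configurations visited just before first entrance of $\cG(\Box,\boxplus)$; since that entrance is via $(\hat\eta,x)\in\g\boub$ with $\hat\eta\in\g$, and the predecessor of $(\hat\eta,x)$ in $\nbis{\protonumber}$ is exactly $\hat\eta$, we conclude $\proto\subset\g$. I expect the only delicate point to be the bookkeeping in the previous sentence — verifying that the configuration visited immediately before $(\hat\eta,x)$ is $\hat\eta$ itself and not some other configuration with $\protonumber$ particles — which follows from the structure of allowed moves ($\nbis{\protonumber}\to\nbis{\critinumber}$ transitions only add a type-$\tb$ particle in $\partial^-\Lambda$) together with the energy argument in Lemma~\ref{lemma-entrance-of-set-protonumber}.
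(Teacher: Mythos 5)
Your overall strategy is the same as the paper's (reduce everything to Lemma~\ref{lemma-entrance-of-set-protonumber}(2) and Lemma~\ref{lemma-unessential-saddles}, via Lemma~\ref{lemma-mnos-essential-saddles}), but there is a genuine gap at the decisive step. You conclude by asserting that the entrance configuration $(\hat\eta,x)\in\g\boub$ ``is itself the entrance point into $\cG(\Box,\boxplus)$'', justified only by the observation that it has energy $\Gamma\starred$ and lies on an optimal path. That only makes $(\hat\eta,x)$ a saddle, i.e.\ an element of $\cS(\Box,\boxplus)$; it does not make it an \emph{essential} saddle, and by Lemma~\ref{lemma-mnos-essential-saddles} membership in $\cG(\Box,\boxplus)$ is exactly essentiality. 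Saddles on optimal paths can perfectly well be dead-ends (Definition~\ref{def3}(d)--(f)), and indeed the whole point of Lemma~\ref{lemma-unessential-saddles} is that many saddles visited by optimal paths are unessential. Establishing that the individual configurations of, say, $\CA\boub$ are essential is precisely the hard, region-specific content of Lemma~\ref{lemma-identification-of-entgate-RA} (it needs the ``good site'' Lemma~\ref{lemma-proto-is-good-RA}); it cannot be ``checked'' at the abstract level of the present lemma, whose hypothesis is only $\gbar=\g$. Without knowing that the path meets $\cG(\Box,\boxplus)$ \emph{inside} $\g\boub$, your argument leaves open the possibility that the first (indeed every) configuration of $\cG(\Box,\boxplus)$ visited by $\omega$ lies in $\nbis{\ge\critinumber}\setminus\g\boub$, and then neither $\entgate\subset\g\boub$ nor $\proto\subset\g$ follows.

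The paper closes this hole with an abstract gate argument that your proposal is missing: by Lemma~\ref{lemma-entrance-of-set-protonumber}(2) and the hypothesis $\gbar=\g$, the set $\g\boub$ is a gate for $\Box\to\boxplus$; every gate contains a minimal gate $\cW$, and every minimal gate is by Definition~\ref{def3}(d) contained in $\cG(\Box,\boxplus)$. Hence every $\omega\in\optpaths$ satisfies $\omega\cap\cG(\Box,\boxplus)\cap\g\boub\neq\emptyset$. Combining this with the fact (which you do prove, as in the paper, from Lemmas~\ref{lemma-entrance-of-set-protonumber} and \ref{lemma-unessential-saddles}) that no saddle in $\nbis{\le\protonumber}$ is essential, one concludes that the first configuration of $\cG(\Box,\boxplus)$ visited by $\omega$ lies in $\g\boub$, which gives $\entgate\subset\g\boub$; note that this yields only the inclusion, leaving room for some elements of $\g\boub$ to be dead-ends, which is all the lemma claims. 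For the second inclusion, your bookkeeping (the predecessor of the entrance configuration is $\hat\eta\in\g$, since transitions into $\nbis{\critinumber}$ only add a particle of type $\tb$ in $\partial^-\Lambda$) is fine, but it presupposes that the first entrance into $\cG(\Box,\boxplus)$ coincides with the first entrance into $\nbis{\critinumber}$, which again rests on the unproved essentiality claim; the paper instead argues by contradiction from $\hat\eta\in\proto\setminus\g$, using that $\entgate\subset\g\boub$ and that all optimal paths enter $\nbis{\critinumber}$ from $\g$, to force $\hat\eta\in\nbis{\ge\critinumber}$ and a contradiction. So the missing ingredient you need to add is the ``gate $\Rightarrow$ contains a minimal gate $\Rightarrow$ meets $\cG(\Box,\boxplus)$'' step.
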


\begin{proof}
By Lemma~\ref{lemma-entrance-of-set-protonumber}(2), if $\gbar = \g$, then $\g\boub$ is a 
gate for the transition $\Box \to \boxplus$. Since $\g\boub$ is a gate, there exists a 
$\cW \subset \g\boub$ that is a minimal gate for the transition $\Box \to \boxplus$. 
Let $\omega \in \optpaths$. Since $\cW \subset \cG(\Box, \boxplus)$ and $\omega \cap 
\cW \neq \emptyset$, it follows that $\omega \cap \cG(\Box, \boxplus) \cap \g\boub 
\neq \emptyset$. Combining Lemmas~\ref{lemma-entrance-of-set-protonumber} and 
\ref{lemma-unessential-saddles}, it follows that if $\gbar = \g$, then all saddles in 
$\nbis{\le \protonumber}$ are unessential and, by Lemma~\ref{lemma-mnos-essential-saddles},
do not belong to $\cG(\Box,\boxplus)$. Therefore the first configuration in $\cG(\Box,
\boxplus)$ visited by $\omega$ is an element of $\g\boub$. Since the choice of $\omega
\in \optpaths$ is arbitrary, we conclude that $\entgate \subset \g\boub$.
	
It remains to show that $\proto \subset \g$. The proof is by contradiction. Pick $\hat{\eta}
\in \proto \backslash \g$. Since $\entgate \subset \g\boub$, there is a configuration $\eta
\in \g\boub$ obtained in a single step from $\hat{\eta}$. Clearly, $\hat{\eta} \in 
\nbis{\ge \protonumber}$, since the number of particle of type $\tb$ in $\Lambda$ changes 
at most by one at each step. Since, by Lemma~\ref{lemma-entrance-of-set-protonumber} and 
the hypothesis $\gbar = \g$, all paths in $\optpaths$ enter $\nbis{\critinumber}$ by adding 
a particle of type $\tb$ in $\partial^{-}\Lambda$ to a configuration in $\g$, it follows that
$\hat{\eta} \in \nbis{\ge \critinumber}$ by assumption.
\end{proof}


\section{Motion of $\btiles$}
\label{tilemotion}

In Section~\ref{sec-moving-dimers} we study the motion of $\btiles$. In 
Section~\ref{sec-heavysteps} we derive some restrictions on the transitions 
between configurations with different tile support. In Section~\ref{sec-small-lc} 
we identify the critical droplets for small values of $\ell\starred$, namely, 
$\ell\starred=2,3$.   


\subsection{Motion of dimers of $\btiles$}
\label{sec-moving-dimers}

\begin{definition}
\label{def-various}
{\rm (a)}
Two configurations $\eta$ and $\eta\prm$ (with the same tile support) are called equivalent 
if there is a path $\omega\colon\,\eta \to \eta\prm$ (possibly of length zero) such that all configuration in $\omega$ have the same tile support and $\comlev(\eta,\eta\prm)<\Gamma
\starred$. In words, two configurations are equivalent if it is possible to go from one to 
the other via a sequence of moves of particles of type $\ta$ without reaching energy level 
$\Gamma \starred$.\\ 
{\rm (b)}
A heavy-step is a sequence of moves realizing the transition between two configurations 
$\eta$ and $\eta\prm$ with different tile support. Note that a heavy-step is completed 
by moving, removing or adding a particle of type $\tb$ in $\Lambda$.
\end{definition}

Let $\eta \in \nbis{n_{2}}$ and $\bar{\eta} \in \minnbis{n_{2}}$. By \cite{dHNT11}, 
Lemma~4.1 and the proof of Lemma~3.1, both $B(\bar{\eta})$ and $\na(\bar{\eta})$ are 
constant in $\minnbis{n_{2}}$.

\begin{definition}
{\rm (a)}
A configuration $\eta \in \nbis{n_{2}}$ is said to have $m$ broken bonds if $B(\eta)
=B(\bar{\eta})-m$ for all $\bar{\eta} \in \minnbis{n_{2}}$. The number of broken bonds 
in configuration $\eta$ is denoted by $\broken{\eta}$.\\
{\rm (b)}
A configuration $\eta \in \nbis{n_{2}}$ is said to have $n$ extra particles of type $\ta$ 
if $\na(\eta)=\na(\bar{\eta})+n$ for all $\bar{\eta} \in \minnbis{n_{2}}$. The number of 
extra particles of type $\ta$ in configuration $\eta$ is denoted by $\extrapart{\eta}$.\\
{\rm (c)}
$B(p,\eta)$ denotes the number of active bonds adjacent to particle $p$ in configuration 
$\eta$.\\
{\rm (d)}
A dimer consists of two adjacent particles of different type such that the particle of 
type $\ta$ is lattice-connecting and has only one active bond (i.e., is a corner particle 
of type $\ta$). The particle of type $\tb$ belonging to a dimer is called a corner particle 
of type $\tb$.\\
{\rm (e)}
A particle of type $\tb$ in a $\btiled$ configuration $\eta$ is called external if it 
can be moved without moving any other particle of type $\tb$ in $\eta$ (see 
Fig.~{\rm \ref{fig:paradigm_class_A}}).
\end{definition}
	
In this section we will exhibit two methods to move a dimer in a configuration $\eta$ to 
a good dual corner of the cluster it belongs to (see Fig.~\ref{fig:dimer-moved}). The 
configuration $\eta\prm$ that is obtained in this way satisfies $H(\eta\prm) \le H(\eta)$. 
In particular, we will exhibit two different choices for a path $\omega$ from $\eta$ as 
in Fig.~\ref{fig:dimer-moved}(a) to $\eta\prm$ as in Fig.~\ref{fig:dimer-moved}(b), and 
we will determine $\max_{\xi \in \omega} H(\xi)$. In what follows we write $\D H(\omega) 
= H(\tilde{\eta}) - H(\eta)$, where $\tilde{\eta}$ is the configuration that is reached 
after the last step in $\omega$.

\begin{figure}[htbp]
\centering
\subfigure[]
{\includegraphics[height=5\tilesize]{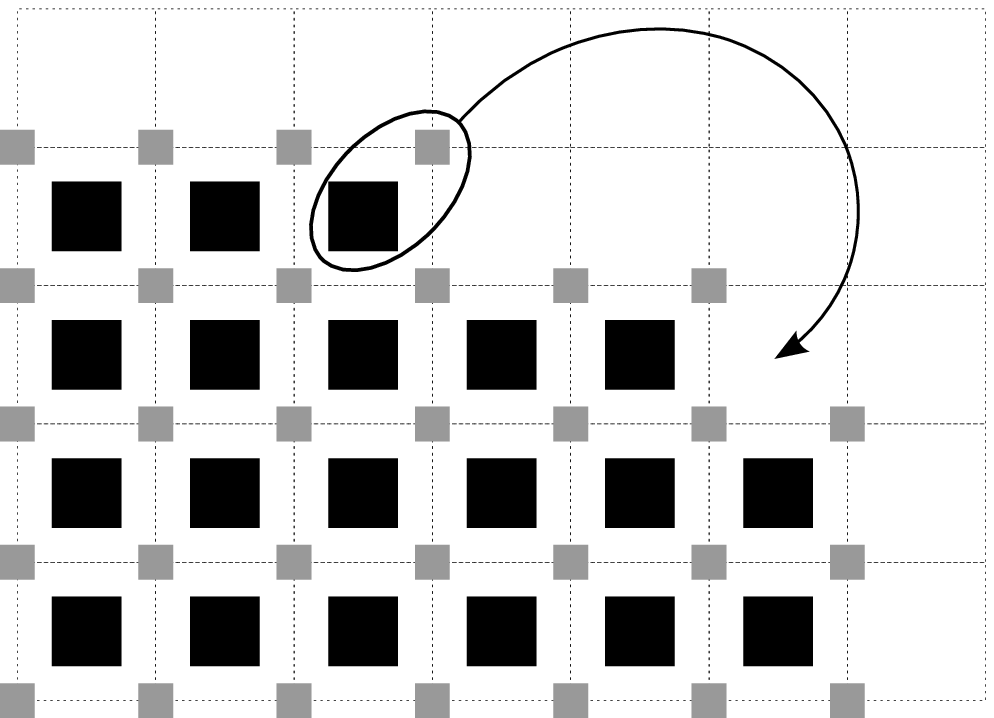}}
\qquad
\subfigure[]
{\includegraphics[height=5\tilesize]{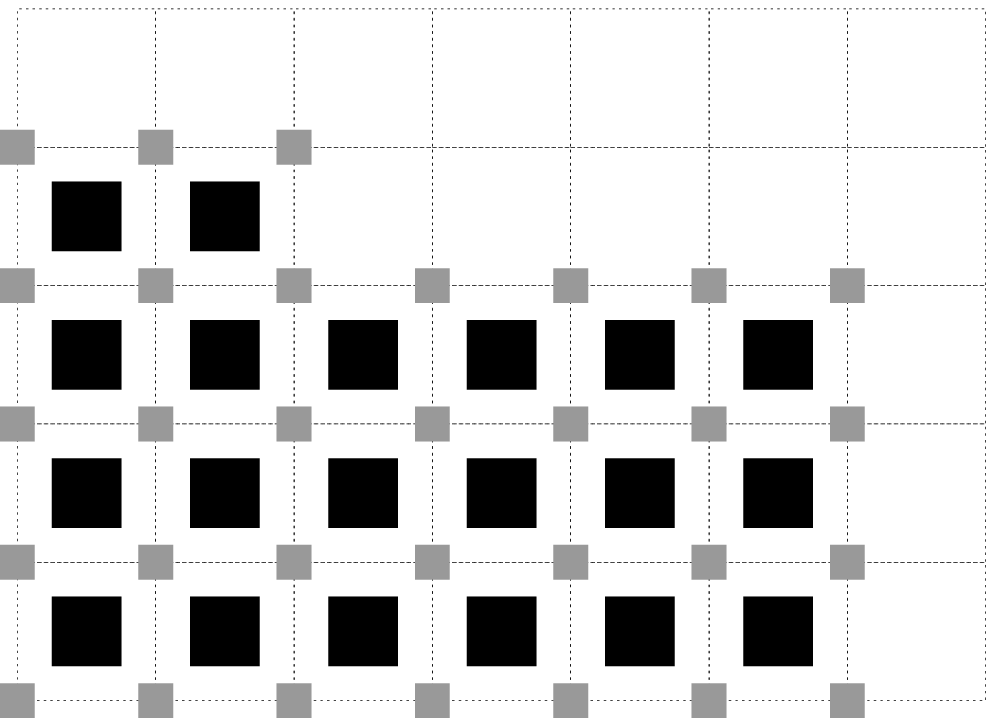}}
\caption{Motion of a $\btile$.} 
\label{fig:dimer-moved}
\end{figure}

\begin{lemma}
A $\btile$ can be moved within energy barrier $3U$ and $U+4\Da$.
\end{lemma}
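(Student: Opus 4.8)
The proof is by explicit construction. Write $\eta$ for the configuration of Fig.~\ref{fig:dimer-moved}(a), in which a dimer (a corner particle $p$ of type $\tb$ together with an adjacent corner particle of type $\ta$) sits on the cluster, and $\eta\prm$ for the configuration of Fig.~\ref{fig:dimer-moved}(b), obtained from $\eta$ by relocating this $\btile$ to the prescribed good dual corner. I will produce two paths of allowed moves $\omega_{1},\omega_{2}\colon\eta\to\eta\prm$ and show that the energy along $\omega_{1}$ never exceeds $H(\eta)+3U$ and the energy along $\omega_{2}$ never exceeds $H(\eta)+U+4\Da$; since a $\btile$ inserted into a good dual corner only creates active bonds, this also yields $H(\eta\prm)\le H(\eta)$. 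Both paths decompose into an initial phase acting near the dimer, a transport phase in which a lattice-connecting free particle (or small group) is walked along empty sites of $\Lambda$ to the neighbourhood of the good dual corner, and a final reattachment phase there. The transport phase is always feasible: by Definition~\ref{def group lattice} and the largeness of $L$, every site outside $\supp(\eta)$ lies on a path of empty sites to $\partial^{-}\Lambda$, so the relocated particles can be routed freely inside $\Lambda$; and the reattachment phase is energy non-increasing. Hence the supremum of $H$ along each path is controlled by the first two phases.

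For $\omega_{1}$ (barrier $3U$): first move the companion particle of type $\ta$ of the dimer onto a neighbouring empty tile centre, and then shift $p$ onto the corner just vacated; this breaks at most three active bonds and creates none, raising the energy to at most $H(\eta)+3U$ and leaving $p$ bonded to a single particle of type $\ta$. One further move re-attaches $p$ to the boundary of the cluster (restoring two active bonds), bringing the energy down to $H(\eta)+2U$. Now roll $p$ along the boundary of the cluster towards the good dual corner, escorted by the freed particle(s) of type $\ta$: at each tile centre visited $p$ has at least two active bonds to particles of type $\ta$ already present on the cluster, and each elementary step (tile centre $\to$ outside corner $\to$ next tile centre) costs at most $U$, the escorting particle of type $\ta$ supplying the bond at the intermediate corner; so the energy stays $\le H(\eta)+3U$ throughout. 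Slotting $p$ and one escort into the good dual corner (and routing any remaining escort to $\partial^{-}\Lambda$ without raising the energy above the current level) only creates bonds. Thus the energy along $\omega_{1}$ stays $\le H(\eta)+3U$.

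For $\omega_{2}$ (barrier $U+4\Da$): here $p$ is never detached from the cluster. Before moving it, bring in up to four extra particles of type $\ta$ — each created at a site of $\partial^{-}\Lambda$ at cost $\Da$, or relocated at cost $0$ from a corner of the cluster where available — and place them along the intended boundary path of $p$, so that at every stage $p$ has enough neighbours of type $\ta$ to keep at most one of its active bonds broken. The highest-energy configuration then carries at most one broken bond and the four extra particles of type $\ta$, so its energy is $\le H(\eta)+U+4\Da$; afterwards, once $p$ sits at the good dual corner, the surplus particles of type $\ta$ are removed one at a time, each after being brought to a site with no active bond, which only lowers the energy. Thus the energy along $\omega_{2}$ stays $\le H(\eta)+U+4\Da$.

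The main obstacle is verifying that $H$ stays below the stated bound all along each path, not just at its endpoints. This requires (i) choosing the order of the elementary moves so that broken bonds and surplus particles of type $\ta$ never accumulate beyond the counts $3$ and $0$, respectively $1$ and $4$; (ii) a short case distinction over the local geometry of the starting dimer and of the target good dual corner (ordinary versus hanging protuberance; good dual corner lying on a straight edge versus adjacent to a second corner), which is finite because, up to rotation and reflection, only the canonical pictures of Fig.~\ref{fig:dimer-moved} occur; and (iii) using the width-$3$ inert annulus and the largeness of $\Lambda$ so that neither the transport of the freed $\btile$ and its escorts nor the comings and goings of the surplus particles of type $\ta$ is ever obstructed. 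Steps (i)--(ii) are the real work; once a path is fixed, the estimate at each step is a direct computation using $H(\eta)=\Da\, n_{\ta}(\eta)+\Db\, n_{\tb}(\eta)-U B(\eta)$.
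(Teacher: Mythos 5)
Your proposal is correct and takes essentially the same route as the paper: the $3U$ path rolls the detached dimer along the cluster boundary escorted by its freed particle of type $\ta$, and the $U+4\Da$ path imports at most four extra particles of type $\ta$ so that at most one active bond is broken at any time, which are exactly the two paradigmatic move sequences the paper exhibits. The one caveat is that your local invariant for the first path (two active bonds to cluster particles at every tile centre) fails at the tile centre just outside a convex corner of the cluster, where only one cluster bond is available and the barrier $3U$ is actually attained; there the escort's bond and a careful ordering of its repositioning (as in the paper's explicit corner sequence) are what keep the path at $3U$, a repair that stays within your own framework.
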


\begin{proof}
We will give examples that are paradigmatic for the general case. Let $p$, $q$ denote, 
respectively, the particle of type $\tb$ and of type $\ta$ of the dimer that we want to 
move.

\medskip\noindent
{\bf 1.}
The first method is achieved within energy barrier $3U$ energy (i.e., $\max_{\xi\in\omega}
H(\xi)=3U$) and goes as follows (see Fig.~\ref{fig:dimer-3U}). First, particle $q$ is moved 
one step North-East ($\D H(\omega) = U$). Next, also particle $p$ is moved one step North-East 
($\D H(\omega) = 3U$; see Fig.~\ref{fig:dimer-3U}(a)). After that, particle $p$ is moved one 
step South-East ($\D H(\omega) = 2U$), and particle $q$ is moved in two steps to the site at 
dual distance $2\sqrt{2}$ from particle $p$ in the North-East direction ($\D H(\omega) = 2U$; 
see Fig.~\ref{fig:dimer-3U}(b)). It is possible to continue following a pattern of this type 
until particle $p$ is adjacent only to the (original) corner particle of type $\ta$ at the 
end of the bar ``just below'' $p$ ($\D H(\omega) = 3U$; see Fig.~\ref{fig:dimer-3U}(c)). Call 
$\eta_{1}$ the configuration reached after this last step. Particle $q$ can now be moved to 
the site at dual distance $2\sqrt{2}$ from particle $p$ in the South-East direction ($\D H
(\omega) = 3U$; see Fig.~\ref{fig:dimer-3U}(c)). Call this configuration $\eta_{2}$. Move 
particle $p$ first one step South-East ($\D H(\omega) = 3U$) and then one step South-West 
($\D H(\omega) = U$). Finally, move particle $q$ to the free site adjacent to $p$ ($\D H
(\omega) = 0$).

\begin{remark}
Note that from $\eta_{2}$ to $\eta\prm$ particle $p$ moves in the South direction via the 
same mechanism that was used to move in the East direction from $\eta$ to $\eta_{1}$. This 
symmetry in the motion of the dimer around a corner of the cluster will be used also in 
the sequel.
\end{remark}
 
\begin{figure}[htbp]
\centering
\subfigure[]
{\includegraphics[height=5\tilesize]{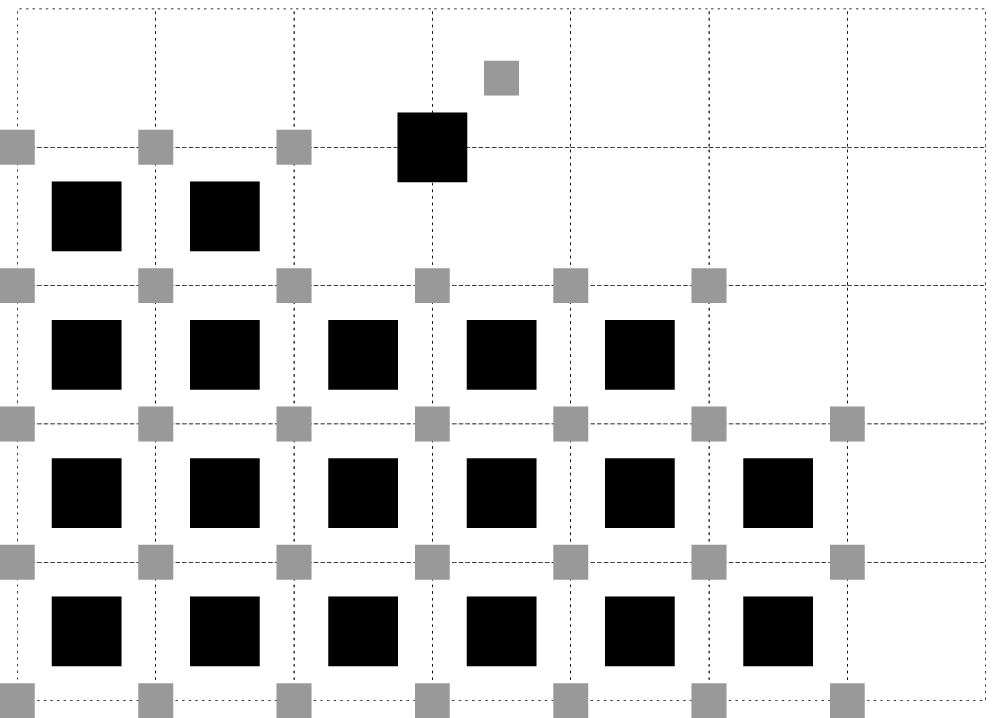}}
\qquad
\subfigure[]
{\includegraphics[height=5\tilesize]{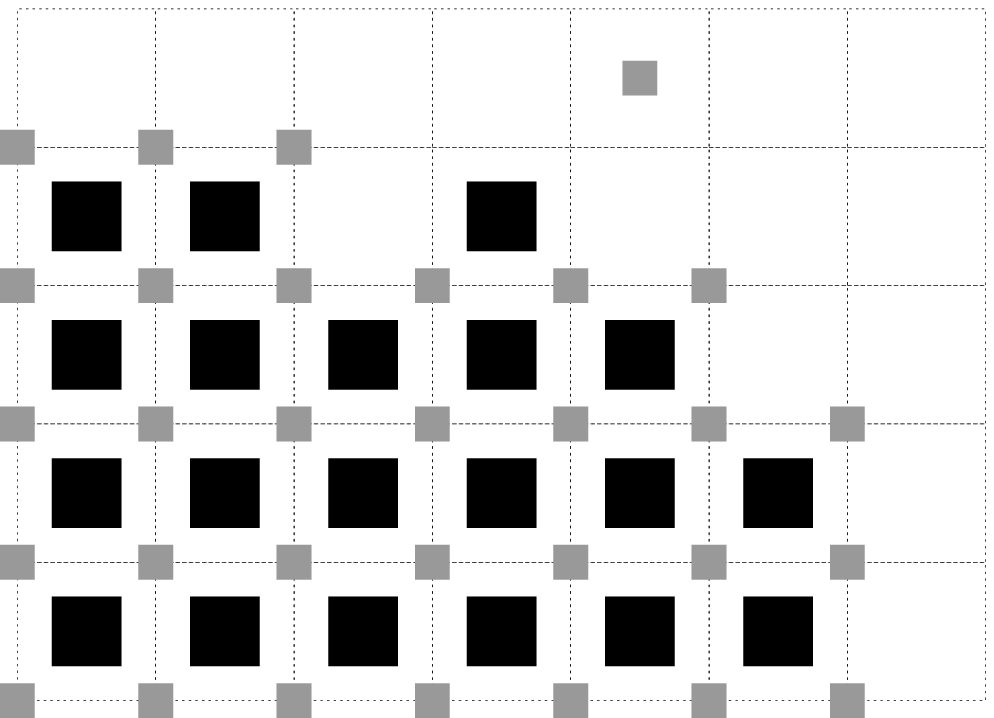}}
\qquad
\subfigure[]
{\includegraphics[height=5\tilesize]{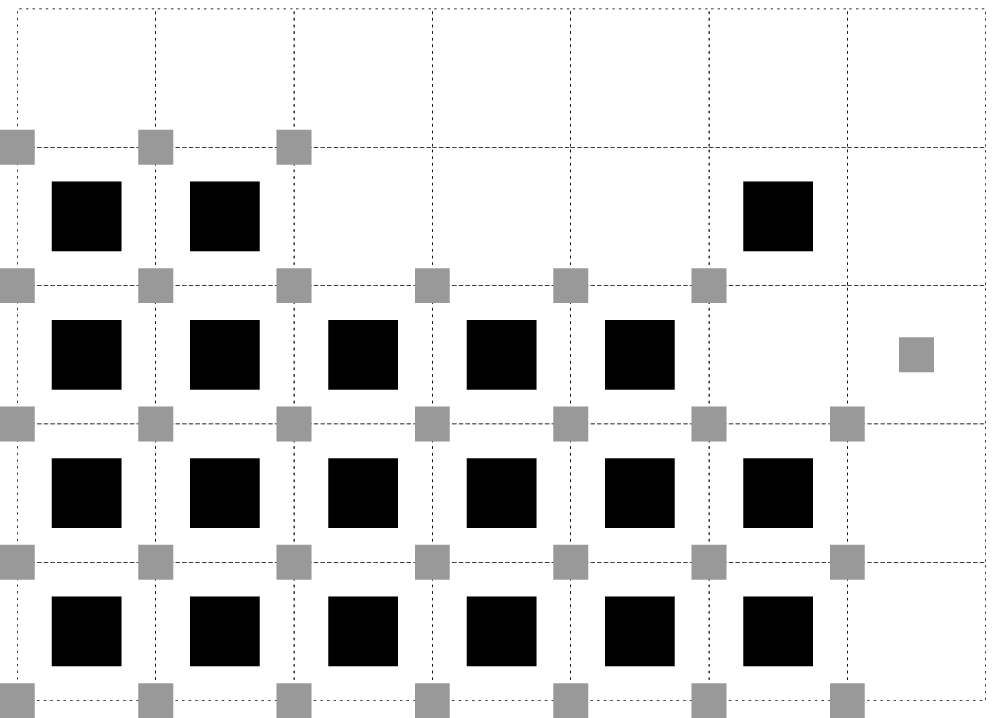}}
\caption{A dimer is moved to a corner within energy barrier $3U$.} 
\label{fig:dimer-3U}
\end{figure}

\medskip\noindent
{\bf 2.} The second method is achieved within energy barrier $U + 4\Da$ (i.e., 
$\max_{\xi\in\omega} H(\xi)=U + 4\Da$) and goes as follows (see Fig.~\ref{fig:dimer-U4Da}). 
First, move particle $q$ one step in the North-East direction ($\D H(\omega) = U$), and 
let two extra particles of type $\ta$ enter $\Lambda$ and reach the two sites at dual 
distance $1$ from $q$ in the West and the South direction ($\D H(\omega) = U + 2\Da$). 
Next, move particle $p$ one step in the North-East direction ($\D H(\omega) = U + 2\Da$, 
see Fig.~\ref{fig:dimer-U4Da}(a)). After that, move the particle of type $\ta$ at dual 
distance $1$ in the West direction from $p$ to the site adjacent to $p$ in the South-West 
direction ($\D H(\omega) = U+2\Da$), let one extra particle of type $\ta$ enter $\Lambda$ 
($\D H(\omega) = 2+3\Da$), and move this particle to the site at dual distance $1$ in 
the West direction from $p$ ($\D H(\omega) = 3\Da$, see Fig.~\ref{fig:dimer-U4Da}(b)).

\begin{figure}[htbp]
\centering
\subfigure[]
{\includegraphics[height=5\tilesize]{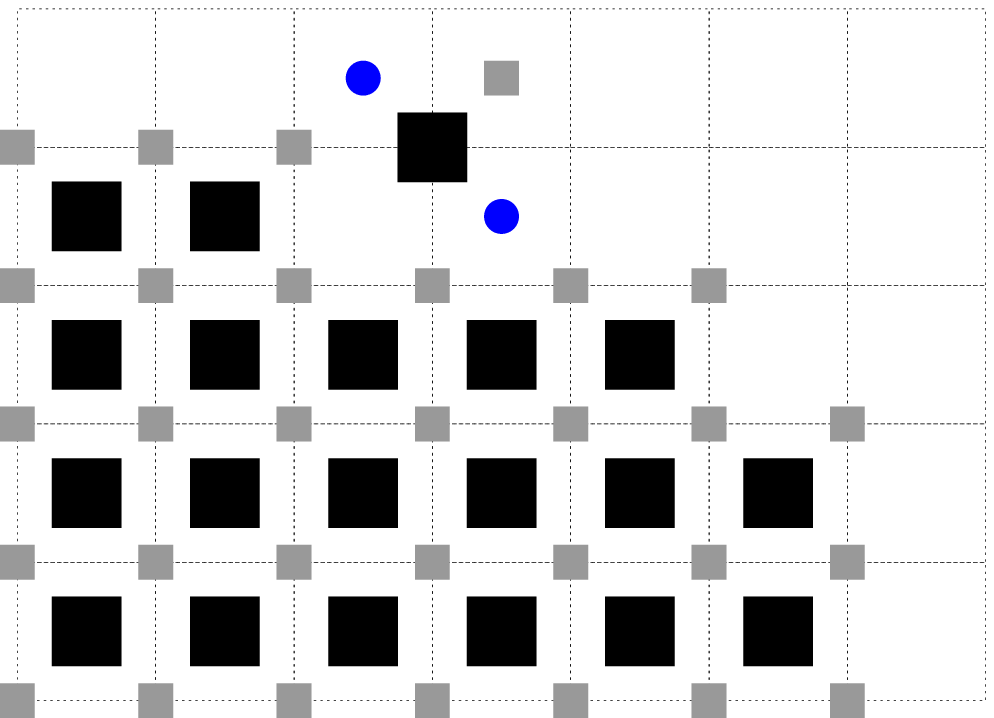}}
\qquad
\subfigure[]
{\includegraphics[height=5\tilesize]{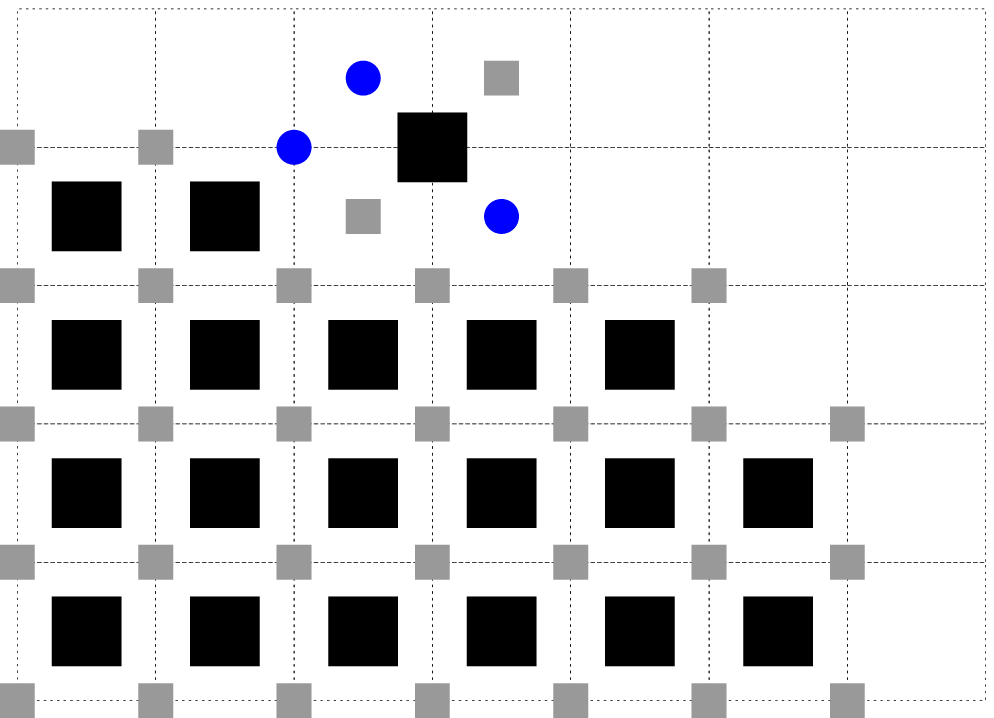}}
\qquad
\subfigure[]
{\includegraphics[height=5\tilesize]{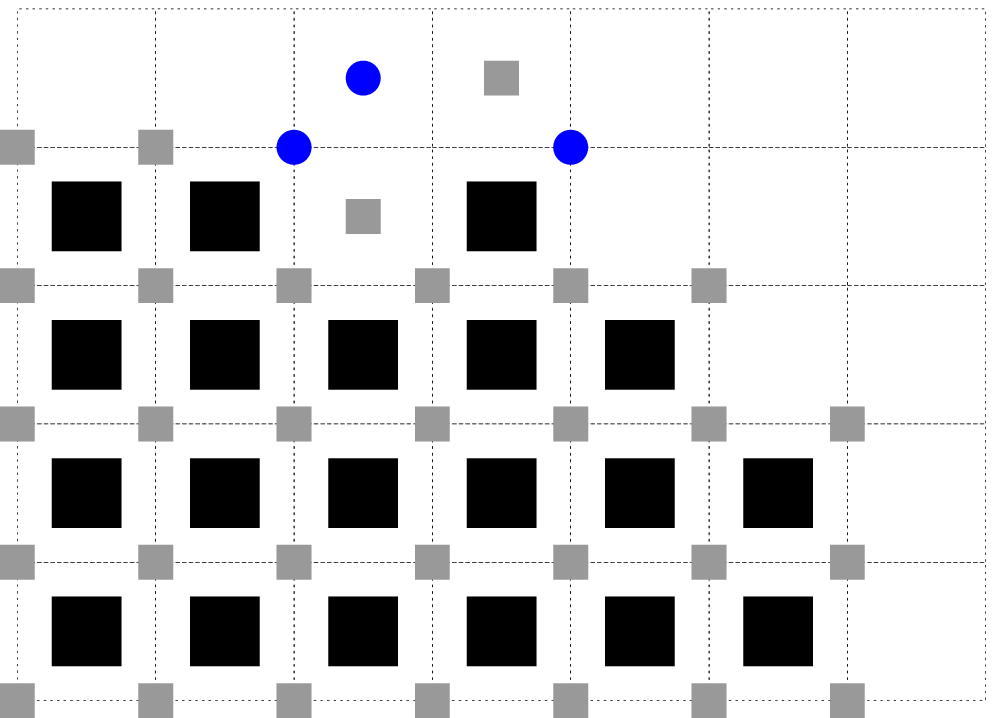}}
\\
\subfigure[]
{\includegraphics[height=5\tilesize]{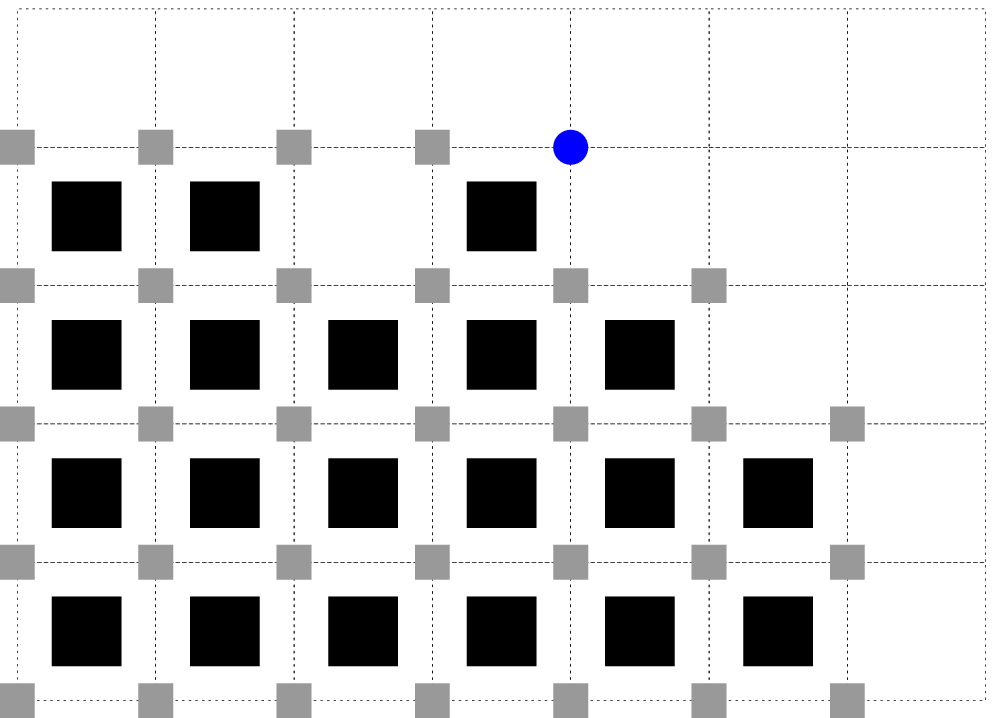}}
\qquad
\subfigure[]
{\includegraphics[height=5\tilesize]{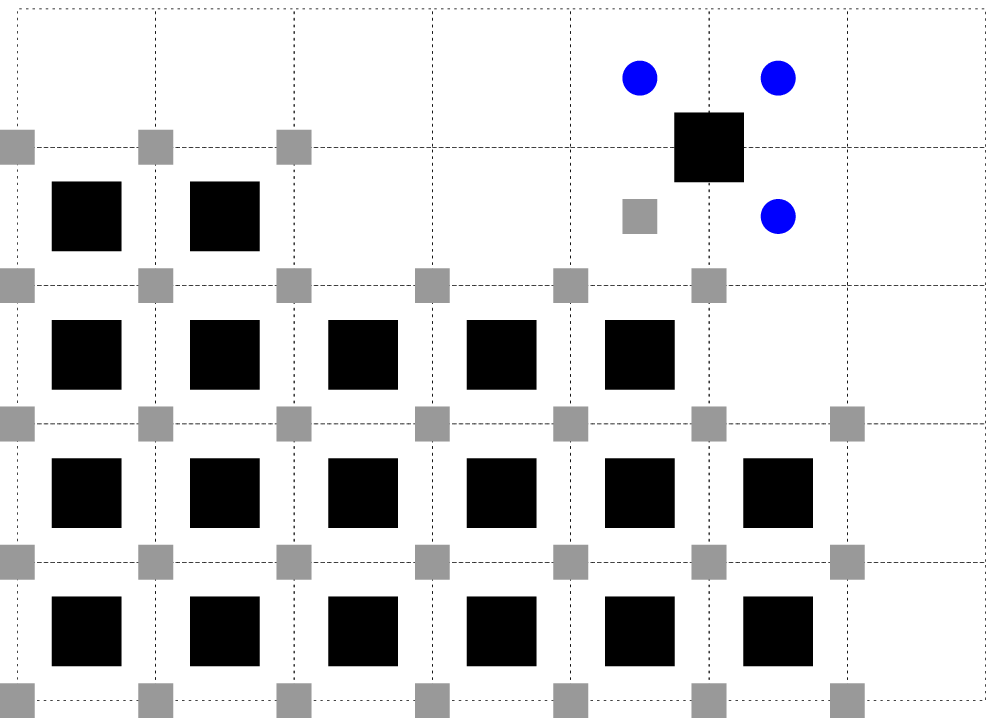}}
\qquad
\subfigure[]
{\includegraphics[height=5\tilesize]{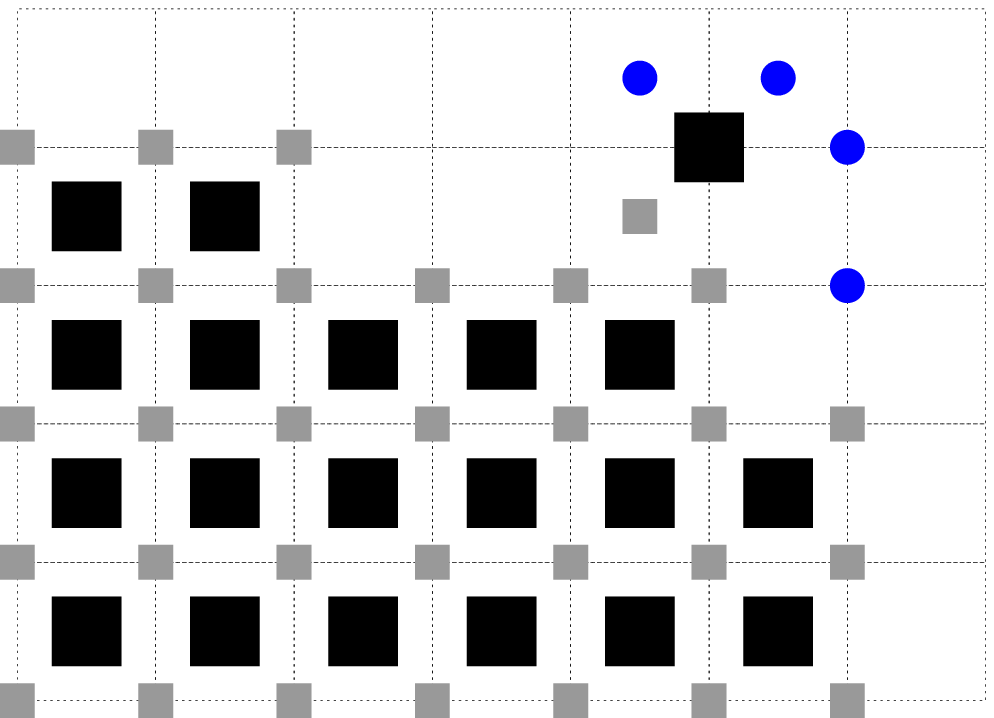}}
\caption{A dimer is moved to a corner within energy barrier $U + 4\Da$.
The small circles represent the extra particles of type $\ta$ with 
respect to those in the starting configuration.} 
\label{fig:dimer-U4Da}
\end{figure}

Move the particle adjacent to $p$ in the South-East direction one step in the North-East 
direction ($\D H(\omega) = U + 3\Da$). Move $p$ one step in the South-East direction 
($\D H(\omega) = U + 2\Da$, see Fig.~\ref{fig:dimer-U4Da}(c)). Afterwards, use one of 
the free particles of type $\ta$ to saturate $p$ ($\D H(\omega) = + 3\Da$), and remove 
the other free particles of type $\ta$ from $\Lambda$ ($\D H(\omega) = 2\Da$, see 
Fig.~\ref{fig:dimer-U4Da}(d)). The same procedure can be repeated until the configuration 
in Fig.~\ref{fig:dimer-U4Da}(e) is reached ($\D H(\omega) = 3\Da$). Next, let a particle 
enter $\Lambda$ and reach the site at dual distance $1$ in the East direction from $p$, 
and move one step in the South-East direction the particle adjacent to $p$ in the South-East 
direction ($\D H(\omega) = U + 4\Da$, see Fig.~\ref{fig:dimer-U4Da}(f)). Next, move $p$ in 
the Sout-East direction ($\D H(\omega) = U + 4\Da$), saturate it with one of the free 
particles of type $\ta$, and remove the other particles of type $\ta$ from $\Lambda$ 
($\D H(\omega) = 2\Da$). Now particle $p$ can be moved in the South direction in the 
same way it was moved in the East direction within energy barrier $U+3\Da$.
\end{proof}

By Lemma~\ref{lemma-entrance-of-set-protonumber}, we know that any path in $\optpaths$ 
enters the set $\nbis{\critinumber}$ when a particle of type $\tb$ is added in 
$\partial^{-}\Lambda$ to a configuration in $\bar{g}(\Box, \minnbis{\protonumber})$. 
By Lemma~\ref{lemma-reachable-from-standard}, we know that $\bar{g}(\Box,\minnbis{
\protonumber})$ can be determined by looking at all the configurations that can be 
reached starting from the standard configurations in $\nbis{\protonumber}$ without 
changing the number of particles of type $\tb$ in $\Lambda$ and taking into account 
all the moves that are possible within energy barrier $\Db$. Note that different 
configurations in $\minnbis{\protonumber}$ necessarily have different tile support, 
and so to move between these classes of configurations it is necessary to perform a 
sequence of heavy-steps.

\begin{remark}
\label{rem-da-halfU-transition}
Note that, from the point of view of the maximal energy barrier that needs to be overcome 
to go from $\eta$ to $\eta\prm$, for $\Da > \tfrac12 U$ the first method is more efficient
while fOr $\Da < \tfrac12 U$ the second method is more efficient.
\end{remark}

\begin{remark}
\label{rem-visit-all-monotone}
Starting from a $\btiled$ configuration with a monotone dual support inscribed in a 
rectangle of side lengths $l_{1},l_{2}$ (``far enough'' from the boundary of $\Lambda$), 
it is possible to reach, via one of the two mechanisms described above, all the $\btiled$ configurations with a monotone dual support and the same circumscribing rectangle.  
\end{remark}


\subsection{Restriction on heavy-steps}
\label{sec-heavysteps}

\begin{lemma}
\label{lemma-first-2step-from-corner}
Let $\Db < 3U + \Da$ and $\eta \in \minnbis{\protonumber}$. If the first heavy-step 
starting from $\eta$ (that does not change the number of particles of type $\tb$)
does not result in the motion of a corner particles of type $\tb$ along the edge 
where in $\eta$ it shares a bond with a corner particle of type $\ta$, then it is 
not possible to reach a new configuration $\eta\prm \in \minnbis{\protonumber}$ 
within energy barrier $\Db$.
\end{lemma}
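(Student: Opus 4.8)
The plan is to argue by contradiction: suppose $\omega\colon\eta\to\eta\prm$ is a path with $\eta\prm\in\minnbis{\protonumber}\setminus\{\eta\}$, with $\max_{\xi\in\omega}H(\xi)\le\Gamma\starred$, and whose first heavy-step does not change $n_\tb$ and is not the move singled out in the statement; I want to exhibit a configuration on $\omega$ of energy $>\Gamma\starred$. First I would fix normalisations. By Lemma~\ref{lemma-minimal-energy-coincides-with-minimal-perimeter}, $\eta$ is $\btiled$ with dual perimeter $4\ell\starred$, and (Lemma~2.2 of \cite{dHNT11} together with Appendix~\ref{appA}) its tile support $[\eta]$ is a monotone polyomino whose circumscribing rectangle has semiperimeter $2\ell\starred$; by Lemmas~3.1--3.2 of \cite{dHNT11} one has $H(\eta)=\Gamma\starred-\Db$, so ``within energy barrier $\Db$'' is the same as ``never exceeding $\Gamma\starred$'', and the available budget above $H(\eta)$ is exactly $\Db$. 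Since the first heavy-step does not change $n_\tb$, it is completed by moving a particle $p$ of type $\tb$ (Definition~\ref{def-various}(b)); as $p$ is saturated in $\eta$, each of its four neighbours carries a particle of type $\ta$, so any site to which $p$ is moved must first be vacated by detaching such a particle, at a cost of $U$ per broken active bond (in $\eta$ a particle of type $\ta$ has $\ge1$ active bond, and $\ge2$ unless it is a corner particle of type $\ta$).

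The dichotomy is then: either $p$ is moved onto the site of a corner particle $q$ of type $\ta$ incident to it --- in which case $p$ is the type-$\tb$ particle of a dimer and the displacement is exactly the excluded move, the $\btile$-transport lemma of Section~\ref{sec-moving-dimers} completing the whole heavy-step within $H(\eta)+\min\{3U,U+4\Da\}$ --- or $p$ is moved onto the site of a junction particle $q\prm$ of type $\ta$ carrying at least two active bonds, incident to $p$ and to a second type-$\tb$ particle $p\prm$. In the latter, non-excluded case I claim the barrier is at least $H(\eta)+3U+\Da$. Indeed, detaching $q\prm$ costs $\ge2U$; once $p$ has been moved onto $q\prm$'s site the only occupied neighbours of that site are $p\prm$ (type $\tb$, contributing no bond) and $p$'s own vacated site, so $p$ is isolated there unless a particle of type $\ta$ is imported into its neighbourhood, costing a further $\ge\Da$; and reaching that site has cost $p$, net, one further broken bond, i.e.\ $\ge U$. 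Running this accounting over the finitely many positions $p$ can occupy in the monotone polyomino $[\eta]$ --- inner sites, flat sides, convex corners of the circumscribing rectangle, and the type-$\tb$ particles of ordinary and of hanging protuberances --- one gets that $\omega$ reaches energy $\ge H(\eta)+3U+\Da=\Gamma\starred-\Db+3U+\Da$; since $\Db<3U+\Da$ this is $>\Gamma\starred$, contradicting the choice of $\omega$.

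The main obstacle is making this bond count rigorous. One has to check, position by position, that a non-excluded displacement of a type-$\tb$ particle really does force the detachment of a two-bond junction particle, a further net broken bond of $p$, and the import of a particle of type $\ta$ --- and, more delicately, rule out that this last $+\Da$ could be amortised against the $3U$ already spent, e.g.\ by re-using a detached type-$\ta$ particle without paying the bond it would otherwise form, or by re-compactifying the tile support for free while the moved type-$\tb$ particle traverses the half-integer dual positions it must pass through before settling into a new valid $\btiled$ configuration. Here the two facts that pin the threshold at $3U+\Da$ are: the relation $H(\zeta)-H(\zeta\prm)=\tfrac14[\cT(\zeta)-\cT(\zeta\prm)]\Da$ for $\btiled$ configurations with equal $n_\tb$, together with $\cT\in4\N$ (as in the proof of Lemma~\ref{lemma-quasi-square-is-optimal} and Lemma~2.3 of \cite{dHNT11}), so that every gain in compactness of the tile support is an energy step of at least $\Da$; and the fact that such a gain can only be realised through configurations that transiently break a further active bond. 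The remaining bookkeeping of the constants $U,\Da$ in each sub-case is routine, exactly as in the $\btile$-transport computations preceding this lemma; and $3U+\Da$ is precisely the smallest barrier of a non-excluded first heavy-step, which is why the borderline $\Db=3U+\Da$ enters the hypothesis.
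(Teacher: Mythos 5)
Your central quantitative claim --- that every non-excluded first heavy-step forces the path up to energy at least $H(\eta)+3U+\Da>\Gamma\starred$ --- is false, and this is exactly where the real difficulty of the lemma lies. The extra $+\Da$ in your accounting comes from the assertion that, after the type-$\tb$ particle $p$ has been moved onto the site of a junction particle of type $\ta$, it can only acquire active bonds by \emph{importing} a fresh particle of type $\ta$. But the bonds can instead be supplied by particles of type $\ta$ already in the configuration, detached and repositioned (before or during the heavy-step) at a net cost that is absorbed into the bond count: the paper's own case analysis (first heavy-step along the edge marked $b$ in Fig.~\ref{fig:region_A_classes_of_2-steps}) exhibits a completed non-excluded heavy-step at relative energy exactly $3U$, with \emph{no} extra particles of type $\ta$ in $\Lambda$. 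Since the lemma must cover region $\RB$, where $3U<\Db<3U+\Da$, such a step fits strictly under the barrier, so no pure energy-threshold argument on the first heavy-step can prove the statement; your proof would stall precisely in that parameter range. (Your worry about ``amortisation'' is the right instinct, but the proposed fix via the quantisation $\cT\in 4\N$ does not recover the missing $\Da$, because the $3U$ configuration needs no change in the number of particles of type $\ta$ at all.)

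What the paper does instead, and what your proposal is missing, is a two-stage argument: (i) an explicit enumeration of the possible non-excluded first heavy-steps (edges of type $a$, $b$, $b\prm$), showing that each either costs at least $4U-\Da>\Db$ outright or can be completed at cost exactly $3U$; and (ii), in the $3U$ case, an entrapment analysis on the resulting energy plateau: since $\Db<3U+\Da$, the residual budget is smaller than both $U$ and $\Da$, so no further bond may be broken and no particle of type $\ta$ may enter $\Lambda$, and one then checks that the only energy-non-increasing moves available lead back to the starting configuration (non-self-avoiding path) or to a dead end, never to a new configuration in $\minnbis{\protonumber}$. Without step (ii) --- or a corrected lower bound replacing it --- the lemma is not proved.
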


\begin{figure}[htbp]
\centering
{\includegraphics[height=32mm]{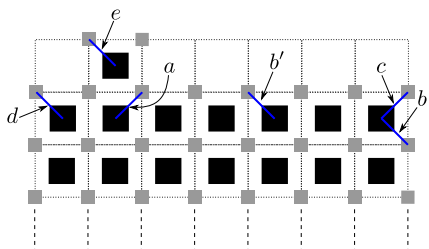}}
\caption{Types of edges for the first possible heavy-step starting from a standard 
configuration.}
\label{fig:region_A_classes_of_2-steps}
\end{figure}

\begin{proof}
It is clear that the first heavy-step can only involve one of the external particles 
of type $\tb$. For the proof we refer to Fig.~\ref{fig:region_A_classes_of_2-steps}, 
where a prototype configuration $\bar{\eta} \in \minnbis{\protonumber}$ is represented. 
	
We will show that if a heavy-step is performed along one of the edges $a$, $b$ or $b\prm$, 
then it is not possible to reach a new configuration $\eta\prm \in \minnbis{\protonumber}$
without exceeding energy barrier $\Db$. These edges are representatives of the possible
types of edges along which the first heavy-step is possible without involving the motion
of a corner particle of type $\tb$ along the edge where it shares a bond with a corner 
particle of type $\ta$.

\medskip\noindent
{\bf 1.}	
Assume that the first heavy-step is along edge $a$. Let $(u,v)$ denote the dual coordinates 
of the particle $p_{1}$ of type $\tb$ we want to move. Let $\eta_{1} \notin 
\minnbis{\protonumber}$ be the configuration that is reached when $p_{1}$ is moved, 
along edge $a$, to the site with dual coordinates $(u+\tfrac{1}{2}, v+\tfrac{1}{2})$,
and let $\eta_{0}$ be the configuration visited just before the heavy-step is performed.
When $\eta_{1}$ is reached, either the site with dual coordinates $(u+1,v+1)$ is empty 
or it is occupied by a particle of type $\ta$. Without loss of generality, we may assume 
that $\eta_{1}$ does not contain free particles of type $\ta$, since these particles 
could be iteratively removed from $\Lambda$ while decreasing the energy of the configuration.
Similarly, we may assume that all particles of type $\ta$ that do not interfere with the 
heavy-step that is performed are still in $\Lambda$, since in $\bar{\eta}$ they had at 
least one active bond and thier removal would increase the energy of the configuration
(since $\Da < U$). In the former case, $B(\eta_{1}) = B(\bar{\eta}) - 5$, and so 
$H(\eta_{1}) - H(\bar{\eta}) = 5U - \Da > \Db$. In the latter case, $B(\eta_{1}) 
= B(\bar{\eta}) - 4$, and so $H(\eta_{1}) - H(\bar{\eta}) = 4U > \Db$.

\medskip\noindent
{\bf 2.}
Assume that the first heavy-step is along edge $b$ (see Fig.~\ref{fig:region_A_2-step_b1}(a)).
Again, let $w = (u,v)$ denote the dual coordinates of the particle $p_{1}$ of type $\tb$
we want to move. Denote by $q_{1}$ and $q_{2}$ the particles of type $\ta$ sitting in 
$\bar{\eta}$, respectively, at the sites with dual coordinates $(u+\tfrac{1}{2},v-\tfrac{1}{2})$ 
and $(u+\tfrac{1}{2}, v-\tfrac{1}{2})$. Let $\eta_{1} \notin \minnbis{\protonumber}$ be the
configuration obtained by moving particle $p_{1}$ to the site $x$ with dual coordinates
$(u+\tfrac{1}{2}, v-\tfrac{1}{2})$, and let $\eta_{0}$ be the configuration visited just 
before the heavy-step is performed. When particles $p_{1}$ reaches site $x$, it has at most 
two active bonds, depending on whether the dual sites $y_{1}=(u+1,v-1)$ and $y-{2}=(u+1,v)$ 
are empty or occupied by a particle of type $\ta$. If both $y_{1}$ and $y_{2}$ are empty, 
then $H(\eta_{1}) - H(\bar{\eta}) = 5U  - 2\Da > \Db$ (again we assume that $\Lambda$ does 
not contain free particles of type $\ta$ and all other particles of type $\tb$ are saturated).
If only one dual site between $y_{1}$ and $y_{2}$ is occupied, then, arguing as before, we
get $H(\eta_{1}) - H(\bar{\eta}) = 4U > \Db$ if particle $q_{2}$ is still inside $\Lambda$ 
and $H(\eta_{1}) - H(\bar{\eta}) = 4U -\Da > \Db$ if particle $q_{1}$ has been removed from 
the $\Lambda$. If both $y_{1}$ and $y_{2}$ are occupied, then $H(\eta_{1}) - H(\bar{\eta}) 
= 3U$ (again we assume that there are no free particles of type $\ta$ in $\Lambda$). Note 
that, since $\Db < 3U + \Da$, no particle of type $\ta$ is allowed to enter $\Lambda$ nor
is it allowed to break any active bond. Therefore the only moves that are possible starting 
from $\eta_{1}$ are those that do not increase the energy. Only two moves are possible. Either 
the particle of type $\tb$ is moved back from site $x$ to site $w$, or the particle $q_{3}$ 
of type $\ta$ sitting at the dual site $y_{3}=(u-\tfrac{1}{2}, v+\tfrac{1}{2})$ is moved 
to site $x$. In the former case, it is clear that the configuration that is reached is
$\eta_{0}$ and the move produces a non-self-avoiding path. In the latter case, we reach 
a configuration $\eta_{2} \notin \minnbis{\protonumber}$ with the same energy as $\eta_{1}$. 
But now the only move that does not increase the energy is the motion of $q_{3}$ back to 
$y_{3}$, which again produces a non-self-avoiding path.

\begin{figure}[htbp]
\centering
\subfigure[]{\includegraphics[height=3\tilesize]{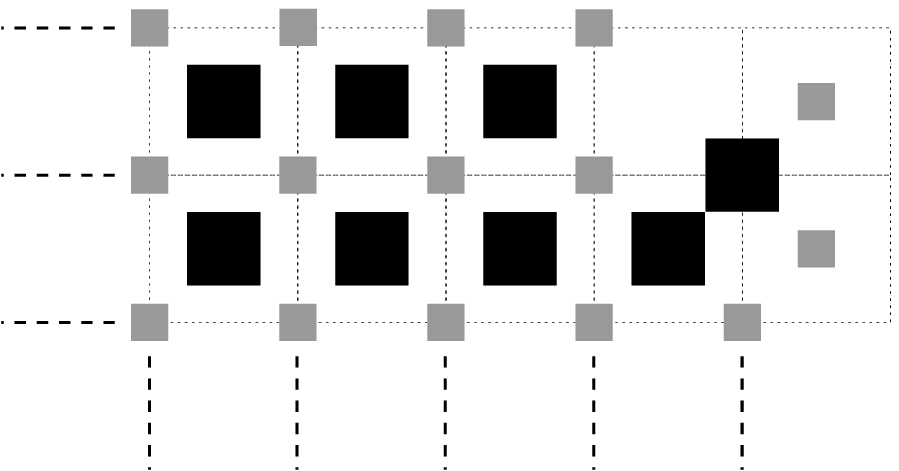}}
\qquad
\subfigure[] {\includegraphics[height=4\tilesize]{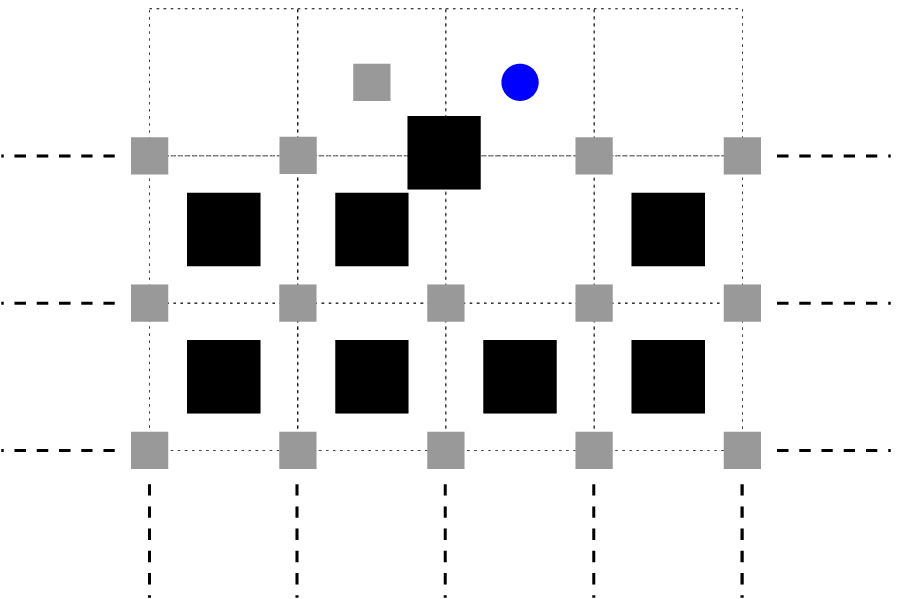}}
\caption{Two choices for the first heavy-step that are too costly.}
\label{fig:region_A_2-step_b1}
\end{figure}	

\medskip\noindent	
{\bf 3.} 
The case where the first heavy--step is performed along edge $b\prm$ is similar to 
the previous case (see Fig.~\ref{fig:region_A_2-step_b1}(b)).
\end{proof}

We see from Lemma~\ref{lemma-unessential-saddles} that $\gbar\boub$ is a good candidate 
for $\entgate$.


\subsection{Small values of $\ell\starred$}
\label{sec-small-lc}

In Section~\ref{Proofs} we will identify the geometry of the protocritical and critical
configurations for $\ell\starred \ge 4$, i.e., for the subregion 
$\Db > 4U
-\tfrac{4}{3}\Da$. The analysis will show that the set $\g = \gbar$ consists of all 
the configurations in $\minnbis{\protonumber}$. 
The region $\Db \le 4U -\tfrac{4}{3}\Da$ is denoted by $\RT$.

For $\ell\starred = 2$ this set consists of those $\btiled$ configurations whose dual 
tile support is either a $2\times2$ square from which a corner has been removed or a 
$3 \times 1$ rectangle. For $\ell\starred = 3$ it consists of those $\btiled$ configurations 
whose dual tile support is either a $3\times2$ rectangle plus a ``protuberance'' on 
one of the four sides or a $3 \times 3$ square from which two corners have been removed.
This can be easily verified by noting that it is possible to move a tile protuberance 
within energy barrier $4U - 2\Da < \Db$ and that it is possible to ``slide'' an external 
$\abbar$ of length $2$ within energy barrier $2U + \Da < \Db$, as described next (see 
Fig.~\ref{fig:sliding-bar-2}).

\begin{figure}[htbp]
\centering
\subfigure[]{\includegraphics[height=4\tilesize]{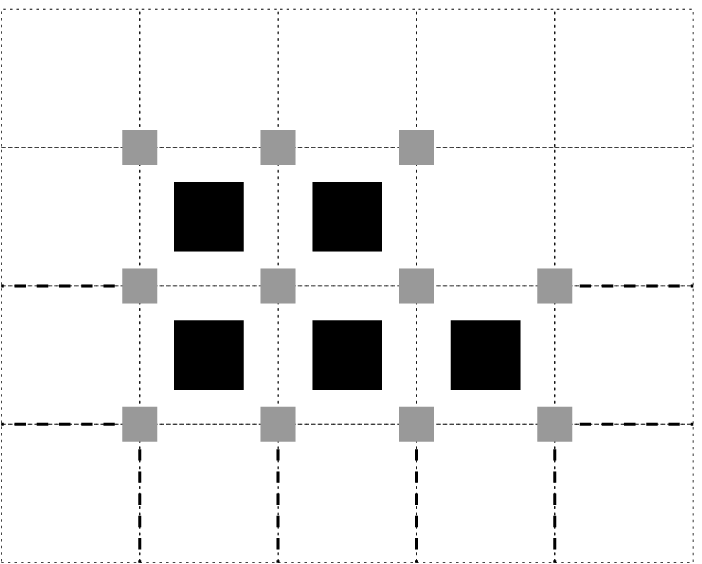}}
\qquad
\subfigure[]{\includegraphics[height=4\tilesize]{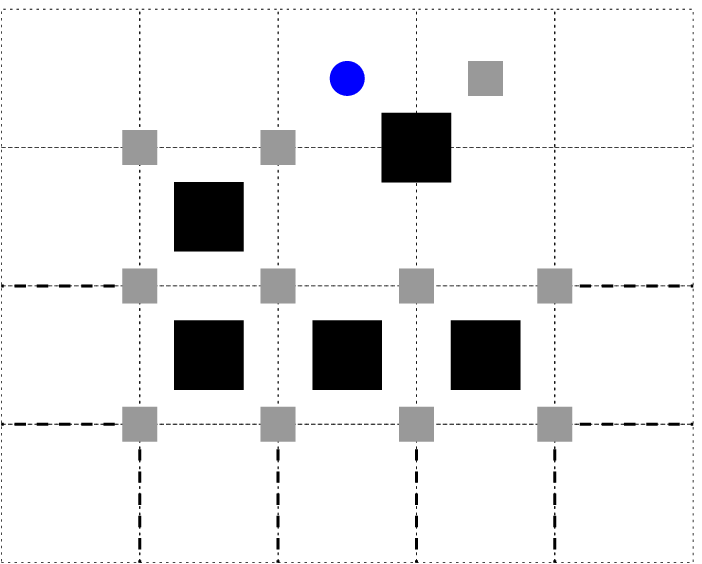}}
\qquad
\subfigure[]{\includegraphics[height=4\tilesize]{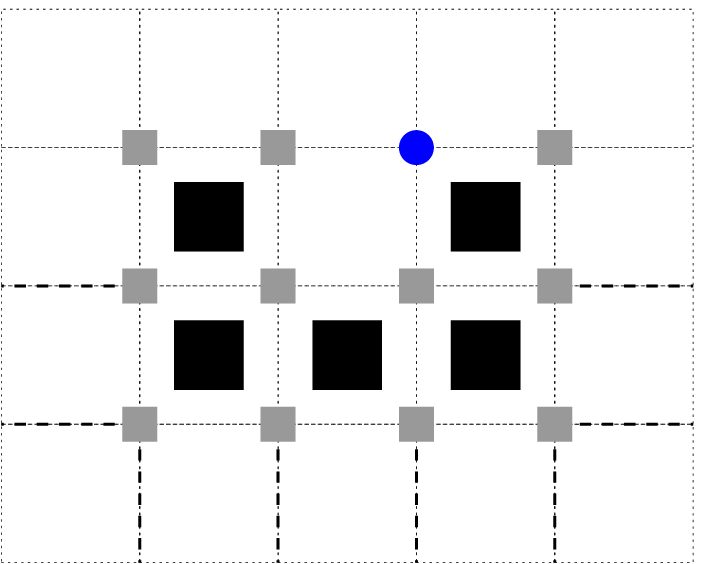}}
\qquad
\subfigure[]{\includegraphics[height=4\tilesize]{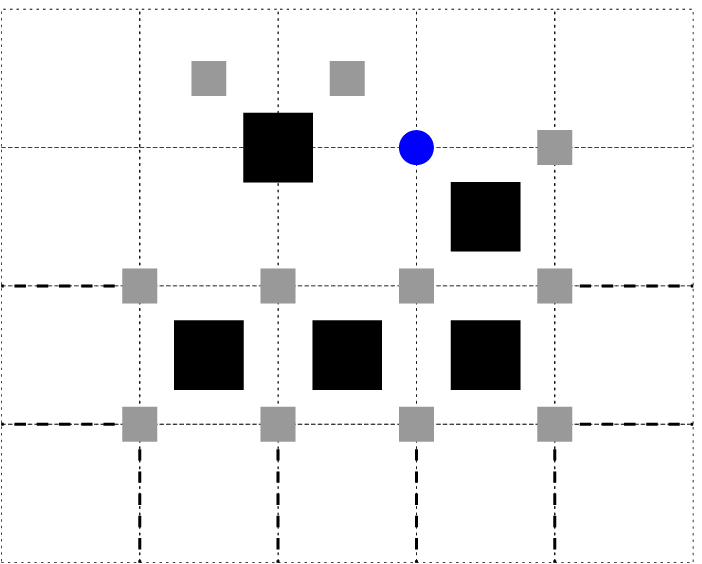}}
\caption{Sliding a $\abbar$ of length $2$.}
\label{fig:sliding-bar-2}	
\end{figure}

We construct a path $\omega$ from configuration $\eta$ of Fig.~\ref{fig:sliding-bar-2}(a) 
to configuration $\eta\prm$ obtained by shifting the $\abbar$ of length $2$ in the East 
direction by one dual unit. Let $p_{1}$ denote the Eastern-most particle of type $\tb$ of 
the $\abbar$ and $p_{2}$ the other particle of type $\tb$. Let $q_{1}$ be the particle of 
type $\ta$ adjacent to $p_{1}$ in the North-East direction, $q_{2}$ the particle of type 
$\ta$ adjacent to $p_{1}$ in the North-West direction, and $q_{3}$ the particle of type 
$\ta$ adjacent to $p_{2}$ in the North-West direction. Move $q_{1}$ one step North-East 
($\D H(\omega) = U$), let an extra particle $q_{4}$ of type $\ta$ enter $\Lambda$ 
($\D H(\omega) = U + \Da$), and let this particle reach the site at dual distance $1$ 
in the West direction from $q_{1}$, and move $p_{1}$ one step North-East ($\D H(\omega) 
= 2U + \Da$; see Fig.\ref{fig:sliding-bar-2}(b)). Then, without increasing the energy of 
the configuration, move $p_{2}$, $q_{1}$ and $q_{4}$ subsequently one step South-East 
($\D H (\omega) = \Da$; see Fig.~\ref{fig:sliding-bar-2}(c)). Afterwards, move first 
$q_{2}$ and $q_{3}$ one step North-East ($\D H (\omega) = 2U + \Da$) and $p_{2}$ one 
step North-East ($\D H (\omega) = 2U + \Da$; see Fig.~\ref{fig:sliding-bar-2}(d)). Finally, 
move $p_{2}$ one step South-East, use $q_{2}$ to saturate $p_{2}$, and remove $q_{3}$ 
from $\Lambda$ ($\D H (\omega) = 0$).

It turns out that in each of these cases $\proto = \minnbis{\protonumber}$ and
$\entgate = \minnbis{\protonumber}\boub$. The proofs are essentially analogous to those 
that will be given in Section~\ref{sec-RA} below.


\section{Proof of Theorems \ref{th-H3} and \ref{th-RA}--\ref{th-RC}}
\label{Proofs}

In Sections~\ref{sec-RA}--\ref{sec-RC} we will identify $\proto$ and $\entgate$ for the 
subregions $\RA$, $\RB$ and $\RC$, respectively. Once the structure of the configurations 
in $\proto$ and $\entgate$ are identified, (H3-a) and (H3-b) will follow immediately. To 
prove (H3-c), we will show the existence of a $\zeta \in \gateatt(\hat{\eta})$ such that 
$\comlev(\zeta, \boxplus) < \Gamma\starred$ (i.e., the existence of a ``good site'' in 
$\hat{\eta}$). 
Due to the fact that $\Lambda$ has a border of width $3$ where there is no 
interaction between particles, it will be 
immediate that such a good site can always be reached by a particle of type $\tb$ in 
$\partial^{-}\Lambda$ without touching the cluster. Finally, to see that all $\omega 
\in \optpaths$ pass through $\gateatt$, we observe that, as long as the free particle of 
type $\tb$ does not attach itself to the cluster, the energy cannot drop below $\Gamma
\starred$ and hence no other particle is allowed to enter $\Lambda$. In particular, this 
implies that the set $\nbis{\ge \critinumber} \ni \boxplus$ cannot be reached.

In each of the following sections we look at a specific standard configuration,
labelled by the position of the lower-left particle, acting as the representative 
of the set of all standards configurations that are obtained by translation and 
rotation. Each section is split into three parts: (1) identification of $g(\{\bar{\eta}\},
\minnbis{\protonumber})$ and $\bar{g}(\{\bar{\eta}\},\minnbis{\protonumber})$ via
the motion of $\btiles$ between dual corners; (2) existence of a good site via the
construction of a path to $\boxplus$ below energy level $\Gamma\starred$; 
(3) identification of $\proto$ and $\entgate$.
 

\subsection{Region $\RA$: proof of Theorem \ref{th-RA}}
\label{sec-RA}

Let \CA be the set of $\btiled$ configurations with $\protonumber$ particles of type 
$\tb$ whose dual tile support is a rectangle of side lengths $\ell\starred,\ell\starred
-1$ plus a protuberance on one of the four side of the rectangle (see 
Fig.~\ref{fig:paradigm_class_A}).

\begin{figure}[htbp]
\centering
\subfigure[]
{\includegraphics[height=0.21\textwidth]{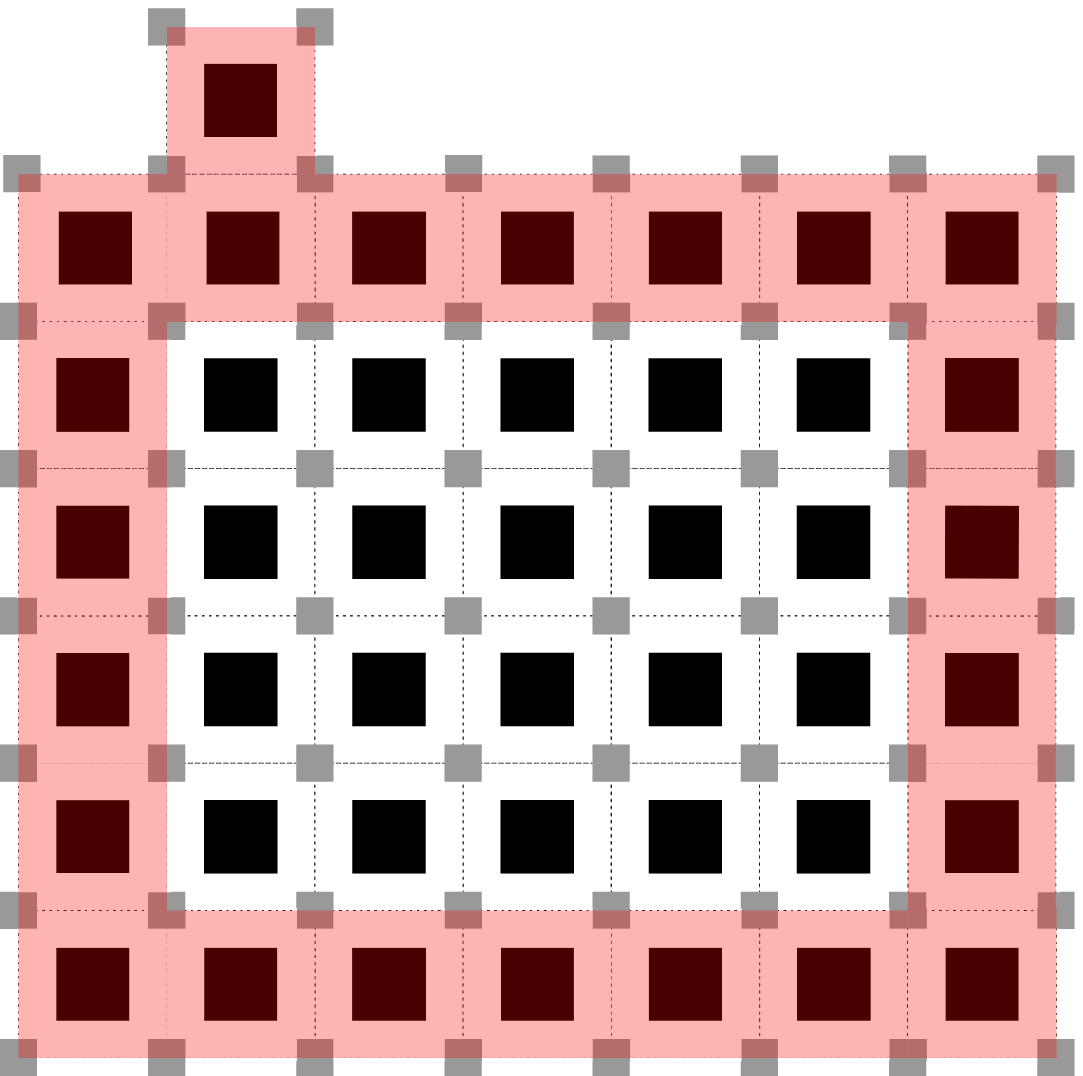}}
\qquad
\subfigure[]
{\includegraphics[height=0.18\textwidth]{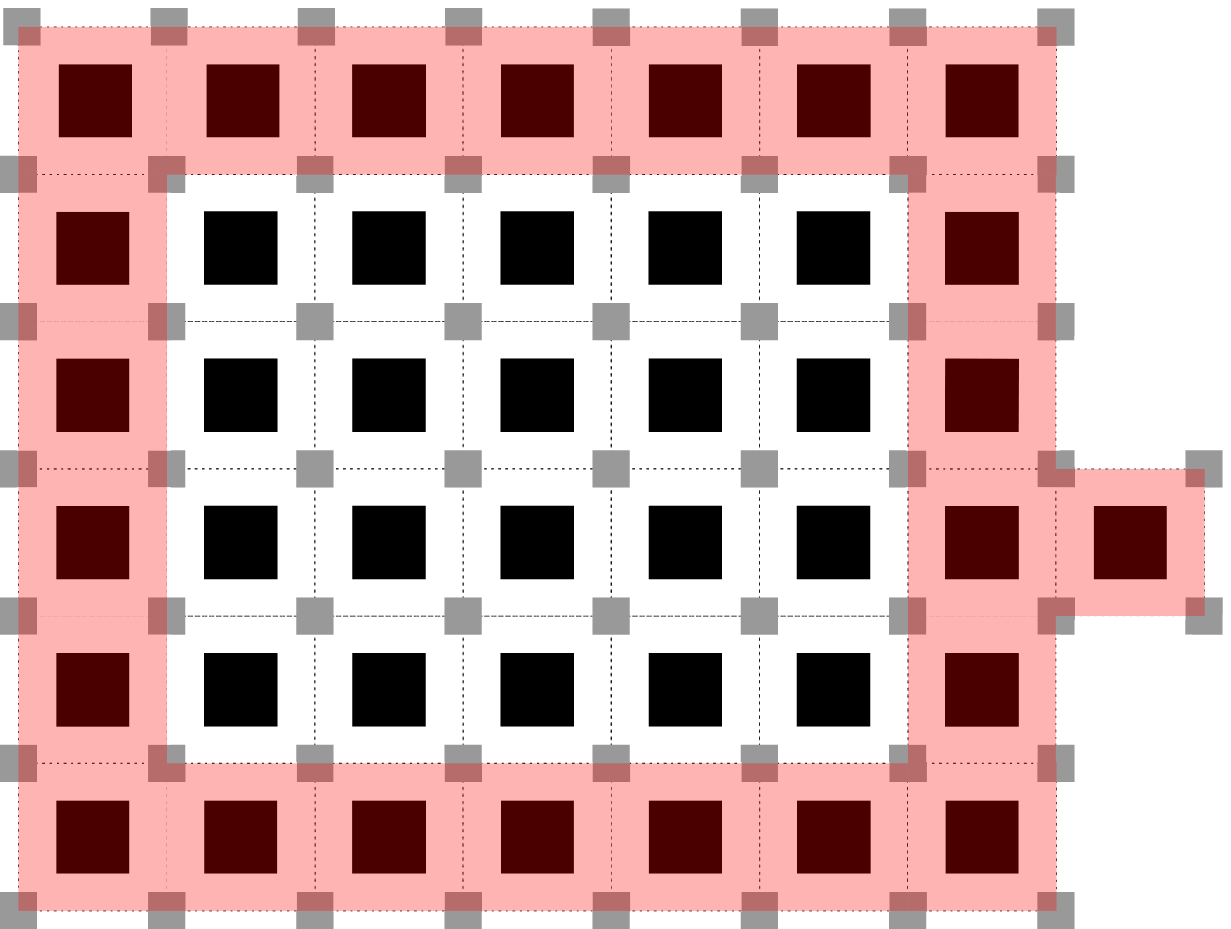}}
\caption{Two examples of configurations in \CA for $\ell\starred = 7$. The external 
particles of type $\tb$ are those lying in the shaded area.} 
\label{fig:paradigm_class_A}
\end{figure}

\begin{definition}
\label{def-modifying-path}
A path $\omega$ from $\eta \in \minnbis{\protonumber}$ to $\eta\prm \in \minnbis{\protonumber}$ 
such that $\nb(\xi) = \nb(\eta)$ for all $\xi \in \omega$ is called a modifying path 
from $\eta$ to $\eta\prm$.
\end{definition}

\noindent
With this definition, a configuration $\eta\prm \in \minnbis{\protonumber}$ belongs to 
$\bar{g}(\{\bar{\eta}\},\minnbis{\protonumber})$ (respectively, $g(\{\bar{\eta}\},\minnbis{\protonumber})$) with $\bar{\eta}$ a standard configuration in $\nbis{\protonumber}$ if 
and only if there is a modifying path from $\bar{\eta}$ to $\eta\prm$ that does not exceed (respectively, stays below) energy level $\Gamma\starred$.

\begin{remark}
\label{rem-heavy-self-avoiding}
Note that if there is a path $\omega\colon\,\eta \to \eta\prm$ that does not exceed (stay 
below) $\Gamma\starred$, then there is also a path $\omega\prm\colon\,\eta \to \eta\prm$ 
that does the same without ever completing a heavy-step and reaching a configuration that 
is equivalent to a configuration that has already been visited.
\end{remark}


\subsubsection{Identification of $g(\{\bar{\eta}\},\minnbis{\protonumber})$ and 
$\bar{g}(\{\bar{\eta}\},\minnbis{\protonumber})$}

\begin{lemma}
\label{lemma-proto-below-gamma-RA}
$g(\{\bar{\eta}\},\minnbis{\protonumber})=\bar{g}(\{\bar{\eta}\},\minnbis{\protonumber})$ 
= \CA.	
\end{lemma}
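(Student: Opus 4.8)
The plan is to prove a chain of inclusions. Since $\CA \subseteq \minnbis{\protonumber}$ and, by Lemma~\ref{lemma-reachable-from-standard}, $g(\{\Box\},\minnbis{\protonumber}) = \bigcup_{\bar\eta} g(\{\bar\eta\},\minnbis{\protonumber})$, it suffices to work with a fixed standard representative $\bar\eta \in \standard{\protonumber}$, whose dual tile support is a rectangle $\ell\starred \times (\ell\starred-1)$ with a protuberance on one of its longest sides. We will show (i) $\CA \subseteq g(\{\bar\eta\},\minnbis{\protonumber})$ and (ii) $\bar g(\{\bar\eta\},\minnbis{\protonumber}) \subseteq \CA$; together with the trivial inclusion $g(\{\bar\eta\},\minnbis{\protonumber}) \subseteq \bar g(\{\bar\eta\},\minnbis{\protonumber})$ this yields the claimed triple equality.

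For inclusion (i), I would exhibit, for every target $\eta\prm \in \CA$, an explicit modifying path from $\bar\eta$ to $\eta\prm$ that stays \emph{strictly} below $\Gamma\starred$. Recall $H(\bar\eta) = \Gamma\starred - \Db$, so a modifying path stays below $\Gamma\starred$ iff $\D H(\omega) < \Db$ along the way; here $\Db < 3U + \Da$ in region $\RA$ (since $\Da < 3U$ gives $\Db = 4U - \Da - \epsi < 4U - \Da$, but one must check the relevant inequality $\Db < 4U - 2\Da$ does \emph{not} hold and rather use $\Db < 3U+\Da$ holds in $\RA$; this bookkeeping needs care). The key geometric ingredient is the ``$\btile$ motion'' established just above: the first method moves a dimer/protuberance around a corner within energy barrier $3U$, and $3U < \Db$ in $\RA$ (this must be verified: in $\RA$, $\Da < 3U$ and $\Db > 4U - 2\Da$; one checks $3U < \Db$ follows from $\ell\starred \geq 4$, i.e.\ $\Db > 4U - \tfrac43\Da$). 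So starting from $\bar\eta$, I slide its protuberance to any of the four sides of the $\ell\starred\times(\ell\starred-1)$ rectangle via the $3U$-mechanism, never completing any other heavy-step — this reaches every element of $\CA$. By Remark~\ref{rem-heavy-self-avoiding} the path may be taken self-avoiding. Thus $\eta\prm \in g(\{\bar\eta\},\minnbis{\protonumber})$.

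For inclusion (ii), the argument is that \emph{no} configuration in $\minnbis{\protonumber}\setminus\CA$ can be reached from $\bar\eta$ by a modifying path that does not exceed $\Gamma\starred$. This is where Lemma~\ref{lemma-first-2step-from-corner} does the heavy lifting: since $\Db < 3U + \Da$ in $\RA$, that lemma says the only way to reach a \emph{new} configuration in $\minnbis{\protonumber}$ within energy barrier $\Db$ is to have the first heavy-step move a corner particle of type $\tb$ \emph{along the edge where it shares a bond with a corner particle of type $\ta$} — i.e.\ to slide the protuberance. Sliding the protuberance along the short side of the $\ell\starred\times(\ell\starred-1)$ rectangle and around the corners keeps the dual tile support of the form ``$\ell\starred\times(\ell\starred-1)$ rectangle plus a protuberance'', hence stays within $\CA$; detaching the protuberance entirely and re-attaching it elsewhere must be analyzed to see it either returns to $\CA$ or exceeds $\Gamma\starred$ (one argues via $\cT$ and Lemma~\ref{lemma-quasi-square-is-optimal}-type estimates that a configuration in $\minnbis{\protonumber}$ with tile support not of the $\CA$ shape — e.g.\ with the extra tile forming a bump on a short side creating two concave corners, or producing an $L$-shaped monotone polyomino — cannot be reached in region $\RA$ because the circumscribing rectangle would have to grow, forcing perimeter $> 4\ell\starred$, contradicting Lemma~\ref{lemma-minimal-energy-coincides-with-minimal-perimeter}, or else forcing an intermediate configuration of energy $\geq\Gamma\starred$). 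Iterating Lemma~\ref{lemma-first-2step-from-corner} (applying it afresh at each newly reached element of $\minnbis{\protonumber}$) confines the entire modifying path to $\CA$.

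The main obstacle I anticipate is inclusion (ii), specifically the bookkeeping of \emph{which} rectangles-with-protuberance are mutually reachable and ruling out the ``two-corners'' monotone polyominoes of Definition~\ref{Dsetdefs}(b)--(c) in region $\RA$. Lemma~\ref{lemma-first-2step-from-corner} controls the \emph{first} heavy-step from a standard $\bar\eta$, but after one protuberance slide the configuration is again in $\minnbis{\protonumber}$ and one must re-invoke the lemma — so I need to confirm its hypotheses (namely membership in $\minnbis{\protonumber}$) persist, which they do since protuberance-sliding preserves being in $\minnbis{\protonumber}$. The subtle point is that sliding the protuberance past a corner momentarily creates a ``hanging protuberance'' configuration not in $\minnbis{\protonumber}$, so one must check the energy of that transient configuration stays below $\Gamma\starred$ in $\RA$ — this is exactly what the $3U$-barrier computation in the $\btile$-motion lemma guarantees, provided $3U < \Db$, which I will verify holds throughout $\RA$ (for $\ell\starred \geq 4$). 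Finally, I should note that reaching a protuberance on a \emph{short} side versus a \emph{long} side of the rectangle are both allowed in $\CA$ by definition (``on one of the four sides''), so no extra case distinction is needed there.
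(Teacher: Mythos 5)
Your proof hinges on the inequality $3U<\Db$ in region $\RA$, and that inequality is false: $\RA$ is precisely the subregion where $\Db<3U$ (this is how it is separated from $\RB$, which starts at $\Db>3U$, and it is used explicitly in the paper's argument, e.g.\ to conclude $\broken{\xi}\le 2$ along any admissible modifying path). From $\ell\starred\ge 4$ one only gets $\Db>4U-\tfrac43\Da$, hence $\Da>\tfrac34 U$ and $4U-\tfrac43\Da<\Db<3U$; in particular \emph{both} $\btile$-relocation mechanisms of Section~\ref{sec-moving-dimers} (barriers $3U$ and $U+4\Da>4U$) exceed the permitted excursion $\Db$ above $H(\bar\eta)$. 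Consequently the path you propose for the inclusion $\CA\subseteq g(\{\bar\eta\},\minnbis{\protonumber})$ — sliding the protuberance around corners via the $3U$ mechanism — crosses energy level $\Gamma\starred$ and proves nothing; the same error undermines your treatment of the transient ``hanging protuberance'' configurations. The correct route (the paper's) is different: detach and remove the one-bond particles of type $\ta$ of the protuberance and then detach its particle of type $\tb$ as a free particle, at maximal cost $4U-2\Da$ above $H(\bar\eta)$, which is $<\Db$ exactly because we are in the nontrivial region $\Db>4U-2\Da$; the free particle is then floated around the rectangle, reattached on any side and resaturated, which reaches all of $\CA$ strictly below $\Gamma\starred$.

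Inclusion (ii) also has a genuine gap. Lemma~\ref{lemma-first-2step-from-corner} does \emph{not} confine the first heavy-step to the protuberance: the corner tiles of the $\ell\starred\times(\ell\starred-1)$ rectangle also contain ``corner particles of type $\tb$'' (each is adjacent to a particle of type $\ta$ with a single active bond), and moving those along an edge is exactly how the configurations of $\CB\setminus\CA$ are produced once $\Db>3U$. So one still has to rule out, by explicit energy bookkeeping, every first heavy-step involving a particle of type $\tb$ other than the protuberance's (the paper's Step~1, along edges $c$ and $d$, where the possible intermediate configurations cost $3U$, $4U$, $2U+\Da$, etc., and either exceed $\Gamma\starred$ or dead-end), and then analyze detached and hanging protuberances (Steps~2--3). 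Your proposed substitute — that a non-$\CA$ shape would force the circumscribing rectangle to grow and hence dual perimeter $>4\ell\starred$, contradicting Lemma~\ref{lemma-minimal-energy-coincides-with-minimal-perimeter} — is false: monotone polyominoes with $\protonumber$ cells inscribed in an $\ell\starred\times\ell\starred$ or $(\ell\starred+1)\times(\ell\starred-1)$ rectangle have dual perimeter exactly $4\ell\starred$ and lie in $\minnbis{\protonumber}$. They are excluded in $\RA$ for purely dynamical reasons (every modifying path to them must pass an intermediate configuration at least $3U$ or $U+3\Da$ above $H(\bar\eta)$, both $>\Db$ in $\RA$), not isoperimetric ones, and this is the part of the proof your proposal does not supply.
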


\begin{proof}
Let $\rho$ be a configuration consisting of $\btiled$ dual rectangle with horizontal side 
length $\ell\starred$ and vertical side length $\ell\starred-1$ whose top-rightmost $\btile$ 
is centered at site $(a_{\rho}, b_{\rho})$. Let $\bar{\eta}$ be the $\btiled$ standard 
configuration obtained from $\rho$ by adding a $\btiled$ protuberance centered at (dual) 
site $x = (a_{\rho} + 1,b_{\rho} + 1)$ (see Fig.~\ref{fig:region_A_classes_of_2-steps}).
Observe that $H(\bar{\eta}) = \Gamma\starred - \Db$. Let $y$ be one of the two sites of the 
tile centered at $x$ that is occupied by a particle of type $\ta$ with only one active bond. 
It is easy to check that $\CA \subseteq g(\{\bar{\eta}\},\minnbis{\protonumber})$. Indeed, it 
is enough to consider the configuration $(\rho,y)$ obtained from $\bar{\eta}$ by first 
detaching and removing the two corner particles of type $\ta$ adjacent to $x$ and then 
moving the particle of type $\tb$ from $x$ to $y$, which gives $\comlev((\rho,y),\eta) 
< \Gamma\starred$ for all $\eta \in\CA$. Note that this is true irrespective of the 
distance of $\eta$ to the boundary of $\Lambda$. 
	
To conclude the proof we will show that all modifying paths starting from $\bar{\eta}$
either lead to a configuration in $\CA$ or exceed energy level $\Gamma\starred$. Note 
that we are interested only in those modifying paths consisting of at least one heavy-step, 
otherwise the only configuration in $\minnbis{\protonumber}$ that can be reached from 
$\bar{\eta}$ is $\bar{\eta}$ itself. In other words, since all configurations in $\CA$ 
consist of a $\btiled$ dual rectangle plus a $\btiled$ protuberance, we have to show that, 
without exceeding energy level $\Gamma\starred$, i.e., within energy barrier $\Db$, the 
only configurations in $\minnbis{\protonumber}$ that can be reached via a path consisting 
of configurations with $\protonumber$ particles of type $\tb$ are obtained by ``moving'' 
the $\btiled$ protuberance.
	
Let $\eta \in \bar{g}(\{\bar{\eta}\},\minnbis{\protonumber})$, and let $\omega$ be a 
modifying path from $\bar{\eta}$ to $\eta$ such that $H(\xi)\le\Gamma\starred = 
H(\bar{\eta}) + \Db$ for all $\xi \in \omega$. Note that, since $\Db < 3U$, we have 
$\broken{\xi} \le 2$ for all $\xi \in \omega$. Furthermore, in $\CA \backslash 
(\CA\cap\RT)$ we have $\Da > \tfrac12 U$ and $\Db > U + 3\Da$. This implies that, for 
$\xi \in \omega$,
\begin{mytextlikelist}
\item if $\broken{\xi} = 1$, then $\extrapart{\xi} \le 2$;
\item if $\broken{\xi} = 2$, then $\extrapart{\xi} \le 1$.
\end{mytextlikelist}
\noindent
Fig.~\ref{fig:region_A_classes_of_2-steps} shows the different classes of edges along 
which the first heavy--step of a modifying path is possible. We will group modifying 
paths according to the edge along which the first heavy-step is made. Note that $\Db 
< 3U + \Da$ in region $\RA$ and so Lemma~\ref{lemma-first-2step-from-corner} applies. 
Therefore, as a first possible heavy-step, we need to consider only those consisting 
of the motion of a corner particle of type $\tb$ along the edge where it shares a bond 
with a corner particle of type $\ta$. In order to identify particles and edges, we 
refer to Fig.~\ref{fig:region_A_classes_of_2-steps}.

\begin{figure}[htbp]
\centering
\subfigure[]{\includegraphics[height=4\tilesize]{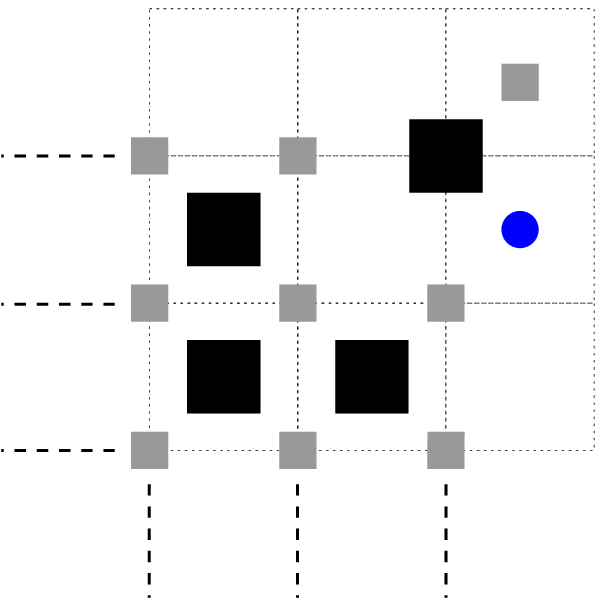}}
\qquad
\subfigure[]{\includegraphics[height=4\tilesize]{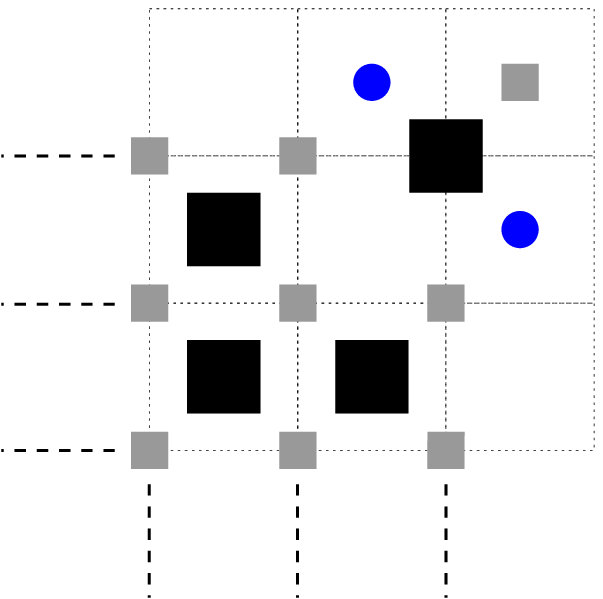}}
\qquad
\subfigure[]{\includegraphics[height=4\tilesize]{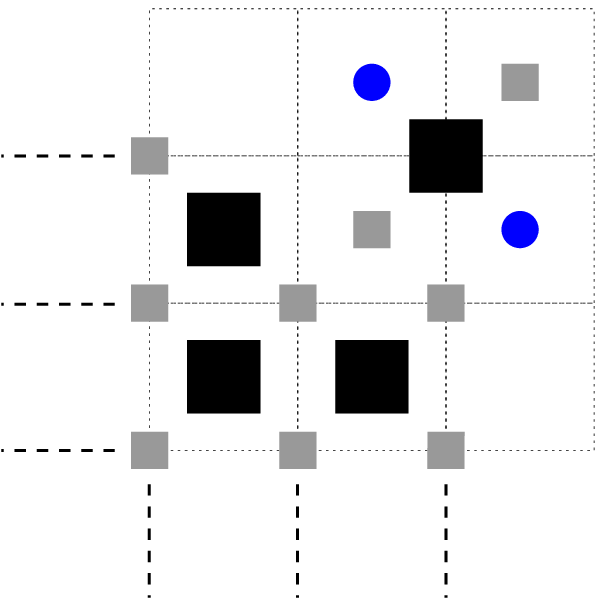}}
\caption{Region $\RA$, first heavy--step along edge $c$.} 
\label{fig:regionAmoveC}
\end{figure}
	

\step{1}
 
\begin{claim}
All modifying paths starting with a heavy-step involving a particle of type $\tb$ other 
than that in the protuberance exceed energy level $\Gamma\starred$.
\end{claim}
	
\begin{proof}
{\bf 1.} Asumme that he first heavy-step is along edge $c$. Let $x$ be the site where 
the particle $p_{1}$ that we want to move sits in configuration $\bar{\eta}$. Let 
$\eta_{1-}$ be the configuration visited just before the first heavy-step is performed, 
and let $\eta_{1}$ be the configuration that is reached when $p_{1}$ is moved one step 
North-East to site $y$. There are four possible cases.
\begin{itemize}
\item
$B(p_{1}, \eta_{1}) = 0$. Let us assume that all free particles of type $\ta$ are removed 
from $\Lambda$. Then $H(\eta_{1}) - H(\bar{\eta}) = 4U - \Da > \Db$ and hence the path 
exceeds energy level $\Gamma\starred$.
\item
$B(p_{1}, \eta_{1}) = 1$: $H(\eta_{1}) - H(\bar{\eta}) = 3U > \Db$.
\item
$B(p_{1}, \eta_{1}) = 2$. Let us assume that the two sites occupied by a particle of type 
$\ta$ are $y_{1}$ (South-East of $y$) and $y_{2}$ (North-East of $y$). Since $\Da < U$, 
the least expensive way in terms of energy cost to have $B(p_{1}, \eta_{1}) = 2$ is achieved 
by bringing one extra particle of type $\ta$ in $\Lambda$ (see Fig.~\ref{fig:regionAmoveC}(a)): 
$H(\eta_{1}) - H(\bar{\eta)} = 2U + \Da$. Therefore, starting from $\eta_{1}$, only moves 
that do not increase the energy are possible. Since $\eta_{1}$ is not equivalent to any 
configuration in $\minnbis{\protonumber}$, at least one extra heavy-step is necessary from 
$\eta_{1}$. By Remark~\ref{rem-heavy-self-avoiding}, the first heavy-step from $\eta_{1}$ 
cannot be completed by moving $p_{1}$ back to $x$. Before the next heavy-step is performed, 
the only moves from $\eta_{1}$ that do not increase the energy further are motions of 
particles of type $\ta$ with one active bond to or from a site adjacent to $p_{1}$. In 
particular, it is not possible to bring inside $\Lambda$ any other particle or type $\ta$. 
Any possible sequence of such moves cannot change the energy of the configuration. When 
the next heavy-step is completed, at least one extra bond is added and energy level 
$\Gamma\starred$ is exceeded.
\item
$B(p_{1}, \eta_{1}) = 3$ (see Fig.~\ref{fig:regionAmoveC}(b)). Similarly to the previous case, 
the least expensive way to have $B(p_{1},\eta_{1}) = 3$ is achieved by bringing two extra 
particles of type $\ta$ inside $\Lambda$. From $\eta_{1}$ at least one other heavy-step is 
necessary, and moves that increase the energy are not allowed. As in the previous case, only 
motions of particles of type $\ta$ that do not decrease the number of bonds are allowed (see, 
for instance, Fig.~\ref{fig:regionAmoveC}(c)), and the completion of the next heavy-step 
exceeds energy level $\Gamma\starred$.
\end{itemize}
	
\begin{figure}[htbp]
\centering
\subfigure[]{\includegraphics[height=5\tilesize]{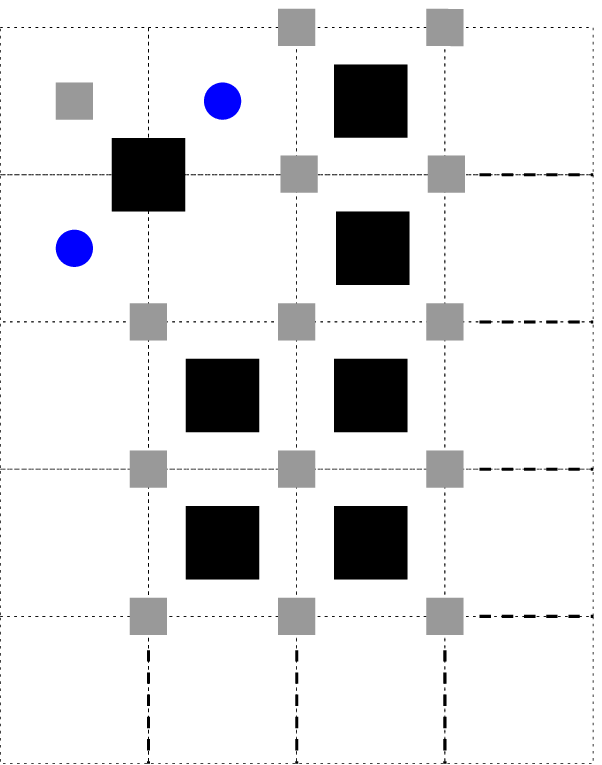}}
\qquad
\subfigure[]{\includegraphics[height=5\tilesize]{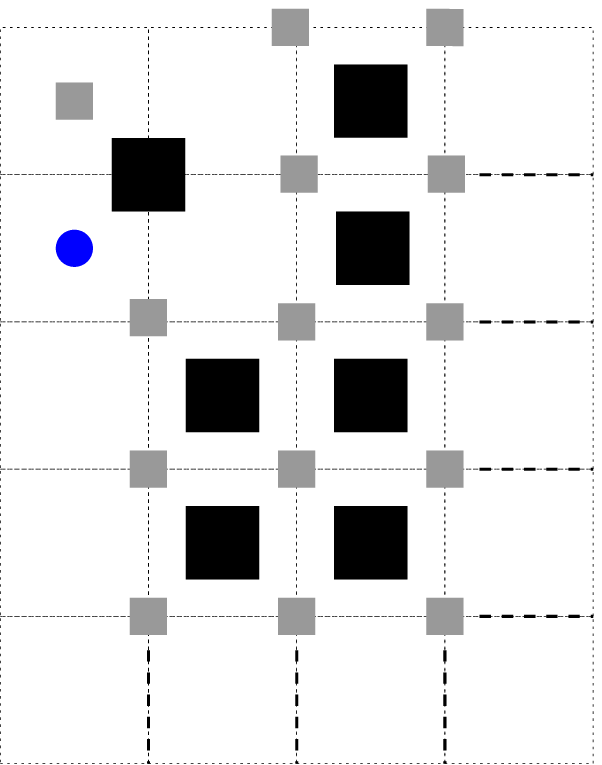}}
\qquad
\subfigure[]{\includegraphics[height=5\tilesize]{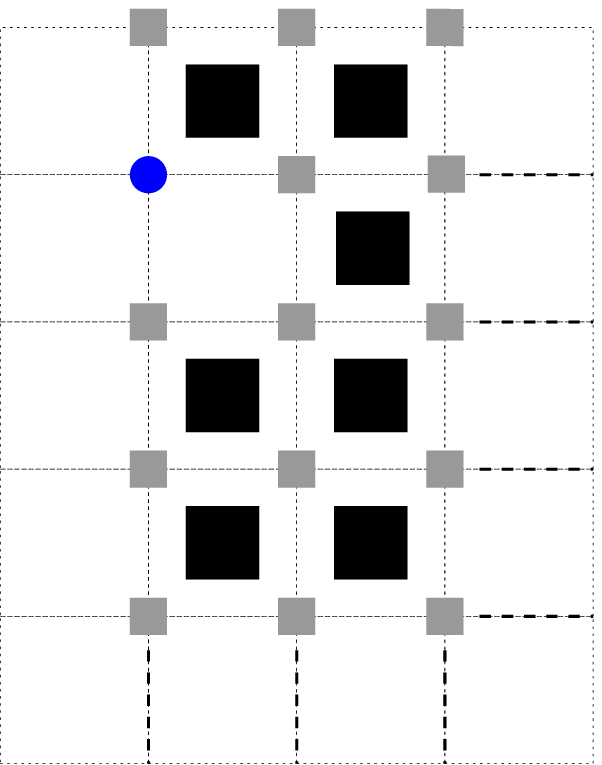}}
\caption{Region $\RA$, first heavy--step along edge $d$.} 
\label{fig:regionAmoveD}
\end{figure}

\medskip\noindent
{\bf 2.} 
Assume that the first heavy--step is along edge $d$. Let $x$ be the site where the 
particle $p_{1}$ that we want to move sits in configuration $\bar{\eta}$. Let $\eta_{1-}$ 
be the configuration visited just before the first heavy-step is performed, and let 
$\eta_{1}$ the configuration that is reached from $\eta$ after moving $p_{1}$ one step 
North-West along the edge $d$ to site $y$. Also, let $p_{2}$ denote the particle of type 
$\tb$ in the tile protuberance, and $p_{3}$ the particle of type $\tb$ at dual distance 
one from $p_{1}$ in the South direction in configuration $\bar{\eta}$. As in the previous
case, after the heavy-step is performed we must have $B(p_{1}, \eta_{1}) \ge 2$.

\begin{itemize}
\item
$B(p_{1}, \eta_{1}) = 3$. The best choice is when $\broken{\eta_{1}} = 1$ and 
$\extrapart{\eta_{1}} = 2 $ (see Fig.~\ref{fig:regionAmoveD}(a)). From $\eta_{1}$ 
only moves that do not increase the energy are possible. There is only possible one
non-backtracking move: the particle of type $\ta$ South-West of $x$ is moved one 
step North-East. After this move, it is only possible to move the particle of type 
$\ta$ South-West of $p_{1}$ one step South-West. The configuration that is reached 
does not belong to $\minnbis{\protonumber}$ and no other move is allowed (this is the
analogue of $B(p_{1}, \eta_{1}) = 3$ in the previous case).
\item
$B(p_{1}, \eta_{1}) = 2$. As in the previous case, the choice that minimizes the 
energy of $\eta_{1}$ is such that $\broken{\eta_{1}}=2$ and $\extrapart{\eta_{1}} = 1$, 
and the particles of type $\ta$ adjacent to $p_{1}$ sit at sites $y_{1}$ and $y_{2}$, 
respectively, South-West and North-West of $y$ (see Fig.~\ref{fig:regionAmoveD}(b)). 
It is easy to see that this is the choice that allows for ``more freedom'' of the 
path, in the sense that it is the only choice from which it is possible to complete 
a further heavy-step. Therefore, only moves that do not increase the energy are 
allowed starting from $\eta_{1}$, and again note that no other particle of type $\ta$ 
is allowed to enter $\Lambda$. From $\eta_{1}$ at least one other heavy-step is necessary. 
This means that from $\eta_{1}$ it is only possible to move $p_{1}$ one step North-East 
to site $z$ (this case will be examined afterwards) or to start a sequence of moves 
of particles of type $\ta$ to or from a site adjacent to $p_{1}$. Any possible sequence 
of such moves cannot decrease the energy of the configuration and only a heavy-step 
can be completed by moving $p_{1}$ one step North-East to site $z$ to reach configuration
$\eta_{2}$. Clearly, the configurations the path can reach (strictly) below $\Gamma\starred$ 
from $\eta_{2}$ are the same as those that can be reached from $\eta_{3}$ by saturating 
$p_{1}$ with the two free particle of type $\ta$ (see Fig.~\ref{fig:regionAmoveD}(c)). 
Arguing as in the previous case, we see that the only heavy-step possible without 
exceeding energy level $\Gamma\starred$ from $\eta_{3}$ is the one that is completed 
by moving $p_{1}$ back to site $y$ (configuration $\eta_{4}$). The transition from $\eta_{3}$ 
to $\eta_{4}$ can be treated in the same way as that from $\bar{\eta}$ to $\eta_{1}$,
and so we conclude that from $\eta_{4}$ it is only possible to reach a configuration 
equivalent to $\bar{\eta}$.	
\end{itemize}
\end{proof}
	

\step{2}
 
\begin{claim}
Let the first step of a modifying path starting from a configuration in $\CA$ involve 
the particle $p$ of type $\tb$ in the protuberance. If $p$ is not re-attached to the 
main cluster before the next heavy-step is completed, then the path exceeds energy 
level $\Gamma\starred$ before reaching a configuration in $\minnbis{\protonumber}$.
\end{claim}

\begin{proof}	
The proof of this claim will be deferred to the proof of Lemma~\ref{lemma-proto-below-gamma-RB},
Step 1, in Section~\ref{sec-RB}. There it will be shown that the same claim holds also in 
region $\RB$, where the values that $\Db$ can take are larger than those that $\Db$ can take 
in $\RA$.
\end{proof}
	
Depending on its starting position, the protuberance can be re-attached in two possible
ways: either the particle of type $\tb$ shares two particles of type $\ta$ with the other 
particles of type $\tb$ in the cluster (see Fig.~\ref{fig:possible-reattachment}(a)),
or it shares only one particle of type $\ta$ (see Fig.~\ref{fig:possible-reattachment}(b)). 
In both cases we consider the evolution of the path to a $\btiled$ configuration that is 
equivalent to the one reached the moment the particle of type $\tb$ joins the main cluster.
In the first case, the configuration is again in the class $\CA$, and hence the same kind 
of argument can be repeated. In the second case, the following statement allow us to conclude 
the proof of Lemma~\ref{lemma-proto-below-gamma-RA}.	

\begin{figure}[htbp]
\centering
\subfigure[]{\includegraphics[height=4\tilesize]{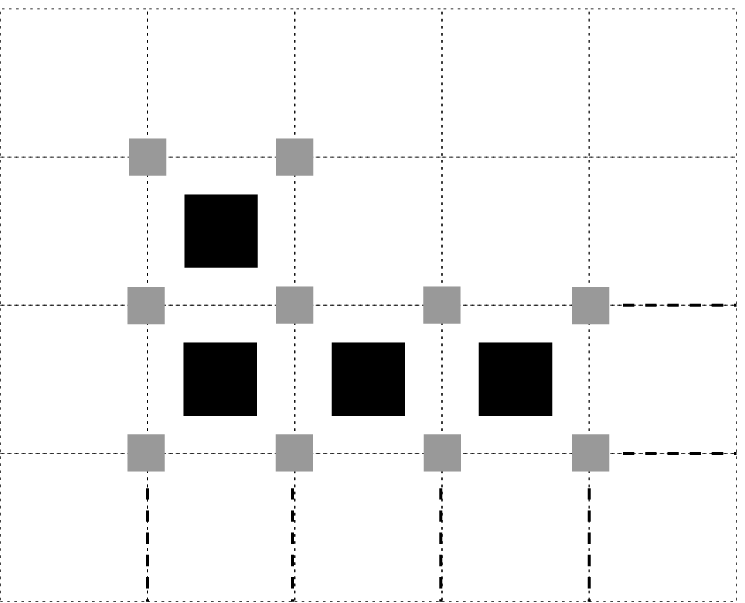}}
\qquad
\subfigure[]{\includegraphics[height=4\tilesize]{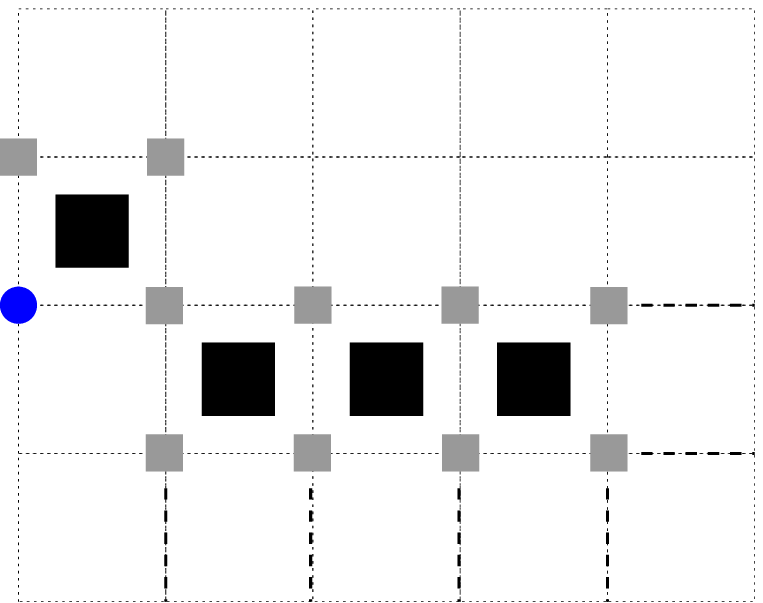}}
\caption{Modifying paths may create hanging protuberances.} 
\label{fig:possible-reattachment}
\end{figure}
	
In the first case, the configuration is again in the class $\CA$ and, hence, the same kind 
of analysis can be repeated. In the second case, the following statement allow us to 
conclude the proof of the lemma.	


\step{3}

\begin{claim} 
From a configuration consisting of a $\btiled$ rectangle plus a hanging protuberance 
it is not possible to reach a configuration in $\minnbis{\protonumber}\backslash\CA$ 
without exceeding energy level $\Gamma\starred$.
\end{claim}

\begin{proof}
Let $\eta_{0}$ be the configuration of Fig.~\ref{fig:possible-reattachment}(b), $\eta_{1}$ 
the configuration reached when the first heavy-step from $\eta_{0}$ is completed, and 
$\eta_{1-}$ the configuration visited by the path just before $\eta_{1}$. Let $p_{1}$ 
be the particle of type $\tb$ in the hanging protuberance and $p_{2}$ the particle of type 
$\tb$ at dual distance $\sqrt{2}$ in the South-East direction from $p_{1}$. Let $q_{1}$ 
denote the particle of type $\ta$ shared by $p_{1}$ and $p_{2}$. We will show that 
if the first heavy-step from $\eta_{0}$ is not completed by moving $p_{1}$ one step 
North-East of one step South-West (the two cases are analogous), then the path exceeds 
energy level $\Gamma\starred$. From Lemma~\ref{lemma-first-2step-from-corner} and Step 1 
it follows that, in order to prove the claim, we need to consider only the heavy-steps 
completed by moving $p_{1}$ North-West or South-East and $p_{2}$ North-West.

\begin{itemize}	
\item
Assume that $p_{1}$ is moved one step South-East. Observe that $B(p_{1}, \eta_{1}) \le 2$ 
and $B(p_{2}, \eta_{1}) \le 3$. Clearly, since $\Da < U$, the choice that is most favorable 
from the point of view of energy is when $p_{1}$ has $2$ active bonds and $p_{2}$ has $3$ 
active bonds. It is clear that, since $\eta_{1}$ is reached from $\eta_{1-}$ via the motion 
of a particle of type $\tb$, the two configurations have the same particles of type $\ta$ 
placed at the same sites. Since $B(p_{1}, \eta_{1}) = 2$, there is no advantage in having 
more than two particles of type $\ta$ adjacent to $p_{1}$ in $\eta_{1-}$, since one bond 
will be lost anyway with the motion of $p_{2}$. It follows that $H(\eta_{1}) - H(\bar{\eta}) 
= 3U + \Da > \Db$ (see Fig.~\ref{fig:hanging-ra}(a)). Arguing in the same way, we see that 
in the case where the first heavy-step from $\eta_{0}$ is completed by moving $p_{2}$ one 
step North-West, we have $H(\eta_{1}) - H(\bar{\eta}) = 3U + 2\Da > \Db$ (see 
Fig.~\ref{fig:hanging-ra}(b)). (Note that $3U + \Da > \Db$ also in region $\RB$, and so
these moves will be forbidden there as well.)
\item	
Assume that the first heavy-step is in the North-West direction. Then the only possibility 
without exceeding energy level $\Gamma\starred$ is when $B(p_{1}, \eta_{1}) = 2$. This is 
achieved, for instance, by moving the two particles of type $\ta$ on the West side of $p_{1}$ 
one step North-West before completing the heavy-step. Since $\eta_{1}$ has two broken bonds, 
it is not possible to break any extra bond, and hence no extra heavy-step is possible as 
long as the particle of type $\ta$ does not reach a site adjacent to $p_{1}$. Let $\eta_{2}$ 
be such a configuration (see Fig.~\ref{fig:hanging-ra}(d)). Since $\eta_{2}$ has one broken 
bond and one extra particles of type $\ta$, the next heavy-step (completed by moving either 
$p_{1}$ or $p_{2}$) cannot break more than one bond, but clearly this is impossible.
\item
Assume that the first heavy-step is completed by moving $p_{1}$ North-East (see 
Fig.~\ref{fig:hanging-ra}(c)). Then $\eta_{1}$ is a configuration that can be reached 
with one heavy-step from a configuration in $\CA$, and hence the claim in Step 2 holds.
This can be done by moving the two particles of type $\ta$ North of $p_{1}$ one step 
North-East (breaking two active bonds), moving $p_{1}$ one step North-East ($\D H=0$) 
and removing the free particle of type $\ta$ froma $\Lambda$ ($\D H=-\Da$).
\end{itemize}
\end{proof}

\end{proof}

\begin{figure}[htbp]
\centering
\subfigure[]{\includegraphics[height=4\tilesize]{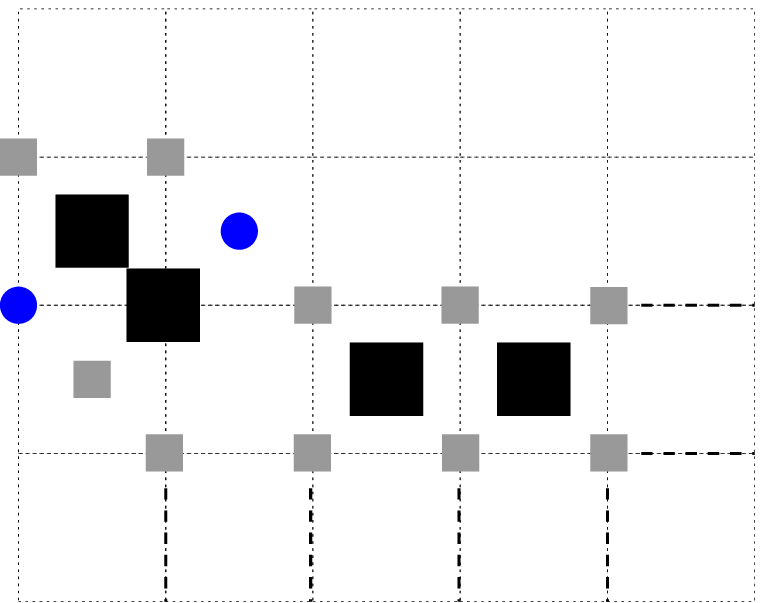}}
\qquad
\subfigure[]{\includegraphics[height=4\tilesize]{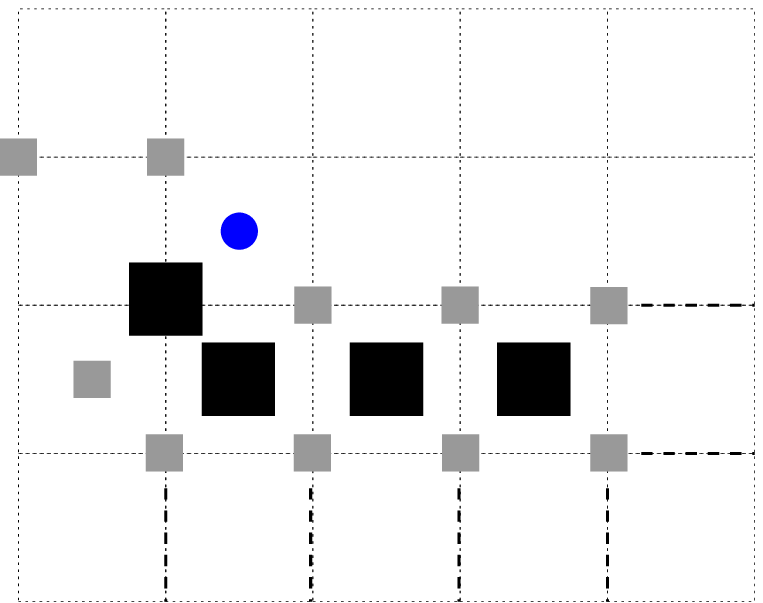}}
\qquad
\subfigure[]{\includegraphics[height=4\tilesize]{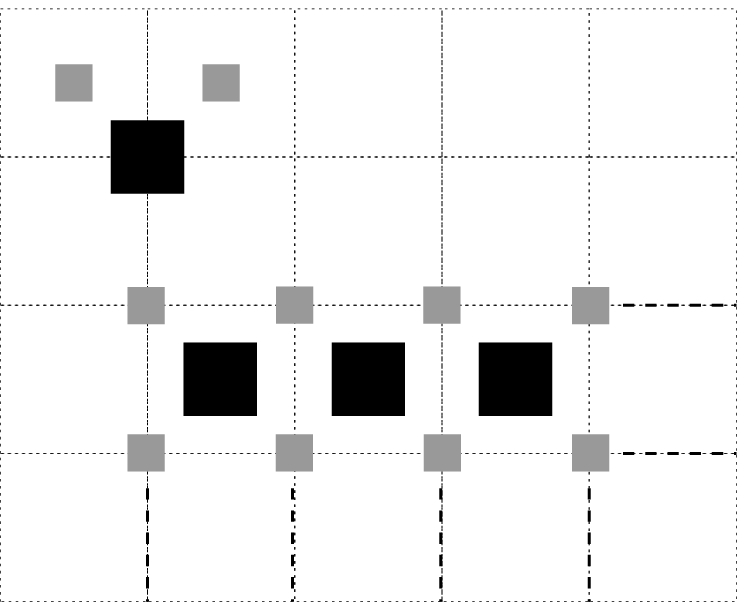}}
\qquad
\subfigure[]{\includegraphics[height=4\tilesize]{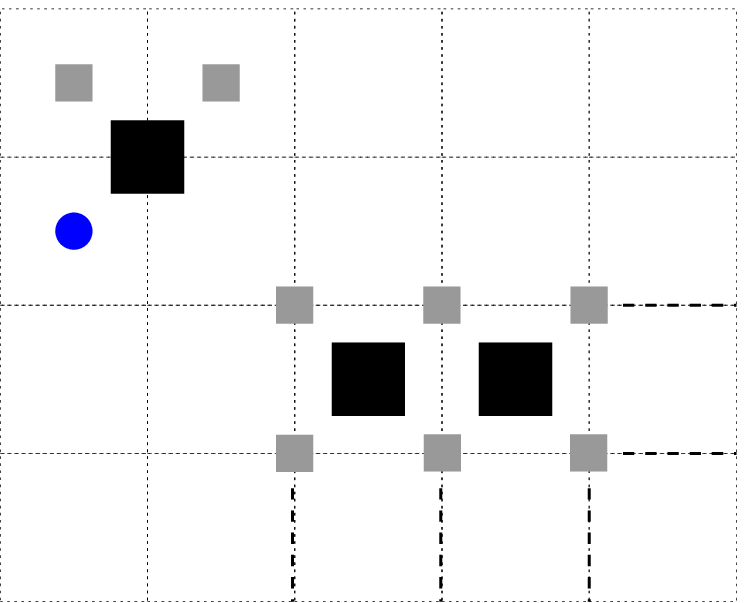}}
\caption{Region $\RA$, first heavy--step along edge $e$.} 
\label{fig:hanging-ra}
\end{figure}

\subsubsection{Existence of a good site}

\begin{lemma}
\label{lemma-proto-is-good-RA}
For all $\hat{\eta} \in \CA$, there exists an $x \in F(\hat{\eta})$ such that 
$\comlev((\hat{\eta},x),\boxplus) < \Gamma\starred$.
\end{lemma}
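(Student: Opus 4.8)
The plan is to single out, for each $\hat\eta\in\CA$, the good site $x$ that turns the $\btiled$ protuberance of $\hat\eta$ into a $\btiled$ bar of length two, and then to exhibit a path from $(\hat\eta,x)$ to $\boxplus$ along which $H$ stays strictly below $\Gamma\starred$, by letting the cluster grow one $\btile$ at a time into a supercritical $\ell\starred\times\ell\starred$ square. Concretely, write $\hat\eta$ as a $\btiled$ rectangle $R$ of dual side lengths $\ell\starred$ and $\ell\starred-1$ carrying a $\btiled$ protuberance whose particle of type $\tb$ sits at the dual site $w$, and let $s$ be the side of $R$ carrying the protuberance. Since $\ell\starred\ge 4$, there is a neighbour $x$ of $w$ along $s$ whose tile $\tile(x)$ is adjacent both to $\tile(w)$ and to $R$; then $\hat\eta(x)=0$ and $x\in F(\hat\eta)$. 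The one point to check carefully is the bond count: in $(\hat\eta,x)$ the particle of type $\tb$ at $x$ has \emph{exactly three} active bonds, because three of the four corners of $\tile(x)$ already carry particles of type $\ta$ coming from $R$ and from the protuberance. Since $\CA\subset\minnbis{\protonumber}$ and $H(\minnbis{\protonumber})=\Gamma\starred-\Db$ (Lemma~\ref{lemma-entrance-of-set-protonumber}), this gives $H((\hat\eta,x))=\Gamma\starred-3U$.

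I would then build the path in three stages. Stage (a): let one particle of type $\ta$ enter $\partial^{-}\Lambda$ and travel through empty sites to the remaining free corner of $\tile(x)$, saturating the particle of type $\tb$ at $x$; this reaches the configuration $A_2$ consisting of $R$ plus a saturated $\btiled$ bar of length two on $s$, with $H(A_2)=H(\hat\eta)-\epsi$, and the energy along this stage never exceeds $\Gamma\starred-3U+\Da$, which is $<\Gamma\starred$ precisely because $\Da<3U$ in $\RA$. Stage (b), assuming first that $s$ is a long side of $R$: iteratively extend the bar, always at one of its two ends, until it fills $s$; at each extension a particle of type $\tb$ enters $\partial^{-}\Lambda$, moves through empty space to the good dual corner at the end of the bar where, just as at $x$, it attaches with three active bonds, and is then saturated by one particle of type $\ta$. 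Writing $A_k$ for the intermediate carrying a bar of length $k$, one has $H(A_{k+1})=H(A_k)-\epsi$, and the maximum of $H$ along the $k$-th extension equals $H(A_k)+\Db$ (again since $\Da<3U$); hence the whole of stage (b) stays at energy $\le H(A_2)+\Db=\Gamma\starred-\epsi<\Gamma\starred$ and ends at the $\btiled$ $\ell\starred\times\ell\starred$ square. Stage (c): from this square, which lies in $\cX_\boxplus$ by Remark~\ref{remark-supercritical-square}, reach $\boxplus$ along a path with $H<\Gamma\starred$; this is the standard droplet-growth fact, and can be checked directly since every successive bar-addition to the growing square keeps $H$ strictly below $\Gamma\starred$ (here one uses $\Da/\epsi\notin\N$). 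Concatenating the three stages gives $\comlev((\hat\eta,x),\boxplus)<\Gamma\starred$.

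When $s$ is a short side of $R$, stage (b) must be modified: filling $s$ would only produce the $(\ell\starred+1)\times(\ell\starred-1)$ rectangle, from which \emph{every} bar-addition already overshoots $\Gamma\starred$ by $\Da-\epsi(\ell\starred-1)>0$. Instead, after stage (a) one first slides the length-two bar along the border of $R$ — around one corner — onto a long side of $R$, and then runs stage (b) as above. This slide keeps $H$ below $\Gamma\starred$: by the tile-motion estimates of Sections~\ref{sec-moving-dimers}--\ref{sec-small-lc} it costs at most $\min\{3U,\,U+4\Da\}$ (the flat parts costing only $2U+\Da$), and $\min\{3U,\,U+4\Da\}<4U-\Da=\Db+\epsi$ since $\Da<U$, while along the slide the configuration sits at energy $H(A_2)=\Gamma\starred-\Db-\epsi$.

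The reachability statements used throughout — that the incoming particles of type $\ta$ and of type $\tb$ can always travel from $\partial^{-}\Lambda$ to the relevant attachment site through empty sites without touching the cluster, and that there is enough room in $\Lambda^{-}$ for the cluster to grow up to the $\ell\starred\times\ell\starred$ square — are immediate from the width-$3$ interaction-free border of $\Lambda$ together with $L>2\ell\starred+2$, as already noted at the start of Section~\ref{Proofs}. I expect the genuine work to be in the short-side case: one must verify that the length-two bar really can be slid around a corner of $R$ within the stated energy barrier. The bond counts at $x$ and at the successive good dual corners (three, not two) are elementary, but they are exactly what pins every peak at $\Gamma\starred-\epsi$ rather than at $\Gamma\starred$, so they have to be carried out with care.
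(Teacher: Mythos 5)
Your construction follows the paper's own route almost step for step: the same good site $x$ turning the protuberance into an $\abbar$ of length $2$, the same energy bookkeeping ($H((\hat\eta,x))=\Gamma\starred-3U$, $H(A_2)=\Gamma\starred-\Db-\epsi$, peaks at $H(A_k)+\Db\le\Gamma\starred-\epsi$ while filling a long side), the same appeal to Remark~\ref{remark-supercritical-square} for the endgame, and, in the short-side case, the same relocation of the length-$2$ bar to a long side via the dimer-motion mechanism of Section~\ref{sec-moving-dimers} within barrier $3U<\Db+\epsi=4U-\Da$ (the paper does this dimer by dimer, with barriers $3U$, $U$, $2U$, rather than ``sliding the bar as a unit'', but that is the same idea and you flagged it as the point to be checked).

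There is, however, a genuine gap in the short-side case: you dismiss all positional issues as ``immediate from the width-$3$ interaction-free border and $L>2\ell\starred+2$'', but that border is the \emph{source} of the difficulty, not its resolution. It guarantees that free particles can travel from $\partial^{-}\Lambda$ to the droplet, but it does nothing for the corner slide when $\hat\eta\in\CA$ sits with the relevant sides adjacent to the annulus where bonds do not count: bonds are only active for pairs at distance at least $3$ from $\partial^{-}\Lambda$, so intermediate configurations of the dimer motion (and the target positions on a long side hugging the annulus) may have strictly fewer active bonds than the accounting of Section~\ref{sec-moving-dimers} assumes, pushing the peak above $\Gamma\starred-\Db-\epsi+3U$ and possibly above $\Gamma\starred$ (an extra lost bond already gives $\Gamma\starred+\Da-U+\dots$, and the modified boundary mechanism of Section~\ref{sec-RC}, costing $3U+\Da$, is not affordable throughout $\RA$ since $3U+\Da<4U-\Da$ requires $2\Da<U$). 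Since the lemma quantifies over \emph{all} $\hat\eta\in\CA$, these configurations must be treated. The paper does so explicitly: when the protuberance-bearing short side is close to $\partial^{-}\Lambda$, it first completes the $\abbar$ on that short side below $\Gamma\starred$ (each addition at a good dual corner, peak $\Gamma\starred-k\epsi$), reaching the $\btiled$ $(\ell\starred+1)\times(\ell\starred-1)$ rectangle, and then removes $\ell\starred-3$ corner $\btiles$ from the \emph{opposite} short side, which is far from $\partial^{-}\Lambda$, so as to re-create the length-$2$ bar away from the annulus; only then is the slide to a long side performed. (Your observation that one cannot simply grow the $(\ell\starred+1)\times(\ell\starred-1)$ rectangle directly, because the peak overshoots by $\Da-\epsi(\ell\starred-1)>0$, is correct, which is exactly why the paper shrinks it back rather than growing it.) To make your proof complete you need either this grow-then-shrink detour or a separate verification that your slide can always be routed along sides at lattice distance at least $2$ from the non-interacting region.
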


\begin{proof}
Configurations in $\CA$ consist of a dual $\btiled$ square and a protuberance on one 
of the four sides. If the protuberance is on the longest side, then the rectangle 
circumscribing the cluster is a square of side length $\ell\starred$. Conversely, if 
the protuberance is on one of the shortest sides, then the rectangle circumscribing 
the cluster has side lengths $\ell\starred - 1,\ell\starred + 1$. Without loss of 
generality, we assume that the longest side of the rectangle is horizontal and that
the protuberance is on the North side of the rectangle when it is attached to the 
longest side and on the East side when it is attached to the shortest side.
\begin{itemize}
\item
Assume that the protuberance is on the longest side. Let $x$ be the central site of 
a tile adjacent to the protuberance to the North side of the rectangle. After a particle 
of type $\tb$ has entered $\Lambda$ and has been moved to $x$, a configuration with 
energy $\Gamma\starred - 3U$ is reached. The particle at site $x$ can be saturated 
within the energy barrier $\Da$, to reach a configuration with energy $\Gamma\starred 
- \Db - \epsi < H(\eta)$ for all $\eta \in \CA$, consisting of a rectangle of side 
lengths $\ell\starred, \ell\starred - 1$ plus a $\abbar$ of length $2$ on its (longest) 
north-side. Then, within energy barrier $\Db$, the $\abbar$ can be completed to obtain 
a $\btiled$ dual square of side length $\ell\starred$. The claim in the lemma follows 
from Remark~\ref{remark-supercritical-square}. Such a configuration is supercritical 
(and has energy smaller than the standard configuration with $\quasiqnumber$ particles 
of type $\tb$). Therefore it can grow until $\boxplus$ is reached while staying below 
energy level $\Gamma\starred$. 
\item
Assume that the protuberance is on the shortest side. Let $x$ be the central site of 
a tile adjacent to the protuberance to the East side of the rectangle. After a particle 
of type $\tb$ has entered $\Lambda$ and has been moved to $x$, a configuration with 
energy $\Gamma\starred - 3U$ is reached. The particle at site $x$ can be saturated 
within energy barrier $\Da$, to reach a configuration with energy $\Gamma\starred 
- \Db - \epsi < H(\eta)$ for all $\eta \in \CA$, consisting of a rectangle of side 
lengths $\ell\starred, \ell\starred - 1$ plus a $\abbar$ of length $2$ on its shortest 
(see Fig.~\ref{fig:small-bar-to-top-side}(a)).

\begin{figure}[htbp]
\centering
\subfigure[]{\includegraphics[height=5\tilesize]{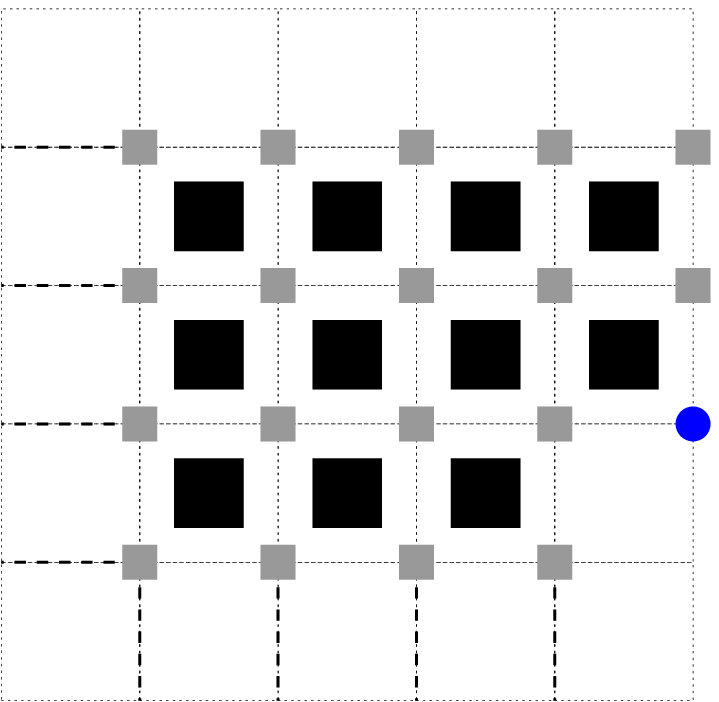}}
\qquad
\subfigure[]{\includegraphics[height=5\tilesize]{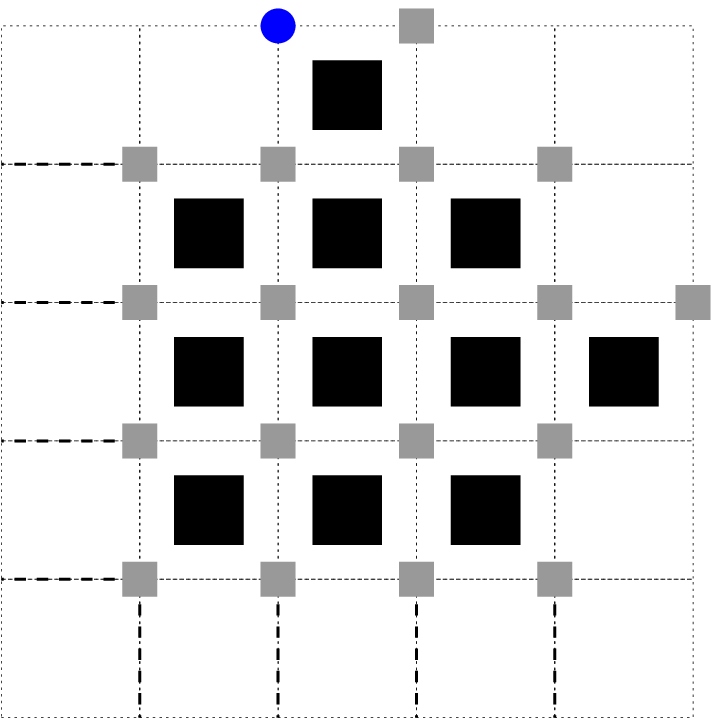}}
\qquad
\subfigure[]{\includegraphics[height=5\tilesize]{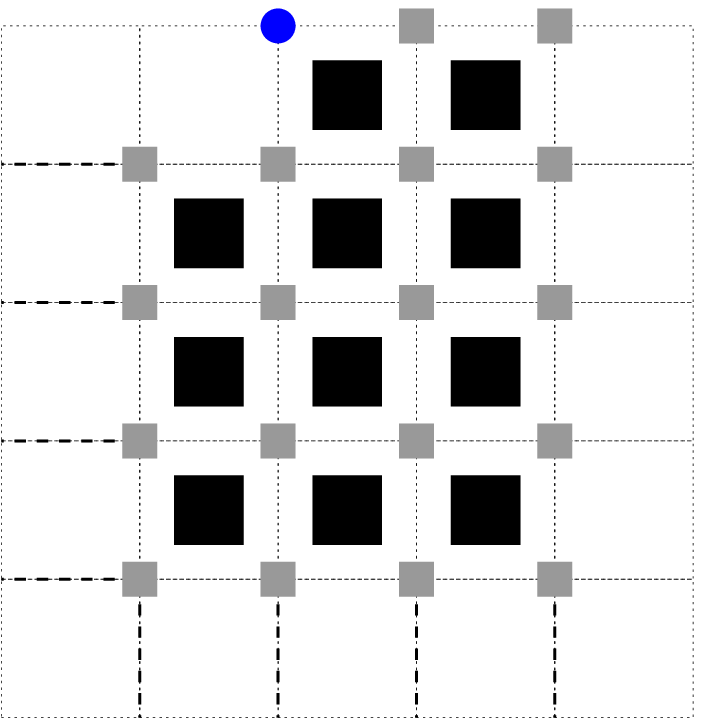}}
\caption{Region $\RA$, first heavy--step along edge $e$.} 
\label{fig:small-bar-to-top-side}
\end{figure}

\medskip
Let $y$ be the central site of a tile adjacent to the North side of the rectangle of side 
lengths $\ell\starred, \ell\starred - 1$. One of the dimers of the $\abbar$ of length $2$ 
can be moved within energy barrier $3U$ (note that $3U - \Db - \epsi < 0$) to the north 
Side of the rectangle (as described in Section~\ref{sec-moving-dimers}), with the particle 
of type $\tb$ at site $y$, to reach a configuration with energy $\Gamma\starred - \Db - 
\epsi + U$. The particle of type $\tb$ can be saturated using one of the corner particle of 
type $\ta$ adjacent to the protuberance that is left on the East side. This is done within 
energy barrier $U$, and the resulting configuration again has energy $\Gamma\starred - \Db 
- \epsi + U$ (see Fig.~\ref{fig:small-bar-to-top-side}(b)). Finally, move the pending dimer 
that is left on the East side to a corner adjacent to the tile centered at site $y$ (see 
Fig.~\ref{fig:small-bar-to-top-side}(c)). As described in Section~\ref{sec-moving-dimers}, 
this can be done within energy barrier $2U$ (since the particle of type $\tb$ of the dimer 
has only one neighboring corner particle of type $\ta$). The configuration obtained in 
this way consists of a rectangle of side lengths $\ell\starred,\ell\starred-1$ plus a 
$\abbar$ of length $2$ on its longest North-side. It is now possible to continue as in 
the previous case. Note that, to move the $\abbar$ of length 2 from the East side to the 
North side, we implicitly assumed that these sides of the rectangle are far from the 
boundary of $\Lambda$. 

Now suppose that the protuberance was originally on a side of the rectangle close to 
$\partial^{-}\Lambda$. Then we can proceed in the following way. After the $\abbar$ of 
length $2$ has been completed, it is possible to complete the $\abbar$ below energy level 
$\Gamma\starred$ and obtain a $\btiled$ rectangle of side lengths $\ell\starred+1, 
\ell\starred-1$. From this configuration it is possible to iteratively remove $\ell\starred 
- 3$ corner $\btiles$ from the shorthest side of the rectangle that is far from 
$\partial^{-}\Lambda$, in order to obtain a $\btiled$ configuration where the $\abbar$ 
of length $2$ is far from $\partial^{-}\Lambda$ as well. Clearly, this can be achieved 
below energy level $\Gamma\starred$. It is now possible to proceed as in the previous case, 
observing that either the North side or the South side of the rectangle is far from
$\partial^{-}\Lambda$.
\end{itemize}
\end{proof}


\subsubsection{Identification of $\proto$ and $\entgate$}

\begin{lemma}
\label{lemma-identification-of-entgate-RA}
$\proto = \CA$ and $\entgate = \CA\boub$.
\end{lemma}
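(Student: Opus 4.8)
The plan is to assemble the two local results of this subsection, Lemma~\ref{lemma-proto-below-gamma-RA} and Lemma~\ref{lemma-proto-is-good-RA}, with the general reductions of Section~\ref{Preplem}. First I would upgrade Lemma~\ref{lemma-proto-below-gamma-RA} from a statement about one fixed standard configuration to a statement about $\g$ and $\gbar$. By translation and rotation invariance (the width-$3$ non-interacting border and $L>2\ell\starred+2$ remove boundary effects) the lemma holds for every $\bar{\eta}\in\standard{\protonumber}$, and each such $\bar{\eta}$ is reachable from $\Box$ by a path staying strictly below $\Gamma\starred$ (follow the reference path of \cite{dHNT11}). Hence Lemma~\ref{lemma-reachable-from-standard} gives
\[
\g=\gbar=\bigcup_{\bar{\eta}\in\standard{\protonumber}}g(\{\bar{\eta}\},\minnbis{\protonumber})=\CA .
\]
In particular $\gbar=\g$, so the hypothesis of Lemma~\ref{lemma-entgate-subset-ent-critinumber} holds and we obtain $\entgate\subset\g\boub=\CA\boub$ and $\proto\subset\g=\CA$. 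This disposes of the ``$\subseteq$'' halves of both identities.

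For the reverse inclusions I would show that each $\hat{\eta}\in\CA$ and each $x_{0}\in\partial^{-}\Lambda$ occur, respectively, as a protocritical droplet and as an element of the entrance set, by exhibiting an optimal path $\Box\to\boxplus$ whose first visit to $\cG(\Box,\boxplus)$ is precisely $(\hat{\eta},x_{0})$, with immediate predecessor $\hat{\eta}$. The path is built from four pieces: (i) a path $\Box\to\hat{\eta}$ staying strictly below $\Gamma\starred$, which exists because $\hat{\eta}\in\g$; (ii) the creation of a particle of type $\tb$ at $x_{0}\in\partial^{-}\Lambda$, which raises the energy by $\Db$, from $H(\hat{\eta})=\Gamma\starred-\Db$ (the minimal energy in $\nbis{\protonumber}$; see \cite{dHNT11} and the proof of Lemma~\ref{lemma-entrance-of-set-protonumber}) to exactly $\Gamma\starred$; (iii) a walk of this free particle through empty sites, first stepping off $\partial^{-}\Lambda$ into $\Lambda^{-}$ and then moving without ever grazing the cluster — possible since the support of $\hat{\eta}$ lies well inside $\Lambda^{-}$ — until it reaches the ``good site'' $x\starred\in F(\hat{\eta})$ supplied by Lemma~\ref{lemma-proto-is-good-RA}; along this walk the free particle carries no active bond, so the energy stays at $\Gamma\starred$ and these configurations form $\gate(\hat{\eta})$, while the last, attaching step creates at least one active bond and lowers the energy below $\Gamma\starred$, landing in $\gateatt(\hat{\eta})$; (iv) a path $(\hat{\eta},x\starred)\to\boxplus$ realizing $\comlev((\hat{\eta},x\starred),\boxplus)<\Gamma\starred$. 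The concatenation (made self-avoiding via Remark~\ref{rem-heavy-self-avoiding}) has maximal energy exactly $\Gamma\starred=\comlev(\Box,\boxplus)$, hence it lies in $\optpaths$.

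It then remains to locate the first entrance of this path into $\cG(\Box,\boxplus)$. Since any optimal path enters $\cG(\Box,\boxplus)$ through $\entgate$, and $\entgate\subset\CA\boub$, the entrance configuration lies in $\CA\boub$; moreover every configuration of $\CA\boub$ is a saddle of energy $\Gamma\starred$ in which a particle of type $\tb$ sits in $\partial^{-}\Lambda$. On the path constructed above the only configuration meeting that description is $(\hat{\eta},x_{0})$: piece (i) stays strictly below $\Gamma\starred$; from the second step of piece (iii) onward the free particle is inside $\Lambda^{-}$; and pieces (iii)--(iv) after attachment stay strictly below $\Gamma\starred$. Hence $(\hat{\eta},x_{0})$ is the first configuration of $\cG(\Box,\boxplus)$ visited by the path, so $(\hat{\eta},x_{0})\in\entgate$, and because the configuration visited just before it is $\hat{\eta}$, also $\hat{\eta}\in\proto$. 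As $\hat{\eta}\in\CA$ and $x_{0}\in\partial^{-}\Lambda$ were arbitrary, this gives $\CA\boub\subseteq\entgate$ and $\CA\subseteq\proto$, and together with the first step, $\entgate=\CA\boub$ and $\proto=\CA$.

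The genuinely delicate ingredient has already been supplied in Section~\ref{Preplem}: the identity $\gbar=\g$ together with the unessentiality of all saddles in $\nbis{\le\protonumber}$ (Lemma~\ref{lemma-entrance-of-set-protonumber} and Lemma~\ref{lemma-unessential-saddles}), which is exactly what guarantees that $\cG(\Box,\boxplus)$ is not met before a particle of type $\tb$ has entered $\partial^{-}\Lambda$ — the fact that makes the ``first entrance at $(\hat{\eta},x_{0})$'' step go through. The remaining points, namely routing the free particle to $x\starred$ without touching the cluster and keeping the concatenated path self-avoiding, are routine and rely only on the width-$3$ border and $L$ large.
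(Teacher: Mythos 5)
Your proof is correct and arrives at the same conclusion by a somewhat different logical route than the paper. Both your argument and the paper's build essentially the same four-piece optimal path (ascend from $\Box$ to $\hat{\eta}$ strictly below $\Gamma\starred$; create a particle of type $\tb$ at $x_{0}\in\partial^{-}\Lambda$, reaching energy $\Gamma\starred$; walk the free particle through $\Lambda^{-}$ away from $\partial^{-}\Lambda$ and off the cluster until it reaches the good site $x\starred$; descend to $\boxplus$), and both rely on Lemmas~\ref{lemma-entrance-of-set-protonumber}, \ref{lemma-unessential-saddles}, \ref{lemma-proto-below-gamma-RA} and \ref{lemma-proto-is-good-RA}. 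The paper then verifies directly that each $\eta\in\CA\boub$ is an essential saddle, partitioning $\optpaths$ according to whether the path enters $\nbis{\critinumber}$ via $\eta$ and checking $S(\omega\prm)\nsubseteq S(\omega)\setminus\{\eta\}$ in each case, before arguing separately that saddles visited before $\CA\boub$ are unessential. You instead first pin down $\g=\gbar=\CA$ (from Lemmas~\ref{lemma-proto-below-gamma-RA} and \ref{lemma-reachable-from-standard}), feed that into Lemma~\ref{lemma-entgate-subset-ent-critinumber} to obtain $\entgate\subseteq\CA\boub$ and $\proto\subseteq\CA$, and then get the reverse inclusions by observing that your constructed path meets $\CA\boub$ only at $(\hat{\eta},x_{0})$ while its first visit to $\cG(\Box,\boxplus)$ must, by the definition of $\entgate$ combined with the forward inclusion, lie in $\CA\boub$. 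This bootstrap cleanly replaces the paper's explicit essential-saddle bookkeeping, at the cost of leaning on Lemma~\ref{lemma-entgate-subset-ent-critinumber}. One small loose end: Remark~\ref{rem-heavy-self-avoiding} is not quite the right tool for self-avoidance of the concatenation; the cleaner observation is that the sub-$\Gamma\starred$ pieces (i) and (iv) cannot share a configuration (that would force $\comlev(\Box,\boxplus)<\Gamma\starred$), while pieces (ii)--(iii) up to attachment sit at energy exactly $\Gamma\starred$, so the four pieces meet only at the intended junctions.
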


\begin{proof}
All configurations in $\CA\boub$ have energy $\Gamma\starred$. We will show that all 
configurations in $\CA\boub$ are essential saddles, and therefore belong to $\cG(\Box,
\boxplus)$ by Lemma~\ref{lemma-mnos-essential-saddles}. Pick $\eta = (\hat{\eta},x)$ 
in $\CA\boub$. 

\medskip
Let $\cU(\eta)$ be the set of optimal paths entering $\nbis{\critinumber}$ via configuration 
$\eta$. Pick $\omega \in \cU(\eta)$ such that $H(\xi) < \Gamma\starred$ for all $\xi \in 
P_{\omega}(\eta)$ and $S_{\omega}(\eta) = \{(\hat{\eta},y_{1}),\ldots,(\hat{\eta}, y_{m}),
\ldots,\boxplus\}$, with $y_{i} \notin \partial^{-}\Lambda$ for all $i \in 1,\dots,m$ and 
$y_{m}$ in $F(\hat{\eta})$ such that $H(\xi)<\Gamma\starred$ for all $\xi \in S_{\omega}
((\hat{\eta}, y_{m}))$. Note that such a path $\omega$ exists because:
\begin{mytextlikelist}
\item[(i)] 
$\hat{\eta} \in g(\Box,\minnbis{\protonumber})$ by Lemma~\ref{lemma-proto-below-gamma-RA};
\item[(ii)]
there exists an $y_{m} \in F(\hat{\eta})$ such that $\comlev((\hat{\eta}, y_{m}))
< \Gamma\starred$ by Lemma~\ref{lemma-proto-is-good-RA};
\item[(iii)]
in configuration $\eta$ there exists a lattice path of empty sites $y_{1}, \ldots, y_{m}$ 
with $y_{1} \sim x$ and $y_{i} \notin \partial^{-}\Lambda$ for all $i \in 1, \dots, m$, 
since for all $x \in \partial^{-}\Lambda$ there exists an $y \sim x$ such that $y \notin 
\partial^{-}\Lambda$ (by the shape of $\Lambda$), and $[\hat{\eta}]$ is ``sufficiently 
far'' from $\partial^-\Lambda$ (i.e., there is a border of width $3$ in $\Lambda$ where 
particles do no interact).
\end{mytextlikelist}
Next, note that all saddles in $S_{\omega}(\eta)$ are configurations of the type $\eta\prm 
= (\hat{\eta},u)$, $u \notin \partial^{-}\Lambda$. Recall Definition~\ref{def3}(e). We have 
to show that $S(\omega\prm) \nsubseteq S(\omega) \backslash \{\eta\}$ for all $\omega\prm 
\in (\eta, \eta\prm)_{\mathrm{opt}}$. Consider the partition $\optpaths = (\cU(\eta),
\cU^{c}(\eta))$ with $\cU^{c}(\eta) = \optpaths \backslash \cU(\eta)$. If $\omega\prm \in 
\cU(\eta)$, then $S(\omega\prm) \nsubseteq S(\omega) \backslash \{\eta\}$ because $\eta 
\in \omega\prm$. If, on the other hand, $\omega\prm \in \cU^{c}(\eta)$, then, by 
Lemma~\ref{lemma-entrance-of-set-protonumber}(1), $\omega\prm$ enters the set 
$\nbis{\critinumber}$ via some configuration $\zeta = (\hat{\zeta},z)$ with $H(\zeta) 
= \Gamma\starred$ and $z \in \partial^{-}\Lambda$ that does not belong to $S(\omega)$ 
(by the construction of $\omega$). Since the choice of $\eta\in\CA\boub$ was arbitrary, 
we conclude that all configurations in $\CA\boub$ are essential saddles, and hence that 
$\CA\boub \subset \cG(\Box,\boxplus)$. 
	
To prove that $\CA\boub = \entgate$, i.e., $\CA\boub$ is the entrance of the essential 
gate, we will show that, for any path $\omega \in \optpaths$, any configuration $\zeta \in 
\cS(\Box,\boxplus)$ that is visited by $\omega$ before some configuration in $\CA\boub$ 
is an unessential saddle and therefore does not belong to $\cG(\Box,\boxplus)$.

\begin{itemize}
\item
We show that all saddles in $\nbis{\le \protonumber}$ are unessential. To that end, we 
pick $\omega \in \optpaths$ and we let $\zeta \in \omega$ be such that $H(\zeta) = \Gamma
\starred$ and $n_{2}(\zeta) \le \protonumber$. By Lemma~\ref{lemma-entrance-of-set-protonumber}(1), 
all optimal paths enter $\nbis{\critinumber}$ via a configuration in $\bar{g}(\Box,
\minnbis{\critinumber})\boub$. Hence, after visiting $\zeta$, $\omega$ must visit 
$\bar{g}(\Box, \minnbis{\critinumber})$ before entering $\nbis{\critinumber}$. By 
Lemma~\ref{lemma-proto-below-gamma-RA}, combined with Lemma~\ref{lemma-reachable-from-standard},
we have $\bar{g}(\Box, \minnbis{\critinumber}) = g(\Box, \minnbis{\critinumber})$. The 
claim now follows via Lemma~\ref{lemma-unessential-saddles}(1).
\item
For all $\omega \in \optpaths$ such that $\xi \in S(\omega)$ and $\xi \in 
\nbis{\ge\critinumber} \backslash \CA\boub$, there exists an $\eta \in \CA\boub$ such that 
$\eta \in P_{\omega}(\xi)$. Indeed, by Lemmas~\ref{lemma-entrance-of-set-protonumber}(1) 
and \ref{lemma-proto-below-gamma-RA}, all optimal paths enter $\nbis{\protonumber}$ via 
a configuration in $\CA\boub$.
\item
To conclude, we need to show that $\cP = \CA$. The inclusion $\cP \supseteq \CA$ is immediate, 
since all $\eta = (\hat{\eta}, x) \in \CA\boub = \entgate$ are obtained, in a single step,
by adding a particle of type $\tb$ at the boundary of $\Lambda$ to a configuration 
$\hat{\eta} \in g(\Box,\minnbis{\protonumber}) = \CA$. To see that $\cP \subseteq \CA$, 
suppose that there is a configuration $\hat{\eta} \in \cP \backslash \CA$. Let $\hat{\eta}\prm$ 
be the configuration in $\entgate = \CA\boub$ obtained in a single step from $\hat{\eta}$.
Observe that $\CA\boub \subset \entrance{\nbis{\critinumber}}$. Since, by 
Lemma~\ref{lemma-entrance-of-set-protonumber}(1), all optimal paths enter $\nbis{\critinumber}$ 
by adding a particle of type $\tb$ at $\partial^-\Lambda$ to a configuration in $g(\Box,
\minnbis{\protonumber}) = \CA$, it follows that $\hat{\eta} \in \nbis{\ge\critinumber}$.
Indeed, by assumption, this configuration cannot be in $\g=\CA$. In particular, it follows 
that $\hat{\eta} \in \nbis{\ge \critinumber}\backslash\CA\boub$. But this means that there 
is a path $\hat{\omega} \in \optpaths$ that reaches $\hat{\eta} \in \nbis{\ge\critinumber}
\backslash\CA\boub$ and does not contain any configuration in $\cG(\Box,\boxplus)$. In 
particular, $\hat{\omega}$ does not contain any configuration in $\CA\boub$. But this is 
a contradiction, since by Lemmas~\ref{lemma-entrance-of-set-protonumber}(1) and 
\ref{lemma-proto-below-gamma-RA}, all optimal paths enter $\nbis{\protonumber}$ via a 
configuration in $\CA\boub$.	
\end{itemize}
\end{proof}


\subsection{Region $\RB$: proof of Theorem \ref{th-RB}}
\label{sec-RB}

Let $\CB$ be the set of $\btiled$ configurations with $\protonumber$ particles of type 
$\tb$ whose dual tile support is a monotone polyomino and whose circumscribing rectangle 
has side lengths either $\ell\starred, \ell\starred$ or $\ell\starred + 1, \ell\starred - 1$
(see Fig.~\ref{fig:paradigm_class_B}). Note that $\CB \supseteq \CA$.

\begin{figure}[htbp]
\centering
\subfigure[]
{\includegraphics[height=0.21\textwidth]{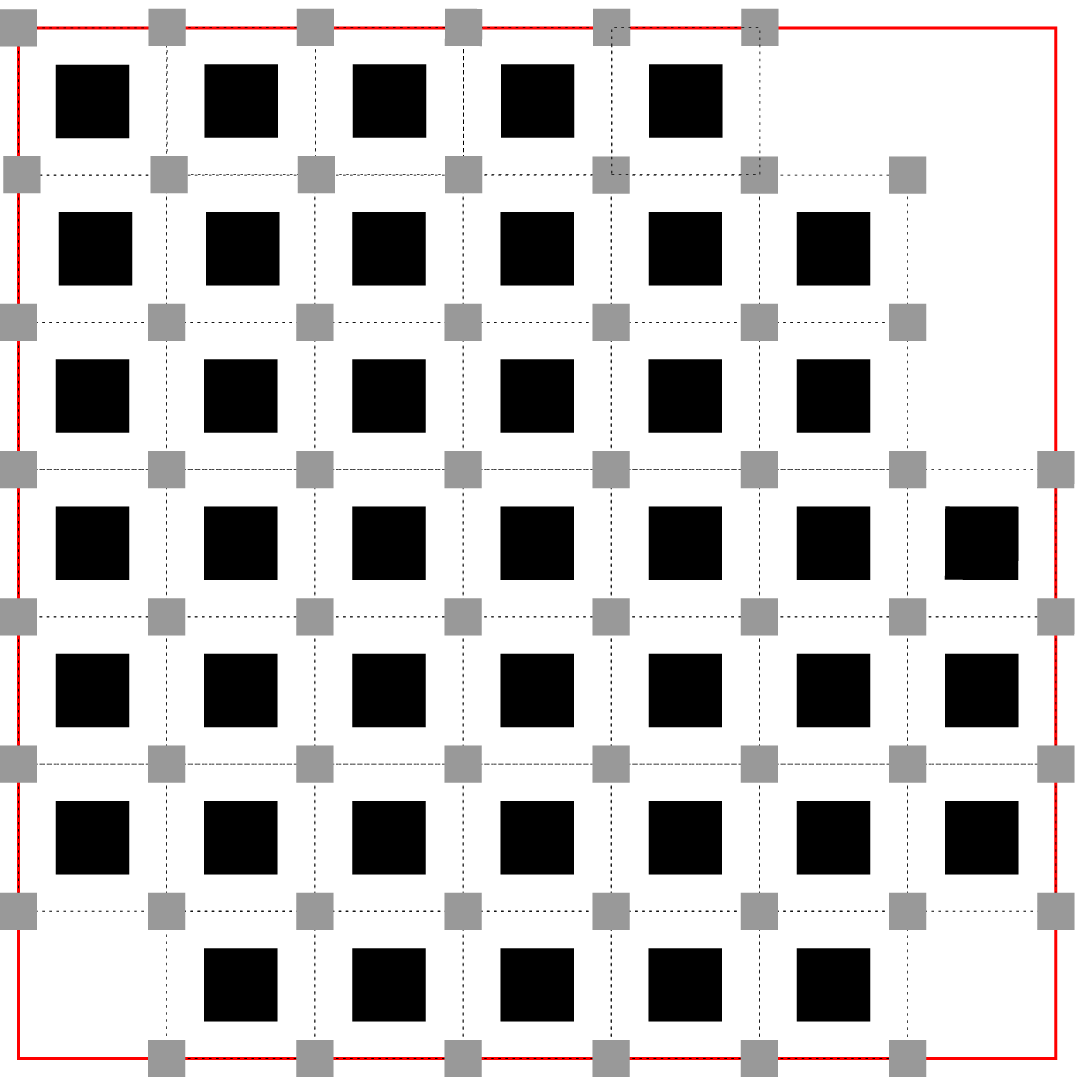}}
\qquad
\subfigure[]
{\includegraphics[height=0.18\textwidth]{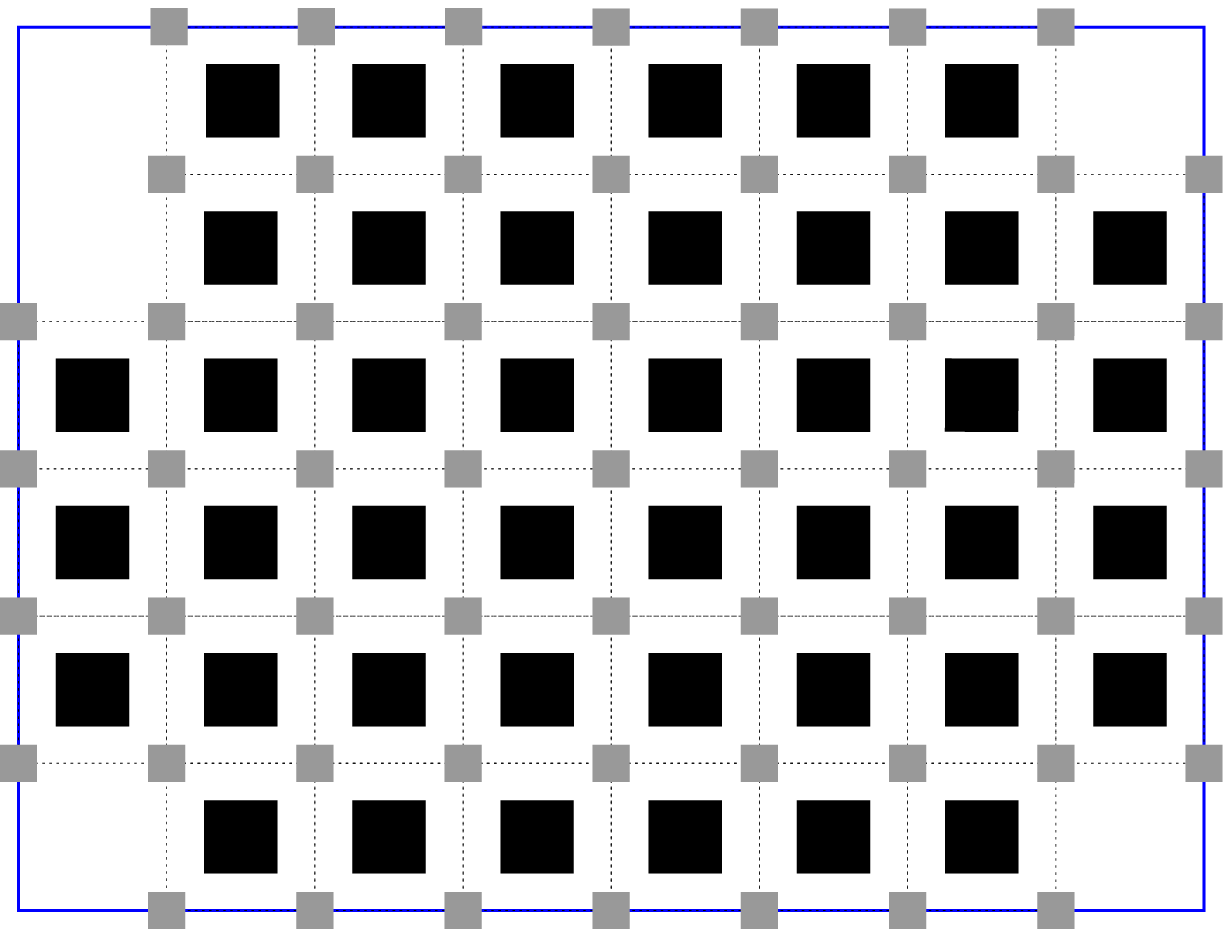}}
\caption{Two examples of configurations in \CB for $\ell\starred = 7$.}
\label{fig:paradigm_class_B}
\end{figure}


\subsubsection{Identification of $g(\{\bar{\eta}\},\minnbis{\protonumber})$ and 
$\bar{g}(\{\bar{\eta}\},\minnbis{\protonumber})$}

There are configurations in $\CB$ that cannot be reached within energy level $\Gamma\starred$. 
These configurations have support near the boundary of $\Lambda$ and for $\Lambda \to \Z^{2}$ 
form a negligible fraction of $\CB$.

\begin{lemma}
\label{lemma-proto-below-gamma-RB}
$g(\{\bar{\eta}\},\minnbis{\protonumber})=\bar{g}(\{\bar{\eta}\},\minnbis{\protonumber}) 
\subseteq \CB$.	
\end{lemma}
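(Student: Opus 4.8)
The plan is to follow the scheme of the proof of Lemma~\ref{lemma-proto-below-gamma-RA}, now with the parameter window $3U<\Db<2U+2\Da$ of region \RB in force. This window forces $\tfrac12U<\Da<U$, hence also $\Db<2U+2\Da<3U+\Da$, so Lemma~\ref{lemma-first-2step-from-corner} applies: the first heavy-step of any modifying path must move a corner particle of type $\tb$ along the edge where it shares a bond with a corner particle of type $\ta$. As in the \RA case the equality $g(\{\bar\eta\},\minnbis{\protonumber})=\bar g(\{\bar\eta\},\minnbis{\protonumber})$ comes out along the way, by producing for each reachable configuration a modifying path from a standard $\bar\eta$ that stays strictly below $\Gamma\starred$ — this uses the tile-moving and bar-sliding constructions of Sections~\ref{sec-moving-dimers} and~\ref{sec-small-lc}, which run below level $\Gamma\starred$ precisely because $3U$, $2U+\Da$ and $4U-2\Da$ are all $<\Db$ in \RB. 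The inclusion $\CA\subseteq g(\{\bar\eta\},\minnbis{\protonumber})$ is then immediate exactly as in \RA (recall $\CA\subseteq\CB$), so the substance of the lemma is the inclusion $\bar g(\{\bar\eta\},\minnbis{\protonumber})\subseteq\CB$.

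Fix a standard $\bar\eta\in\nbis{\protonumber}$ — a $\btiled$ dual rectangle of side lengths $\ell\starred,\ell\starred-1$ carrying one $\btiled$ protuberance — and a modifying path $\omega\colon\bar\eta\to\eta$ with $\eta\in\minnbis{\protonumber}$ and $\max_{\xi\in\omega}H(\xi)\le\Gamma\starred$; by Remark~\ref{rem-heavy-self-avoiding} I may assume $\omega$ never completes a heavy-step returning to a previously visited equivalence class. Since $B$ and $\na$ are constant on $\minnbis{\protonumber}$ (discussion after Definition~\ref{def-various}), every $\xi\in\omega$ satisfies $H(\xi)=\Gamma\starred-\Db+\broken{\xi}\,U+\extrapart{\xi}\,\Da$, and the constraint $H(\xi)\le\Gamma\starred$ together with $3U<\Db<2U+2\Da$ and $\Da>\tfrac12U$ yields the admissible bookkeeping regimes along $\omega$: these are slightly wider than in \RA, because now $\broken{\xi}=3$ is affordable but only with $\extrapart{\xi}\le 0$, while $\broken{\xi}=2$ forces $\extrapart{\xi}\le1$ and $\broken{\xi}=1$ forces $\extrapart{\xi}\le 2$. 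The proof then splits into the three steps of Lemma~\ref{lemma-proto-below-gamma-RA}. Step~1 establishes the claim deferred from that lemma: if the first step of $\omega$ involves the external particle $p$ of type $\tb$ of the protuberance and $p$ is not re-attached to the main cluster before the next heavy-step is completed, then $\omega$ exceeds $\Gamma\starred$; the point is that once $p$ has been peeled off, every configuration reached while $p$ travels sits at level $\Gamma\starred-\Db+mU+n\Da$ whose slack is, because $\Db<2U+2\Da$, too small to also pay for breaking an active bond of the main cluster (and the same bound forbids importing a particle of type $\ta$ once enough bonds are broken to begin reforming structure), so $p$ can only wander as an isolated dimer/$\btile$ until it re-attaches. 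Step~1 also rules out, via the \RB bookkeeping and the analysis along edges $c$ and $d$ of Fig.~\ref{fig:region_A_classes_of_2-steps}, any first heavy-step that moves a corner $\tb$ not along the admissible edge.

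It remains to enumerate how $p$ re-attaches and to iterate. By Lemma~\ref{lemma-minimal-energy-coincides-with-minimal-perimeter}, any $\eta\in\minnbis{\protonumber}$ that $\omega$ visits has tile support a polyomino of area $\protonumber$ and of minimal dual perimeter $4\ell\starred$; I would check that, starting from the $\ell\starred\times(\ell\starred-1)$ rectangle plus one tile and using only heavy-steps permitted by the \RB bookkeeping, every minimal-perimeter polyomino so produced is monotone with circumscribing rectangle of side lengths $\ell\starred,\ell\starred$ or $\ell\starred+1,\ell\starred-1$ — hence lies in $\CB$ — while any attempt at a thinner circumscribing rectangle, or at a non-monotone shape, forces at some stage a configuration violating the $\broken{\xi}\,U+\extrapart{\xi}\,\Da\le\Db$ budget. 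The hanging-protuberance case (Fig.~\ref{fig:possible-reattachment}(b)) is closed by the exact analogue of Step~3 of Lemma~\ref{lemma-proto-below-gamma-RA}. I expect the main obstacle to be this last part — organizing the many geometric subcases of re-attachment and iteration so that the upper cut-off $\Db<2U+2\Da$ of region \RB is seen to be exactly what forbids the thinner-bounding-box shapes of $\CC\setminus\CB$; the energy bookkeeping of Step~1 is routine once these regimes are fixed.
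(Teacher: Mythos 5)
Your plan follows the same route as the paper (reduce the first heavy-step via Lemma~\ref{lemma-first-2step-from-corner}, track $\broken{\xi}$ and $\extrapart{\xi}$ along modifying paths, classify how a detached particle must re-attach, iterate), but it has a genuine gap. First, the energy bookkeeping is wrong in the one place where it matters: from $U+n\Da\le\Db$, $\Db<2U+2\Da$ and $\Da>\tfrac12 U$ you only get $n<U/\Da+2<4$, i.e.\ $\extrapart{\xi}\le 3$ when $\broken{\xi}=1$, not $\le 2$. The case of one broken bond and three extra particles of type $\ta$ really occurs in $\RB$ (e.g.\ $\Da=0.7U$, $\Db=3.2U$ satisfies \eqref{subpropmetreg} and $3U<\Db<2U+2\Da$ while $\Db>U+3\Da$), and these configurations --- a detached $\btile$ or an $\abbar$ of length $2$ saturated with the help of three extra particles of type $\ta$ --- are exactly the delicate ones in the paper's Steps~4 and 6; the borderline value $\Db=U+3\Da$ is also what makes the equality $g(\{\bar\eta\},\minnbis{\protonumber})=\bar g(\{\bar\eta\},\minnbis{\protonumber})$ nontrivial. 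A verification carried out under your stricter budget would not see these admissible paths at all, so it cannot establish $\bar g(\{\bar\eta\},\minnbis{\protonumber})\subseteq\CB$ on all of $\RB$.

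Second, the substance of the lemma --- that no shape outside $\CB$ can be reached within level $\Gamma\starred$ --- is deferred (``I would check that\dots''), and the one concrete mechanism you offer in its place is false. It is not true that, once the protuberance particle $p$ is detached, the remaining slack cannot pay for breaking a bond of the main cluster: after $p$ has been re-saturated with two extra particles of type $\ta$, the budget still allows a heavy-step that moves a corner particle of type $\tb$ of the rectangle into a good dual corner at the cost of exactly one broken bond, and even a further one (the transitions $\eta_2\to\eta_3\to\eta_4$ in the paper's Step~1, Fig.~\ref{fig:reattach-protuberance}). The correct argument has to follow these continuations and show that they dead-end before any new configuration of $\minnbis{\protonumber}$ is reached, and the same multi-step dead-end analysis is needed for hanging protuberances and for $\abbars$ of length $\ge 3$ and $=2$; that analysis, not a one-line slack estimate, is the proof, and your proposal does not contain it. Your idea for proving $g=\bar g$ (re-derive every reachable shape strictly below $\Gamma\starred$ using the $3U$ mechanisms) agrees in spirit with the paper's closing observation that anything reachable within barrier $U+3\Da$ is also reachable within barrier $3U$, but it too rests on the enumeration you have left open.
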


\begin{proof}
Note that in $\CB$, for $\xi\in\nbis{\protonumber}$ the following conditions are satisfied
\begin{mytextlikelist}
\item $\broken{\sigma} = 1$: $\extrapart{\sigma} \le 3$;
\item $\broken{\sigma} = 2$: $\extrapart{\sigma} \le 1$.
\item $\broken{\sigma} = 3$: $\extrapart{\sigma} \le 0$.
\end{mytextlikelist}
Observe that $\Db \le 3U + \Da$ throughout region $\RB$, and therefore
Lemma~\ref{lemma-first-2step-from-corner} applies. This means that any heavy-step, completed 
without exceeding energy levl $\Gamma\starred$ and starting from a configuration in 
$\minnbis{\protonumber}$, necessarily involves a corner particle of type $\tb$ that is 
moved along the edge where it shares a bond with a corner particle of type $\ta$.

From Remark~\ref{rem-visit-all-monotone} it follows that all $\btiled$ configurations with 
a monotone dual support, a circumscribed rectangle of side lengths $(\ell\starred,\ell\starred)$ 
or $(\ell\starred-1, \ell\starred+1)$, and with a fixed lower left corner far enough from 
$\partial^{-}\Lambda$, belong to $g(\bar{\eta}, \minnbis{\protonumber})$. It remains to show 
that all other configurations in $\minnbis{\protonumber}$ cannot be reached without exceeding 
energy level $\Gamma\starred$. As in the case of the analogous lemma for region $\RA$, the 
proofs comes in various steps.
	
\step{1} 
Let the first step of a modifying path starting from a configuration in $\CA$ involve the 
particle $p$ of type $\tb$ in the protuberance. If $p$ is not re-attached to the main 
cluster before the next heavy-step is completed, then the path exceeds energy level
$\Gamma\starred$ before reaching a configuration in $\minnbis{\protonumber}$. Therefore 
we may consider a path whose first heavy-step involves the protuberance and whose second 
heavy-step does not. 

\begin{figure}[htbp]
\centering
\subfigure[$\eta_{1}$]{\includegraphics[height=6\tilesize]{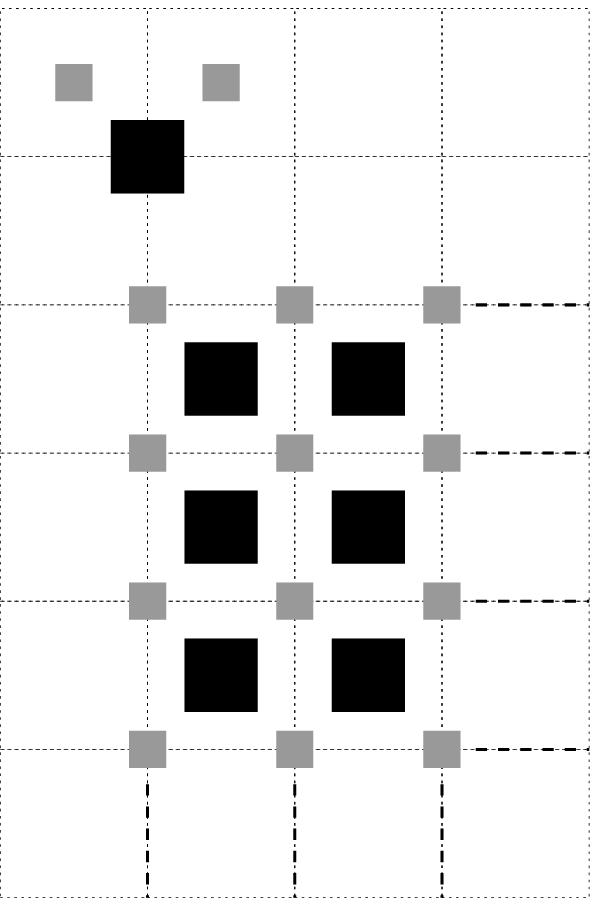}}
\qquad
\subfigure[$\eta_{2}$]{\includegraphics[height=6\tilesize]{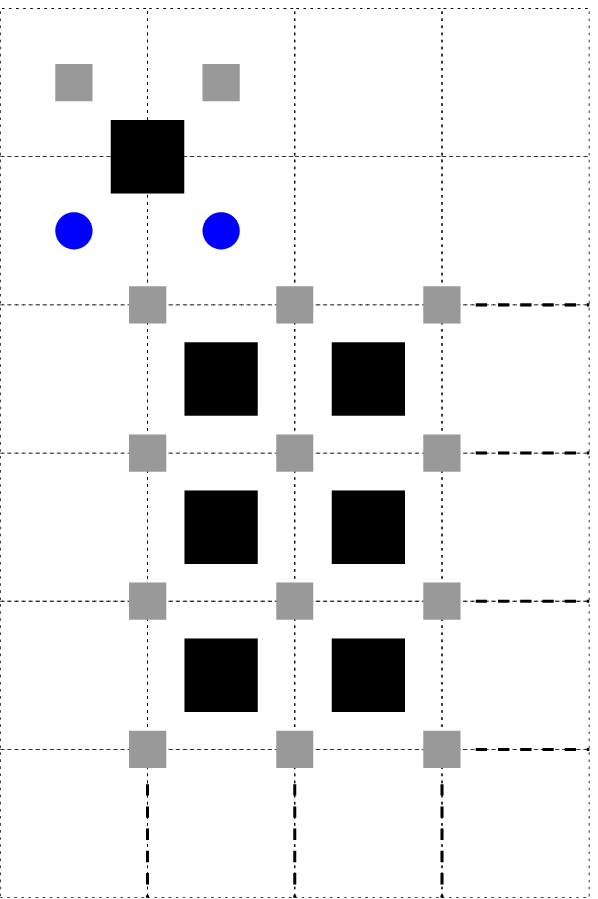}}
\qquad
\subfigure[$\eta_{3}$]{\includegraphics[height=6\tilesize]{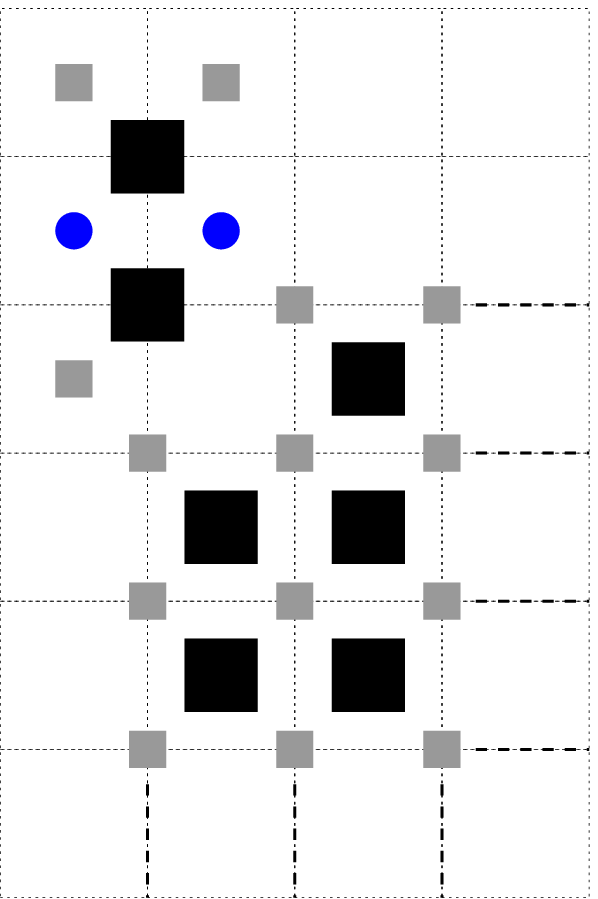}}
\qquad
\subfigure[$\eta_{4}$]{\includegraphics[height=6\tilesize]{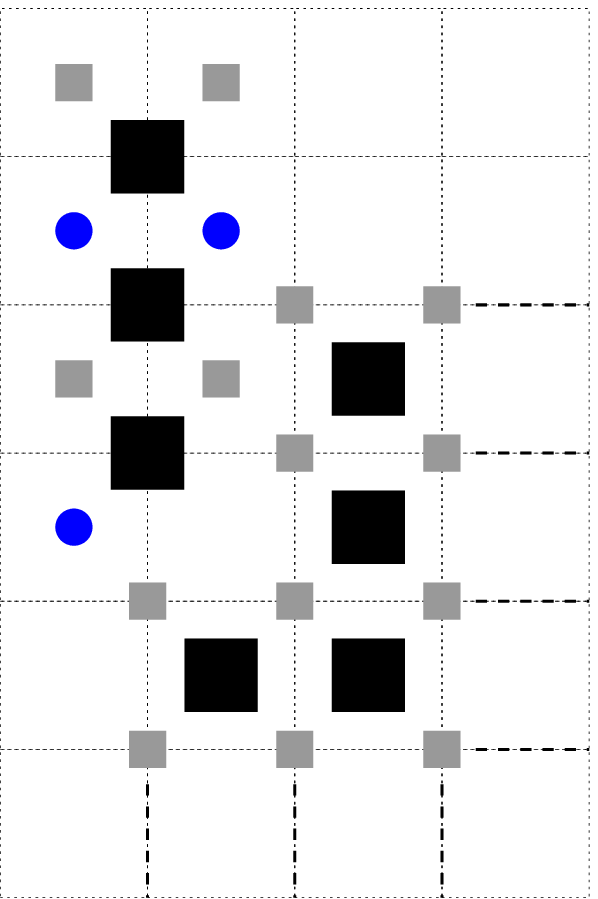}}
\qquad
\subfigure[$\eta_{5}$]{\includegraphics[height=6\tilesize]{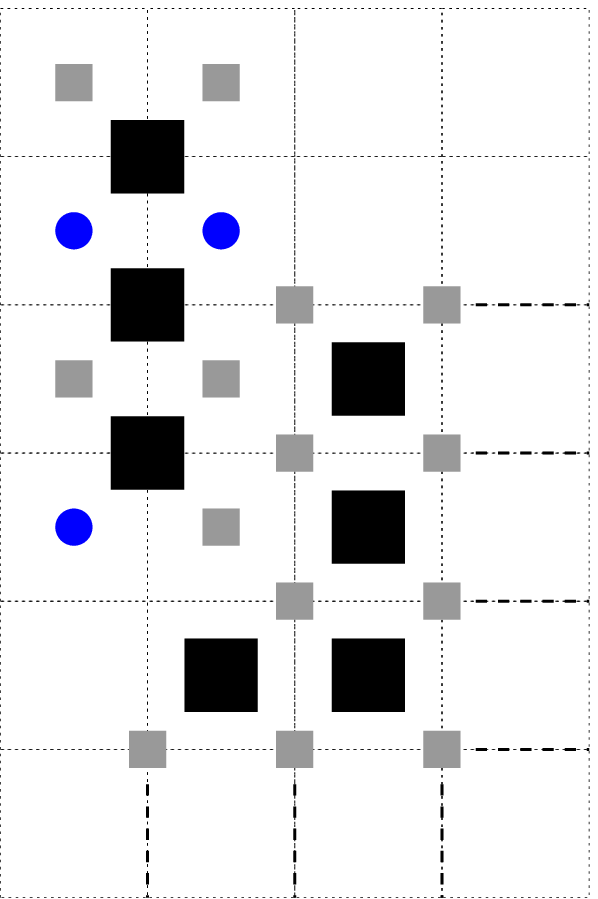}}
\caption{The presence of a detached $\btile$ precludes the motion of other $\btiles$.} 		
\label{fig:reattach-protuberance}
\end{figure}

Let $\bar{\eta}$ be a standard configuration whose protuberance on the North side belongs 
to the Western-most bar, and let $\eta_{1}$ (see Fig.~\ref{fig:reattach-protuberance}(a)) 
be the configuration reached by completing a heavy-step that moves the particle $p_{1}$ in 
the protuberance one step North-West. Since the second heavy-step of the path does not 
involve a motion of $p_{1}$, we may assume that from configuration $\eta_{2}$ (see 
Fig.~\ref{fig:reattach-protuberance}(b)) the path saturates particle $p_{1}$ with two extra 
particles of type $\ta$. It will become clear later on that this choice for $\eta_{0}$ 
and $\eta_{1}$ is the most interesting, since after the next heavy-step is completed the 
particle of type $\tb$ that is moved can share two particles of type $\ta$ with $p_{1}$.

Let $p_{2}$ be the North-West particle of type $\tb$ in the $\btiled$ rectangle of $\eta_{2}$,
$p_{3}$ the particle of type $\tb$ below $p_{2}$, and $p_{4}$ the particle of type $\tb$ East 
of $p_{2}$. It is easy to check that any heavy-step completed by moving any other corner 
particle of type $\tb$ other than $p_{2}$ leads to an energy above $\Gamma\starred$. Since 
$\eta_{2}$ has two extra particle of type $\ta$, the next heavy-step must be completed by 
breaking at most one bond. (Note that in $\eta_{2}$ all external particles of type $\ta$ 
are saturated and hence at least one bond must be broken.) This means that the next particle 
of type $\tb$ must be moved from one good dual corner to another, and the corner must be 
created by breaking exactly one bond. It follows that the only heavy-step from $\eta_{2}$ 
that can be completed is the one obtained by moving $p_{2}$ one step North-West with a 
particle of type $\ta$ sitting South-West of it (configuration $\eta_{3}$; see 
Fig.~\ref{fig:reattach-protuberance}(c)). From $\eta_{3}$ it is not allowed to break 
other bonds. As a consequence, the only heavy-steps that are possible are completed by 
moving $p_{4}$ South-East or $p_{3}$ North-East. These two cases are analogous. We describe 
the second one.
	
When $p_{2}$ is moved North-East, the total number of bonds cannot be decreased. This can 
be achieved by moving one step North-West the particle of type $\ta$ adjacent to $p_{3}$ 
in $\eta_{3}$, and with the help of one extra particle of type $\ta$ reaching the South-West 
of the ``destination'' of $p_{3}$ (configuration $\eta_{4}$; see 
Fig.~\ref{fig:reattach-protuberance}(d)). From $\eta_{4}$ it is not allowed to increase 
the energy. Since in $\eta_{4}$ all particles of type $\tb$ have at least three active bonds, 
the motion of the next particle of type $\tb$ must be from a good dual to another. But from 
$\eta_{4}$ only the motion of corner particles of type $\ta$ is allowed (configuration 
$\eta_{5}$; see Fig.~\ref{fig:reattach-protuberance}(e)), and it is not possible to create 
a good dual corner without bringing a further extra particles of type $\ta$ inside $\Lambda$. 
Hence the path cannot be extended with another heavy-step without exceeding energy level
$\Gamma\starred$. 
	
\step{2}
In this step we will consider the evolution of those paths that visit a $\btiled$ 
configuration having a hanging protuberance. For definiteness, we take $\eta$ to be 
a configuration consisting of a $\btiled$ rectangle plus a hanging protuberance at 
the North-West corner (see Fig.~\ref{fig:possible-reattachment}(b)). Let $p_{1}$ be the 
particle of type $\tb$ in the hanging protuberance, and let $p_{2}$ be the particle of 
type $\tb$ South-East of $p_{1}$. 
	 
If the first heavy-step from $\eta$ is not completed by moving $p_{1}$ in the North-West, 
South-West or North-East direction, then the path exceeds energy level $\Gamma\starred$ 
before reaching a configuration in $\minnbis{\protonumber}$. From Step 3 in the proof of 
Lemma~\ref{lemma-identification-of-entgate-RA}, we already know that it is not possible 
to move $p_{1}$ South-East nor $p_{2}$ North-West. We will investigate what happens when
some other corner particle of type $\tb$ is moved to complete the first heavy-step from 
$\eta$. 

Suppose that the first heavy-step is completed by moving particle $p_{3}$ of type $\tb$ 
in the North-East corner tile of the rectangle (configuration $\eta_{1}$). It is easy to 
see that this move is only possible below energy level $\Gamma\starred$ when, in $\eta_{1}$, 
$p_{3}$ has three adjacent particles of type $\ta$. This can be achieved by bringing 
inside $\Lambda$ two extra particles of type $\ta$ (see Fig.~\ref{fig:hanging-move-corner}). 
From $\eta_{1}$ it is not possible to further increase the energy without exceeding 
energly level $\Gamma\starred$, and it is immediate that no other heavy-step is allowed.

\begin{figure}[htbp]
\centering
{\includegraphics[height=5\tilesize]{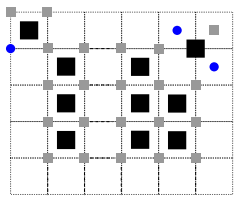}}
\caption{The presence of a hanging protuberance precludes the motion of other
$\btiles$.} 		
\label{fig:hanging-move-corner}
\end{figure}
	
Step 1 and 2 imply the following. A modifying path that does not exceed energy level 
$\Gamma\starred$ and starts with any heavy-step involving the particle of the 
protuberance must ``go back'' to a configuration equivalent to a configuration 
in $\CA$ before some other particle of type $\tb$ can be moved.
	
\step{3} 
Let the first heavy-step from a configuration $\eta_{0} \in \CB$ be completed by moving 
a corner particle $p$ of type $\ta$ belonging to a $\abbar$ of dual length $m \ge 3$.
If $p$ is not re-attached to the main cluster before the next heavy-step is completed, 
then the path exceeds energy level $\Gamma\starred$ before reaching a configuration in 
$\minnbis{\protonumber}$.
	
\begin{figure}[htbp]
\centering
\subfigure[$\eta_{0}$]{\includegraphics[height=5\tilesize]{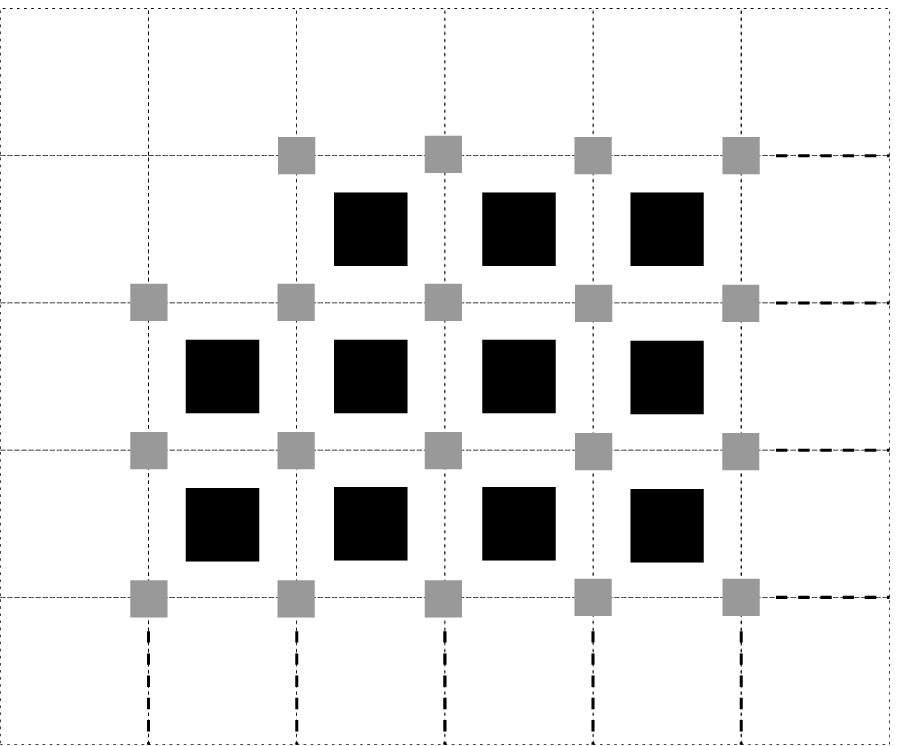}}
\qquad
\subfigure[$\eta_{1}$]{\includegraphics[height=5\tilesize]{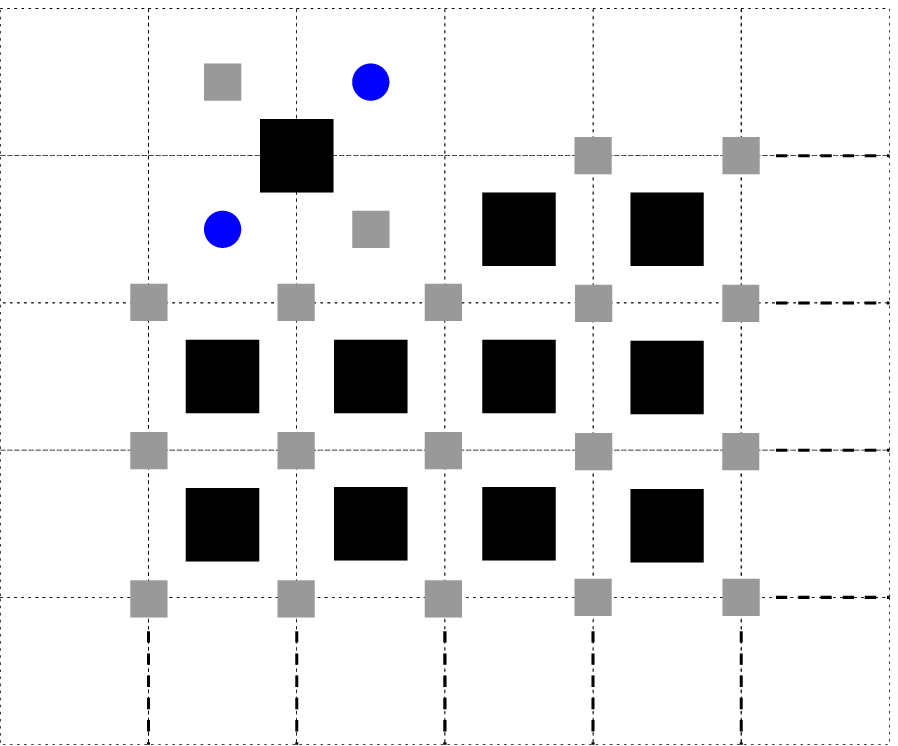}}
\qquad
\subfigure[$\eta_{2}$]{\includegraphics[height=5\tilesize]{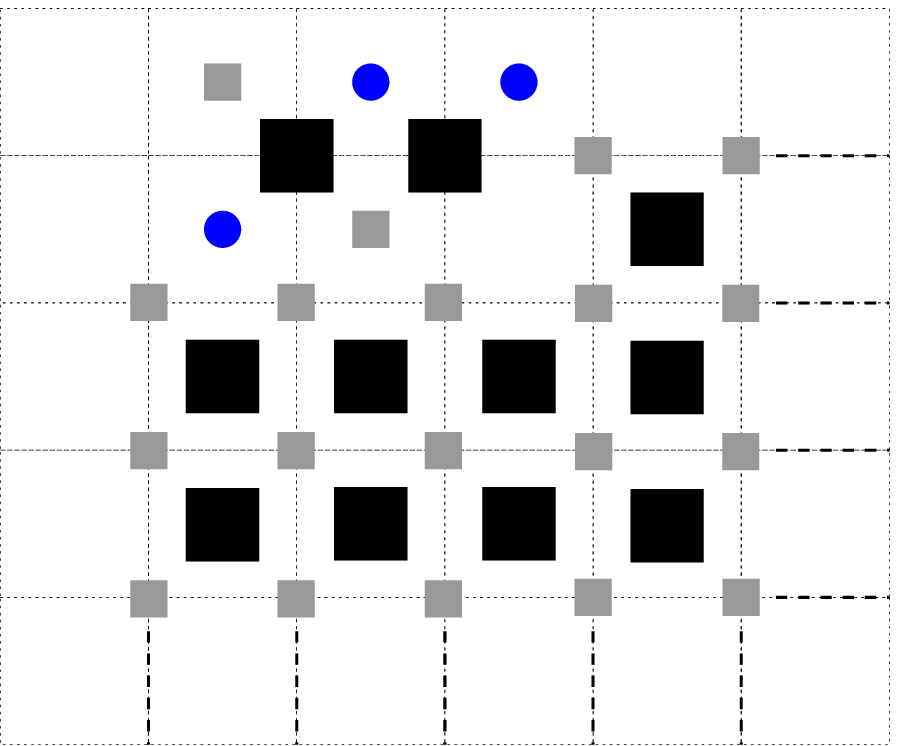}}
\qquad
\subfigure[$\eta_{3}$]{\includegraphics[height=5\tilesize]{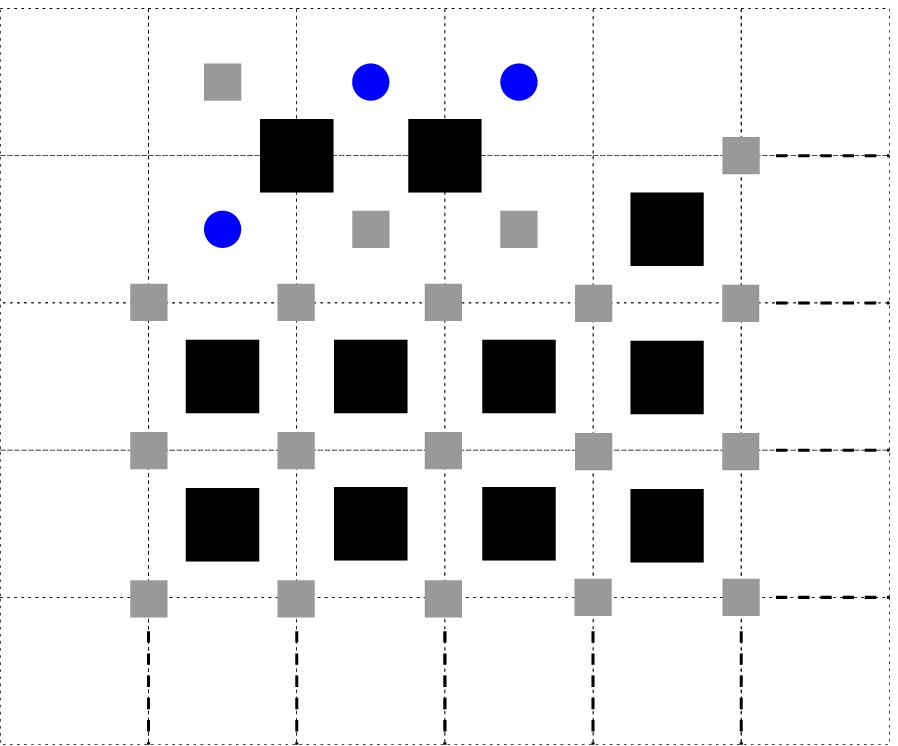}}
\caption{First heavy-step from a bar of length $\ge 3$.} 		
\label{fig:bar3}
\end{figure}

Let $\eta_{0}$ be as in Fig.~\ref{fig:bar3}(a), and let $p_{1}$, $p_{2}$ and 
$p_{3}$ be the first three particles of type $\tb$, starting from the West, of 
the Northern-most $\abbar$. Let the first heavy-step from $\eta_{0}$ be completed by 
moving $p_{1}$ North-West. Since we are assuming that the second heavy-step is not 
completed by moving $p_{1}$, we may consider the path from $\eta_{1}$ that is
obtained by saturating $p_{1}$ with two extra particles of type $\ta$ and by moving 
the particle North-West of $p_{2}$ one step South-West (see Fig.~\ref{fig:bar3}(b)). 
Note that $\eta_{1}$ can be reached within energy barrier $U + 3\Da$.
	
Since from $\eta_{1}$ it is not allowed to break another extra bond, the only possible
heavy-step is the one obtained by moving $p_{2}$ North-West after an extra particle of 
type $\ta$ has entered $\Lambda$ and has reached a site adjacent to the destination 
of $p_{2}$ (configuration $\eta_{2}$; see Fig.~\ref{fig:bar3}(c)). From $\eta_{2}$ it 
is not possible to further increase the energy without exceeding level $\Gamma\starred$, 
and a further heavy-step is therefore not possible. 

\emph{Remark}:
Note that the key observations here are the following. After $p_{1}$ has moved, two 
extra particles of type $\ta$ are required to saturate it. The presence of two extra 
particles of type $\ta$ forces the path to evolve without breaking extra bonds. Since 
all the external particles of type $\tb$ other than $p_{1}$ have at least three active 
bonds, the motion must necessarily be towards a good dual corner (a site with three 
neighbors occupied by a particles of type $\ta$). Note that when a particle is moved, 
it changes its parity and only particles with the same parity can interact. When the 
second particle $p_{2}$ of type $\tb$ is moved, it takes the parity of the particles 
in the tile $p_{1}$ belongs to. But after $p_{2}$ is moved, it can share at most two 
particles of type $\ta$ with $p_{1}$, and hence another particle of type $\ta$ is 
needed. This leads to a configuration with 3 extra particles of type $\ta$ and one broken 
bond, and hence it is not allowed to make moves that increase the energy. In particular, 
it is not allowed to bring inside $\Lambda$ other particles of type $\ta$, and only 
particles of type $\ta$ with one active bond can be moved. Note that, again, all particles 
of type $\tb$ have at least three active bonds. Hence, a further heavy-step would only 
be possible if a good dual corner can be created close to a particle of type $\tb$ 
without decreasing the number of bonds. These observations are key in order to explore 
what configurations can be reached by a modifying paths without exceeding $\Gamma\starred$.	
	
\step{4} 
Let the first heavy-step from a configuration $\eta_{0} \in \CB$ be completed by moving 
a corner particle $p_{1}$ of type $\tb$ belonging to a $\abbar$ of dual length $m = 2$.
If the path does not exceed energy level $\Gamma\starred$, then one of the following 
must happen:
\begin{itemize}
\item 
Particle $p$ is re-attached to the cluster before any other particle of type $\tb$ is 
moved.
\item 
If $p_{1}$ denotes the other particle of type $\tb$ in the bar, then the path reaches a 
configuration $\eta_{d}$ where $p_{1}$ and $p_{2}$ are saturated (with the help of 
three extra particles of type $\tb$), belong to the same cluster and are at dual 
distance $\sqrt{2}$ from its location in $\eta_{0}$ (see Fig.~\ref{fig:bar2}(c)).
From $\eta_{d}$, the first heavy-step (without backtracking) must be completed
by re-attaching $p_{1}$ to the cluster. The configuration that is reached in this way
is again a configuration that can be reached with a single heavy-step completed by 
moving a corner particle of type $\tb$ belonging to a $\abbar$ of dual length $m = 2$
(see Fig.~\ref{fig:bar2}(d)).
\end{itemize}

\begin{figure}[htbp]
\centering
\subfigure[$\eta_{0}$]{\includegraphics[height=5\tilesize]{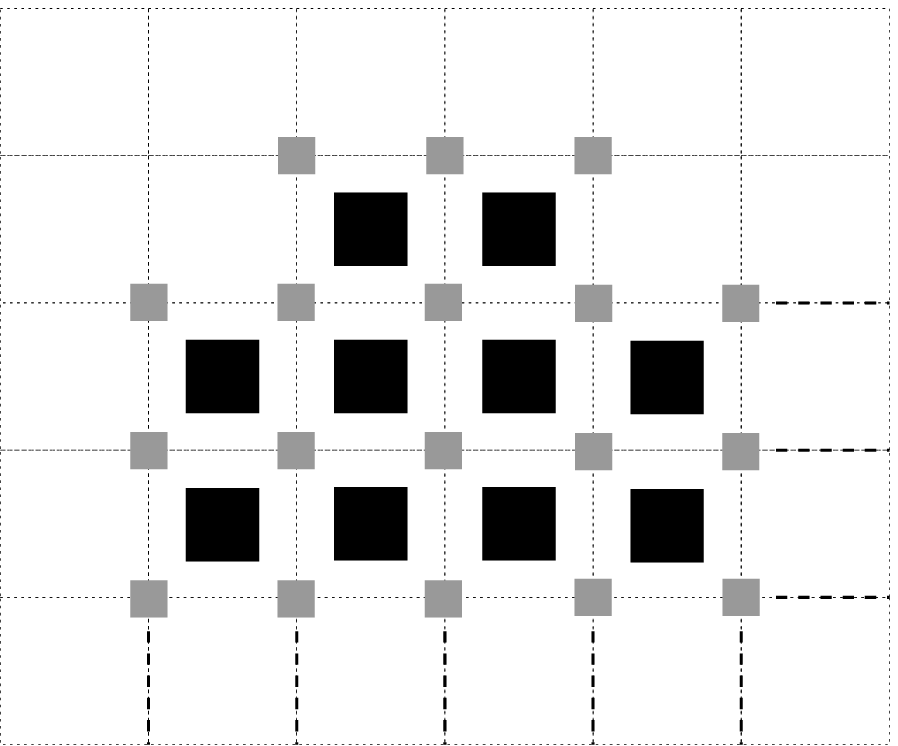}}
\qquad
\subfigure[$\eta_{1}$]{\includegraphics[height=5\tilesize]{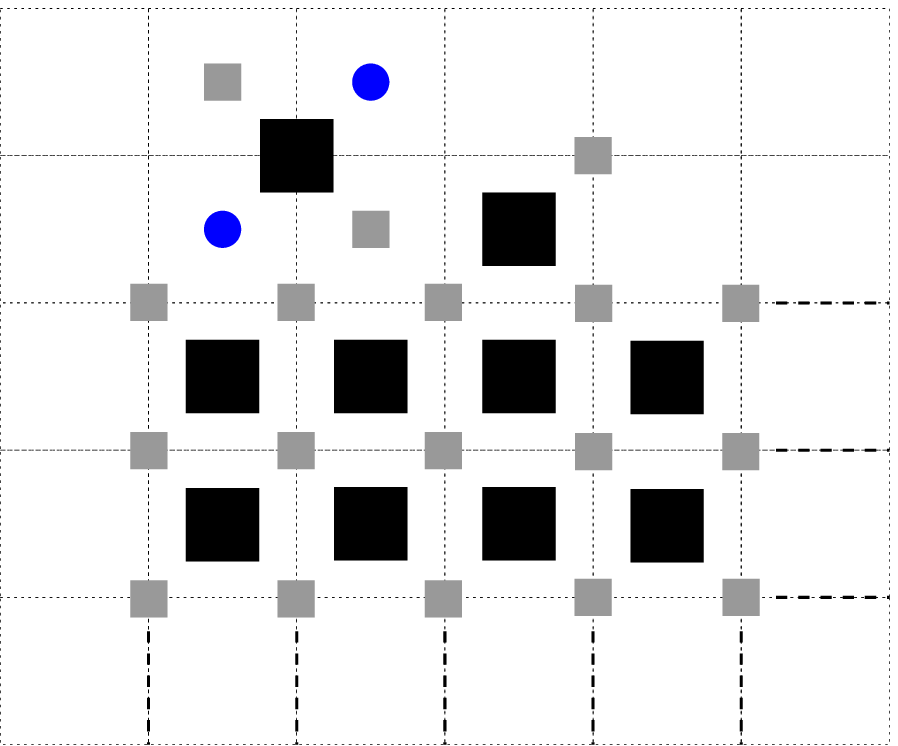}}
\qquad
\subfigure[$\eta_{d}$]{\includegraphics[height=5\tilesize]{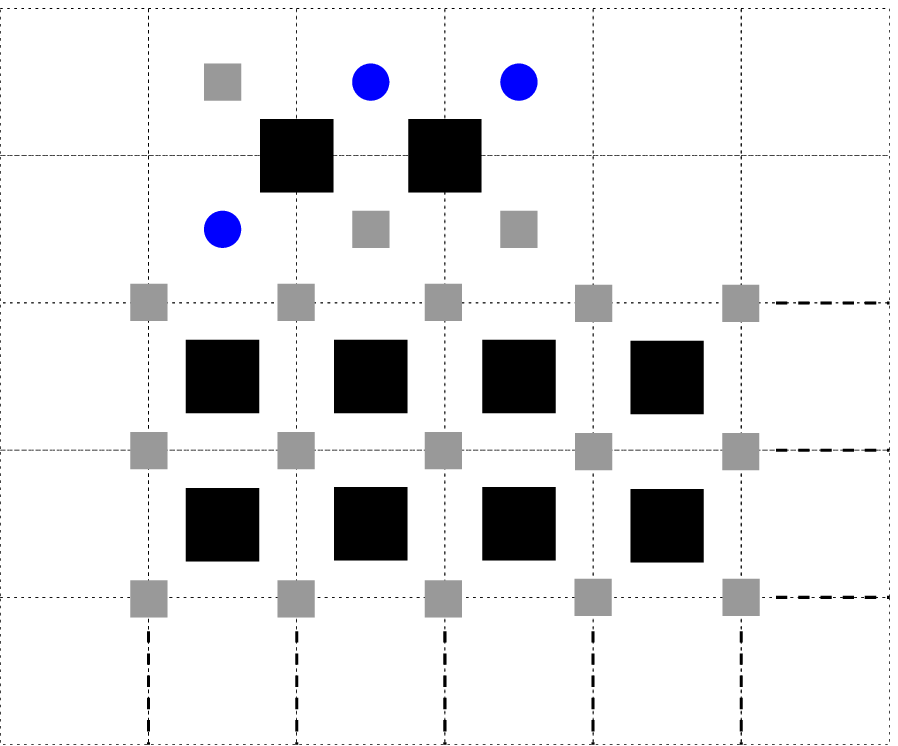}}
\qquad
\subfigure[$\eta_{2}$]{\includegraphics[height=5\tilesize]{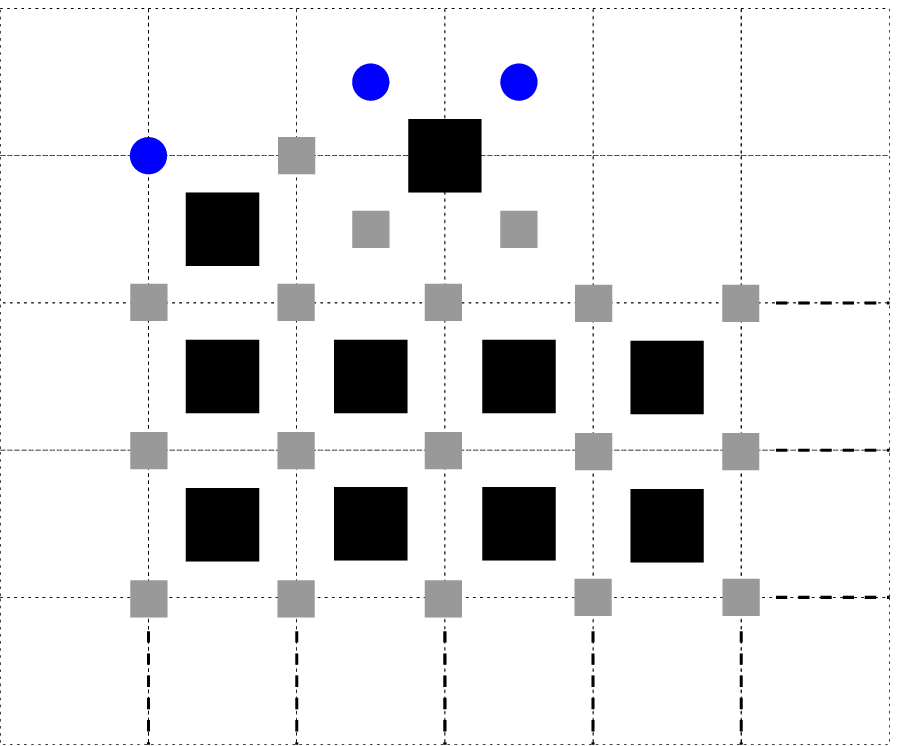}}
\caption{First heavy-step from a bar of length $2$.} 		
\label{fig:bar2}
\end{figure}

Let $p_{1}$ be the Western-most particle of type $\tb$ in the bar of length $2$ in 
configuration $\eta_{0}$, and $p_{2}$ the particle of type $\ta$ East of $p_{1}$ 
(see Fig.\ref{fig:bar2}(a)). Let the first heavy-step from $\eta_{0}$ be completed 
by moving $p_{1}$ North-West. Suppose that $p_{1}$ is not re-attached to the cluster 
before the next heavy-step is completed. As in Step 3, we can consider the path 
from $\eta_{1}$ obtained by saturating $p_{1}$ with two extra particles of type $\ta$ 
and by moving the particle North-West of $p_{2}$ one step South-West (see
Fig.~\ref{fig:bar2}(b)). Note that $\eta_{1}$ can be reached within energy barrier 
$U + 3\Da$.
	
Since from $\eta_{1}$ it is not allowed to break another bond, the only possible
heavy-step is the one obtained by moving $p_{2}$ North-West after an extra particle of 
type $\ta$ has entered $\Lambda$ and has reached a site adjacent to the destination 
of $p_{2}$. After that, since it is not possible to further increase the energy, before 
the next heavy-step is completed it is necessary to move a particle of type $\ta$ to 
the empty site adjacent to $p_{2}$, reaching configuration $\eta_{d}$. The first 
heavy-step from $\eta_{d}$, since it contains three extra particles of type $\ta$, must 
be completed breaking at most one bond. This means that a particle of type $\ta$ must 
be moved from a good dual corner to another. This is only possible, without backtracking, 
by moving $p_{1}$ one step South-West to site $x$ after the particle of type $\ta$ sitting 
at $x$ in $\eta_{d}$ has been moved one step North-West. Particle $p_{1}$ can be saturated 
with a free particle of type $\ta$, to reach configuration $\eta_{2}$ of Fig.~\ref{fig:bar2}(d). 
Note that $\eta_{2}$ is the ``mirror image'' of $\eta_{1}$, and hence the same arguments 
can be repeated.
	
For further reference, let us consider also a configuration with a vertical $\abbar$ of 
length 2. Such a configuration $\eta_{v}$ does not belong to $\CB$, but could be reached 
by a modifying path below energy level $\Gamma\starred$ (see Fig.~\ref{fig:prot-on-prot}(a)).

\begin{figure}[htbp]
\centering
\subfigure[$\eta_{v}$]{\includegraphics[height=5\tilesize]{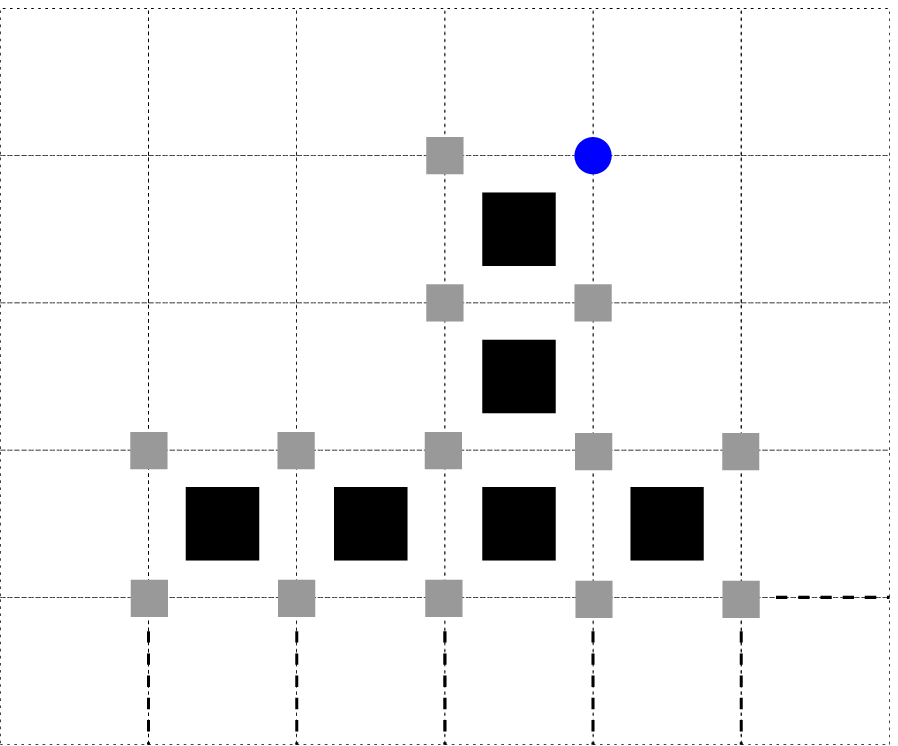}}
\qquad
\subfigure[$\eta_{w}$]{\includegraphics[height=5\tilesize]{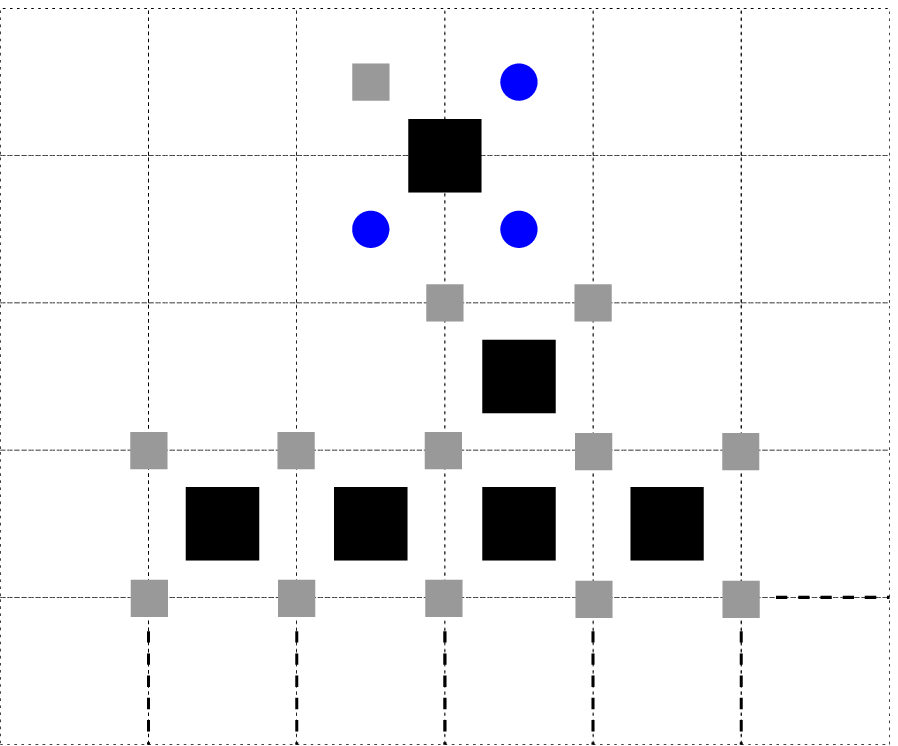}}
\qquad
\subfigure[$\eta_{x}$]{\includegraphics[height=5\tilesize]{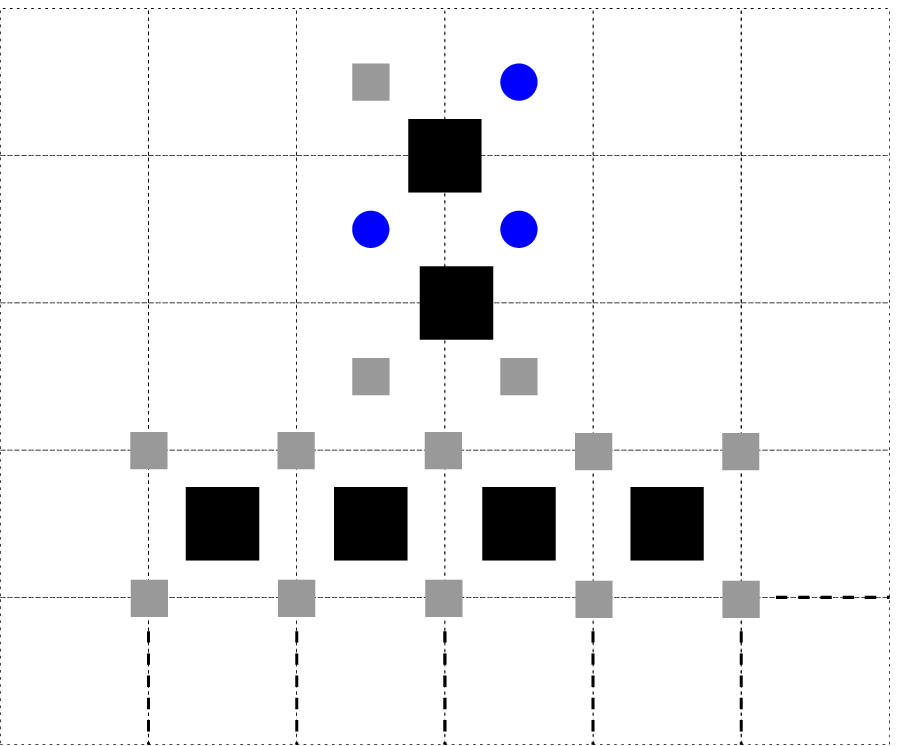}}
\qquad
\subfigure[$\eta_{y}$]{\includegraphics[height=5\tilesize]{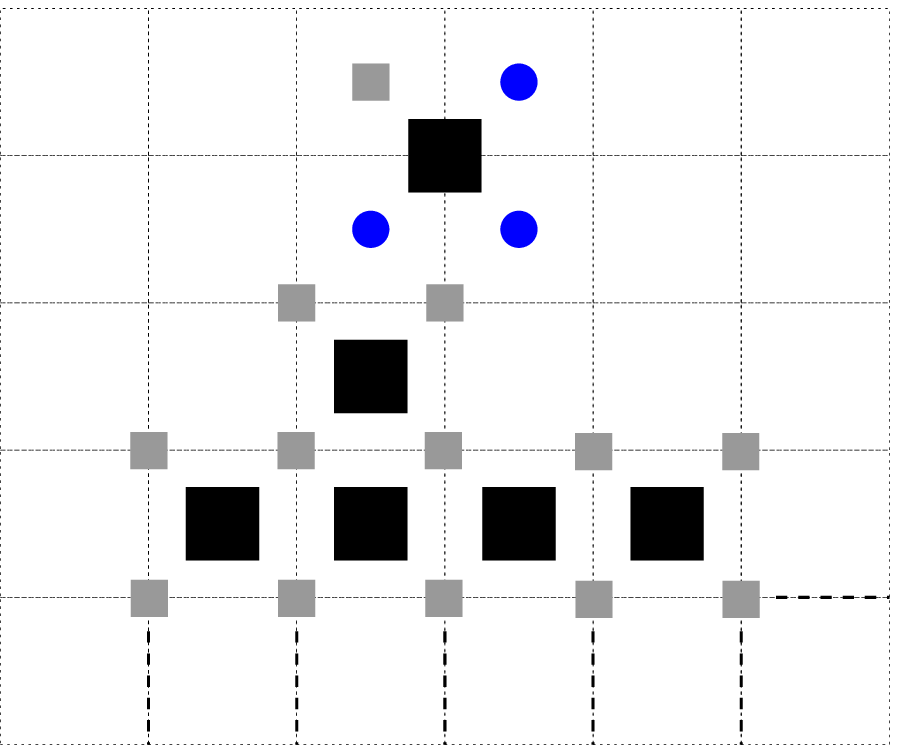}}
\caption{Protuberances attached to $\abbars$ can be treated as $\abbars$ of length 2.} 		
\label{fig:prot-on-prot}
\end{figure}

The analysis is completely analogous to the case of a horizontal $\abbar$ of length 2, 
and the claim is that if the first heavy-step is completed by moving the particle $p_{1}$ 
of type $\tb$ on top of the vertical $\abbar$ and the path does not exceed the energy 
level $\Gamma\starred$, then either $p_{1}$ is re-attached to the cluster before the next 
heavy-step is completed, or after $p_{1}$ has been saturated (configuration $\eta_{w}$; 
see Fig.~\ref{fig:prot-on-prot}(b)) the next heavy-step must be completed by moving the 
other particle $p_{2}$ of type $\tb$ originally in the $\abbar$ to reach a $\btiled$ 
configuration $\eta_{x}$ with three extra particles of type $\ta$ where the $\abbar$ 
is ``floating'' on a side of the cluster (see Fig.~\ref{fig:prot-on-prot}(c)). From 
$\eta_{x}$, the only heavy-step that can be completed is the one obtained by re-attaching 
$p_{2}$ to the cluster. Then $p_{2}$ can be saturated to obtain configuration $\eta_{y}$ 
(see Fig.~\ref{fig:prot-on-prot}(d)). Note that $\eta_{y}$ is the mirror image of 
$\eta_{w}$ and the same argument can be repeated. Note that when $p_{1}$ is re-attached, 
either a configuration with again a vertical $\abbar$ of length $2$ is reached, or the 
path visits a configuration where $p_{1}$, after being saturated, belongs to a hanging 
protuberance. We will see below that in this case the next heavy-step must be completed 
by moving $p_{1}$ again. 
	
Thus, after the first heavy-step from a configuration $\eta_{0}$ in $\CB$ is completed 
by moving a particle $p_{1}$ of type $\tb$, this particle must be re-attached to the 
cluster. This particle can be saturated to reach a configuration $\eta_{1}$ in which
$p_{1}$ belongs to a (possibly hanging) protuberance. Note that $\eta_{1}$ contains one 
extra particle of type $\ta$, and therefore $H(\eta_{1}) = H(\eta_{0}) + \Da$. Note
that in the case $p_{1}$ belonged to a bar of length $\tb$, and configuration $\eta_{1}$ 
contains two protuberances.
	
We claim that, from $\eta_{1}$, the first heavy-step must be completed by moving a particle 
of type $\ta$ in one of the protuberances. This can be seen as follows.

\begin{figure}[htbp]
\centering
\subfigure[$\eta_{1}$]{\includegraphics[height=5\tilesize]{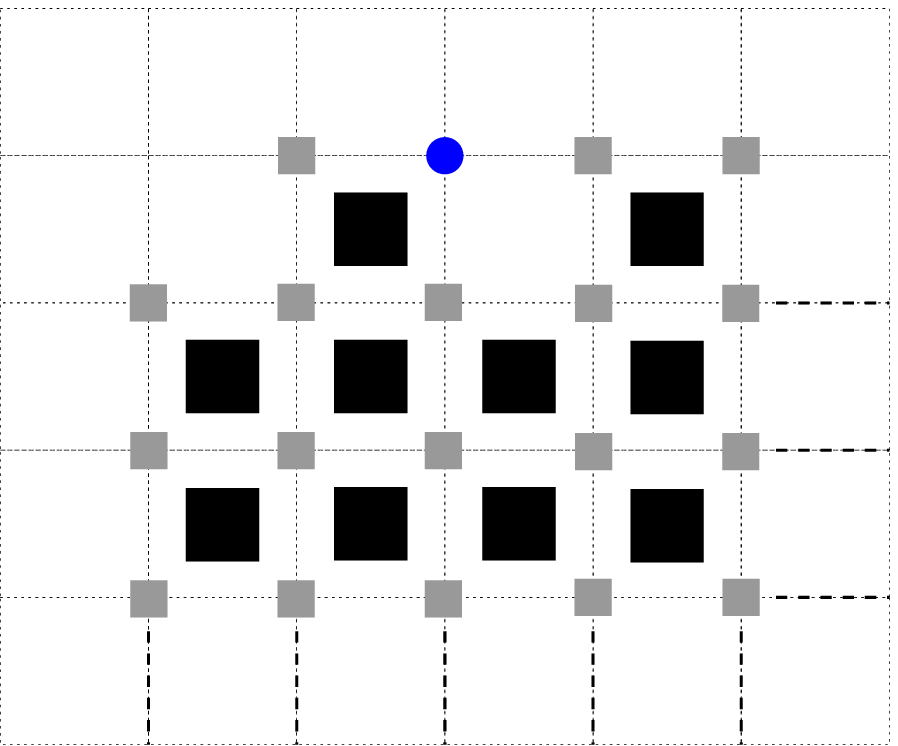}}
\qquad
\subfigure[$\eta_{2}$]{\includegraphics[height=5\tilesize]{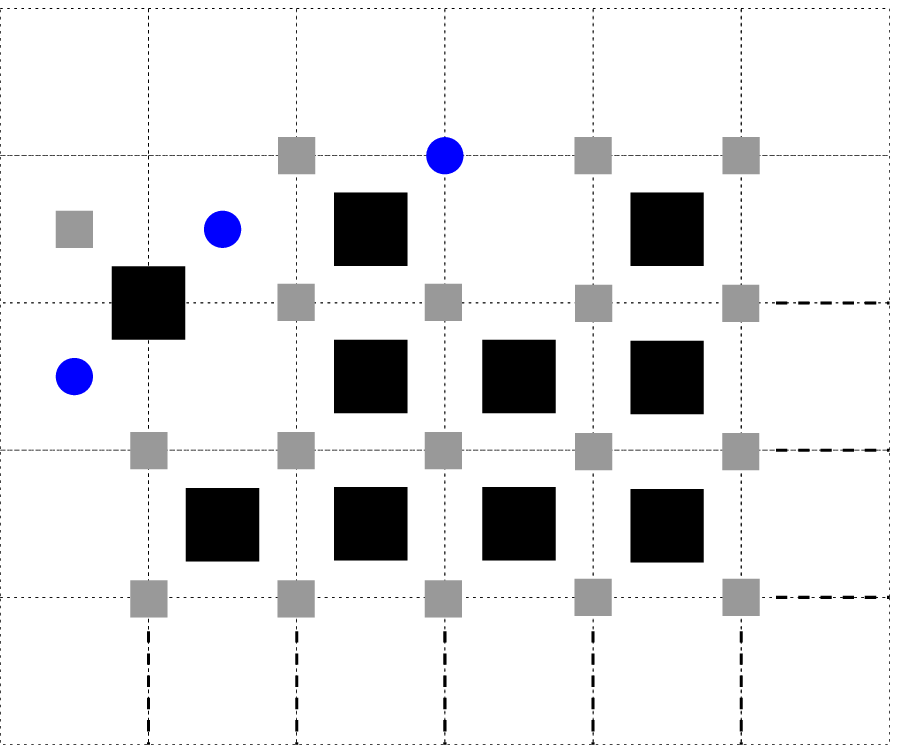}}
\qquad
\subfigure[$\eta_{2}\prm$]{\includegraphics[height=5\tilesize]{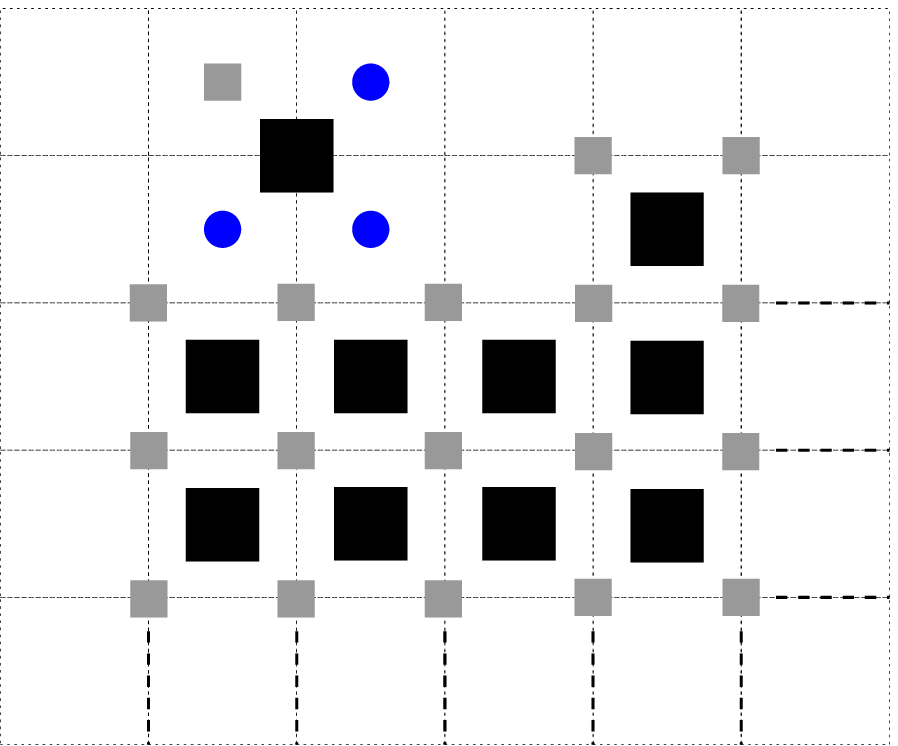}}
\caption{Heavy-step from a configuration with protuberances non in $\CA$} 		
\label{fig:protuberances}
\end{figure}

\medskip\noindent
{\bf 1.}
Let $\eta_{1}$ be a configuration like the one in Fig.~\ref{fig:protuberances}(a), and 
let the first heavy-step from $\eta_{1}$ be completed by moving the particle $p_{2}$ of 
type $\tb$ sitting at the Northern-most site of the West bar of $\eta_{1}$ one step 
North-West. Let $\eta_{2}$ be the configuration that is reached when this heavy-step 
is completed (see Fig.~\ref{fig:protuberances}(b)). Since $\eta_{p}$ has already one 
extra particle of type $\ta$, it is not possible to have $B(p_{2}, \eta_{2}) = 1$
(i.e., three broken bonds). Similarly, if we consider the case $B(p_{2},\eta_{2}) 
= 2$, then we must require one extra particle of type $\ta$, but this is incompatible 
with a configuration with two broken bonds. Therefore the only possibility is $B(p_{2},
\eta_{2}) = 1$, where the three bonds of $p_{1}$ in $\eta_{2}$ are obtained with 
the help of two extra particles of type $\ta$. From $\eta_{2}$ it is not possible to 
increase the energy further. Since in $\eta_{2}$ all configurations have at least three 
active bonds, the next heavy-step must be completed by moving a particle of type $\tb$ 
from a good dual corner	to another without increasing the number of broken bonds. But 
this is impossible from configuration $\eta_{2}$.

\medskip\noindent
{\bf 2.}
Clearly the same conclusion can be reached also if $p_{1}$ belongs to a hanging 
protuberance (see Step 2). Note that, by completing a heavy-step from $\eta_{1}$ 
by moving $p_{1}$, the configuration that is reached is either one heavy-step away 
from being attached in a good dual corner to form a bar of length $m \ge 2$ (and 
we know that it must be re-attached by Step 3 and 4), or it is far from good dual 
corners. In this case, if we assume that the next heavy-step is not completed by 
moving again $p_{1}$, then we can consider the path from the configuration 
$\eta_{2}\prm$ obtained by saturating $p_{1}$ (see Fig.\ref{fig:protuberances}(c)). 
This configuration has three extra particles of type $\ta$, and hence no heavy-step 
can be completed without exceeding energy level $\Gamma\starred$ by moving a particle 
of type $\tb$ different from $p_{1}$. It is straightforward to see in the next heavy-step 
$p_{1}$ must indeed be re-attached to the cluster.

\step{5}	
It follows from Steps 1--4 that the set of single cluster configurations (and, consequently, 
the set of configurations in $\minnbis{\protonumber}$) that can be visited by a modifying 
path that does not exceed energy level $\Gamma\starred$ coincides with the set of 
configurations that can be reached by a modifying path whose configurations do not have 
more than one particle of type $\tb$ not belonging to the main cluster such that if a 
configuration has a particle of type $\tb$ that is not connected to the cluster, 
then this particle is re-attached to the cluster in the next heavy-step, and afterwards 
is saturated. This means that the configurations in $\minnbis{\protonumber}$ that are 
reached without exceeding energy level $\Gamma\starred$ are obtained by iteratively moving 
a corner $\btile$ around the cluster (with the help of one or two extra particles of type 
$\ta$ used to saturate the particle of type $\tb$). We already saw  in Steps 1--2 that a 
modifying path starting from a configuration in $\CA$ with a heavy-step involving the 
particles of type $\tb$ in the protuberance cannot leave the set $\CA$. In the other cases, 
if the $\btile$ that is moved reaches a corner, then a new configuration in $\CB$ is 
reached. Otherwise, a configuration with one or two protuberances not belonging to $\CA$ 
is reached. But in this case, the path can only proceed by moving one of the protuberances, 
which eventually reach a corner and again producing a configuration in $\CB$. 

\step{6}
From what has been seen so far it follows that the set of single cluster $\btiled$ 
configurations that can be visited by a modifying path starting from a standard 
configuration $\bar{\eta}$ without exceeding energy level $\Gamma\starred$ consists 
of those configurations that can be reached by either moving the protuberance of a 
configuration in $\CA$, or by iteratively moving corner $\btile$ to some other corner 
possibly created by adding an extra particle of type $\ta$. This observation implies 
that $\gbar \subset \CB$. Furthermore, if $\eta\in\CB$ and has support far enough from 
$\partial^{-}\Lambda$, then $\eta \in \g$. It is straightforward to see that lattice 
distance $2$ from the annulus where no interaction is present is already far enough. 
In order to prove that $\g = \gbar$, we will show that the set $\gbar \backslash \g$ 
indeed is empty.

\medskip\noindent
\emph{Remark}:	
Note that variations in the energy are only possible when a particle of type $\ta$ enters 
or leaves $\Lambda$ or when the number of active bonds changes as a consequence of the 
motion of a particle inside $\Lambda$. This implies that, for all $\eta \in \CB$, 
$\comlev(\bar{\eta}, \eta)$ for some standard configuration $\bar{\eta}\in\nbis{\protonumber}$
can take only a discrete set of values. In region $\RB$, $\comlev(\bar{\eta}, \eta) 
= \Gamma\starred$ can only happen when $\comlev(\bar{\eta}, \eta) = H(\bar{\eta}) + U + 3\Da$ 
and $\Db = U + 3\Da$.  

\medskip	
Consider the set of $\btiled$ configurations consisting of a single cluster that can be
reached with the moves considered at the various stages of the previous analysis. Clearly,
this set contains $\bar{g}(\bar{\eta}, \minnbis{\protonumber})$. Let $\eta\prm$ belong 
to this set, and let $\omega:\bar{\eta}\to\eta\prm$. Assume that $\max_{\xi\in\omega} 
H(\xi) = U + 3\Da$ and let $\supp(\omega) = \cup_{\xi \in \omega} \supp(\xi)$. Then, 
from the previous analysis, it follows that there is a path $\omega\prm:\bar{\eta} 
\to \eta\prm$ such that $\max{\xi \in \omega\prm} H(\xi) =  H(\bar{\eta} + 3U$
and $\supp(\omega\prm) \subset \supp(\omega)$. In words, if a configuration in $\CB$ 
can be reached within energy barrier $U + 3\Da$, then it can also be reached within 
energy barrier $3U$, irrespective of the distance from the boundary of $\Lambda$.
\end{proof}	

\begin{remark}
\label{rem-filling-circumscribed-rectangle}
Let $\eta$ be a $\btiled$ conconfiguration of minimal energy with a fixed number of 
particles of type $\tb$, and let $R(\eta)$ be the rectangle circumscribing its dual 
support. Then the $\btiled$ configuration $\eta\prm$ with dual support equal to 
$R(\eta)$ can be obtained by iteratively bringing a particle of type $\tb$ to a good 
dual corner and saturating it with a particle of type $\ta$ within energy barrier $\Db$. 
Clearly, $H(\eta\prm) < H(\eta)$. We say that $\eta\prm$ is obtained by filling the 
rectangle circumscribing the dual support of $\eta$.
\end{remark}


\subsubsection{Existence of a good site}

\begin{lemma}
\label{lemma-proto-is-good-RB}
For all $\hat{\eta} \in \CB$, there exists an $x \in F(\hat{\eta})$ such that 
$\comlev((\hat{\eta},x),\boxplus) < \Gamma\starred$.
\end{lemma}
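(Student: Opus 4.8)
The plan is to follow the template of the proof of Lemma~\ref{lemma-proto-is-good-RA}, replacing the rectangle-plus-protuberance geometry of $\CA$ by the general monotone polyomino underlying a configuration in $\CB$. Fix $\hat{\eta}\in\CB$ and let $R$ be the rectangle circumscribing its dual tile support $[\hat{\eta}]$. Since $[\hat{\eta}]$ is a monotone polyomino with $\protonumber$ tiles, its dual perimeter equals $4\ell\starred$, so $R$ has side lengths $\ell\starred,\ell\starred$ or $\ell\starred+1,\ell\starred-1$; because $\protonumber<|R|$ for $\ell\starred\ge4$, the polyomino $[\hat{\eta}]$ is not the full rectangle and hence possesses a good dual corner. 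I will also use that every configuration in $\CB$ has minimal dual perimeter, hence belongs to $\minnbis{\protonumber}$ with energy $\Gamma\starred-\Db$ (Lemma~\ref{lemma-minimal-energy-coincides-with-minimal-perimeter} together with the energy formulas of \cite{dHNT11}), and that along $\btiled$ shapes of minimal dual perimeter the energy of $\minnbis{n}$ drops by $\epsi$ for each extra tile.

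First I would take a good dual corner $x$ of $[\hat{\eta}]$; since $x$ has a neighbouring particle of type $\ta$, we have $x\in F(\hat{\eta})$, and in $(\hat{\eta},x)$ the added particle of type $\tb$ has three active bonds, so $H((\hat{\eta},x))=\Gamma\starred-3U<\Gamma\starred$. From $(\hat{\eta},x)$ I bring one further particle of type $\ta$ in from $\partial^-\Lambda$ along a lattice path of empty sites --- which exists because the border of width $3$ in $\Lambda$ keeps $[\hat{\eta}]$ well inside $\Lambda$ --- and use it to saturate the new $\btile$; this raises the energy by at most $\Da<3U$ and then lowers it by $U$, so the energy stays below $\Gamma\starred$ and one reaches a $\btiled$, monotone configuration $\eta_1$ with $\critinumber$ tiles, still inscribed in $R$, with $\eta_1\in\minnbis{\critinumber}$ and $H(\eta_1)=\Gamma\starred-\Db-\epsi$ (filling a good dual corner of a monotone polyomino preserves minimal perimeter).

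Now I split according to the shape of $R$. If $R$ has side lengths $\ell\starred,\ell\starred$, then by Remark~\ref{rem-filling-circumscribed-rectangle} one fills $R$ starting from $\eta_1$ within energy barrier $\Db$; since each step brings a type-$\tb$ particle to a good dual corner (cost $\Db-3U$) and then a type-$\ta$ particle saturating it (net $-\epsi$) while the minimal energy of $\minnbis{n}$ is non-increasing in $n$, the path stays at energy $\le H(\eta_1)+\Db=\Gamma\starred-\epsi<\Gamma\starred$ and ends at the $\btiled$ square of side $\ell\starred$, which lies in $\cX_{\boxplus}$ by Remark~\ref{remark-supercritical-square}; composing with the previous path gives $\comlev((\hat{\eta},x),\boxplus)<\Gamma\starred$. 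If $R$ has side lengths $\ell\starred+1,\ell\starred-1$, I would instead invoke Remark~\ref{rem-visit-all-monotone} (moving $\btiles$ between corners within energy barrier $3U<\Db$, admissible since $\tfrac12U<\Da<U$ throughout $\RB$) to reshape $\eta_1$ into an $\ell\starred\times(\ell\starred-1)$ $\btiled$ rectangle carrying one external vertical $\abbar$ of length $2$, and then proceed exactly as in the proof of Lemma~\ref{lemma-proto-is-good-RA} for the case of a protuberance on the short side (sliding the $\abbar$ onto a long side, completing to the $\btiled$ square of side $\ell\starred$, and applying Remark~\ref{remark-supercritical-square}), staying below $\Gamma\starred$ at every step. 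Finally, the case in which $[\hat{\eta}]$ sits close to $\partial^-\Lambda$ is reduced to the generic one exactly as in Lemma~\ref{lemma-proto-is-good-RA}, by first detaching a few corner $\btiles$ from the side of $R$ farthest from the boundary.

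The step I expect to be the main obstacle is the energy bookkeeping: one must verify that \emph{every} intermediate configuration --- in particular $(\hat{\eta},x)$ at energy $\Gamma\starred-3U$, the transits of the auxiliary particles of type $\ta$, and the $\btile$ and $\abbar$ displacements during the reshaping --- stays \emph{strictly} below $\Gamma\starred$. This is precisely what confines the argument to region $\RB$ (through $\Da<U$ and $\tfrac12U<\Da$) and forces every newly added tile to be routed through a good dual corner rather than through an arbitrary site of $F(\hat{\eta})$.
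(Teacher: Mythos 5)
Your proposal is correct and follows essentially the same route as the paper: pick a good dual corner $x$ of $\hat{\eta}$ (so $H((\hat{\eta},x))=\Gamma\starred-3U$), saturate the new $\btile$ to reach a configuration in $\minnbis{\critinumber}$ at energy $\Gamma\starred-\Db-\epsi$, and then steer towards the $\btiled$ dual square of side $\ell\starred$ below energy level $\Gamma\starred$, invoking Remark~\ref{remark-supercritical-square} to finish. The only divergence is in the case where the circumscribing rectangle has side lengths $\ell\starred+1,\ell\starred-1$: the paper first fills the whole circumscribing rectangle (Remark~\ref{rem-filling-circumscribed-rectangle}) and then argues as in the final part of the proof of Lemma~\ref{lemma-proto-is-good-RA} (stripping corner $\btiles$ from the far short side and moving the residual $\abbar$ of length $2$ to a long side), whereas you reshape the $\critinumber$-tile cluster directly, via the tile motions of Remark~\ref{rem-visit-all-monotone}, into an $\ell\starred\times(\ell\starred-1)$ rectangle plus a bar of length $2$ on a short side and then follow the short-side case of Lemma~\ref{lemma-proto-is-good-RA}; the energy bookkeeping you give ($3U<\Db$ in $\RB$, barrier $\Db$ only from configurations at $\Gamma\starred-\Db-\epsi$) is sound. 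The one point where the paper's ordering is slightly more convenient is the near-boundary case: filling the circumscribing rectangle does not require room around the cluster, so the paper can postpone all boundary considerations to the corner-stripping step inherited from Lemma~\ref{lemma-proto-is-good-RA}, while your use of Remark~\ref{rem-visit-all-monotone} needs the cluster to be away from $\partial^{-}\Lambda$ and you only sketch the reduction; this is a matter of presentation rather than a gap, since the patch you indicate is exactly the paper's own stripping argument.
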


\begin{proof}
The dual support of $\hat{\eta} \in \CB$ is either a square of side length $\ell\starred$
or a rectangle of side lengths $\ell\starred + 1, \ell\starred - 1$. Let $x$ be a site 
in a good dual corner of $\hat{\eta}$. After a particle of type $\tb$ has reached site 
$x$ ($H((\hat{\eta}, x)), \Gamma\starred - 3U$), it is possible to saturate this particle 
with an extra particle of type $\ta$ within energy barrier $\Da$, reaching the configuration 
$\eta\prm$ with energy $H(\eta\prm) = \Gamma\starred - \Db - \epsi$. Note that the dual 
support of $\eta\prm$ has the same circumscribing rectangle as $\hat{\eta}$. If the 
rectangle circumscribing the dual support of $\eta\prm$ is a square, then by filling
this rectangle we obtain a dual $\btiled$ square of side length $\ell\starred$. By 
Remark~\ref{remark-supercritical-square} this is enough. If, on the other hand, the 
rectangle circumscribing the dual support of $\eta\prm$ has side length $\ell\starred + 1,
\ell\starred - 1$, then by filling this rectangle we obtain a dual $\btiled$ rectangle 
of side lengths $\ell\starred + 1, \ell\starred - 1$. From this point on, it is possible 
to argue as in the final part of the proof of Lemma~\ref{lemma-proto-is-good-RA}.
\end{proof}


\subsubsection{Identification of $\proto$ and $\entgate$}

\begin{lemma}
\label{lemma-identification-of-entgate-RB}
$\proto = \CB$ and $\entgate = \CB\boub$.
\end{lemma}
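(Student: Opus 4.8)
The plan is to run the same three-stage argument used for Lemma~\ref{lemma-identification-of-entgate-RA}, with $\CB$ in place of $\CA$. First one records that every $\eta\in\CB\boub$ has energy $\Gamma\starred$: each configuration in $\CB$ is $\btiled$ with $\protonumber$ particles of type $\tb$ and its dual support is a monotone polyomino whose circumscribing rectangle has perimeter $4\ell\starred$ (both $\ell\starred\times\ell\starred$ and $(\ell\starred+1)\times(\ell\starred-1)$ give perimeter $4\ell\starred$), so by Lemma~\ref{lemma-minimal-energy-coincides-with-minimal-perimeter} $\CB\subseteq\minnbis{\protonumber}$, hence $H(\CB)=\Gamma\starred-\Db$ and $H(\CB\boub)=\Gamma\starred$.

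\emph{Step 1 (every $\eta\in\CB\boub$ is an essential saddle).} For $\eta=(\hat\eta,x)\in\CB\boub$ with $\hat\eta$ having support far enough from $\partial^{-}\Lambda$, construct an optimal path $\omega\ni\eta$ with $H<\Gamma\starred$ strictly before $\eta$ and with $S_\omega(\eta)=\{(\hat\eta,y_1),\dots,(\hat\eta,y_m),\dots,\boxplus\}$, where $y_i\notin\partial^{-}\Lambda$, $y_m\in F(\hat\eta)$ is a good site, and $H<\Gamma\starred$ on $S_\omega((\hat\eta,y_m))$. Such a path exists by (i) $\hat\eta\in g(\Box,\minnbis{\protonumber})$, which follows from Lemma~\ref{lemma-proto-below-gamma-RB} together with Lemma~\ref{lemma-reachable-from-standard}; (ii) the existence of the good site $y_m$, by Lemma~\ref{lemma-proto-is-good-RB}; and (iii) the width-$3$ no-interaction annulus, which supplies the lattice path of empty sites from $x$ into $\Lambda^{-}$. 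Essentiality then follows exactly as in Lemma~\ref{lemma-identification-of-entgate-RA}: partition $\optpaths=(\cU(\eta),\cU^{c}(\eta))$; for $\omega'\in\cU(\eta)$ trivially $\eta\in\omega'$, and for $\omega'\in\cU^{c}(\eta)$ Lemma~\ref{lemma-entrance-of-set-protonumber}(1) gives that $\omega'$ enters $\nbis{\critinumber}$ through a distinct saddle $(\hat\zeta,z)$ with $z\in\partial^{-}\Lambda$, so $S(\omega')\nsubseteq S(\omega)\setminus\{\eta\}$ (Definition~\ref{def3}(f2)). Hence $\CB\boub\subset\cG(\Box,\boxplus)$ by Lemma~\ref{lemma-mnos-essential-saddles}.

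\emph{Step 2 ($\entgate=\CB\boub$).} Show that any saddle visited by an optimal path before its first entry into $\CB\boub$ is unessential. Saddles in $\nbis{\le\quasiqnumber}$ are unessential by Lemma~\ref{lemma-unessential-saddles}(1). For $\zeta\in\nbis{\le\protonumber}$ with $H(\zeta)=\Gamma\starred$, Lemma~\ref{lemma-proto-below-gamma-RB} gives $g=\bar g$, hence (with Lemma~\ref{lemma-reachable-from-standard}) $g(\Box,\minnbis{\protonumber})=\bar g(\Box,\minnbis{\protonumber})$, so by Lemma~\ref{lemma-entrance-of-set-protonumber} every path in $\cO(\zeta)$ visits $g(\Box,\minnbis{\protonumber})$ after $\zeta$ before entering $\nbis{\critinumber}$, and Lemma~\ref{lemma-unessential-saddles}(2) applies. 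For a saddle in $\nbis{\ge\critinumber}\setminus\CB\boub$, Lemma~\ref{lemma-entrance-of-set-protonumber}(1) together with Lemma~\ref{lemma-proto-below-gamma-RB} shows every optimal path enters $\nbis{\critinumber}$ by adding a particle of type $\tb$ in $\partial^{-}\Lambda$ to a configuration in $g(\Box,\minnbis{\protonumber})\subset\CB$, so such a saddle is always preceded on the path by an element of $\CB\boub$ and is not essential. Thus $\entgate=\optentrance\cG(\Box,\boxplus)=\CB\boub$. The inclusion $\proto=\CB$ then follows as in Lemma~\ref{lemma-identification-of-entgate-RA}: $\CB\subseteq\proto$ since each $(\hat\eta,x)\in\CB\boub=\entgate$ comes in one step from $\hat\eta\in\CB$, and the reverse inclusion is the same contradiction argument (a hypothetical $\hat\eta\in\proto\setminus\CB$ forces $\hat\eta\in\nbis{\ge\critinumber}\setminus\CB\boub$ and hence an optimal path reaching $\hat\eta$ without meeting $\cG(\Box,\boxplus)$, contradicting that all optimal paths enter $\nbis{\protonumber}$ via $\CB\boub$).

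The main obstacle is that, unlike region $\RA$ where Lemma~\ref{lemma-proto-below-gamma-RA} gave the equality $g=\bar g=\CA$, here Lemma~\ref{lemma-proto-below-gamma-RB} only gives $g=\bar g\subseteq\CB$, with strict inclusion for configurations whose support abuts $\partial^{-}\Lambda$. Reconciling this with the asserted equalities $\proto=\CB$, $\entgate=\CB\boub$ is the delicate point: since $L>2\ell\starred+2$, every shape in $\CB$ can be realized (modulo translation) with support far from $\partial^{-}\Lambda$, and the argument of Lemma~\ref{lemma-proto-below-gamma-RB}, Step~6 (any $\CB$-configuration is reachable within barrier $3U$ regardless of distance to the boundary, after first detaching corner $\btiles$, moving the cluster away from and back toward $\partial^{-}\Lambda$, and using Remark~\ref{rem-filling-circumscribed-rectangle}) shows the boundary exceptions are a vanishing fraction of $\CB$ as $\Lambda\to\Z^{2}$, which does not affect the count $N\starred$; care is needed to phrase the two identities as equalities of sets of shapes modulo shifts, exactly as the boundary caveat preceding Lemma~\ref{lemma-proto-below-gamma-RB} anticipates.
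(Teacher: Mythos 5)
Your proposal is correct and follows essentially the same route as the paper: the paper's own proof of this lemma is simply the one-line statement that it is the same as the proof of Lemma~\ref{lemma-identification-of-entgate-RA}, and you carry out exactly that substitution of $\CB$ for $\CA$, invoking Lemma~\ref{lemma-proto-below-gamma-RB}, Lemma~\ref{lemma-proto-is-good-RB}, Lemma~\ref{lemma-entrance-of-set-protonumber} and Lemma~\ref{lemma-unessential-saddles} in the same roles. Your closing remark on the boundary configurations (where $g=\bar g\subseteq\CB$ is a priori not an equality with $\CB$) makes explicit a caveat the paper only signals in the sentence preceding Lemma~\ref{lemma-proto-below-gamma-RB}, and resolves it in the intended way, namely modulo shifts as in the count of $N\starred$.
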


\begin{proof}
Same as the proof of Lemma~\ref{lemma-identification-of-entgate-RA}.
\end{proof}


\subsection{Region $\RC$: proof of Theorem \ref{th-RC}}\label{sec-RC}

Let $\CC$ be the set of $\btiled$ configurations with $\protonumber$ particles of 
type $\tb$ whose dual tile support is a monotone polyomino and whose circumscribing 
rectangle has perimeter $4\ell\starred$ (see Fig.~\ref{fig:paradigm_class_F}).

\begin{figure}[htbp]
\centering
{\includegraphics[height=0.15\textwidth]{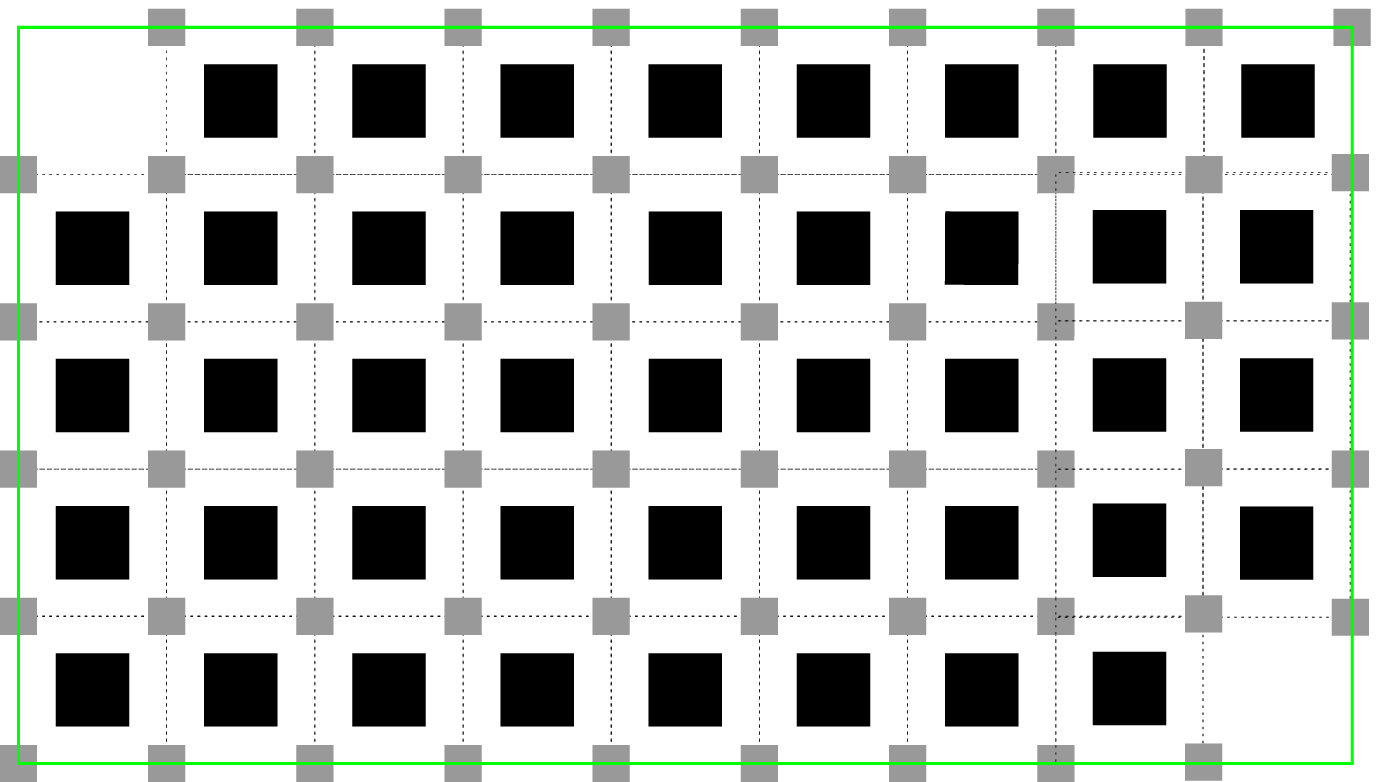}}
\caption{An example of configuration in \CC for $\ell\starred = 7$.}
\label{fig:paradigm_class_F}
\end{figure}


\subsubsection{Identification of $g(\{\bar{\eta}\},\minnbis{\protonumber})$ and 
$\bar{g}(\{\bar{\eta}\},\minnbis{\protonumber})$}

\begin{lemma}
\label{lemma-proto-below-gamma-RF}
$\g = \gbar = \CC$.	
\end{lemma}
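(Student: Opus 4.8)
The plan is to establish the chain $\CC \subseteq \g \subseteq \gbar \subseteq \CC$. I begin with the two easy inclusions $\g \subseteq \gbar \subseteq \CC$. By the very definitions of $g$ and $\bar{g}$ one has $\g \subseteq \gbar \subseteq \minnbis{\protonumber}$, so it suffices to check $\minnbis{\protonumber} \subseteq \CC$. Fix $\eta \in \minnbis{\protonumber}$. By Lemma~\ref{lemma-minimal-energy-coincides-with-minimal-perimeter} (applicable since $\protonumber \le (\ell\starred)^2$), $\eta$ is $\btiled$ and its dual perimeter equals $4\ell\starred$. If $\eta$ had two or more clusters, then, since the minimal perimeter $p(n)$ of a polyomino of area $n$ satisfies $p(n_1)+p(n_2)>p(n_1+n_2)$ for $n_1,n_2\ge 1$, the sum of the clusters' dual perimeters would strictly exceed $p(\protonumber)=4\ell\starred$, a contradiction; hence $\eta$ is a single $\btiled$ cluster, whose tile support is a polyomino of area $\protonumber$ with minimal perimeter $4\ell\starred$. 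By the structure theorem for minimal-perimeter polyominoes recalled in Appendix~\ref{appA} (Alonso and Cerf~\cite{AC96}), such a polyomino is monotone, so its perimeter equals that of its circumscribing rectangle; therefore $\eta \in \CC$, and $\gbar \subseteq \CC$.

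The content of the lemma is the reverse inclusion $\CC \subseteq \g$. By Lemma~\ref{lemma-reachable-from-standard} it suffices, for each fixed $\eta \in \CC$, to build a modifying path $\omega$ from a standard configuration $\bar{\eta} \in \standard{\protonumber}$ to $\eta$ with $\max_{\xi\in\omega} H(\xi) < \Gamma\starred$. Write the circumscribing rectangle of $\eta$ as $(\ell\starred+c)\times(\ell\starred-c)$ with $0 \le c \le \lfloor\sqrt{\ell\starred-1}\rfloor$; a standard configuration has $c\in\{0,1\}$, so $\omega$ must progressively elongate the cluster. I would assemble $\omega$ from two kinds of segments: (i) segments realizing Remark~\ref{rem-visit-all-monotone}, i.e.\ transitions between $\btiled$ monotone configurations with the \emph{same} circumscribing rectangle, carried out by the single-$\btile$ motion of Section~\ref{sec-moving-dimers}; since in $\RC$ one has $\Da<\tfrac12 U$, the second mechanism there has barrier $U+4\Da<3U$, and moreover $U+4\Da<3U+\Da<\Db$, so these segments stay strictly below $\Gamma\starred$; and (ii) ``bar-transfer'' segments that pass from a configuration inscribed in $(\ell\starred+j)\times(\ell\starred-j)$ to one inscribed in $(\ell\starred+j+1)\times(\ell\starred-j-1)$, realized by detaching a corner $\btile$, transporting it around the boundary of the cluster to a good dual corner, re-attaching and saturating it, and iterating along a side. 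Chaining type-(ii) segments at most $c$ times and finishing with a type-(i) segment yields a path from $\bar{\eta}$ to $\eta$.

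The hard part is the energy bookkeeping in the bar-transfer segments (ii), whose intermediate configurations are not of minimal energy: some are $\btiled$ with dual perimeter $4\ell\starred+2$ (or slightly larger), and the rest are transient states carrying a detached $\btile$ together with a bounded number of extra particles of type $\ta$. For the $\btiled$ intermediates I would use the identity $H(\xi)-H(\bar{\eta}) = \tfrac14[\cT(\xi)-\cT(\bar{\eta})]\,\Da$ valid for $\btiled$ configurations (Lemma~2.3 of \cite{dHNT11}), which keeps them at $H(\bar{\eta})+O(\Da)$; for the transient ones I would count broken bonds and extra type-$\ta$ particles exactly as in the $\btile$-motion lemma of Section~\ref{sec-moving-dimers} and in Lemma~\ref{lemma-proto-below-gamma-RB}, obtaining a worst-case excess of order $U+O(\Da)$. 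In every case the decisive inequality is $\Db > 3U+\Da$, the defining inequality of $\RC$: this is exactly what lets the path afford transferring bars (and what distinguishes $\RC$ both from $\RB$, where only $c\in\{0,1\}$ is attainable, and from the unanalyzed fourth subregion). Finally, because $\Lambda$ carries a border of width $3$ with no interaction (Discussion~1), the transfer constructions can be performed around any target $\eta\in\CC$ regardless of the position of its support, and the constructed paths stay \emph{strictly} below $\Gamma\starred$ in all cases; hence $\CC\subseteq\g$, which together with the first paragraph gives $\g=\gbar=\CC$.
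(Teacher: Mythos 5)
Your overall strategy is the same as the paper's: reduce to constructing modifying paths between standard configurations and configurations in $\CC$, use the $\btile$-motion mechanisms of Section~\ref{sec-moving-dimers} to rearrange within a fixed circumscribing rectangle (Remark~\ref{rem-visit-all-monotone}), and change the circumscribing rectangle by transferring a bar from one side of the cluster to an adjacent side, the decisive inequality being $\Db>3U+\Da$. Two remarks on the bookkeeping: the paper's bar transfer first lets one extra particle of type $\ta$ enter to \emph{create} a good dual corner on the (flat) target side (cost $\Da$, since that particle has no active bonds) and then iterates the $3U$-barrier dimer motion, giving total barrier $3U+\Da<\Db$; your ``worst-case excess of order $U+O(\Da)$'' is vague and, if one uses the first mechanism, simply not the bound one gets --- though the conclusion survives because $3U+\Da<\Db$ is exactly the definition of $\RC$.

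The genuine gap is the boundary case, which you dispose of in one sentence but which occupies roughly half of the paper's proof (Steps 4--6) and is precisely what upgrades the statement from the $\RB$-type inclusion $\gbar\subseteq\CB$ of Lemma~\ref{lemma-proto-below-gamma-RB} (where boundary-proximate configurations are excluded) to the full equality $\gbar=\CC$ claimed here. The width-3 annulus without interaction does \emph{not} make the constructions go through ``regardless of the position of the support''; on the contrary, it is the source of the difficulty: a particle in that annulus has no active bonds, so the standard $3U$ (or $U+4\Da$) dimer-motion estimates fail for dimers travelling along a side of the cluster that is close to $\partial^{-}\Lambda$. The paper therefore introduces a modified dimer-moving mechanism for such dimers, with barrier $3U+\Da$ (Fig.~\ref{fig:move-boundary-dimers}), and then argues iteratively that any $\eta\in\CC$ whose cluster is close to $\partial^{-}\Lambda$ can still be retracted to a standard configuration strictly below $\Gamma\starred$, by transferring bars onto the two sides of the circumscribing rectangle that are always far from $\partial^{-}\Lambda$ (possible since $L>2\ell\starred+2$), reducing $L-l$ by $2$ at each stage until the standard case is reached. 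Without this analysis your construction only yields $\CC'\subseteq\g$ for the configurations whose support is far from $\partial^{-}\Lambda$, i.e.\ the analogue of the $\RB$ result, not the stated lemma; so you should either supply the modified boundary mechanism and the retraction argument, or weaken the claim accordingly.
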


\begin{proof}
First observe that the sets $\minnbis{\protonumber}$ and $\CC$ coincide by
Lemma~\ref{lemma-minimal-energy-coincides-with-minimal-perimeter}. Therefore it 
remains to prove that for any configuration $\eta \in \CC$ there exist a standard
configuration $\bar{\eta}$ and a path $\omega: \bar{\eta} \to \eta$ such that 
$\max_{\xi \in \omega} H(\xi) < \Gamma\starred$ or, equivalently, a path
$\omega\prm\colon\,\eta \to \bar{\eta}$ such that $\max_{\xi \in \omega\prm} 
H(\xi) < \Gamma\starred$.

\medskip\noindent
{\bf 1.} 
We know from Remark~\ref{rem-visit-all-monotone} that, starting from a $\btiled$
configuration $\eta\prm \in \minnbis{\protonumber}$ whose dual tile support has a 
circumscribing rectangle of side lengths $L,l$ with $L \ge l$ and $L\times l 
> \protonumber$, it is possible to reach below energy level $\Gamma\starred$ 
all configurations in $\minnbis{\protonumber}$ whose dual support has the same 
circumscribing rectangle (provided that the cluster is sufficiently far from 
$\partial^{-}\Lambda)$. We next show that, from $\eta\prm$, it is also possible 
to reach a configuration $\eta\dprm \in \minnbis{\protonumber}$ whose dual support 
has a circumscribing rectangle with side lengths $L+1, l-1$ (whenever $(L+1) \times 
(l-1) \ge \protonumber$).

\medskip\noindent	
{\bf 2.} 
From $\eta\prm$ it is possible to reach below energy level $\Gamma\starred$ a 
configuration $\tilde{\eta}$ whose tile support, in dual coordinates, is a rectangle 
of side lengths $L-1,l$ plus a bar of length $k = \protonumber - (L-1) l$ on top of 
the longest side of the rectangle. There are two cases. Either $k > l$ or $k \le l$.
If $k > l$, then $(L+1)(l-1) < \protonumber$. If $k \le l$, then we will show that 
it is possible to obtain within energy barrier $\Db$ the configuration $\eta\dprm$ 
whose dual tile support is obtained from the dual tile support of $\tilde{\eta}$
by moving the bar of length $k$ from the top of the rectangle to one of its sides
as follows. 

\medskip\noindent
{\bf 3.}	
Suppose we want to move the bar onto the East side of the rectangle. Let a particle of 
type $\ta$ enter $\Lambda$ and reach the site at dual distance $1$ in the East direction 
from the Southern-most particle of type $\ta$ on the East side of the rectangle 
(configuration $\tilde{\eta}\prm$) in order to create a good dual corner (see 
Fig.~\ref{fig:change-bar-side}). Note that $H(\tilde{\eta}\prm) = H(\tilde{\eta}) + \Da$.
From $\tilde{\eta}\prm$, using the mechanism described in Section~\ref{sec-moving-dimers},
we can iteratively move all the $\btiles$ originally on the top $\abbar$ to the East side 
of the rectangle within energy barrier $3U$. The free particle of type $\ta$ that is left 
afterwards is removed from $\Lambda$. Therefore the task can be achieved within energy 
barrier $3U + \Da$. Note that it is sufficient that only the North side and the East side 
of the dual rectangle circumscribing the cluster of $\tilde{\eta}$ are far from 
$\partial^{-}\Lambda$.

\begin{figure}[htbp]
\centering
{\includegraphics[height=6\tilesize]{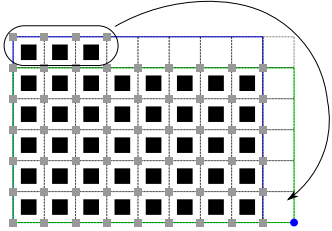}}
\caption{The top bar can be moved to the East side of the rectangle within a 
energy barrier $3U + \Da$.}
\label{fig:change-bar-side}
\end{figure}

\medskip\noindent	
{\bf 4.} 
It is clear that this argument is sufficient to show that from a $\btiled$ configuration  
in $\minnbis{\protonumber}$ with a dual support consisting of a rectangle of side lengths
$L,(l-1)$ plus a bar attached to one of the shortest sides far from $\partial^{-}\Lambda$
(note that at least one of the shortest side is far from $\partial^{-}\Lambda$) it is possible 
to return to a configuration of minimal energy with a dual tile support an $L \times l$ 
rectangle and, eventually, to some standard configuration $\bar{\eta}$ strictly below energy 
level $\Gamma\starred$. This implies (see Lemma~\ref{lemma-reachable-from-standard}) that 
$\comlev(\Box, \eta) < \Gamma\starred$ for all $\eta \in \CF$ whose dual support is far 
from $\partial^{-}\Lambda$. To complete the proof we will have to consider those configurations 
in $\CF$ with a dual tile support consisting of an $L \times l$ rectangle that is close to 
$\partial^{-}\Lambda$ and see that also for these configurations it is possible to 
reach below energy level $\Gamma\starred$ some standard configuration in 
$\minnbis{\protonumber}$. A particle is said to be close to $\partial^{-}\Lambda$ if it 
is adjacent to a site in the region of $\Lambda$ where interaction between particles is 
not possible. Dimers, $\btiles$, $\abbars$ and cluster are said to be close to 
$\partial^{-}\Lambda$ if they contain at least one particle that is close to 
$\partial^{-}\Lambda$.
	
\begin{figure}[htbp]
\centering
\subfigure[]{\includegraphics[height=5\tilesize]{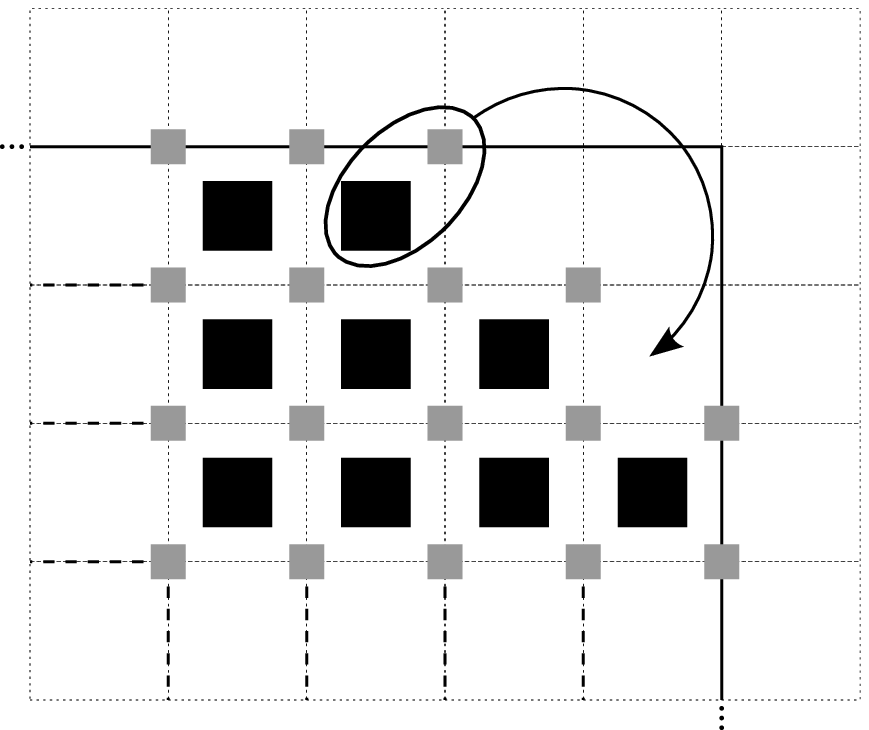}}
\qquad
\subfigure[]{\includegraphics[height=5\tilesize]{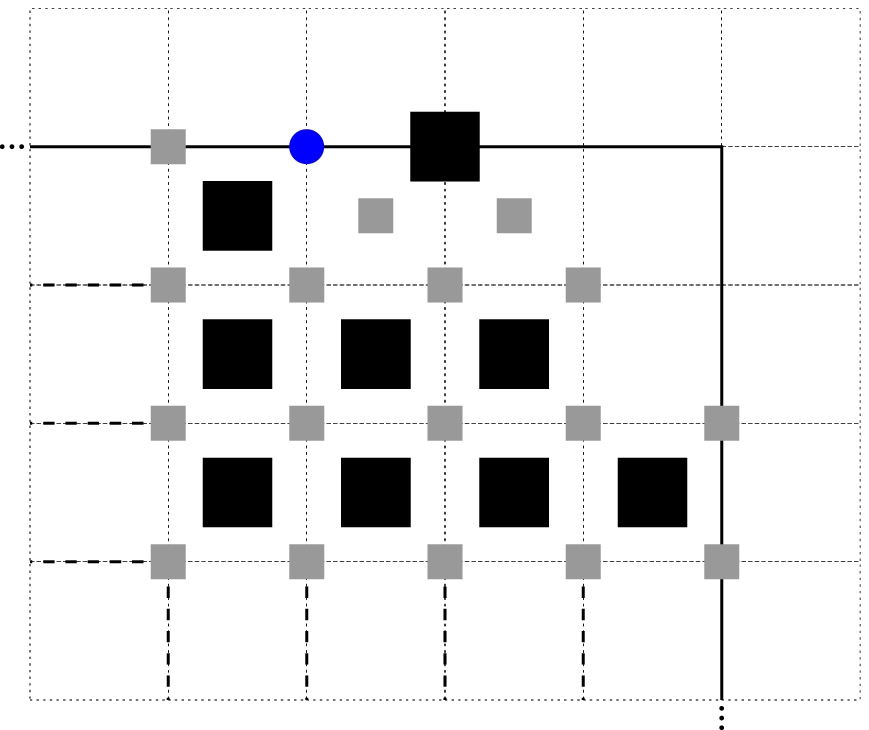}}
\qquad
\subfigure[]{\includegraphics[height=5\tilesize]{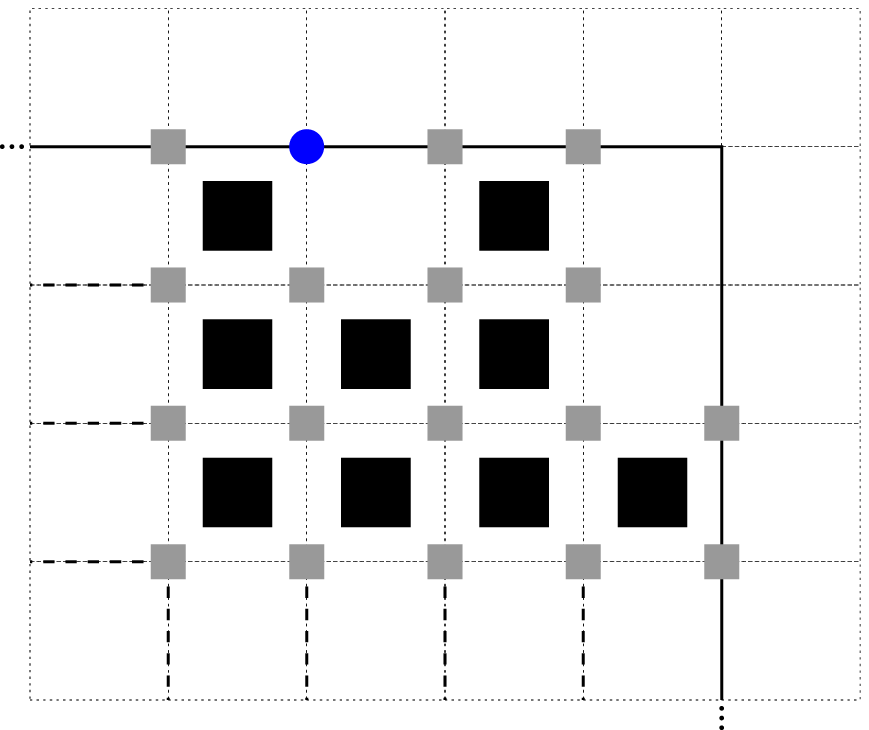}}
\qquad
\subfigure[]{\includegraphics[height=5\tilesize]{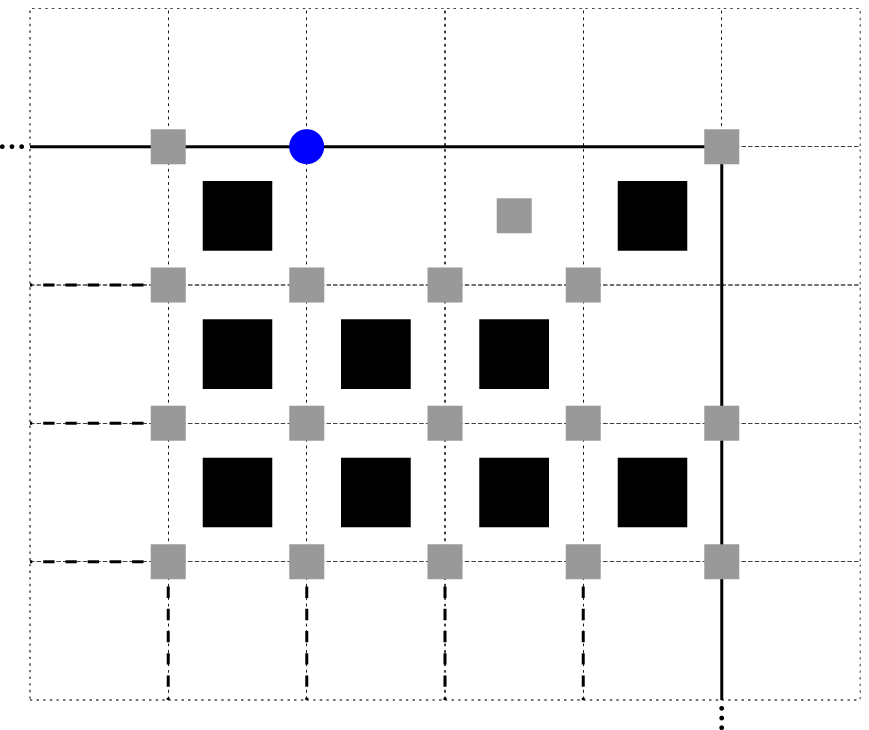}}
\caption{Dimers that are close to $\partial^{-}\Lambda$ can be moved within a  
$3U + \Da$ energy barrier. A particle that is ``beyond'' the black linec an not
have active bonds.}
\label{fig:move-boundary-dimers}
\end{figure}

\medskip\noindent
{\bf 5.}
We will show how it is possible to move those dimers that are close to $\partial^{-}
\Lambda$ within energy barrier $3U + \Da$ using a modification of the argument presented 
in Section~\ref{sec-moving-dimers}. For this purpose, we refer to 
Fig.~\ref{fig:move-boundary-dimers}. Let $p$ and $q_{1}$ be  the particle of type 
$\tb$, respectively, type $\ta$ of the dimer that is encircled in configuration 
$\eta$ in Fig.~\ref{fig:move-boundary-dimers}(a), and let $q_{2}$ be the particle 
of type $\ta$ adjacent to $p$ in the North-East direction. We will construct a path 
$\omega$ that moves the dimer to a different $\abbar$, as follows. Move $q_{1}$ one 
step South-East ($\D H (\omega) = U$), and $p$ one step North-East ($\D H(\omega) = 3U$). 
Then move $q_{2}$ one step South-East ($\D H(\omega) = 3U$), and let a new particle of 
type $\ta$ enter $\Lambda$ ($\D H(\omega) = 3U + \Da$) and reach the site originally 
occupied by $q_{2}$ ($\D H(\omega) = 2U + \Da$; see Fig.~\ref{fig:move-boundary-dimers}(b)).
Afterwards, move $q_{1}$ one step North-East ($\D H(\omega) = 3U + \Da$), $p$ one step 
South-East ($\D H(\omega) = 3U + \Da$), and $q_{2}$ one step North-East ($\D H(\omega) 
= 2U + \Da$; see Fig.~\ref{fig:move-boundary-dimers}(c)). The same procedure described 
so far can be repeated (now it is not necessary to let a new particle of type $\ta$ enter 
$\Lambda$, since $p$ is adjacent to two corner particles of type $\ta$), to reach the 
configuration represented in Fig.~\ref{fig:move-boundary-dimers}(d) (as in the 
case of Section~\ref{sec-moving-dimers}), which is the key configuration to see that 
the dimer can moved to a corner without further increasing the energy difference 
with the original configuration.
 
\medskip\noindent
{\bf 6.}
Let $\eta \in \CC$ consist of a cluster close to $\partial^{-}\Lambda$, and let 
$L, l$ be the side lengths of the rectangle circumscribing its dual tile support. Using 
the mechanism described above, we see that also from $\eta$ it is possible to reach a 
configuration $\eta\prm$ with a dual support consisting of a rectangle of side lengths 
$L-1,(l)$ plus a $\abbar$ (possibly still close to $\partial^{-}\Lambda$) attached to 
one of its sides. 

\begin{itemize}
\item
If $L=l = \ell\starred$, then we can reach a standard configuration and we are done. 
\item
If $L = \ell\starred + 1$ and $l = \ell\starred - 1$ (and hence $L - l = 2$), then 
the same procedure can be used to reach a configuration with a protuberance that can 
be easily moved below energy level $\Gamma\starred$ to obtain a standard configuration 
(detach and remove the two particles of type $\ta$, detach and move the particle of 
type $\tb$, and saturate the particle of type $\tb$ with two new particles of type $\ta$).
\item
If $L - l >  2$, then  it is possible to proceed as follows. Move the short external 
$\abbar$ that is far from $\partial^{-}\Lambda$ onto the longest side of the dual 
rectangle far from $\partial^{-}\Lambda$, as described above within energy barrier 
$3U + \Da$, to obtain a configuration $\eta\prm$ such that $H(\eta\prm) = H(\eta)$ 
and with a dual tile support that has a circumscribing rectangle of side lengths 
$L-1, l+1$. From $\eta\prm$ it is possible to iterate the above procedure until a 
configuration with dual support with a circumscribing rectangle of side lengths 
$\ell\starred + 1, \ell\starred - 1$ is reached. But this case has already been 
treated.  
\end{itemize}
\end{proof}


\subsubsection{Existence of a good site}

\begin{lemma}
\label{lemma-proto-is-good-RF}
For all $\hat{\eta} \in \CC$, there exists an $x \in F(\hat{\eta})$ such that 
$\comlev((\hat{\eta},x),\boxplus) < \Gamma\starred$.
\end{lemma}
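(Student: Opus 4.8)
The plan is to construct, for a well-chosen site $x\in F(\hat\eta)$, a path from $(\hat\eta,x)$ to $\boxplus$ whose energy stays strictly below $\Gamma\starred$, following the three-stage scheme of the proofs of Lemmas~\ref{lemma-proto-is-good-RA} and \ref{lemma-proto-is-good-RB}: attach the additional $\btile$; bring the resulting $\btiled$ cluster to a configuration of minimal energy; and then let that configuration grow to $\boxplus$ as a supercritical droplet. Since $\hat\eta\in\CC=\minnbis{\protonumber}$ (Lemma~\ref{lemma-minimal-energy-coincides-with-minimal-perimeter}), we have $H(\hat\eta)=\Gamma\starred-\Db$ and the dual tile support of $\hat\eta$ is a monotone polyomino whose circumscribing rectangle has side lengths $L\ge l$ with $L+l=2\ell\starred$; writing $L=\ell\starred+c$, $l=\ell\starred-c$, the requirement that a monotone polyomino with $\protonumber$ cells fit inside forces $c^{2}\le\ell\starred-1$, i.e.\ $0\le c\le\lfloor\sqrt{\ell\starred-1}\rfloor$, which is exactly the range appearing in Theorem~\ref{th-RC}.

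For the first stage I would distinguish two cases. If the dual support of $\hat\eta$ is not a full rectangle it has a concave corner, so $\hat\eta$ has a good dual corner $x$; a free particle of type $\tb$ can enter $\partial^{-}\Lambda$ and move along empty sites to $x$ without touching the cluster (the border of width $3$ guarantees $[\hat\eta]$ lies far enough from $\partial^{-}\Lambda$), producing a configuration of energy $\Gamma\starred-3U$, and saturating that particle with one extra particle of type $\ta$ keeps the path below $\Gamma\starred$ (each sub-step raises the energy by $\Da<U$ and then lowers it by $U$) and reaches $\eta\prm\in\minnbis{\protonumber+1}$ with $H(\eta\prm)=\Gamma\starred-\Db-\epsi$ and the same circumscribing rectangle. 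If instead $\hat\eta$ is a full $L\times l$ rectangle — which, since it forces $L\cdot l=\protonumber$ with $L+l=2\ell\starred$, i.e.\ $(L-\ell\starred)^{2}=\ell\starred-1$, can occur only when $\ell\starred-1$ is a perfect square — there is no good dual corner, so I would place the free particle on a flat side, where it lands with two active bonds (energy $\Gamma\starred-2U$), and saturate it with two extra particles of type $\ta$, reaching $\eta\prm$ with $\protonumber+1$ particles of type $\tb$, a single $\btiled$ protuberance, and $H(\eta\prm)=\Gamma\starred-4U+2\Da<\Gamma\starred$.

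In the good-dual-corner case I would then fill the circumscribing rectangle of $\eta\prm$ by Remark~\ref{rem-filling-circumscribed-rectangle}; since $H(\eta\prm)+\Db=\Gamma\starred-\epsi<\Gamma\starred$ this stays below $\Gamma\starred$ and produces a $\btiled$ rectangle $\eta\dprm$ of side lengths $L,l$. If $L=l=\ell\starred$ this is a $\btiled$ square of side length $\ell\starred$, hence lies in $\cX_{\boxplus}$ by Remark~\ref{remark-supercritical-square} and, being supercritical, grows to $\boxplus$ below $\Gamma\starred$; if $L>l$, I would repeatedly peel the shortest external $\abbar$ from a long side of the current $\btiled$ rectangle and reattach it to a short side, using the mechanisms of Section~\ref{sec-moving-dimers} for moving $\btiles$ and $\abbars$ (barriers $3U$ and $U+4\Da$, both strictly below $\Db$ in region $\RC$) together with Remark~\ref{rem-visit-all-monotone}, and import the missing particles of type $\tb$ one at a time, each entering $\partial^{-}\Lambda$ at cost $\Db$ above a $\btiled$ configuration of energy $<\Gamma\starred-\Db$; this is carried out as in Steps~1--4 of the proof of Lemma~\ref{lemma-proto-below-gamma-RF} and in the final part of the proof of Lemma~\ref{lemma-proto-is-good-RA}, and terminates at a $\btiled$ $\ell\starred\times\ell\starred$ square (or a $\btiled$ rectangle of side lengths $\ell\starred+1,\ell\starred-1$), which is supercritical. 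In the full-rectangle case, $\eta\prm$ already carries a $\btiled$ protuberance, and since $2\Da<U$ throughout $\RC$ its energy $\Gamma\starred-4U+2\Da$ leaves room to apply the same rearrangement moves and transfer a column of $\btiles$ from the long side into a partial new row on the short side, bringing $\eta\prm$ into $\minnbis{\protonumber+1}$ with a strictly less elongated circumscribing rectangle, after which the preceding argument applies verbatim.

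The step I expect to be the main obstacle is this reshaping, and in particular the full-rectangle subcase: because the $\btiled$ protuberance carries one extra particle of type $\ta$ and region $\RC$ allows $\Da>\epsi$, one cannot simply grow a new row off the protuberance, and must instead verify that the whole rearrangement into $\minnbis{\protonumber+1}$ — the peeling, reattaching and importing of $\btiles$, including configurations whose circumscribing rectangle abuts $\partial^{-}\Lambda$ — stays below $\Gamma\starred$. The boundary-proximity part is controlled exactly as in Lemma~\ref{lemma-proto-below-gamma-RF}: the width-$3$ non-interacting annulus guarantees that at least one long side and one short side of the relevant rectangle lie far enough from $\partial^{-}\Lambda$ for the boundary-adapted $3U+\Da$ $\btile$-moves of that lemma to be available. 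The remaining bookkeeping reduces to the fact that the valleys $\minnbis{n}$ with $n>\protonumber$ have energy strictly below $\Gamma\starred-\Db$, so every insertion of a particle of type $\tb$ in $\partial^{-}\Lambda$ along the growth path keeps the energy below $\Gamma\starred$ — the standard supercritical growth already used for the reference path in \cite{dHNT11}.
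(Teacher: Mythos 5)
Your argument follows essentially the same route as the paper's proof: attach the extra particle of type $\tb$ at a good dual corner (or, when the dual tile support is a full rectangle, on a flat side) and saturate it, fill the circumscribing rectangle, then use the tile-motion mechanisms of Section~\ref{sec-moving-dimers} (available in region $\RC$ since $\Db>3U+\Da$ and, combined with $\Da+\Db<4U$, $2\Da<U$) to transfer the bar lying along a short side onto a long side, making the circumscribing rectangle one step less elongated while staying below $\Gamma\starred$, and iterate until the situation of Lemmas~\ref{lemma-proto-is-good-RA}--\ref{lemma-proto-is-good-RB} is reached and the supercritical $\ell\starred\times\ell\starred$ square can grow to $\boxplus$. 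The only caveats are cosmetic: your ``long side''/``short side'' labels in the peeling step are interchanged (it is the short-side column that must be laid down as a partial new row along a long side, exactly as in Fig.~\ref{fig:reconnect-rf}), and the $(\ell\starred+1)\times(\ell\starred-1)$ rectangle is supercritical only via the further reshaping into the square taken from the final part of the proof of Lemma~\ref{lemma-proto-is-good-RA} (direct row-by-row growth from it would exceed $\Gamma\starred$ because $\Da>(\ell\starred-1)\epsi$), which you do invoke.
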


\begin{proof}
If $\hat{\eta} \in \CB$, then the claim follows from the proof of 
Lemma~\ref{lemma-proto-is-good-RB}. We will show that, for all $\hat{\eta}$ whose dual 
support has a circumscribing rectangle with side lengths $L,l$, there is a site $x$ 
such that from configuration $(\hat{\eta}, x)$ there is a path $\omega$ to a configuration 
$\tilde{\eta}\prm$ whose dual support has a circumscribing rectangle of side lengths 
$L-1, l + 1$ such that $H(\tilde{\eta}\prm) \le H(\hat{\eta})$ and $H(\xi) < 
\Gamma\starred$ for all $\xi \in \omega$. The procedures that we present only require 
that two sides of the circumscribing rectangle of the dual support of $\eta$ are far 
from $\partial^{-}\Lambda$, which is always the case. Without loss of generality we 
may assume that these two sides are the North side and the East side.

\begin{figure}[htbp]
\centering
{\includegraphics[height=6\tilesize]{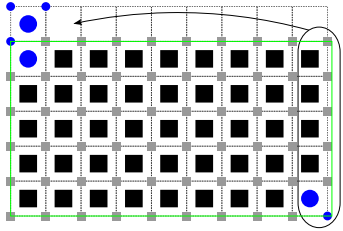}}
\caption{The configuration obtained from  $\hat{\eta}$ by first filling the rectangle 
circumscribing its dual support and then adding a $\btiled$ on the North side; 
circles represent particles added to $\hat{\eta}$.}
\label{fig:reconnect-rf}
\end{figure}	

\begin{itemize}
\item
$Ll > \protonumber$. In this case, configuration $\hat{\eta}$ contains at least one 
good dual corner. Let $x$ be one of these dual corners. From $\hat{\eta}$, first 
saturate the particle of type $\tb$ at $x$ with a new particle of type $\tb$, to 
obtain configuration $\eta\prm$. Let $\eta\dprm$ be the configuration obtained from 
$\eta\prm$ by filling $R(\hat{\eta})$ (note that $\eta\prm = \eta\dprm$ is possible). 
Clearly, $H(\eta\dprm) < H(\hat{\eta})$ and $\comlev(\hat{\eta}, \eta\dprm) < 
\Gamma\starred$. Let $\tilde{\eta}$ be the configuration obtained from $\eta\dprm$ 
within energy barrier $\Db$ by adding a $\btile$ on the North side of the $\btiled$ 
dual rectangle with side lengths $L,l$. $H(\tilde{\eta}) \le \Gamma\starred - 4U + 2\Da$. 
As in the proof of Lemma~\ref{lemma-proto-below-gamma-RF}, all the tiles of the 
Eastern-most $\abbar$ of $\tilde{\eta}$ can be itereatively moved to a corner on 
the North side of the cluster within energy barrier $3U$ (see Fig.~\ref{fig:reconnect-rf}), 
to obtain a $\btiled$ configuration $\tilde{\eta}\prm$ whose dual support has a 
circumscribing rectangle with side lengths $L-1, l+1$ and such that $\tilde{\eta}\prm 
= H(\eta\dprm)$. Hence $\comlev((\hat{\eta},x), \tilde{\eta}\prm) le \Gamma\starred 
- 4U + 2\Da + 3U < \Gamma\starred$ as soon as $\Da < \tfrac{1}{2}$, which is satisfied 
in $\RF$.
\item	
$Ll > \protonumber$. In this case the dual support of $\hat{\eta}$ is already a rectangle 
with side lengths $L, l$. Let $x$ be the central site of a tile adjacent to the North 
side of the rectangle. We have $H((\hat{\eta},x)) = \Gamma\starred - 2U$. Let $\tilde{\eta}$ 
be the configuration obtained within energy barrier $\Da$ by saturating the particle of 
type $\tb$ at $x$. $H(\tilde{\eta}) = \Gamma\starred - 4U + 2\Da$. From $\tilde{\eta}$ 
proceed as in the previous case.
\end{itemize}
\end{proof}


\subsubsection{Identification of $\proto$ and $\entgate$}

\begin{lemma}
\label{lemma-identification-of-entgate-RF}
$\proto = \CC$ and $\entgate = \CC\boub$.
\end{lemma}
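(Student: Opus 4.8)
The plan is to run the argument of Lemma~\ref{lemma-identification-of-entgate-RA} (the region-$\RA$ case) essentially verbatim, with $\CA$ replaced throughout by $\CC$, using Lemma~\ref{lemma-proto-below-gamma-RF} (which gives $\g=\gbar=\CC$) in place of Lemma~\ref{lemma-proto-below-gamma-RA}, and Lemma~\ref{lemma-proto-is-good-RF} (existence of a good site in every $\hat{\eta}\in\CC$) in place of Lemma~\ref{lemma-proto-is-good-RA}. A convenient simplification in region $\RC$ is that, by Lemma~\ref{lemma-minimal-energy-coincides-with-minimal-perimeter}, $\CC$ is exactly the set $\minnbis{\protonumber}$ of minimal-energy configurations with $\protonumber$ particles of type $\tb$, so the chain of identities $\g=\gbar=\CC=\minnbis{\protonumber}$ holds with no near-boundary exceptions.

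First I would prove $\CC\boub\subseteq\cG(\Box,\boxplus)$. Every configuration in $\CC\boub$ has energy $\Gamma\starred$, so by Lemma~\ref{lemma-mnos-essential-saddles} it suffices to show each $\eta=(\hat{\eta},x)\in\CC\boub$ is an essential saddle. Fix such an $\eta$. Combining Lemma~\ref{lemma-proto-below-gamma-RF} with Lemma~\ref{lemma-reachable-from-standard} yields a path $\Box\to\hat{\eta}$ staying strictly below $\Gamma\starred$; appending the creation of the free particle of type $\tb$ at $x\in\partial^-\Lambda$, then a lattice path of empty sites inside $\Lambda^-$ bringing this particle to a good site $y_m\in F(\hat{\eta})$ (it exists by Lemma~\ref{lemma-proto-is-good-RF}, and is reachable without touching the cluster because $\Lambda$ carries a non-interacting annulus of width $3$), and finally a descent to $\boxplus$ below $\Gamma\starred$, produces an optimal path $\omega$ through $\eta$ whose only other saddles are configurations $(\hat{\eta},u)$ with $u\notin\partial^-\Lambda$. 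To check essentiality via Definition~\ref{def3}(f2), I would split $\optpaths$ into the paths through $\eta$, for which $S(\omega')\nsubseteq S(\omega)\backslash\{\eta\}$ holds trivially, and the paths $\omega'$ avoiding $\eta$; by Lemma~\ref{lemma-entrance-of-set-protonumber}(1) the latter enter $\nbis{\critinumber}$ through some $(\hat{\zeta},z)$ with $z\in\partial^-\Lambda$ that, by construction of $\omega$, lies outside $S(\omega)$. Hence $\eta$ is essential, and since $\eta$ was arbitrary, $\CC\boub\subseteq\cG(\Box,\boxplus)$.

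Next I would upgrade this to $\entgate=\CC\boub$ by showing that, along any $\omega\in\optpaths$, every saddle visited before the first configuration of $\CC\boub$ is unessential, hence not in $\cG(\Box,\boxplus)$ by Lemma~\ref{lemma-mnos-essential-saddles}. For saddles in $\nbis{\le\protonumber}$: since $\gbar=\g$, Lemma~\ref{lemma-entrance-of-set-protonumber}(2) makes $\g\boub=\CC\boub$ a gate, and Lemma~\ref{lemma-unessential-saddles}(1)--(2), together with Lemma~\ref{lemma-entrance-of-set-protonumber}, shows these saddles are unessential (after visiting such a $\zeta$ the path must re-enter $\nbis{\critinumber}$, necessarily via $\g\boub$, hence visits $\g$). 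For saddles in $\nbis{\ge\critinumber}\backslash\CC\boub$: by Lemma~\ref{lemma-entrance-of-set-protonumber}(1) and Lemma~\ref{lemma-proto-below-gamma-RF} every optimal path enters $\nbis{\critinumber}$ via a configuration of $\CC\boub$, so any such later saddle already has a predecessor in $\CC\boub\subseteq\cG(\Box,\boxplus)$. Thus $\entgate=\CC\boub$. Finally, $\proto$ is the set of configurations visited just before this first entrance: the inclusion $\CC\subseteq\proto$ is immediate, since each $\hat{\eta}\in\CC=\g$ is turned into an element of $\CC\boub=\entgate$ by a single particle creation in $\partial^-\Lambda$ and the resulting path can be completed below $\Gamma\starred$ by Lemma~\ref{lemma-proto-is-good-RF}; and $\proto\subseteq\CC$ follows from Lemma~\ref{lemma-entgate-subset-ent-critinumber}, whose hypothesis $\gbar=\g$ is supplied by Lemma~\ref{lemma-proto-below-gamma-RF}, giving $\proto\subseteq\g=\CC$.

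I do not expect a genuine obstacle here: the lemma is a transcription of the $\RA$ argument, and the region-$\RC$-specific content has already been discharged in Lemma~\ref{lemma-proto-below-gamma-RF}, where $\g=\gbar=\CC$ is obtained from the tile-motion mechanisms of Section~\ref{tilemotion} in the larger energy budget $\Db>3U+\Da$ available in $\RC$. The only points deserving a line of care in the present proof are the reachability of the good site of Lemma~\ref{lemma-proto-is-good-RF} by the free particle without perturbing the cluster (guaranteed by the width-$3$ non-interacting annulus, exactly as in $\RA$), and the completeness of the case split on optimal paths in the essentiality step, which is precisely the content of Lemma~\ref{lemma-entrance-of-set-protonumber}(1).
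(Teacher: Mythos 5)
Your proposal is correct and follows the paper's route exactly: the paper's proof of this lemma is literally a reference back to the proof of Lemma~\ref{lemma-identification-of-entgate-RA}, and your write-up is that argument transcribed with $\CA$ replaced by $\CC$, invoking Lemma~\ref{lemma-proto-below-gamma-RF} and Lemma~\ref{lemma-proto-is-good-RF} in place of their region-$\RA$ counterparts. The observation that $\CC=\minnbis{\protonumber}$ (so no near-boundary exceptions arise) matches the opening of the paper's proof of Lemma~\ref{lemma-proto-below-gamma-RF} and is a sound simplification.
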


\begin{proof}
Same as the proof of Lemma~\ref{lemma-identification-of-entgate-RA}.
\end{proof}


\appendix
\section{Computation of $N\starred$}
\label{appA}

Kurz~\cite{K08} shows how to construct all polyominoes of minimal perimeter 
with fixed area, and gives an expression for their number in terms of 
generating functions. Two basic generating functions
\begin{equation}
s(x)=1+\sum_{k=1}^\infty x^{k^2}\prod\limits_{j=1}^k\frac{1}{1-x^{2j}},
\qquad 
a(x)=\prod\limits_{j=1}^\infty\frac{1}{1-x^{j}}
\end{equation}
are used to define two composite generating functions 
\begin{equation}
r(x)=\tfrac{1}{4}\left[a(x)^4+3a(x^2)^2\right],
\qquad
q(x)= \tfrac{1}{8}\left[a(x)^4+3a(x^2)^2+2s(x)^2a(x^2)+2a(x^4)\right],
\end{equation}
whose coefficients $r_{k}$, $q_{k}$ of $x^k$ count the polyominoes as follows.
The number of polyominoes of minimal perimeter with area $n$ equals
\begin{equation}
e(n) = \left\{\begin{array}{ll}
1 
&\text{ if } n=s^2,\\[0.2cm]
\sum_{c=0}^{\left\lfloor-\tfrac12+\tfrac12\sqrt{1+4s-4t}\right\rfloor} r_{s-c-c^2-t}
&\text{ if } n=s^2+t \text{ with } 0<t<s,\\[0.2cm]
1
&\text{ if } n=s^2+s+t,\\[0.2cm]
q_{s+1-t} + \sum_{c=1}^{\left\lfloor\sqrt{s+1-t}\right\rfloor} r_{s+1-c^2-t}
&\text{ if } n=s^2+s+t \text{ with } 0<t\leq s,
\end{array}
\right.
\end{equation}
where $s=\lfloor\sqrt{n}\rfloor$.

We need to count the number of polyominoes of minimal perimeter with area 
$n = \protonumber$ for $\ell\starred \ge 4$, i.e., we are only interested 
in $n$ of the form $s^{2} + s + t$ with $s = \ell\starred - 1$ and $t = 1$. 
Kurz~\cite{K08} counts polyominoes modulo translations, rotations and reflections. 
We need the number modulo translations only. Therefore we must put in correction 
factors: $4$ for the rotations and $2$ for the reflections. 

In region $\RC$ we retain all $c$-terms. In region $\RB$ we only retain the term 
with $c = 1$. Indeed, $q_{\ell\starred - 1}$ is the number of polyominoes of 
minimal perimeter when the circumscribing rectangle is a square of side length 
$\ell\starred$, and $r_{\ell\starred - c^{2} - 1}$ is the number of polyominoes 
of minimal perimeter when the circumscribing rectangle has side lengths 
$\ell\starred + 1,\ell\starred - 1$. Thus, modulo rotations and reflections, we 
have
\begin{equation}
N\starred = 
\left\{\begin{array}{ll}
8\left[q_{\ell\starred - 1} + r_{\ell\starred - 1 - 1}\right]
&\text{ in region } \RB,\\[0.2cm]
8\left[q_{\ell\starred - 1} 
+ \sum_{c=1}^{\left\lfloor\sqrt{\ell\starred - 1}\right\rfloor} 
r_{\ell\starred - c^{2} - 1}\right]
&\text{ in region } \RC.
\end{array}
\right.
\end{equation}


\section{Clarification of some statements in \cite{dHNT12}}


\subsection{Proof of Lemma~1.18 in \cite{dHNT12}}
\label{appB}

The following statement was used in the proof of Lemma~2.2 in \cite{dHNT12}.

\begin{lemma}
\label{lemma-entgate-is-minimal-gate}
$\entgate$ is a minimal gate.
\end{lemma}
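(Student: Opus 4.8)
The goal is to show that $\entgate$ is a minimal gate for the transition $\Box\to\boxplus$, i.e., that it is a gate (every optimal path passes through it) and that no proper subset of it retains this property. The first half is essentially already in hand: by Definition~\ref{defdroplets-a}(a), $\entgate=\optentrance\cG(\Box,\boxplus)$ is the set of configurations through which optimal paths enter the essential gate $\cG(\Box,\boxplus)$, and since every optimal path must visit $\cG(\Box,\boxplus)$ (being the union of minimal gates, each of which is a gate), every optimal path must visit $\entgate$. Moreover $\entgate\subseteq\cS(\Box,\boxplus)$ because, as established in the region-by-region analysis of Section~\ref{Proofs}, $\entgate=\CA\boub$ (resp.\ $\CB\boub$, $\CC\boub$) and all these configurations have energy exactly $\Gamma\starred=\comlev(\Box,\boxplus)$. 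Hence $\entgate$ is a gate.

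\textbf{Minimality.} For the minimality, I would fix an arbitrary configuration $\eta=(\hat\eta,x)\in\entgate$ (with $\hat\eta\in\proto$ and $x\in\partial^-\Lambda$) and exhibit an optimal path $\omega\in\optpaths$ such that $\omega\cap(\entgate\setminus\{\eta\})=\emptyset$, i.e., $\omega$ enters $\cG(\Box,\boxplus)$ precisely at $\eta$ and at no other element of $\entgate$. This is exactly the construction already carried out inside the proof of Lemma~\ref{lemma-identification-of-entgate-RA} (and reused verbatim for regions $\RB$, $\RC$): one builds $\omega$ so that $H(\xi)<\Gamma\starred$ for all $\xi\in P_\omega(\eta)$ — using $\hat\eta\in g(\{\Box\},\minnbis{\protonumber})$ from Lemma~\ref{lemma-proto-below-gamma-RA} — and so that after $\eta$ the path moves the free particle of type $\tb$ directly to a good site $y_m\in F(\hat\eta)$ (which exists by Lemma~\ref{lemma-proto-is-good-RA}) via a lattice path of empty sites in $\Lambda\setminus\partial^-\Lambda$ (possible by the width-$3$ non-interacting border around $\Lambda$), and then descends below $\Gamma\starred$ to $\boxplus$. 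Along this $\omega$, the only configuration at height $\Gamma\starred$ with $n_\tb=\protonumber$ sitting in $\partial^-\Lambda$ is $\eta$ itself: every later configuration either has a particle of type $\tb$ strictly inside $\Lambda$ (so it is of the form $(\hat\eta,u)$ with $u\notin\partial^-\Lambda$, hence not in $\entgate=\proto\boub$), or it has already dropped below $\Gamma\starred$. Thus $\omega$ witnesses that $\entgate\setminus\{\eta\}$ fails to be a gate, and since $\eta\in\entgate$ was arbitrary, $\entgate$ is minimal.

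\textbf{Main obstacle.} The one delicate point is verifying that the optimal path $\omega$ constructed above really avoids all other elements of $\entgate$ — in particular that it does not re-enter $\nbis{\protonumber}$ through a different configuration of $\CA\boub$ (or $\CB\boub$, $\CC\boub$) after leaving $\eta$. This is handled by the monotonicity built into the construction: once the free particle attaches to the droplet and the configuration descends below $\Gamma\starred$, Lemma~\ref{lemma-proto-is-good-RA} (resp.\ its analogues) guarantees the path proceeds to $\boxplus$ staying strictly below $\Gamma\starred$, so it never revisits height $\Gamma\starred$ at all, let alone at an element of $\entgate$. A second, more bookkeeping-type obstacle is that near $\partial^-\Lambda$ some configurations of $\CB$, $\CC$ are not reachable below $\Gamma\starred$; but Lemma~\ref{lemma-entgate-subset-ent-critinumber} together with the identifications $\g=\gbar$ in regions $\RB$, $\RC$ (Lemmas~\ref{lemma-proto-below-gamma-RB} and~\ref{lemma-proto-below-gamma-RF}) show that $\proto=\g$ consists exactly of the reachable configurations, so the good site and the escaping lattice path always exist for every $\hat\eta\in\proto$. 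With these inputs the proof reduces to assembling the pieces already proved in Section~\ref{Proofs}.
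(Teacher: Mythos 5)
Your proposal is correct in substance, but it takes a genuinely different route from the paper, and it is worth seeing what each buys. The paper's proof is purely abstract: it uses only Definition~\ref{defdroplets-a}(a) and the characterization of $\cG(\Box,\boxplus)$ as the set of essential saddles (Lemma~\ref{lemma-mnos-essential-saddles}). Arguing by contradiction, if some $\eta\in\entgate$ were such that every optimal path entering $\cG(\Box,\boxplus)$ through $\eta$ had to return to $\entgate$, one takes the last configuration $\zeta\in\entgate$ visited by such a path and splices its tail from $\zeta$ onto an optimal path that enters $\cG(\Box,\boxplus)$ through $\zeta$; the resulting optimal path avoids $\eta$, which exhibits $\eta$ as an unessential saddle and contradicts $\eta\in\entgate\subset\cG(\Box,\boxplus)$. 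No geometry of the model enters at all. You instead reduce minimality to the (correct) statement that for every $\eta=(\hat\eta,x)\in\entgate$ there is an optimal path $\omega$ with $\omega\cap\entgate=\{\eta\}$, and you produce such a path explicitly by re-using the construction inside Lemma~\ref{lemma-identification-of-entgate-RA} (with Lemmas~\ref{lemma-proto-below-gamma-RA} and \ref{lemma-proto-is-good-RA} and their $\RB$, $\RC$ analogues); your verification that the post-$\eta$ saddles are of the form $(\hat\eta,u)$ with $u\notin\partial^{-}\Lambda$, hence in $\gate$ but not in $\entgate=\proto\boub$, and that after attachment at the good site the path stays strictly below $\Gamma\starred$, is sound. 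There is no formal circularity, since the Section~\ref{Proofs} lemmas do not use Lemma~\ref{lemma-entgate-is-minimal-gate}.

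The trade-off is generality and logical economy. The lemma is placed in the appendix because it is needed to repair arguments of \cite{dHNT12} that are carried out under the hypotheses (H1)--(H3) alone; it is therefore meant to hold without knowing $\proto=\CA$ (or $\CB$, $\CC$) -- for instance also in the fourth, unanalyzed subregion, or for any model satisfying the hypotheses. Your proof, resting on the hardest region-specific results of this paper, establishes the statement only in $\RA$--$\RC$ (and, implicitly, for $\ell\starred\ge 4$), so it cannot serve the purpose for which the lemma is invoked, whereas the paper's short splicing argument gets it essentially for free and in full generality. If you keep your route, you should at least state explicitly that the constructed $\omega$ attains its maximum $\Gamma\starred$ only at $\eta$ and at free-particle configurations outside $\partial^{-}\Lambda$, and flag that the argument is confined to the parameter regions where the identification $\entgate=\proto\boub$ has been proved.
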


\begin{proof}
Let $\cA = \{\eta\in\entgate\colon\,\exists\,\omega \in \Omega(\eta)\colon\, 
\omega\cap\entgate=\{\eta\}\}$, and let $\tilde{\cA} = \entgate \backslash \cA$.
In words, for each $\eta \in \cA$ there is a path in $\optpaths$ entering 
$\cG(\Box,\boxplus)$ via $\eta$ and reaching $\boxplus$ without hitting $\entgate$ 
again, while if a path in $\optpaths$ enters $\entgate$ via a configuration in 
$\tilde{\cA}$, then it must go back to $\entgate$ before reaching $\boxplus$.
We will show that $\tilde{\cA}$ is empty. The proof is by contradiction.
	
Assume that $\eta \in \tilde{\cA}$ and let $\omega \in \Omega(\eta)$. Let $\zeta$ 
be the last configuration in $\entgate$ visited by $\omega$ before reaching $\boxplus$. 
Then there is an $\omega\prm \in \optpaths$ entering $\cG(\Box,\boxplus)$ via $\zeta$.
The path $\omega\dprm$ obtained by joining the part of $\omega\prm$ from $\Box$ to 
$\zeta$ and the part of $\omega$ from $\zeta$ to $\boxplus$ belongs to $\optpaths$, 
and $\eta \notin \omega\dprm$. Therefore $\eta$ is unessential and, by Theorem~5.1 
in \cite{MNOS04} (Lemma~\ref{lemma-mnos-essential-saddles} in this paper), it does 
not belong to $\cG(\Box,\boxplus)$. This contradicts the assumption $\eta \subset 
\entgate \subset \cG(\Box, \boxplus)$.
\end{proof}

\noindent
{\bf Lemma~1.18 in \cite{dHNT12}} 
\emph{{\rm (H3a), (H3-c)} and Definition~{\rm\ref{defdroplets-b}(a)} imply that for every 
$\eta\in\gate_\mathrm{att}$ all paths in $(\eta\to\Box)_\mathrm{opt}$ pass through 
$\entgate$}.

\medskip
\begin{proof}
The proof is by contradicion. Let $\gateatt \ni \eta = (\hat{\eta}, x)$, and assume that 
there is a path $\omega_{1}\colon\,\Box \to \eta$ that does not visit $\entgate$ and such 
that $H(\sigma) \le \Gamma\starred$ for all $\sigma \in \omega_{1}$. By the definition of 
$\gateatt$, there exists a configuration $\entgate \ni \zeta = (\hat{\eta}, z)$ such that 
$\zeta$ is obtained from $\eta$ by moving the particle of type $\tb$ from $x$ to $z$.
Consequently, there exists a path $\omega_{2}$ from $\eta$ to $\zeta$ consisting of a 
sequence of configurations of the type $(\hat{\eta}, y_{i})$ with $y_{i} \in \Lambda$ 
for all $i$. Note that, since $H(\zeta) = \Gamma\starred$, the particle of type $\tb$ 
at site $z$ in $\zeta$ has no active bond (it is in $\partial^{-}\Lambda$) and all 
configurations in $\omega_{2}$ have the same number of particles of both types, we have 
$H(\sigma) \le \Gamma\starred$ for all $\sigma$ in $\omega_{2}$. Since $\zeta$ belongs 
to the minimal gate $\entgate$, there is a path $\omega_{3}\colon\,\zeta \to \boxplus$ 
such that $\omega_{3} \cap \entgate = \{ \zeta \}$ and such that $H(\sigma) \le \Gamma
\starred$ for all $\sigma \in \omega_{3}$. 

Now consider the following two cases.
\begin{itemize}
\item[(1)] 
$\eta \in \omega_{3}$. Let $\omega_{4}$ be the part of $\omega_{3}$ from $\eta$ 
to $\boxplus$. Then the path obtained by joining $\omega_{1}$ and $\omega_{4}$ is a path 
in $\optpaths$ that does not visit $\entgate$. This contradicts the definition of $\entgate$.
\item[(2)] 
$\eta \notin \omega_{3}$. Let $\omega = \omega_{1} + \omega_{2} + \omega_{3}$. 
Then, by construction, $\omega \in \optpaths$. Let $\pi = (\hat{\eta}, w)$ be the configuration 
in $\omega$ visited just before $\zeta$. By definition, $\pi \in \proto$. By (the first part 
of) (H3-a), $\pi$ consists of a single droplet, and hence $w \notin \partial^{-}\Lambda$.
Therefore $\zeta$ is obtained by ``breaking'' a droplet of a configuration in $\proto$ and 
not by adding a particle of type $\tb$ in $\partial^{-}\Lambda$ to a configuration in 
$\proto$. This contradicts (the second part of) (H3-a).
\end{itemize}
\end{proof}


\subsection{Proof of Lemma~2.2 in \cite{dHNT12}}

In the proof of Lemma~2.2 in \cite{dHNT12}, the sentence 

\noindent
``Denote by $\Omega(\eta)$ the set of all optimal paths from $\Box$ to $\boxplus$
that enter $\cG(\Box,\boxplus)$ via $\eta$ (note that this set is
non-empty because $\entgate$
is a minimal gate by Definition~\ref{defdroplets-a}(a)). By
Definition~\ref{defdroplets-a}(b),
$\omega_i \in \Omega(\eta)$ visits $\hat\eta$ before $\eta$ for all $i
\in 1,\ldots,|\Omega(\eta)|$.''

\noindent
must be replaced by

\noindent
``Denote by $\Omega(\eta)$ the set of all optimal paths from $\Box$ to $\boxplus$
such that $\omega\cap\entgate = \{ \eta \}$ (note that this set is
non-empty because $\entgate$
is a minimal gate). By Definition~\ref{defdroplets-a}(b) and by the more precise form 
of (H3-a), $\omega_i \in \Omega(\eta)$ visits $\hat\eta$ before $\eta$ for all $i
\in 1,\ldots,|\Omega(\eta)|$.''


\subsection{Proof of Theorem~1.8 in \cite{dHNT12}}\label{appb3}

In Step~3, the definition of $\rm{CS^{++}}(\hat{\eta})$ should read:
$\rm{CS^{++}}(\hat{\eta}) = \partial^{+}CS^{+}(\hat{\eta})\cap\Lambdaminus$.

In Lemma~2.11, the estimate of $\Theta_1$ in formula (2.48) should be replaced by
\begin{align}
\Theta_{1} &= [1+o(1)] \sum_{\hat{\eta}\in\proto\prm} \CAPA^{\,\Lambda^+} 
\left(\partial^+\Lambda,\CS(\hat{\eta})\right),
\end{align}
where $\proto\prm$ is the set of configurations in $\proto$ whose support has lattice 
distance at least two from $\partial^{-}\Lambda$. This allow us to always construct a 
lattice path connecting every two points adjacent to $\supp(\hat{\eta})$ and avoiding 
$\partial^{+}\Lambda$, as in the argument used to derive bounds for formula (2.56).
The value of $\Theta_{1}$ is derived by restricting the sum in (2.50) to $\hat{\eta} 
\in \proto\prm$. This still produces a lower bound for $\Theta$, because $\proto\prm 
\subset \proto$. $\proto$ should be changed to $\proto\prm$ accordingly in formulas~(2.53) 
in Step~3 and (2.61) in Step~4. The bounds $\Theta_{1}$ and $\Theta_{2}$ will still merge
asymptotically, as shown in Step~4, since $|\proto| \sim N\starred |\Lambda| \sim |\proto\prm|$ 
as $\Lambda\to\Z^{2}$.

In Step~3, where $\Theta_{2}$ is computed, the sentence

\noindent
``The only transitions in $\cX\starred$ between $\cC^{++}$ and $\cX\starred\backslash
[\cX_\Box\cup\cC^{++}]$ are those where the free particle moves from distance $2$ to 
distance $1$ of the protocritical droplet.''

\noindent
should be replaced by

\noindent
``The only transitions in $\cX\starred$ between $\cC^{++}$ and $\cX\starred\backslash
[\cX_\Box\cup\cC^{++}]$ are those where the free particle enters $\rm{CS^{++}}(\hat{\eta})$.''



\begin{thebibliography}{99}

\bibitem{AC96}
L.\ Alonso and R.\ Cerf,
The three dimensional polyominoes of minimal area,
Electron.\ J.\ Combin.\ 3 (1996) Research Paper 27.

\bibitem{BAC96}
G.\ Ben Arous and R.\ Cerf, 
Metastability of the three-dimensional Ising model on a torus at very low temperature, 
Electron.\ J.\ Probab.\ 1 (1996) Research Paper 10.

\bibitem{B09}
A.\ Bovier, Metastability, in: {\it Methods of Contemporary Mathematical Statistical 
Physics} (ed.\ R.\ Koteck\'y), Lecture Notes in Mathematics 1970, Springer, Berlin, 
2009, pp.\ 177--221.

\bibitem{B11}
A.\ Bovier, Metastability: from mean field models to spdes, to appear in a
Festschrift on the occassion of the 60-th birthday of J\"urgen G\"artner
and the 65-th birthday of Erwin Bolthausen, Springer Proceedings in Mathematics.

\bibitem{BEGK02}
A.\ Bovier, M.\ Eckhoff, M.\ Gayrard and M.\ Klein, Metastability and low lying
spectra in reversible Markov chains, Commun.\ Math.\ Phys.\ 228 (2002) 219--255. 

\bibitem{BdHN06}
A.\ Bovier, F.\ den Hollander and F.R.\ Nardi, Sharp asymptotics
for Kawasaki dynamics on a finite box with open boundary, Probab.\
Theory Relat.\ Fields 135 (2006) 265--310. 

\bibitem{BM02}
A.\ Bovier and F.\ Manzo, Metastability in Glauber dynamics in the low-temperature
limit: beyond exponential asymptotics, J.\ Stat.\ Phys.\ 107 (2002) 757--779. 

\bibitem{CO96}
E.N.M.\ Cirillo, E.\ Olivieri, 
Metastability and nucleation for the Blume-Capel model. Different mechanisms of 
transition, J.\ Stat.\ Phys.\ 83 (1996) 473--554. 

\bibitem{Gnotes}
A.\ Gaudilli\`ere, 
Condensers physics applied to Markov chains,
Lecture notes for the 12th Brazilian School of Probability,
arXiv:0901.3053v1

\bibitem{GSnotes}
A.\ Gaudilli\`ere, E.\ Scoppola, 
An introduction to metastability, 
lecture notes for the 12th Brazilian School of Probability
(http://www.mat.ufmg.br/ebp12/notesBetta.pdf).

\bibitem{dH09}
F.\ den Hollander, Three lectures on metastability under stochastic
dynamics , in: {\it Methods of Contemporary Mathematical Statistical
Physics} (ed.\ R.\ Koteck\'y), Lecture Notes in Mathematics 1970,
Springer, Berlin, 2009, pp.\ 223--246. 

\bibitem{dHNOS03}
F.\ den Hollander, F.R.\ Nardi, E.\ Olivieri, and E.\ Scoppola,
Droplet growth for three-dimensional Kawasaki dynamics, P
robab.\ Theory Relat.\ Fields 125 (2003) 153--194. 

\bibitem{dHOS00}
F.\ den Hollander, E.\ Olivieri, and E.\ Scoppola, 
Metastability and nucleation for conservative dynamics, 
J.\ Math.\ Phys.\ 41 (2000) 1424--1498. 

\bibitem{dHNT12}
F.\ den Hollander, F.R.\ Nardi, and A.\ Troiani,
Metastability for Kawasaki dynamics at low temperature with two types of particles.
Electron.\ J.\ Probab.\ 17 (2012) 1--26.

\bibitem{dHNT11}
F.\ den Hollander, F.R.\ Nardi, and A.\ Troiani,
Kawasaki dynamics with two types of particles: Stable/metastable configurations and 
communication heights,
J.\ Stat.\ Phys.\ 145 (2011) 1423--1457.

\bibitem{K08}
S.\ Kurz,
Counting polyominoes with minimal perimeter,
Ars Combin.\ 88 (2008) 161--174.

\bibitem{KO93}
R.\ Kotecky  and E.\ Olivieri, 
Droplet Dynamics for asymmetric Ising model, 
J.\ Stat.\ Phys.\ 70 (1993) 1121--1148.

\bibitem{MNOS04}
F.\ Manzo, F.R.\ Nardi, E.\ Olivieri, and E.\ Scoppola, 
On the essential features of metastability: tunnelling time and critical configurations,
J.\ Stat.\ Phys.\ 115 (2004) 591--642.

\bibitem{NOS05}
F.R.\ Nardi, E.\ Olivieri, and E.\ Scoppola, 
Anisotropy effects in nucleation for conservative dynamics, 
J.\ Stat.\ Phys.\ 119 (2005) 539--595. 

\bibitem{NS91}
E.J.\ Neves and R.H.\ Schonmann, 
Critical droplets and metastability for a Glauber dynamics at very low temperature, 
Commun.\ Math.\ Phys.\ 137 (1991) 209--230. 

\bibitem{OS10}
E.\ Olivieri and E.\ Scoppola, 
An introduction to metastability through random walks, 
Braz.\ J.\ Probab.\ Stat.\ 24 (2010) 361--399.

\bibitem{OV04}
E.\ Olivieri and M.E.\ Vares, 
\emph{Large Deviations and Metastability},
Cambridge University Press, Cambridge, 2004. 

\end{thebibliography}
\end{document}